\newcommand{\Z}{\mathbb{Z}}
\newcommand{\pvd}{\mathrm{pvd}}
\newcommand{\per}{\mathrm{per}}
\newcommand{\ind}{\mathrm{ind}}
\newcommand{\C}{\mathcal{C}}
\newcommand{\D}{\mathcal{D}}
\newcommand{\I}{\mathcal{I}}
\newcommand{\Q}{\mathcal{Q}}
\newcommand{\R}{\mathcal{R}}
\newcommand{\M}{\mathcal{M}}
\newcommand{\Hom}{\mathrm{Hom}}
\newcommand{\RHom}{\bm{\mathrm{R}}\mathrm{Hom}}
\newcommand{\Ltensor}{\bm{\mathrm{L}}}
\newcommand{\rad}{\mathrm{rad}}
\newcommand{\Aut}{\mathrm{Aut}}
\newcommand{\Ext}{\mathrm{Ext}}
\newcommand{\res}{\mathrm{res}}
\newcommand{\id}{\mathrm{id}}
\newcommand{\im}{\imath}
\newcommand{\jm}{\jmath}
\DeclareMathOperator{\modcat}{\mathrm{mod}}
\DeclareMathOperator{\add}{\mathrm{add}}
\newtheorem{thm}{Theorem}[section]
\newtheorem{lemma}[thm]{Lemma}
\newtheorem{prop}[thm]{Proposition}
\newtheorem{cor}[thm]{Corollary}
\theoremstyle{definition}
\newtheorem{defn}[thm]{Definition}
\newtheorem{remark}[thm]{Remark}
\newtheorem{example}[thm]{Example}
\title[A categorification of combinatorial Auslander--Reiten quivers]{A categorification of combinatorial\\
Auslander--Reiten quivers}
\author{Ricardo Canesin}
\address{Université Paris Cité and Sorbonne Université, CNRS, IMJ-PRG, F-75013 Paris, France}
\email{ricardo.canesin@imj-prg.fr}
\keywords{Combinatorial AR quivers, Q-data, reduced words, commutation classes, Ginzburg dg algebras, preprojective algebras} 
\subjclass[2020]{05E10, 18G80, 20F55}
\begin{document}

\begin{abstract}
	We provide a categorification of Oh and Suh's combinatorial Auslander--Reiten quivers in the simply laced case. We work within the perfectly valued derived category $\pvd(\Pi_Q)$ of the 2-dimensional Ginzburg dg algebra of a Dynkin quiver $Q$. For any commutation class $[\bm{i}]$ of reduced words in the corresponding Weyl group, we define a subcategory $\C([\bm{i}])$ of $\pvd(\Pi_Q)$ whose objects are obtained by applying a sequence of spherical twist functors to the simple objects. We describe the $\Hom$-order for $\C([\bm{i}])$ in terms of $[\bm{i}]$, generalizing a result of Bédard. Furthermore, when $[\bm{i}]$ is a commutation class for the longest element, we construct a category $\mathcal{D}([\bm{i}])$ generalizing the bounded derived category of $Q$. It is realized as a certain subquotient of $\pvd(\Pi_Q)$. We demonstrate the existence of particular distinguished triangles in $\mathrm{pvd}(\Pi_Q)$ with corners in $\D([\bm{i}])$, which allows us to extend the classical mesh-additivity to arbitrary commutation classes. Additionally, we define an analog of the Euler form and prove that its symmetrization yields the corresponding Cartan--Killing form. For commutation classes $[\bm{i}]$ arising from Q-data --- a generalization of Dynkin quivers with a height function introduced by Fujita and Oh --- we establish the existence of a partial Serre functor on $\D([\bm{i}])$. Lastly, we apply our results to reinterpret a formula by Fujita and Oh for the inverse of the quantum Cartan matrix.
\end{abstract}

\maketitle

\setcounter{tocdepth}{1}
\tableofcontents

\section{Introduction}

Let $\mathfrak{g}$ be a complex finite-dimensional simple Lie algebra, and let $U_q(L\mathfrak{g})$ denote the associated quantum loop algebra, with $q$ an indeterminate. When $\mathfrak{g}$ is of simply laced type (i.e., type $\mathsf{ADE}$), interesting connections have emerged between the representation theory of $U_q(L\mathfrak{g})$ and that of a Dynkin quiver $Q$ of the same type as $\mathfrak{g}$. A seminal work in this direction is \cite{HernandezLeclerc}, where Hernandez and Leclerc construct a rigid monoidal subcategory $\mathscr{C}_{\Z}$ of the category of finite-dimensional representations of $U_q(L\mathfrak{g})$ and study its quantum Grothendieck ring $\mathscr{K}_t(\mathscr{C}_{\Z})$, a deformation of the classical Grothendieck ring. Their main result is an isomorphism between the derived Hall algebra \cite{Toen} of the derived category of $Q$ over a finite field $F$ and the specialization of $\mathscr{K}_t(\mathscr{C}_{\Z})$ at $t = \sqrt{|F|}$.

A key element in Hernandez and Leclerc’s proof is their computation of the inverse $\widetilde{C}(q)$ of the quantum Cartan matrix of $\mathfrak{g}$, using the Auslander--Reiten (AR) quiver of the bounded derived category $\D^b(\modcat KQ)$ of $Q$ over a field $K$. Their formulas were generalized to arbitrary Dynkin types by Fujita and Oh in \cite{FujitaOh} through their concept of a \emph{Q-datum}, a generalization of a Dynkin quiver with a height function. For each Q-datum $\Q$, they use Weyl group combinatorics to define a \emph{twisted AR quiver} $\Gamma_{\Q}$ and a \emph{generalized twisted Coxeter element} $\tau_{\Q}$. In the simply laced case, these definitions specialize to the AR quiver of the path algebra $KQ$ and its AR translation. We refer the reader to Section \ref{subsection:Q-data and twisted AR quivers} for examples of twisted AR quivers. This framework not only describes $\widetilde{C}(q)$ for nonsimply laced types but also finds significant applications in the representation theory of $U_q(L\mathfrak{g})$ (see, e.g., \cite{FujitaOh}, \cite{FujitaHernandezOhOya22}, \cite{FujitaHernandezOhOya23}, \cite{KashiwaraKimOhPark}).

It is important to note that Fujita and Oh's combinatorics build upon a theory developed earlier by Oh and Suh in \cite{OhSuh19a} and \cite{OhSuh19b}. Let us briefly recall the context of their work. Given a Dynkin diagram $\Delta$ of type $\mathsf{ADE}$, let $\mathsf{W}$ denote its corresponding Weyl group. The longest element of $\mathsf{W}$ is denoted by $w_0$, and one can consider reduced words $\bm{i} = (i_1,\dots,i_N) \in \Delta_0^N$ for $w_0$. We say that two reduced words $\bm{i}$ and $\bm{j}$ are \emph{commutation-equivalent} if one can be transformed into the other via a sequence of short braid relations involving commuting simple reflections of $\mathsf{W}$. This gives rise to a commutation relation whose equivalence classes are called \emph{commutation classes}. If $Q$ is an orientation of $\Delta$, then the set of all reduced words for $w_0$ that are source sequences for $Q$ forms a commutation class, denoted by $[Q]$. It induces a partial order $\preceq_{[Q]}$ on the set of positive roots $\mathsf{R}^+$ of the root system associated with $\Delta$ (see Section \ref{section:commutation classes and combinatorial AR quivers}). Bédard showed in \cite{Bedard} that there is an isomorphism between the AR quiver of the path algebra $KQ$ and the Hasse quiver of the partial order $\preceq_{[Q]}$, sending an indecomposable representation to its dimension vector. On the other hand, one can similarly define a partial order $\preceq_{[\bm{i}]}$ on $\mathsf{R}^+$ for any commutation class $[\bm{i}]$. This led Oh and Suh in \cite{OhSuh19a} to study the Hasse quiver of this partial order and to explore its applications to representation theory. They called it the \emph{combinatorial AR quiver} $\Upsilon_{[\bm{i}]}$ associated with $[\bm{i}]$. We highlight that the twisted AR quivers defined by Fujita and Oh are specific examples of combinatorial AR quivers associated with certain commutation classes introduced in \cite{OhSuh19b}.

If $Q$ is a Dynkin quiver, then $\modcat KQ$ can be studied through the combinatorics of $\Upsilon_{[Q]}$. This raises a natural question: for any commutation class $[\bm{i}]$, is there a ``category of representations''  that can be described by $\Upsilon_{[\bm{i}]}$? Additionally, one might also ask for an analog of the derived category of $Q$. In this paper, we provide candidates for such categories and investigate their properties. In the case of Q-data, we hope that our constructions could shed new light on the connections with the representation theory of quantum loop algebras mentioned earlier.

To explain our constructions in more detail, let $\Pi_Q$ be the (complete) 2-dimensional Ginzburg dg algebra associated with an orientation $Q$ of $\Delta$ (see Section \ref{subsection:Calabi--Yau completion}). We work within its perfectly valued derived category $\pvd(\Pi_Q)$, also known as the finite-dimensional derived category. For each vertex $i \in \Delta_0$, the corresponding simple $\Pi_Q$-module $S_i$ is $2$-spherical as an object in $\pvd(\Pi_Q)$; hence, it yields the spherical twist functor $T_i: \pvd(\Pi_Q) \longrightarrow \pvd(\Pi_Q)$ (see \cite{SeidelThomas}). It is well known that $\pvd(\Pi_Q)$ and its spherical twist functors categorify the root system of $\Delta$ and the action of its Weyl group (see Section \ref{subsection:categorification of the root system}).

Given a reduced word $\bm{i} = (i_1,\dots,i_N) \in \Delta_0^N$ for $w_0$, we define the \emph{category of representations} $\C([\bm{i}])$ associated with the commutation class $[\bm{i}]$ to be the additive subcategory of $\pvd(\Pi_Q)$ generated by the objects
\[
T_{i_1}\dotsb T_{i_{k-1}}(S_{i_k})
\]
for $1 \leq k \leq N$. When $\bm{i}$ is a source sequence for an orientation $Q'$ of $\Delta$, this construction recovers the category $\modcat KQ'$. More generally, we remark that the indecomposable objects above can be seen as the consecutive quotients of a particular filtration of the preprojective algebra of type $\Delta$, introduced by Amiot--Iyama--Reiten--Todorov \cite{AmiotIyamaReitenTodorov}.

Our first main result demonstrates how to recover the combinatorial AR quiver from the category of representations $\C([\bm{i}])$. It can be seen as a generalization of Bédard's result.

\begin{thm}[Theorem \ref{thm:combinatorial AR quiver is obtained from Gabriel quiver}]
Let $[\bm{i}]$ be a commutation class of reduced words for $w_0$. The combinatorial Auslander--Reiten quiver $\Upsilon_{[\bm{i}]}$ is isomorphic to the quiver obtained from the Gabriel quiver of $\C([\bm{i}])$ by removing all arrows parallel to paths of length at least two.
\end{thm}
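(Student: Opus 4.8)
The plan is to set up the quiver isomorphism on vertices first and then match the arrows by an inductive computation of morphism spaces. On vertices, the categorification of the root system (recalled in Section~\ref{subsection:categorification of the root system}) identifies the indecomposable object $M_k := T_{i_1}\dotsb T_{i_{k-1}}(S_{i_k})$ with the positive root $\beta_k := s_{i_1}\dotsb s_{i_{k-1}}(\alpha_{i_k})$ through its class $[M_k]\in K_0(\pvd(\Pi_Q))$; as $k$ ranges over $1,\dots,N$ these give each element of $\mathsf{R}^+$ exactly once. Thus both $\Upsilon_{[\bm{i}]}$ and the Gabriel quiver of $\C([\bm{i}])$ have vertex set $\mathsf{R}^+$, and the content of the theorem is the comparison of arrows. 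I would first record that $\C([\bm{i}])$ depends only on the class $[\bm{i}]$ (the commutation moves $s_as_b=s_bs_a$ for non-adjacent $a,b$ being categorified by $T_aT_b=T_bT_a$), so that its Gabriel quiver is a genuine invariant of $[\bm{i}]$.

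The core of the argument is an induction on the length $N$, carried out for reduced words of an arbitrary Weyl group element (the statement for $w_0$ being the case of interest). The autoequivalence $T_{i_1}^{-1}$ sends $M_k$ to $T_{i_2}\dotsb T_{i_{k-1}}(S_{i_k})$ for $k\ge 2$, which are exactly the indecomposable generators of $\C([\bm{i}'])$ for the truncated word $\bm{i}'=(i_2,\dots,i_N)$, while $M_1=S_{i_1}$ is sent to a shift of $S_{i_1}$. Since an autoequivalence preserves radicals and hence Gabriel quivers, the full subquiver on $\{M_2,\dots,M_N\}$ is identified with the Gabriel quiver of $\C([\bm{i}'])$, for which the result holds by induction. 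It therefore remains to compute the arrows incident to the adjoined vertex $M_1$ and to match them with the way $\Upsilon_{[\bm{i}]}$ is built from $\Upsilon_{[\bm{i}']}$ by adjoining the source $\beta_1=\alpha_{i_1}$.

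To compute these arrows I would reduce all Hom-spaces to the simple objects: for $k\le l$,
\[
\RHom\bigl(M_k,M_l\bigr)\;\cong\;\RHom\bigl(S_{i_k},\,T_{i_k}\dotsb T_{i_{l-1}}(S_{i_l})\bigr),
\]
and then unwind the right-hand side using the defining triangles $\RHom(S_j,X)\otimes S_j\to X\to T_j(X)$ of the spherical twists together with the $2$-spherical relations $\RHom(S_j,S_j)=K\oplus K[-2]$, $\RHom(S_j,S_{j'})=K[-1]$ for $j,j'$ adjacent, and $0$ otherwise. This computation should show that degree-zero morphisms propagate only \emph{forward} along the word, so that the Gabriel quiver is acyclic, and that a nonzero $\Hom(M_1,M_l)$ forces $\beta_1\preceq_{[\bm{i}]}\beta_l$; the irreducible such morphisms (a basis of $\rad/\rad^2$) should be precisely those reaching the first occurrences of the Dynkin-neighbours of $i_1$, which is exactly Oh and Suh's rule for the arrows out of the new source of $\Upsilon_{[\bm{i}]}$. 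Finally, since deleting every arrow parallel to a path of length $\ge 2$ turns an acyclic quiver into the Hasse quiver of the partial order generated by its paths, it suffices to know that this generated order is $\preceq_{[\bm{i}]}$; this follows once the forward implication above is complemented by showing that each covering relation of $\preceq_{[\bm{i}]}$ is realized by an irreducible morphism.

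I expect the main obstacle to be the precise bookkeeping of the radical at the adjoined vertex: showing that the irreducible morphisms out of $M_1$ correspond bijectively, and with the correct multiplicities, to the covering relations $\beta_1\lessdot_{[\bm{i}]}\beta_l$, and that no spurious arrow survives the deletion of arrows parallel to longer paths. This is delicate because $\Hom(M_1,M_l)$ can be nonzero for many $l$ while only the minimal ones are irreducible; disentangling which nonzero morphisms factor through intermediate $M_m$ requires a careful analysis of the twist triangles along the word, and it must be organized in a way that is manifestly invariant under the commutation moves, so as to descend to the class $[\bm{i}]$.
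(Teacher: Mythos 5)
Your strategy---induct on the length, peel off the first letter via $T_{i_1}^{-1}$, and analyse the arrows at the adjoined vertex---is genuinely different from the paper's, but as written it has two real problems. First, a directional error runs through the second half: since $i_1$ is a source of the commutation class, $\beta_1=\alpha_{i_1}$ is a \emph{sink} of $\Upsilon_{[\bm{i}]}$, not a source, and correspondingly $\Hom(M_1,M_l)=0$ for every $l\neq 1$. Indeed, applying $T_{i_1}^{-1}$ identifies this space with $\Hom(\Sigma S_{i_1},\,T_{i_2}\dotsb T_{i_{l-1}}(S_{i_l}))$, a morphism from an object concentrated in degree $-1$ to one concentrated in degree $0$. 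So the adjoined vertex only \emph{receives} arrows; your claims that ``$\Hom(M_1,M_l)$ can be nonzero for many $l$'' and that one must classify the irreducible morphisms \emph{out of} $M_1$ are false as stated (the relevant spaces are the $\Hom(M_l,M_1)$). Second, and more seriously, the step you yourself flag as the main obstacle---determining $\rad/\rad^2$ at the adjoined vertex by unwinding the twist triangles along the word, in a way invariant under commutation moves---is precisely the content of the theorem and is not carried out. There is also a smaller unaddressed point: identifying the full subquiver of the Gabriel quiver of $\C([\bm{i}])$ on $M_2,\dots,M_N$ with the Gabriel quiver of $\C([\bm{i}'])$ requires knowing that no composite $M_k\to M_1\to M_l$ with $k,l\ge 2$ contributes to $\rad^2$; this holds exactly because $\Hom(M_1,M_l)=0$, but it needs saying.

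The paper replaces all of this bookkeeping by one vanishing lemma and an order-theoretic argument, and you would do well to import it even if you keep your inductive architecture. For any representative $\bm{j}\in[\bm{i}]$ and $k<l$, applying $T_{j_k}^{-1}\dotsb T_{j_1}^{-1}$ shows $\Hom(M^{\bm{j}}_k,M^{\bm{j}}_l)=0$ by the same degree argument as above; hence $\Hom(M_\beta,M_\alpha)\neq 0$ forces $\alpha\preceq_{[\bm{i}]}\beta$, so the $\Hom$-order on the indecomposables coincides with $\preceq_{[\bm{i}]}$, whose Hasse quiver is $\Upsilon_{[\bm{i}]}$ by Oh--Suh. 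Irreducibility then comes for free: if $\beta\to\alpha$ is an arrow of $\Upsilon_{[\bm{i}]}$, any factorization of a nonzero map $M_\beta\to M_\alpha$ through an indecomposable $M_\gamma$ yields $\alpha\preceq_{[\bm{i}]}\gamma\preceq_{[\bm{i}]}\beta$, whence $\gamma\in\{\alpha,\beta\}$; combined with the one-dimensionality of $\Hom$ along arrows (a single twist triangle for adjacent simples), this settles the comparison of arrows with no induction and no explicit analysis of which morphisms factor through which.
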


For general commutation classes, $\C([\bm{i}])$ is no longer an abelian category. Nevertheless, we construct a category $\D([\bm{i}])$ equipped with specific equivalences that resemble $\D^b(\modcat KQ)$ and the classical reflection functors of Bernstein--Gelfand--Ponomarev \cite{BernsteinGelfandPonomarev}. This category is obtained as a quotient of the full subcategory of $\pvd(\Pi_Q)$ generated by the objects of $\C([\bm{i}])$ and their shifts. See Section \ref{section:objects} for a precise definition. We call it the \emph{c-derived category of} $[\bm{i}]$.

Although $\D([\bm{i}])$ has no natural structure of triangulated category, we show the existence of some particular distinguished triangles in $\pvd(\Pi_Q)$ with corners in $\D([\bm{i}])$ (Theorem \ref{thm:mesh gives rise to triangles}). They should be thought of as analogs to the AR triangles in $\D^b(\modcat KQ)$ \cite{Happel}. As a result (Corollary \ref{cor:generalized g-additive property}), we provide an alternative proof of the $\mathfrak{g}$-additive property from \cite[Theorem 3.41]{FujitaOh}, extending it to arbitrary commutation classes.

The c-derived category is endowed with a suspension functor $\Sigma$, allowing us to define extension groups between objects $M,N \in \D([\bm{i}])$ as
\[
\Ext^n_{[\bm{i}]}(M,N) = \Hom_{\D([\bm{i}])}(M,\Sigma^nN).
\]
If $M,N \in \C([\bm{i}])$, these functors vanish unless $n=0$ or $n=1$, implying that $\C([\bm{i}])$ is ``hereditary'' in a similar manner to the fact that $KQ$ is a hereditary algebra. Using these groups, we construct an Euler form on $\D([\bm{i}])$, and we prove that its symmetrization induces the symmetric bilinear form on the root lattice of $\Delta$ (Theorem \ref{thm:symmetrization of Euler form is Cartan form}). 

Let us now consider the case of a commutation class $[\Q]$ given by a Q-datum $\Q = (\Delta,\sigma,\xi)$, in the sense of \cite{FujitaOh}. We categorify the generalized twisted Coxeter element $\tau_{\Q}$ of \cite[Section 3.6]{FujitaOh} and view it as an autoequivalence of $\D([\Q])$. If $r$ is the order of the automorphism $\sigma$ of $\Delta$, let $S_{\Q}: \D([\Q]) \longrightarrow \D([\Q])$ be the composition of $\tau_{\Q}^r$ with the suspension functor $\Sigma$. It behaves almost like a Serre functor \cite{BondalKapranov} and reveals a remarkable symmetry of $\D([\Q])$.
\begin{thm}[Theorem \ref{thm:Serre duality}]
Let $\Q = (\Delta, \sigma, \xi)$ be a Q-datum and take an indecomposable object $M \in \D([\Q])$ of residue $\im \in \Delta_0$. If the orbit of $\im$ under the action of $\sigma$ has $r$ elements, then there is an isomorphism
\[\begin{tikzcd}
	{\Hom_{\D([\Q])}(M, N)} & {D\Hom_{\D([\Q])}(N,S_{\Q}M)}
	\arrow["\sim", from=1-1, to=1-2]
\end{tikzcd}\]
natural in the variable $N \in \D([\Q])$, where $D$ denotes the duality functor for $K$-vector spaces.
\end{thm}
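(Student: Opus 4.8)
The plan is to establish this Serre-type duality by transporting the question from the somewhat artificial category $\D([\Q])$ back into the ambient $2$-Calabi--Yau category $\pvd(\Pi_Q)$, where a genuine Serre functor is available. Recall that $\pvd(\Pi_Q)$, being the perfectly valued derived category of a complete $2$-dimensional Ginzburg dg algebra, is Hom-finite and $2$-Calabi--Yau, so it carries a canonical bifunctorial isomorphism $\Hom_{\pvd(\Pi_Q)}(X,Y) \cong D\Hom_{\pvd(\Pi_Q)}(Y,\Sigma^2 X)$. The first step is therefore to record precisely how the morphism spaces in $\D([\Q])$ relate to those in $\pvd(\Pi_Q)$: since $\D([\Q])$ is built as a subquotient, the Hom-spaces $\Hom_{\D([\Q])}(M,N)$ should be identified with an appropriate subquotient of $\Hom_{\pvd(\Pi_Q)}(\widetilde M, \widetilde N)$ for lifts $\widetilde M, \widetilde N$ of the objects. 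I expect that for objects of $\D([\Q])$ this subquotient is in fact the full Hom-space up to shifts that remain inside the subcategory, which is what makes a clean duality statement possible.

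Next I would exploit the categorification of $\tau_{\Q}$. By the construction recalled in the excerpt, $S_{\Q} = \Sigma \circ \tau_{\Q}^r$, and $\tau_{\Q}$ is realized through a composition of spherical twist functors $T_i$. The essential point is to compute the effect of $\tau_{\Q}^r$ on an indecomposable $M$ of residue $\im$ whose $\sigma$-orbit has size $r$, and to compare $S_{\Q}M = \Sigma\tau_{\Q}^r M$ with the Calabi--Yau shift $\Sigma^2 M$. Concretely, I would show that on such an object $\tau_{\Q}^r$ agrees, up to the relevant identifications inside $\D([\Q])$, with a single suspension $\Sigma$; then $S_{\Q}M = \Sigma\tau_{\Q}^r M$ plays the role of $\Sigma^2 M$ from the ambient $2$-Calabi--Yau structure. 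This is where the hypothesis on the orbit size $r$ is indispensable: the power $\tau_{\Q}^r$ (rather than a single $\tau_{\Q}$) is exactly what is needed for the spherical twists to accumulate into a clean shift, reflecting that $\sigma$ has order $r$ on the vertex $\im$. I would verify this on the generating indecomposables using the explicit description of how the $T_i$ act on the objects $T_{i_1}\dotsb T_{i_{k-1}}(S_{i_k})$ generating $\C([\Q])$, together with the structure of the distinguished triangles from Theorem \ref{thm:mesh gives rise to triangles}.

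With these two ingredients in place, the duality for $\D([\Q])$ follows by restricting the $2$-Calabi--Yau pairing of $\pvd(\Pi_Q)$. For a fixed indecomposable $M$ and a variable $N \in \D([\Q])$, I would write down the chain
\[
\Hom_{\D([\Q])}(M,N) \;\cong\; \Hom_{\pvd(\Pi_Q)}(\widetilde M, \widetilde N) \;\cong\; D\Hom_{\pvd(\Pi_Q)}(\widetilde N, \Sigma^2\widetilde M) \;\cong\; D\Hom_{\D([\Q])}(N, S_{\Q}M),
\]
where the outer isomorphisms come from the subquotient identification of the first step and the inner one is the ambient Serre duality, and the final identification uses $S_{\Q}M \cong \Sigma^2 M$ established in the second step. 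Naturality in $N$ is inherited from the bifunctoriality of the Calabi--Yau pairing, provided the subquotient identifications are themselves natural. The main obstacle I anticipate is precisely the first step: controlling the passage between $\pvd(\Pi_Q)$ and the subquotient $\D([\Q])$, since a priori morphisms in a quotient category may fail to lift, and one must check that no Hom-spaces are lost or spuriously created when restricting to the subcategory and passing to the quotient. Establishing that the relevant morphism spaces are preserved --- equivalently, that the lifted objects $\widetilde M, \widetilde N$ see all of the ambient morphisms that survive to $\D([\Q])$ and no more --- is the technical heart on which the clean duality isomorphism rests.
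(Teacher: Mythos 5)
Your proposal rests on the identification $S_{\Q}M \cong \Sigma^2 M$ (``$\tau_{\Q}^r$ agrees with a single suspension $\Sigma$''), and this is false. On coordinates, $\tau_{\Q}^r$ sends $(\im,p)$ to $(\im,p-2r)$ while $\Sigma$ sends $(\im,p)$ to $(\im^*,p+rh^{\vee})$, so $S_{\Q}$ sends $(\im,p)$ to $(\im^*,p-2r+rh^{\vee})$, whereas $\Sigma^2$ sends it to $(\im,p+2rh^{\vee})$; these never coincide. Already in the simply laced case ($\sigma=\id$, $r=1$) this is the familiar fact that the Serre functor $\tau\Sigma$ of $\D^b(\modcat KQ)$ is not $\Sigma^2$. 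Consequently the ambient $2$-Calabi--Yau pairing, which identifies $\Hom_{\pvd(\Pi_Q)}(M,N)$ with $D\Ext^2_{\Pi_Q}(N,M)$, is a genuinely different pairing from the one you want: for $M,N\in\C([\Q])$ the object $\Sigma^2M$ sits $2N$ positions away in $\widehat{\Upsilon}_{[\Q]}$, so no tilting functor places $N$ and $\Sigma^2M$ in a common $\C([\bm{j}])$ and $\Hom_{\D([\Q])}(N,\Sigma^2M)=0$ identically. The nonzero classes in $\Ext^2_{\Pi_Q}(N,M)$ are exactly the morphisms killed by the ideal $\I_{[\Q]}$, so restricting the Calabi--Yau duality to $\D([\Q])$ cannot produce the statement. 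Your diagnosis of where the orbit-size hypothesis enters is also off: $S_{\Q}=\Sigma\tau_{\Q}^r$ is used for all objects, and the condition $d_{\overline{\im}}=r$ is not about twists ``accumulating into a shift.''

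The paper's route is quite different and more concrete: by a tilting functor one reduces to $M$ projective, and then $S_{\Q}M=\Sigma\tau_{\Q}^{r}M$ is precisely the injective indecomposable at the same vertex (Corollary \ref{cor:power of tau sends projective to shift of injective}). Both Hom-spaces vanish when $N$ is indecomposable outside $\C(\Q)$ (Proposition \ref{prop:projective/injective if and only if Ext1 = 0}), and for $N\in\C(\Q)$ the duality is the explicit natural isomorphism $\Hom(P^{\Q}_{\im},N)\cong N(\im)\cong D\Hom(N,I^{\Q}_{\im})$ of Proposition \ref{prop:projectives and injectives gives M(i)}. That proposition is the technical heart, and it is exactly where the hypothesis $d_{\overline{\im}}=r$ is consumed, via Lemma \ref{lemma:source of Q gives source of the representation} (a source of $\Q$ at such a vertex is a source in every representation in $\C(\Q)$), proved by a type-by-type analysis. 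Any correct proof will need an input of this kind; the $2$-Calabi--Yau structure of $\pvd(\Pi_Q)$ alone cannot supply it.
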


As an application, we reinterpret a formula from \cite{FujitaOh} for the inverse $\widetilde{C}(q)$ of the quantum Cartan matrix of $\mathfrak{g}$ in terms of the Euler form $\langle-,-\rangle_{\Q}$ on the c-derived category associated with a Q-datum $\Q$ of the same type.

To state this result, let us introduce some notation. For $u \in \Z$, denote by $\widetilde{c}_{ij}(u)$ the coefficient of $q^u$ in the expansion of the $(i,j)$-entry of $\widetilde{C}(q)$ as a formal Laurent series in the indeterminate $q$. Let $I$ be the Dynkin diagram of $\mathfrak{g}$, and $D = \mathrm{diag}(d_i \mid i \in I)$ the minimal left symmetrizer of its Cartan matrix. Let $\widehat{I} \subset I \times \Z$ be the associated \emph{folded repetition quiver} (\cite[Section 3.8]{FujitaOh}). Given a Q-datum $\Q$ for $\mathfrak{g}$, there is a bijection $H_{\Q}$ from $\widehat{I}$ to the set of isomorphism classes of indecomposable objects of the c-derived category $\D([\Q])$. See Section \ref{subsection:applications} for more details.

\begin{thm}[Proposition \ref{prop:computing the inverse of the quantum cartan matrix}]
Let $(i,p), (j,s) \in \widehat{I}$ be such that $p - s + d_i \geq 0$. If we have $\max\{d_i,d_j\} = r$, then
\[
\widetilde{c}_{ij}(p-s+d_i) = \left\langle H_{\Q}(j,s), \bigoplus_{k=0}^{\lceil d_j/d_i\rceil-1}\tau_{\Q}^k(H_{\Q}(i,p))\right\rangle_{\!\!\Q}
\]
for any Q-datum $\Q = (\Delta, \sigma, \xi)$ for $\mathfrak{g}$.
\end{thm}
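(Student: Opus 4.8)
The plan is to reduce the formula for $\widetilde{c}_{ij}(p - s + d_i)$ to a computation in the c-derived category using the Serre duality of Theorem \ref{thm:Serre duality} together with an explicit combinatorial formula for $\widetilde{C}(q)$ already established by Fujita and Oh. The strategy rests on the dictionary provided by the bijection $H_{\Q} \colon \widehat{I} \to \operatorname{ind}\D([\Q])$, under which the vertical translation $(i,p) \mapsto (i, p - 2d_i)$ of the folded repetition quiver should correspond to the Serre functor $S_{\Q}$, while the generalized twisted Coxeter element $\tau_{\Q}$ acts as a horizontal shift realizing the coordinate bookkeeping.

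First I would recall the formula from \cite{FujitaOh} expressing $\widetilde{c}_{ij}(u)$ as a count (with signs) of paths or of lattice points in the folded repetition quiver $\widehat{I}$ connecting $(i,p)$ and $(j,s)$, organized so that the relevant contribution sits in the single degree $u = p - s + d_i$. The goal is to match this combinatorial count with the right-hand side, namely the Euler form $\langle H_{\Q}(j,s), \bigoplus_{k} \tau_{\Q}^k H_{\Q}(i,p)\rangle_{\Q}$. I would unwind the Euler form $\langle M, N\rangle_{\Q} = \sum_n (-1)^n \dim \Ext^n_{[\Q]}(M,N)$ into its $\Hom$ and $\Ext^1$ contributions, using the ``hereditary'' vanishing of $\Ext^n_{[\Q]}$ for $n \neq 0,1$ on objects of $\C([\Q])$; here Serre duality lets me replace the $\Ext^1$-term $\Hom_{\D([\Q])}(M, \Sigma N)$ by $D\Hom_{\D([\Q])}(N, S_{\Q}\Sigma^{-1}M)$, converting an extension group into a $\Hom$-space involving the Serre twist.

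The heart of the argument is a precise translation of the indexing. The condition $\max\{d_i,d_j\} = r$ and the truncation of the sum at $\lceil d_j/d_i\rceil - 1$ are exactly the bookkeeping needed so that, under $H_{\Q}$, the direct sum $\bigoplus_{k=0}^{\lceil d_j/d_i\rceil - 1}\tau_{\Q}^k(H_{\Q}(i,p))$ collects precisely those indecomposables whose coordinates in $\widehat{I}$ lie in the $\sigma$-orbit structure accounting for the folding; the hypothesis $p - s + d_i \ge 0$ pins down which Serre-orbit the relevant $\Hom$-spaces live in, guaranteeing that only the degree-$0$ term survives after applying Theorem \ref{thm:Serre duality}. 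I would verify that each nonzero $\Hom_{\D([\Q])}$ between the indecomposables indexed by $\widehat{I}$ is one-dimensional over $K$ and that the sign $(-1)^n$ in the Euler form matches the parity dictated by the folded repetition quiver, so that the alternating sum reproduces the single coefficient $\widetilde{c}_{ij}(p - s + d_i)$.

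The main obstacle I anticipate is precisely this matching of the folding combinatorics with the functorial formalism: the generalized twisted Coxeter element $\tau_{\Q}$ has order governed by $r$ rather than by a naive Coxeter number, so tracking how $\tau_{\Q}^k$ shifts the $(i,p)$-coordinate through the $\sigma$-twisted identifications—and confirming that the summation range $0 \le k \le \lceil d_j/d_i\rceil - 1$ captures exactly one representative from each relevant orbit without double-counting—requires careful case analysis depending on whether $d_i \le d_j$ or $d_i > d_j$. Once the coordinate dictionary is pinned down and the degrees are controlled by the hypothesis $p - s + d_i \ge 0$, the identity should follow by comparing both sides term by term against the Fujita--Oh formula, with Theorem \ref{thm:Serre duality} supplying the only nontrivial homological input.
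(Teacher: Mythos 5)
Your overall route is the same as the paper's: start from the Fujita--Oh formula and convert it into an Euler-form statement in $\D([\Q])$ using Serre duality. But there is a genuine gap at the heart of the argument. The paper's proof is a short reduction: Theorem \ref{thm:Fujita-Oh formula} gives $\widetilde{c}_{ij}(p-s+d_i) = (\varpi_{\im}, \tau_{\Q}^{(\xi_{\jm}-s)/2-(\xi_{\im}-p)/2}(\gamma^{\Q}_{\jm}))$ --- a pairing of a fundamental weight with a root, not a signed count of paths or lattice points as you describe --- and then Proposition \ref{prop:getting M(i) using Euler form} supplies the key identity
\[
([M]_{\pvd(\Pi_{Q})},\varpi_{\im}) \;=\; \Bigl\langle M,\bigoplus_{k=0}^{\lceil d_{\overline{\jm}}/d_{\overline{\im}}\rceil-1}\tau_{\Q}^k(I^{\Q}_{\im})\Bigr\rangle_{\!\Q},
\]
after which Corollary \ref{cor:using tau to find the repetition category} does the coordinate bookkeeping. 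Everything difficult lives in that identity, and your proposal only asserts that it ``should follow by comparing both sides term by term.'' It does not: when $d_{\overline{\im}} = 1$ and $d_{\overline{\jm}} = r > 1$ one must first prove $\Ext^1_{\Q}(M,\tau_{\Q}^k(I^{\Q}_{\im})) = 0$ for $0 \leq k < r$ (via Serre duality plus a height estimate in $\widehat{\Delta}^{\sigma}$), and then split into cases according to whether $M \preceq_{\C(\Q)} \tau_{\Q}^{r-1}(I^{\Q}_{\im})$ or not, with a separate hands-on verification left over in type $\mathsf{G}_2$; the case $d_{\overline{\im}} = r$ instead rests on the natural isomorphism of Proposition \ref{prop:projectives and injectives gives M(i)}. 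None of this case analysis is carried out in your sketch, and it is exactly where the hypotheses $\max\{d_i,d_j\}=r$ and the truncation at $\lceil d_j/d_i\rceil - 1$ earn their keep.

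Two smaller inaccuracies in your dictionary: the translation $(i,p)\mapsto(i,p-2d_i)$ is realized by $\tau_{\Q}^{d_i}$ (Proposition \ref{prop:tau preserve the indecomposables associated with Q}), not by the Serre functor $S_{\Q} = \Sigma\circ\tau_{\Q}^r$; and the hypothesis $p-s+d_i\geq 0$ serves only to place $u = p-s+d_i$ in the range where Theorem \ref{thm:Fujita-Oh formula} gives a nonzero formula --- it does not ``guarantee that only the degree-zero term survives.'' The vanishing of the $\Ext^1$-contribution is a property of the specific objects $\tau_{\Q}^k(I^{\Q}_{\im})$ and has to be proved separately.
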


When $\mathfrak{g}$ is simply laced, this result recovers \cite[Corollary 3.6]{Fujita}, which comes from a formula of Hernandez--Leclerc in \cite{HernandezLeclerc}.

\subsection*{Organization} This paper is organized as follows. In Section \ref{section:Q-data combinatorics}, we recall the main definitions and results of \cite{OhSuh19a}, \cite{OhSuh19b} and \cite{FujitaOh}. In Section \ref{section:the categories}, the 2-dimensional Ginzburg dg algebra $\Pi_Q$, the category $\pvd(\Pi_Q)$, and its spherical twists are introduced, and we explain how they categorify the root system of the same type. The category of representations $\C([\bm{i}])$ and the c-derived category $\D([\bm{i}])$, together with an auxiliary repetition category $\R([\bm{i}])$, are constructed in Section \ref{section:objects}. In Section \ref{section:morphisms}, we describe the Gabriel quiver of these categories in terms of combinatorial AR quivers and combinatorial repetition quivers. In Section \ref{section:meshes and distinguished triangles}, we prove the existence of certain distinguished triangles in $\pvd(\Pi_Q)$ with corners in $\R([\bm{i}])$ and deduce the generalized $\mathfrak{g}$-additive property. In Section \ref{section:extension groups and the Euler form}, we introduce projective and injective objects, extension groups, and the Euler form. The case of commutation classes coming from Q-data is treated in Section \ref{section:the case of Q-data}, where we categorify the generalized twisted Coxeter element $\tau_{\Q}$, prove the partial Serre duality property, and reinterpret some results of \cite{FujitaOh} on inverse quantum Cartan matrices. Finally, in Appendix \ref{appendix:examples}, we provide a description of the indecomposable objects of $\C([\Q])$ for a Q-datum $\Q$ in all nonsimply laced Dynkin types.

\subsection*{Acknowledgments} I thank my Ph.D. advisor, Bernhard Keller, for his invaluable guidance throughout this project. His ideas and expertise were essential to its development, and I am also grateful for his support during the writing of this article. I thank Geoffrey Janssens for interesting discussions on Q-data and quantum affine algebras, which led to the results concerning the Euler form and to the definition of the c-derived category. I am also grateful to Hugh Thomas for drawing my attention to the connection between this work and \cite{AmiotIyamaReitenTodorov}. I thank Se-jin Oh for pointing out that Corollary \ref{cor:generalized g-additive property} was already implicit in some of his works (see Remark \ref{rem:implicit in the literature}). Last but not least, I thank the anonymous referee for their valuable remarks during the review of this paper.

\section{Combinatorial AR quivers and Q-data combinatorics}\label{section:Q-data combinatorics}

The main goal of this section is to recall the definition of combinatorial Auslander--Reiten (AR) quivers, Q-data, and their combinatorics, following \cite{OhSuh19a}, \cite{OhSuh19b} and \cite{FujitaOh}. We also prove some lemmas needed for our main results. Along the way, we introduce most of the language and notation used in the subsequent sections.

\subsection{Simply laced root systems}\label{subsection:basic notation for root systems}
From now on, $\Delta$ denotes a simply laced Dynkin diagram
(i.e., of type $\mathsf{ADE}$) with vertex set $\Delta_0$.
For $i,j \in \Delta_0$, we write $i \sim j$ if the vertices $i$
and $j$ are adjacent in $\Delta$.

Let $\mathsf{g}$ be the complex finite-dimensional simple Lie
algebra associated with $\Delta$, and denote by $\mathsf{h}$ a
Cartan subalgebra. Denote by $\mathsf{R} \subset \mathsf{h}^*$ 
the associated root system, where $\mathsf{h}^*$ is the dual 
of $\mathsf{h}$. Let $\{\alpha_i\}_{i \in \Delta_0}$ and 
$\{\varpi_i\}_{i \in \Delta_0}$ denote a choice of simple roots and
the corresponding fundamental weights, respectively, which live 
inside $\mathsf{h}^*$. With this choice, denote by $\mathsf{R}^+$ 
the set of positive roots and define $N = |\mathsf{R}^+|$. 
The simple roots form a basis for the root lattice $\mathsf{Q}$, while the 
fundamental weights form a basis for the weight lattice $\mathsf{P}$, 
both contained in $\mathsf{h}^*$. Notice that $\mathsf{Q} \subset \mathsf{P}$. 
We have a symmetric bilinear form 
$(-,-): \mathsf{P} \times \mathsf{P} \longrightarrow \mathbb{Q}$ 
defined by $(\varpi_i,\alpha_j) = \delta_{ij}$.

Denote by $\mathsf{W} \subset \Aut(\mathsf{P})$ the Weyl group 
associated with the root system $\mathsf{R}$. It is generated by 
the simple reflections 
$s_i: \mathsf{P} \longrightarrow \mathsf{P}$ ($i \in \Delta_0$) 
defined by $s_i(\lambda) = \lambda - (\lambda,\alpha_i)\alpha_i$ 
for $\lambda \in \mathsf{P}$. Such reflections are involutions 
subject to the \emph{commutation relation} $s_is_j = s_js_i$, 
if $i \not\sim j$, and the \emph{braid relation} $s_is_js_i = s_js_is_j$, 
if $i \sim j$. In this way, the pair 
$(\mathsf{W}, \{s_i\}_{i \in \Delta_0})$ forms a finite Coxeter 
system. Let $w_0$ be the longest element of $\mathsf{W}$, which has 
length $N$. It induces an involution $i \mapsto i^*$ of $\Delta_0$ 
determined by $w_0(\alpha_i) = -\alpha_{i^*}$.

\subsection{Commutation classes and combinatorial AR 
quivers}\label{section:commutation classes and combinatorial AR quivers}

A sequence $\bm{i} = (i_1,\dots,i_t)$ of vertices in $\Delta$ 
is a \emph{reduced word for} 
$w \in \mathsf{W}$ if $w$ has length $t$ and we have $w = s_{i_1}\dotsb s_{i_t}$. If $\bm{i}$ can be obtained from another reduced word $\bm{i'}$ 
by the direct application of a commutation relation, we say that 
$\bm{i}$ is obtained from $\bm{i'}$ by a \emph{commutation move}. Two reduced words for $w \in \mathsf{W}$ are \emph{commutation equivalent} if one can be obtained from the other by a sequence of commutation moves. This defines an equivalence relation on the set of reduced words for $w$. The equivalence class containing a reduced word $\bm{i}$ is called a \emph{commutation class} and is denoted by $[\bm{i}]$.

Let $\bm{i} = (i_1,\dots,i_t)$ be a reduced word for $w \in \mathsf{W}$. 
Define $\beta^{\bm{i}}_k = s_{i_1}\dotsb s_{i_{k-1}}(\alpha_{i_k})$ 
for $1 \leq k \leq t$. They are distinct positive 
roots and form a subset $\mathsf{R}^+(w) \subseteq \mathsf{R}^+$ which depends only on $w$ (\cite[p. 158, Corollaire 2]{Bourbaki}). In particular, if $w = w_0$ is the longest element, we have $\mathsf{R}^+(w_0) = \mathsf{R}^+$. There is a total order $<_{\bm{i}}$ on  $\mathsf{R}^+(w)$ 
defined by $\beta^{\bm{i}}_k <_{\bm{i}} \beta^{\bm{i}}_l$ 
if $k < l$. For a commutation class $[\bm{i}]$, we define a 
partial order $\preceq_{[\bm{i}]}$ on $\mathsf{R}^+(w)$ given by 
$\alpha \preceq_{[\bm{i}]} \beta$ if $\alpha = \beta$ or $\alpha <_{\bm{j}} \beta$ 
for all $\bm{j} \in [\bm{i}]$.

For a commutation class $[\bm{i}]$ and a positive root 
$\alpha \in \mathsf{R}^+(w)$, we define the \emph{residue of $\alpha$ 
with respect to $[\bm{i}]$}, denoted by $\res^{[\bm{i}]}(\alpha)$, 
to be $i_k \in \Delta_0$ if we have $\bm{i} = (i_1,\dots,i_t)$ and $\alpha = \beta^{\bm{i}}_k$. As the notation suggests, this is 
well defined and independent of the choice of representative for the 
commutation class.

When $w = w_0$, we define the notion of injective and projective positive roots with respect to $[\bm{i}]$. Write $\bm{i} = (i_1,\dots,i_N)$. The \emph{injective positive root associated with} $i \in \Delta_0$ is $\beta^{\bm{i}}_k$ where $1 \leq k \leq N$ is the first index such that $i_k = i$. Dually, the \emph{projective positive root associated with} $i \in \Delta_0$ is $\beta^{\bm{i}}_l$ where $1 \leq l \leq N$ is the last index such that $i_l = i^*$. Both definitions do not depend on the choice of representative for $[\bm{i}]$.

Following \cite{OhSuh19a} (which builds on \cite{Bedard}), we define the 
\emph{combinatorial AR quiver} $\Upsilon_{[\bm{i}]}$ 
associated with a commutation class $[\bm{i}]$ of reduced words for 
$w \in \mathsf{W}$ as follows. Its vertex set $(\Upsilon_{[\bm{i}]})_0$ is the set $\mathsf{R}^+(w)$ of positive roots associated with $w$. After having chosen a representative 
$\bm{i} = (i_1,\dots,i_t)$ for $[\bm{i}]$, we define an arrow in 
$\Upsilon_{[\bm{i}]}$ from $\beta^{\bm{i}}_k$ to $\beta^{\bm{i}}_l$ 
if $1 \leq l < k \leq t$, $i_k \sim i_l$ and there is 
no index $l < j < k$ such that $i_j = i_k$ or $i_j = i_l$. This 
definition is also independent of the choice of the representative 
for the commutation class.

\begin{thm}[{\cite[Theorem 2.22]{OhSuh19a}}]\label{thm:combinatorial AR quiver realizes partial order}
Let $[\bm{i}]$ be a commutation class of reduced words for $w \in \mathsf{W}$. 
The combinatorial AR quiver $\Upsilon_{[\bm{i}]}$ is 
isomorphic to the Hasse quiver of the partial order 
$\preceq_{[\bm{i}]}$. More precisely, for 
$\alpha, \beta \in \mathsf{R}^+(w)$, we have 
$\alpha \preceq_{[\bm{i}]} \beta$ if and only if there is a path 
from $\beta$ to $\alpha$ in $\Upsilon_{[\bm{i}]}$.
\end{thm}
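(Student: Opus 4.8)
The plan is to prove that the combinatorial AR quiver $\Upsilon_{[\bm{i}]}$ is isomorphic, as a quiver on vertex set $\mathsf{R}^+(w)$, to the Hasse quiver of the partial order $\preceq_{[\bm{i}]}$. Since both quivers share the same vertices, the content is the equivalence: there is a path from $\beta$ to $\alpha$ in $\Upsilon_{[\bm{i}]}$ if and only if $\alpha \preceq_{[\bm{i}]} \beta$. First I would fix a representative $\bm{i} = (i_1,\dots,i_t)$ and observe that the arrows of $\Upsilon_{[\bm{i}]}$ point ``backwards'' in the index order (from $\beta^{\bm{i}}_k$ to $\beta^{\bm{i}}_l$ with $l < k$), so any path from $\beta$ to $\alpha$ forces a decrease in index, giving $\alpha <_{\bm{j}} \beta$ for this particular $\bm{j}$. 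The real work is to upgrade this to \emph{all} $\bm{j} \in [\bm{i}]$ and to prove the converse.

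\textbf{Easy direction (paths give comparability).} Suppose there is a path from $\beta$ to $\alpha$ in $\Upsilon_{[\bm{i}]}$. Because the arrow condition is manifestly independent of the representative (as already asserted in the excerpt), I would argue that each individual arrow $\beta^{\bm{i}}_k \to \beta^{\bm{i}}_l$ records a relation that persists across the whole commutation class: the side condition (no intermediate $j$ with $i_j \in \{i_k,i_l\}$) says the two letters $i_k \sim i_l$ are adjacent and cannot be separated by a commutation move into the opposite order, so $\beta^{\bm{i}}_l <_{\bm{j}} \beta^{\bm{i}}_k$ holds for every $\bm{j} \in [\bm{i}]$. Chaining the arrows along the path then yields $\alpha <_{\bm{j}} \beta$ for all $\bm{j}$, i.e. $\alpha \preceq_{[\bm{i}]} \beta$. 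The key lemma to isolate here is that an \emph{arrow} of $\Upsilon_{[\bm{i}]}$ forces the corresponding order relation in every representative; this is the elementary ``inversion cannot be removed by commutations'' fact for adjacent non-commuting letters.

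\textbf{Hard direction (comparability gives a path).} Conversely, assume $\alpha \preceq_{[\bm{i}]} \beta$ with $\alpha \neq \beta$; I must produce a path $\beta \rightsquigarrow \alpha$ in $\Upsilon_{[\bm{i}]}$. Writing $\alpha = \beta^{\bm{i}}_l$ and $\beta = \beta^{\bm{i}}_k$ with $l < k$, I would induct on the index gap $k - l$. If $i_k \sim i_l$ and no intermediate letter equals $i_k$ or $i_l$, there is a direct arrow and we are done. Otherwise I would find an intermediate index $l < m < k$ that is genuinely ``between'' $\alpha$ and $\beta$ in the order for \emph{every} representative, splitting the relation as $\alpha \preceq_{[\bm{i}]} \beta^{\bm{i}}_m \preceq_{[\bm{i}]} \beta$, and recurse. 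Locating such an intermediate root is the main obstacle: if $i_k \not\sim i_l$ or the letters are separated, I expect to use Theorem \ref{thm:combinatorial AR quiver realizes partial order}'s covering structure together with the combinatorics of reduced words — specifically, that a covering relation in $\preceq_{[\bm{i}]}$ must come from a pair of adjacent, commutation-rigid letters, which is exactly an arrow. A clean way to organize this is to show directly that the covering relations of $\preceq_{[\bm{i}]}$ coincide with the arrows of $\Upsilon_{[\bm{i}]}$, and then invoke the general fact that the Hasse quiver of a finite poset has, as paths, precisely the strict comparabilities.

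I expect the subtle point to be proving that \emph{every} covering relation of $\preceq_{[\bm{i}]}$ is realized by an arrow, rather than merely that arrows give covers. This requires ruling out the possibility that $\alpha \prec_{[\bm{i}]} \beta$ is a cover yet the witnessing letters $i_l, i_k$ are non-adjacent or separable — in which case one would manufacture an intermediate root, contradicting minimality. Handling this cleanly will likely rest on the characterization of $\preceq_{[\bm{i}]}$ as the intersection $\bigcap_{\bm{j}\in[\bm{i}]} <_{\bm{j}}$ of total orders, combined with Matsumoto-type reasoning about how commutation moves permute the sequence $(\beta^{\bm{j}}_1,\dots,\beta^{\bm{j}}_t)$. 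Since this theorem is quoted directly from \cite{OhSuh19a}, in the write-up I would either reproduce the poset-covering argument above or simply cite the original, noting that the arrow definition was designed to match the covering relations of $\preceq_{[\bm{i}]}$.
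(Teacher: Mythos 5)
You should first note that the paper does not prove this statement at all: it is imported verbatim from Oh--Suh (\cite[Theorem 2.22]{OhSuh19a}), so there is no in-paper argument to match. Judged on its own terms, your sketch has the right architecture, and the easy direction is essentially complete: an arrow $\beta^{\bm{i}}_k \to \beta^{\bm{i}}_l$ has $i_k \sim i_l$, a commutation move can never interchange two positions carrying non-commuting (or equal) letters, so $\beta^{\bm{i}}_l <_{\bm{j}} \beta^{\bm{i}}_k$ holds for every $\bm{j} \in [\bm{i}]$ and chaining along the path gives $\alpha \preceq_{[\bm{i}]} \beta$. (The ``no intermediate $j$ with $i_j \in \{i_k,i_l\}$'' clause plays no role in this direction; it is only there to make arrows into covers.)

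The genuine gap is in the hard direction, at exactly the point you flag and then defer. To run your induction you need the following: if $l < k$ and $\beta^{\bm{i}}_l <_{\bm{j}} \beta^{\bm{i}}_k$ for \emph{every} representative $\bm{j}$, then there is a chain $l = m_0 < m_1 < \dots < m_r = k$ of positions with $i_{m_s} \sim i_{m_{s+1}}$ or $i_{m_s} = i_{m_{s+1}}$ for each $s$. Your proposal asserts that a cover ``must come from a pair of adjacent, commutation-rigid letters'' but gives no argument ruling out a rigid order relation between positions whose letters are distinct and non-adjacent and which admit no such chain; ``Matsumoto-type reasoning'' is named but not performed. The standard way to close this is the heap/linear-extension lemma: let $\leq_H$ be the transitive closure on positions of ``$a < b$ and $i_a = i_b$ or $i_a \sim i_b$''; commutation moves preserve $\leq_H$-order, and conversely every linear extension of $\leq_H$ is realized by a representative of $[\bm{i}]$ (a bubble-sort argument: the next element of the extension can always be commuted to the front past the $\leq_H$-incomparable, hence commuting, letters before it). Hence $\preceq_{[\bm{i}]}$ \emph{is} $\leq_H$, and in particular any incomparability in $\leq_H$ is witnessed by some representative reversing the two positions, which is what your argument silently uses. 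One then checks that the covers of $\leq_H$ are precisely the arrows: a pair $a<b$ with $i_a = i_b$ is never a cover, because reducedness forces an intermediate position with letter adjacent to $i_a$ (otherwise the two occurrences of $s_{i_a}$ would cancel), and that intermediate position sits strictly $\leq_H$-between them. Without this lemma the induction has no way to manufacture the intermediate root in the case $i_k \not\sim i_l$, so as written the proof is incomplete; with it, your plan goes through, and is in effect a reconstruction of the Oh--Suh proof that the paper chose simply to cite.
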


Let $X \subseteq (\Upsilon_{[\bm{i}]})_0$. A total ordering $(x_1,x_2,\dots,x_l)$ of $X$ is a \emph{compatible reading of} $X$ if we have $k \leq k'$ whenever there is an oriented path from $x_{k'}$ to $x_k$ in 
$\Upsilon_{[\bm{i}]}$. Given such a compatible reading, we 
define the element $w[X] \in \mathsf{W}$ to be
\[
w[X] = s_{i_1}s_{i_2}\dotsb s_{i_l},
\]
where $i_k = \res^{[\bm{i}]}(x_k)$ for $1 \leq k \leq l$. By adapting the proof of \cite[Lemma 3.18]{FujitaOh}, one can show that $w[X]$ is independent 
of the choice of compatible reading.

\begin{thm}[{\cite[Theorem 2.22]{OhSuh19a}}]\label{thm:compatible reading gives reduced word}
Let $[\bm{i}]$ be a commutation class of reduced words for $w \in \mathsf{W}$. A sequence 
$(i_1,\dots,i_t)$ of vertices in $\Delta_0$ is in $[\bm{i}]$ if and only if there is a compatible 
reading $(x_1,\dots,x_t)$ of $(\Upsilon_{[\bm{i}]})_0$ such that 
$i_k = \res^{[\bm{i}]}(x_k)$ for all $1 \leq k \leq t$.
\end{thm}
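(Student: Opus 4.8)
The plan is to prove both directions of the equivalence, establishing a correspondence between reduced words in the commutation class $[\bm{i}]$ and compatible readings of the combinatorial AR quiver $\Upsilon_{[\bm{i}]}$. The key structural fact I would exploit is Theorem \ref{thm:combinatorial AR quiver realizes partial order}, which identifies $\Upsilon_{[\bm{i}]}$ with the Hasse quiver of the partial order $\preceq_{[\bm{i}]}$, so that paths encode exactly the covering relations and hence the full order relations by transitivity.

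For the forward direction, suppose $(i_1,\dots,i_t) \in [\bm{i}]$. I would set $x_k = \beta^{\bm{i}}_k$ for $1 \leq k \leq t$, where the roots are computed using this particular representative. These are exactly the elements of $\mathsf{R}^+(w) = (\Upsilon_{[\bm{i}]})_0$, and by construction $\res^{[\bm{i}]}(x_k) = i_k$. The content to verify is that $(x_1,\dots,x_t)$ is a compatible reading, i.e.\ that whenever there is an oriented path from $x_{k'}$ to $x_k$ in $\Upsilon_{[\bm{i}]}$ we have $k \leq k'$. By Theorem \ref{thm:combinatorial AR quiver realizes partial order}, such a path means $x_k \preceq_{[\bm{i}]} x_{k'}$, which by definition of $\preceq_{[\bm{i}]}$ forces $x_k <_{\bm{j}} x_{k'}$ for every $\bm{j} \in [\bm{i}]$; applying this to the chosen representative $(i_1,\dots,i_t)$ gives precisely $k \leq k'$ (with equality only when the roots coincide).

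For the converse, suppose $(x_1,\dots,x_t)$ is a compatible reading of $(\Upsilon_{[\bm{i}]})_0$ with $i_k = \res^{[\bm{i}]}(x_k)$; I must show $(i_1,\dots,i_t) \in [\bm{i}]$. The idea is that a compatible reading is a linear extension of the partial order $\preceq_{[\bm{i}]}$, and I would argue that any two linear extensions differ by transpositions of adjacent incomparable elements. The crucial point is that if $x_k$ and $x_{k+1}$ are \emph{incomparable} in $\preceq_{[\bm{i}]}$, then their residues satisfy $i_k \not\sim i_{k+1}$, so that swapping them corresponds exactly to a commutation move on the associated word. This step requires the combinatorial fact that adjacent residues $i_k \sim i_{k+1}$ in a compatible reading would force an arrow (hence comparability) between $x_k$ and $x_{k+1}$ in $\Upsilon_{[\bm{i}]}$, which follows from the defining arrow condition together with Theorem \ref{thm:combinatorial AR quiver realizes partial order}. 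Starting from the representative $\bm{i}$ itself (which is a compatible reading by the forward direction) and transposing incomparable adjacent pairs, one transforms $\bm{i}$ into $(i_1,\dots,i_t)$ through a sequence of commutation moves, placing it in $[\bm{i}]$.

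I expect the main obstacle to be the converse direction, specifically the claim that any compatible reading can be reached from the canonical one by swaps of \emph{incomparable} adjacent elements while remaining compatible throughout. This is a standard fact about linear extensions of posets, but I would need to confirm that each intermediate transposition keeps the ordering a valid linear extension and, more delicately, that the incomparability of the swapped pair guarantees $i_k \not\sim i_{k+1}$ rather than merely $i_k \neq i_{k+1}$. The adjacency-forces-an-arrow lemma is the technical heart here: I would establish it by inspecting the arrow definition in $\Upsilon_{[\bm{i}]}$, noting that two consecutive occurrences of adjacent vertices in any representative word yield an arrow between the corresponding roots, and that the no-intermediate-occurrence condition is automatically satisfied for adjacent positions in a compatible reading after reducing to an appropriate representative.
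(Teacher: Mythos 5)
The paper does not prove this statement at all: it is quoted verbatim from Oh and Suh (\cite[Theorem 2.22]{OhSuh19a}), so there is no in-paper argument to compare against. Your proof is self-contained and its overall strategy is sound: identify compatible readings of $(\Upsilon_{[\bm{i}]})_0$ with linear extensions of the poset $(\mathsf{R}^+(w),\preceq_{[\bm{i}]})$ via Theorem \ref{thm:combinatorial AR quiver realizes partial order}, verify the forward direction by evaluating $\preceq_{[\bm{i}]}$ on the chosen representative, and for the converse connect the given reading to the canonical one by adjacent transpositions of incomparable pairs and match each transposition with a commutation move. The poset fact you invoke (any two linear extensions of a finite poset are joined by adjacent transpositions of incomparable elements, all intermediate orderings being linear extensions) is standard and correct.

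The one step you must tighten is the claim that adjacent residues ``force an arrow.'' That is false as stated: if $i_k \sim i_l$ but some intermediate position of the word carries the letter $i_k$ or $i_l$, the no-intermediate-occurrence condition in the definition of $\Upsilon_{[\bm{i}]}$ fails and there is no arrow (only a longer path). What is true, and is all your argument needs, is that equal or adjacent residues force \emph{comparability}. The clean justification bypasses arrows entirely: a single commutation move exchanges two positions of a word only when their letters are distinct and non-adjacent in $\Delta$, so the relative order under $<_{\bm{j}}$ of two roots whose residues are equal or adjacent in $\Delta$ is the same for every representative $\bm{j} \in [\bm{i}]$; by the definition of $\preceq_{[\bm{i}]}$ such roots are therefore comparable. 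Note that this also disposes of the equal-residue case, which you must exclude separately for an incomparable adjacent pair, since a commutation move requires the two letters to be distinct as well as non-adjacent. With that substitution the proof is complete.
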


\subsection{Combinatorial reflection functors and repetition quivers}\label{subsection:combinatorial reflection functors} In this subsection, we only work with reduced words for the longest element $w_0 \in \mathsf{W}$. We say that $i \in \Delta_0$ is a \emph{source} (resp. \emph{sink}) of a commutation class $[\bm{i}]$ of reduced words for $w_0$ if there is a reduced word $\bm{j} \in [\bm{i}]$ starting with $i$ (resp. ending with $i^*$). By \cite[Proposition 4.4]{OhSuh19a}, $i$ is a source (resp. sink) of $[\bm{i}]$ if and only if $\alpha_i$ is a sink (resp. source) of $\Upsilon_{[\bm{i}]}$.

If $\bm{j} = (j_1,\dots,j_N)$ is a reduced word for $w_0$, then so is the sequence $r_{j_1}\bm{j} = (j_2,\dots,j_N,j_1^*)$. If $i \in \Delta_0$ is a source of a commutation class $[\bm{i}]$, we define $r_i[\bm{i}] = [r_i\bm{j}]$ where $\bm{j} \in [\bm{i}]$ is a reduced word starting with $i$. This operation on commutation classes is well defined and is called a \emph{(combinatorial) reflection functor}. It is injective and its image is the set of commutation classes for which $i$ is a sink. We denote its inverse by $r_i^{-1}$. A sequence of vertices $(i_1,\dots,i_k)$ of $\Delta$ is a \emph{source sequence for $[\bm{i}]$} if $i_l$ is a source of $r_{i_{l-1}}r_{i_{l-2}}\dotsb r_{i_1}[\bm{i}]$ for all $1 \leq l \leq k$. Similarly, a sequence of vertices $(i_1,\dots,i_k)$ of $\Delta$ is a \emph{sink sequence for $[\bm{i}]$} if $i_l$ is a sink of $r_{i_{l-1}}^{-1}r_{i_{l-2}}^{-1}\dotsb r_{i_1}^{-1}[\bm{i}]$ for all $1 \leq l \leq k$. If two commutation classes can be obtained from one another by applying a sequence of reflection functors (or their inverses), we say that they are \emph{in the same $r$-cluster point}.

For a reduced word $\bm{i} = (i_1,\dots,i_N)$ for $w_0$, denote by $\widehat{\bm{i}} = (i_k)_{k \in \Z}$ the infinite sequence extending $\bm{i}$ and satisfying $i_{k+N} = i_k^*$ for $k \in \Z$. Define $\widehat{\beta}^{\bm{i}}_{s+tN} = ((-1)^t\beta^{\bm{i}}_s,-t) \in \mathsf{R} \times \Z$ for $1 \leq s \leq N$ and $t \in \Z$. Since the permutation $i \mapsto i^*$ is induced by $w_0$, observe that the first coordinate of $\widehat{\beta}^{\bm{i}}_k$ is
\[
\begin{cases}
	s_{i_1}s_{i_2}\dotsb s_{i_{k-1}}(\alpha_{i_k}) &\textrm{if } k \geq 1,\\
	-s_{i_0}s_{i_{-1}}\dotsb s_{i_{k+1}}(\alpha_{i_k}) &\textrm{if } k \leq 0.
\end{cases}
\]
Let $\widehat{\mathsf{R}}$ be the subset of $\mathsf{R} \times \Z$ whose elements are the roots $\widehat{\beta}^{\bm{i}}_k$ for $k \in \Z$. Alternatively, since $\mathsf{R}^+(w_0) = \mathsf{R}^+$, $\widehat{\mathsf{R}}$ is the subset of pairs $(\alpha,k) \in \mathsf{R} \times \Z$ such that $(-1)^k\alpha \in \mathsf{R}^+$.

Let $[\bm{i}]$ be a commutation class of reduced words for $w_0$. For any root $\alpha \in \mathsf{R}$, we define its residue with respect to $[\bm{i}]$, denoted by $\res^{[\bm{i}]}(\alpha)$, to be $i_k \in \Delta_0$ if we have $\widehat{\bm{i}} = (i_l)_{l \in \Z}$ and $\alpha$ is the first coordinate of $\widehat{\beta}^{\bm{i}}_k$. It is well defined and extends the previous definition of residue. We remark that $\res^{[\bm{i}]}(-\alpha) = \res^{[\bm{i}]}(\alpha)^*$. For $(\alpha,k) \in \widehat{\mathsf{R}}$, we define its residue to be $\res^{[\bm{i}]}(\alpha)$.

We can enlarge the combinatorial AR quiver $\Upsilon_{[\bm{i}]}$ to the \emph{(combinatorial) repetition quiver} $\widehat{\Upsilon}_{[\bm{i}]}$ as follows. Its vertex set $(\widehat{\Upsilon}_{[\bm{i}]})_0$ is the set $\widehat{\mathsf{R}}$ defined above. Given a representative $\bm{i} \in [\bm{i}]$, we construct the infinite sequence $\widehat{\bm{i}} = (i_k)_{k \in \Z}$. We define an arrow in $\widehat{\Upsilon}_{[\bm{i}]}$ from $\widehat{\beta}^{\bm{i}}_k$ to $\widehat{\beta}^{\bm{i}}_l$ if $l < k$, $i_k \sim i_l$ and there is no index $l < j < k$ such that $i_j = i_k$ or $i_j = i_l$. As before, $\widehat{\Upsilon}_{[\bm{i}]}$ depends only on the commutation class. By construction, we can identify $\Upsilon_{[\bm{i}]}$ with the full subquiver of $\widehat{\Upsilon}_{[\bm{i}]}$ with vertex set $\mathsf{R}^+ \times \{0\} \subset \widehat{\mathsf{R}}$. In particular, we see the injective (resp. projective) positive roots as \emph{injective} (resp. \emph{projective}) \emph{vertices} of $\widehat{\Upsilon}_{[\bm{i}]}$. Similarly, as we did for $\Upsilon_{[\bm{i}]}$, we can define what is a compatible reading for any finite subset $X$ of $(\widehat{\Upsilon}_{[\bm{i}]})_0$ and use it to define an element $w[X] \in \mathsf{W}$.

If $\widehat{\beta}^{\bm{i}}_k$ and $\widehat{\beta}^{\bm{i}}_l$ have the same residue and $l < k$, then there is a path in $\widehat{\Upsilon}_{[\bm{i}]}$ from $\widehat{\beta}^{\bm{i}}_k$ to $\widehat{\beta}^{\bm{i}}_l$ when $\Delta \neq \mathsf{A}_1$. We refer to this as the \emph{segment property of $\widehat{\Upsilon}_{[\bm{i}]}$}. We also point out that $\Upsilon_{[\bm{i}]}$ is a \emph{convex subquiver} of $\widehat{\Upsilon}_{[\bm{i}]}$, that is, any directed path in $\widehat{\Upsilon}_{[\bm{i}]}$ between vertices of $\Upsilon_{[\bm{i}]}$ is entirely contained in $\Upsilon_{[\bm{i}]}$. This follows from the fact that, for an arrow $(\alpha,k) \to (\alpha',k')$ in $\widehat{\Upsilon}_{[\bm{i}]}$, we must have $k' = k$ or $k' = k+1$.

\begin{example}
Suppose $\Delta = \mathsf{A}_3$ and enumerate its vertices as $\Delta_0 = \{1,2,3\}$, where $2$ is the central vertex. An example of reduced word for $w_0$ is $\bm{i} = (3,2,1,2,3,2)$. In this case, the repetition quiver $\widehat{\Upsilon}_{[\bm{i}]}$ is the following:
\[\begin{tikzcd}[column sep={3em,between origins}]
	\phantom{0} & \phantom{0} & \phantom{0} & \phantom{0} & \phantom{0} & \phantom{0} & \phantom{0} & \phantom{0} & \phantom{0} & \phantom{0} & \phantom{0} & \phantom{0} & \phantom{0} & \phantom{0}\\[-3.5em]
	1 &&&&  \textcolor{rgb,255:red,150;green,150;blue,150}{(-[3],-1)} &&&& {([1,3],0)} &&&& \textcolor{rgb,255:red,150;green,150;blue,150}{(-[1,2],1)} \\
	2 & \textcolor{rgb,255:red,150;green,150;blue,150}{\ddots} && \textcolor{rgb,255:red,150;green,150;blue,150}{(-[2,3],-1)} && {([2],0)} && {([1],0)} && {([2,3],0)} && \textcolor{rgb,255:red,150;green,150;blue,150}{(-[2],1)} && \textcolor{rgb,255:red,150;green,150;blue,150}{\ddots} \\
	3 && \textcolor{rgb,255:red,150;green,150;blue,150}{(-[1,3],-1)} &&&& {([1,2],0)} &&&& {([3],0)}
	\arrow[color={rgb,255:red,150;green,150;blue,150}, from=2-5, to=3-6]
	\arrow[from=2-9, to=3-10]
	\arrow[color={rgb,255:red,150;green,150;blue,150}, from=2-13, to=3-14]
	\arrow[color={rgb,255:red,150;green,150;blue,150}, from=3-2, to=4-3]
	\arrow[color={rgb,255:red,150;green,150;blue,150}, from=3-4, to=2-5]
	\arrow[from=3-6, to=4-7]
	\arrow[from=3-8, to=2-9]
	\arrow[from=3-10, to=4-11]
	\arrow[color={rgb,255:red,150;green,150;blue,150}, from=3-12, to=2-13]
	\arrow[color={rgb,255:red,150;green,150;blue,150}, from=4-3, to=3-4]
	\arrow[from=4-7, to=3-8]
	\arrow[color={rgb,255:red,150;green,150;blue,150}, from=4-11, to=3-12]
\end{tikzcd}\]
Here, we denote $[i,j] = \alpha_i + \alpha_{i+1} + \dotsb + \alpha_j$ for $1 \leq i \leq j \leq 3$. In each row, all vertices have the same residue, which is indicated in the column on the left. The subquiver highlighted in a darker color is $\Upsilon_{[\bm{i}]}$. The injective vertices associated with $1$, $2$, and $3$ are $([1,3],0)$, $([2,3],0)$ and $([3],0)$, respectively. The corresponding projective vertices are $([1,2],0)$, $([2],0)$ and $([1,3],0)$.
\end{example}

If $\bm{i}$ is a reduced word for $w_0$ starting with $i \in \Delta_0$, notice that the infinite sequence $\widehat{r_i\bm{i}}$ is just a shift of $\widehat{\bm{i}}$. It is thus immediate from the definition that we have an isomorphism of quivers $\widehat{\Upsilon}_{[\bm{i}]} \longrightarrow \widehat{\Upsilon}_{[r_i\bm{i}]}$ that preserves residues. It sends the vertex $\widehat{\beta}^{\bm{i}}_l$ to the vertex $\widehat{\beta}^{r_i\bm{i}}_{l-1}$ or, equivalently, the vertex $(\alpha,k) \in \widehat{\mathsf{R}}$ to the vertex
\[
\begin{cases}
	(s_i(\alpha),k) &\textrm{if } \alpha \neq \pm\alpha_i,\\
	(s_i(\alpha),k+1) &\textrm{if } \alpha = \pm\alpha_i.
\end{cases}
\]
Consequently, if two commutation classes $[\bm{i}]$ and $[\bm{j}]$ are in the same $r$-cluster point, a sequence of reflection functors connecting them defines an isomorphism $\widehat{\Upsilon}_{[\bm{i}]} \longrightarrow \widehat{\Upsilon}_{[\bm{j}]}$ preserving residues.

\begin{lemma}\label{lemma:projective goes to projective iff injective goes to injective}
For two commutation classes $[\bm{i}]$ and $[\bm{j}]$ in the same $r$-cluster point, let $\varphi: \widehat{\Upsilon}_{[\bm{i}]} \longrightarrow \widehat{\Upsilon}_{[\bm{j}]}$ be an isomorphism as above. Let $x_{[\bm{i}]}, x^*_{[\bm{i}]} \in (\widehat{\Upsilon}_{[\bm{i}]})_0$ be the injective and projective vertices associated with $i \in \Delta_0$. Then $\varphi(x_{[\bm{i}]})$ is injective if and only if $\varphi(x^*_{[\bm{i}]})$ is projective.
\end{lemma}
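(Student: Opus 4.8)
The plan is to sidestep any direct analysis of the injective and projective vertices and instead to produce a single natural self-map $\nu$ of repetition quivers that converts ``being injective'' into ``being projective'' and commutes with all the reflection isomorphisms. The point the lemma is really addressing is that being an injective (resp.\ projective) vertex is \emph{not} intrinsic to the residue quiver $\widehat{\Upsilon}_{[\bm{i}]}$: it depends on the embedded copy $\Upsilon_{[\bm{i}]}$, i.e.\ on which vertices are declared to lie in ``row $0$'', and this is exactly the structure that $\varphi$ need not preserve. So I would not try to follow $x_{[\bm{i}]}$ and $x^*_{[\bm{i}]}$ step by step through the sequence of reflections (a vertex kicked out of row $0$ by one reflection may re-enter later, and the naive induction does not close); instead I would build $\nu$ with $\nu(x_{[\bm{i}]})=x^*_{[\bm{i}]}$ and then argue formally.

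First I would record two natural operations on $\widehat{\Upsilon}_{[\bm{i}]}$. Using the segment property together with the fact that every arrow $(\alpha,k)\to(\alpha',k')$ has $k'\in\{k,k+1\}$, the vertices of a fixed residue are totally ordered by path-reachability, so each vertex $v$ has a well-defined same-residue predecessor $\mathrm{pred}(v)$ and successor $\mathrm{succ}(v)$ (its immediate neighbours in this chain); since $\varphi$ preserves residues and paths, it commutes with $\mathrm{pred}$ and $\mathrm{succ}$. Next let $\sigma$ be the shift $\widehat{\beta}^{\bm{i}}_k\mapsto\widehat{\beta}^{\bm{i}}_{k+N}$, equivalently $(\alpha,m)\mapsto(-\alpha,m-1)$ on $\widehat{\mathsf{R}}$. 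A direct check on the explicit formula for the reflection isomorphism shows that $\sigma$ commutes with each reflection functor (and its inverse), both acting by translating the index of $\widehat{\beta}$; hence $\sigma$ commutes with $\varphi$. I then set $\nu=\sigma\circ\mathrm{pred}$, a bijection of the vertex set commuting with $\varphi$.

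The key computation is that $\nu$ carries injective vertices to projective ones. Writing $x_{[\bm{i}]}=\widehat{\beta}^{\bm{i}}_{k}$ with $k$ the first index in $\{1,\dots,N\}$ of residue $i$, its predecessor is the last occurrence of residue $i$ at an index $\leq 0$; using $i_{m+N}=i_m^*$ this occurrence is $l-N$, where $l$ is the last index in $\{1,\dots,N\}$ of residue $i^*$, so that $\nu(x_{[\bm{i}]})=\sigma(\widehat{\beta}^{\bm{i}}_{l-N})=\widehat{\beta}^{\bm{i}}_{l}=x^*_{[\bm{i}]}$. As $i$ ranges over $\Delta_0$ this realises $\nu$ as an injective map from the $|\Delta_0|$ injective vertices into the $|\Delta_0|$ projective vertices; since $\nu$ is a global bijection and the two sets have the same cardinality, $\nu$ restricts to a bijection from injective vertices onto projective vertices. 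Consequently, for \emph{any} vertex $v$ of any such repetition quiver, $v$ is injective if and only if $\nu(v)$ is projective.

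Finally I would combine these facts in $\widehat{\Upsilon}_{[\bm{j}]}$ and $\widehat{\Upsilon}_{[\bm{i}]}$:
\[
\varphi(x_{[\bm{i}]})\text{ injective}\iff \nu\bigl(\varphi(x_{[\bm{i}]})\bigr)\text{ projective}\iff \varphi\bigl(\nu(x_{[\bm{i}]})\bigr)\text{ projective}\iff \varphi(x^*_{[\bm{i}]})\text{ projective},
\]
using the characterisation of $\nu$ in $\widehat{\Upsilon}_{[\bm{j}]}$ for the first equivalence, the naturality $\nu\varphi=\varphi\nu$ for the second, and $\nu(x_{[\bm{i}]})=x^*_{[\bm{i}]}$ for the third. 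The main obstacle is securing the naturality that drives the middle equivalence, namely that this combinatorial Nakayama operator $\nu$ commutes with reflection functors; once $\sigma$ is checked to commute with each $r_i$ this is automatic, but it is exactly what forces ``injective'' and ``projective'' to swap consistently and is the crux of the argument. The degenerate case $\Delta=\mathsf{A}_1$, where the segment property is vacuous, is handled separately and is immediate.
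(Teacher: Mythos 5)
Your proof is correct, but it takes a genuinely different route from the paper's. The paper introduces, for each residue $i$ and each commutation class, a depth function $d_{[\bm{i}]}$ on the set of vertices of residue $i$ normalized to vanish at the injective vertex (and dually $d^*_{[\bm{i}]}$ normalized at the projective vertex), and shows by a case-by-case check that a single reflection $\varphi_j$ shifts both depth functions by the same constant $\delta_{ij}$; the lemma follows because the accumulated offsets for injectives and projectives stay equal along the whole sequence of reflections. You instead package the relation between injectives and projectives into a single combinatorial Nakayama bijection $\nu=\sigma\circ\mathrm{pred}$, verify once that $\nu(x_{[\bm{i}]})=x^*_{[\bm{i}]}$ via the periodicity $i_{k+N}=i_k^*$, and reduce the lemma to the naturality $\nu\varphi=\varphi\nu$. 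Both arguments ultimately rest on the same two ingredients (the segment property, which totally orders each residue class, and the $N$-periodicity up to $*$ of $\widehat{\bm{i}}$), but yours buys something concrete: the only computations are the index-shift description of $\sigma$ (which visibly commutes with the reflection isomorphisms) and the identification of $\mathrm{pred}(x_{[\bm{i}]})$ with $\widehat{\beta}^{\bm{i}}_{l-N}$, so the unproven ``simple case-by-case analysis'' of the paper is replaced by a uniform argument, and you obtain the stronger statement that $\nu$ carries the full set of injective vertices bijectively onto the projective ones in every repetition quiver. The price is that you must check $\nu$ is well defined (existence of same-residue predecessors) and that $\varphi$ respects the path-order on each residue class, both of which you correctly derive from the segment property; your separate treatment of $\mathsf{A}_1$ matches the paper's reduction.
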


\begin{proof}
We may assume $\Delta \neq \mathsf{A}_1$. Let $[\bm{i}]$ be an arbitrary commutation class of reduced words for $w_0$ and take $i \in \Delta_0$. Denote by $X_{[\bm{i}]} \subset \widehat{\mathsf{R}}$ the set of all pairs with residue $i$ with respect to $[\bm{i}]$. Let $x_{[\bm{i}]} \in X_{[\bm{i}]}$ be the injective vertex associated with $i$. By the segment property of $\widehat{\Upsilon}_{[\bm{i}]}$, there is a unique bijective map $d_{[\bm{i}]}: X_{[\bm{i}]} \longrightarrow \Z$ such that $d_{[\bm{i}]}(x_{[\bm{i}]}) = 0$ and $d_{[\bm{i}]}(v_1) < d_{[\bm{i}]}(v_2)$ if and only if there is a nontrivial path from $v_1$ to $v_2$ in $\widehat{\Upsilon}_{[\bm{i}]}$. If $j \in \Delta_0$ is a source of $[\bm{i}]$, let $\varphi_j: \widehat{\Upsilon}_{[\bm{i}]} \longrightarrow \widehat{\Upsilon}_{r_j[\bm{i}]}$ be the induced map. Since it preserves residues and is an isomorphism of quivers, we have $\varphi_j(X_{[\bm{i}]}) = X_{r_j[\bm{i}]}$ and the difference $d_{r_j[\bm{i}]} \circ \varphi_j - d_{[\bm{i}]}$ is always constant and equal to $m = d_{r_j[\bm{i}]}(\varphi_j(x^{[\bm{i}]}))$. A simple case-by-case analysis shows that $m = \delta_{ij}$, that is, Kronecker's delta.

Analogously, if $X^*_{[\bm{i}]} \subset \widehat{\mathsf{R}}$ is the set of all pairs with residue $i^*$ with respect to $[\bm{i}]$ and $x^*_{[\bm{i}]} \in X^*_{[\bm{i}]}$ denotes the projective vertex associated with $i$, there is a unique bijective map $d^*_{[\bm{i}]}: X^*_{[\bm{i}]} \longrightarrow \Z$ such that $d^*_{[\bm{i}]}(x^*_{[\bm{i}]}) = 0$ and $d^*_{[\bm{i}]}(v_1) < d^*_{[\bm{i}]}(v_2)$ if and only if there is a nontrivial path from $v_1$ to $v_2$ in $\widehat{\Upsilon}_{[\bm{i}]}$. If $j \in \Delta_0$ is a source of $[\bm{i}]$, one can show as before that $\varphi_j(X^*_{[\bm{i}]}) = X^*_{r_j[\bm{i}]}$ and the difference $d^*_{r_j[\bm{i}]} \circ \varphi_j - d^*_{[\bm{i}]}$ is a constant function equal to $\delta_{ij}$.

We can now prove the lemma. Applying the argument above multiple times, we can show that $d_{[\bm{j}]} \circ \varphi - d_{[\bm{i}]}$ and $d^*_{[\bm{j}]} \circ \varphi - d^*_{[\bm{i}]}$ are constant and equal to same value. But we have
\[
\varphi(x_{[\bm{i}]}) \textrm{ is injective } \iff d_{[\bm{j}]}(\varphi(x_{[\bm{i}]})) = 0 \iff d_{[\bm{j}]} \circ \varphi - d_{[\bm{i}]} \equiv 0
\]
and, similarly, $\varphi(x^*_{[\bm{i}]})$ is projective if and only if $d^*_{[\bm{j}]} \circ \varphi - d^*_{[\bm{i}]}$ is identically zero. This finishes the proof.
\end{proof}

\subsection{Meshes of repetition quivers}\label{subsection:meshes} Throughout this subsection, we assume that $\Delta \neq \mathsf{A}_1$. Let $\bm{i}$ be a reduced word for the longest element $w_0$ and write $\widehat{\bm{i}} = (i_l)_{l \in \Z}$. Given $k \in \Z$, let $k^+ \in \Z$ be the smallest integer such that $k < k^+$ and $i_k = i_{k^+}$. We define the \emph{mesh of $\widehat{\Upsilon}_{[\bm{i}]}$ at the vertex $\widehat{\beta}^{\bm{i}}_k$} to be the smallest convex subquiver $\M = \M_{[\bm{i}]}(\widehat{\beta}^{\bm{i}}_k)$ of $\widehat{\Upsilon}_{[\bm{i}]}$ containing the vertices $\widehat{\beta}^{\bm{i}}_k$ and $\widehat{\beta}^{\bm{i}}_{k^+}$. Equivalently, $\M$ is the full subquiver of $\widehat{\Upsilon}_{[\bm{i}]}$ whose set of vertices $\M_0$ consists of those vertices that appear in a path from $\widehat{\beta}^{\bm{i}}_{k^+}$ to $\widehat{\beta}^{\bm{i}}_k$. If $x = \widehat{\beta}^{\bm{i}}_k$, we will denote $\widehat{\beta}^{\bm{i}}_{k^+}$ by $s_{[\bm{i}]}(x)$, which is the unique source of $\M$. We define the \emph{set of abutters $V_{[\bm{i}]}(x)$ of $x$} to be the set of all vertices $y \in \M_0$ such that $\res^{[\bm{i}]}(y) \sim \res^{[\bm{i}]}(x)$.

\begin{lemma}\label{lemma:mesh is contained in combinatorial AR quiver}
Let $\bm{i}$ be a reduced word for $w_0$. In the notation above, we have $k^+ < k+N$ for any $k \in \Z$.
\end{lemma}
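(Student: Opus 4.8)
The plan is to reduce the statement to a recurrence property of the infinite sequence $\widehat{\bm{i}} = (i_l)_{l \in \Z}$: concretely, the inequality $k^+ < k+N$ says precisely that the letter $i_k$ reoccurs among the $N-1$ positions $i_{k+1},\dots,i_{k+N-1}$. The key structural input I would use is that \emph{every} length-$N$ contiguous block of $\widehat{\bm{i}}$ is again a reduced word for $w_0$. Granting this, I fix $k \in \Z$ and work with the particular block $\bm{j} = (i_k,i_{k+1},\dots,i_{k+N-1})$, which has $i_k$ at its front.

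To justify the block fact, I would iterate the operation $\bm{j} \mapsto r_{j_1}\bm{j}$ from Section \ref{subsection:combinatorial reflection functors}: if $(j_1,\dots,j_N)$ is a reduced word for $w_0$, then so is $(j_2,\dots,j_N,j_1^*)$. Since $i_{m+N} = i_m^*$ for all $m$, applying this operation to the block $(i_m,\dots,i_{m+N-1})$ produces exactly the next block $(i_{m+1},\dots,i_{m+N})$. Starting from $\bm{i} = (i_1,\dots,i_N)$ and iterating in both directions (using the inverse operation $r^{-1}$ for the backward steps) shows that every contiguous $N$-block of $\widehat{\bm{i}}$ is a reduced word for $w_0$; in particular so is $\bm{j}$.

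The heart of the argument is then a length count in a parabolic subgroup. Suppose, for contradiction, that $i_k$ does not occur among $i_{k+1},\dots,i_{k+N-1}$, so that all these $N-1$ letters lie in $J := \Delta_0 \setminus \{i_k\}$. As a contiguous subword of the reduced word $\bm{j}$, the sequence $(i_{k+1},\dots,i_{k+N-1})$ is itself reduced, hence defines an element $w' = s_{i_{k+1}}\dotsb s_{i_{k+N-1}}$ of the parabolic subgroup $\mathsf{W}_J$ with $\ell(w') = N-1$. On the other hand, the longest element of $\mathsf{W}_J$ has length $|\mathsf{R}^+_J|$, where $\mathsf{R}^+_J = \mathsf{R}^+ \cap \sum_{j \in J}\Z\alpha_j$. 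Since $\Delta$ is connected and $\Delta \neq \mathsf{A}_1$, the vertex $i_k$ has a neighbour $j \sim i_k$, so both $\alpha_{i_k}$ and $\alpha_{i_k}+\alpha_j$ are positive roots with nonzero $\alpha_{i_k}$-coefficient; hence $|\mathsf{R}^+_J| \leq N-2 < N-1 = \ell(w')$, contradicting $w' \in \mathsf{W}_J$. Therefore $i_k$ must reappear in the window, which gives $k^+ < k+N$.

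The main obstacle I anticipate is not the parabolic count, which is routine once set up, but making sure the block fact is invoked correctly at an \emph{arbitrary} integer $k$ (including $k \leq 0$) and handling the self-dual situation $i_k = i_k^*$. This is exactly the case where the last letter $i_{k+N}$ also carries residue $i_k$, so that a naive ``last-letter'' argument on the block $(i_{k+1},\dots,i_{k+N})$ would fail. Placing $i_k$ at the \emph{front} of $\bm{j}$ sidesteps this case distinction, so that a single contradiction handles all residues uniformly.
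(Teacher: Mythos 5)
Your proof is correct and follows essentially the same route as the paper's: both rest on the fact that a length-$N$ window of $\widehat{\bm{i}}$ is again a reduced word for $w_0$, and on the observation that if $i_k$ did not recur within the window then too few positive roots would involve $\alpha_{i_k}$, contradicting $\Delta \neq \mathsf{A}_1$. The only cosmetic difference is that you package the count as a length bound in the parabolic subgroup $\mathsf{W}_{\Delta_0\setminus\{i_k\}}$, whereas the paper reads the same count off directly from the enumeration $\beta^{\bm{j}}_l$ of the positive roots; you also spell out the justification that every window is reduced, which the paper leaves implicit.
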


\begin{proof}
Write $\widehat{\bm{i}} = (i_l)_{l \in \Z}$ and let $\bm{j} = (i_{k+1},i_{k+2},\dots,i_{k+N})$, which is a reduced word for $w_0$. If $k^+ \geq k+N$, then we have $i_l \neq i_k$ for all $k < l < k+N$. Thus, for $1 \leq l < N$, if we write $\beta^{\bm{j}}_l$ as a linear combination of positive simple roots, the coefficient of $\alpha_{i_k}$ is zero. This implies that there is at most one positive root involving $\alpha_{i_k}$, which contradicts the fact that $\Delta \neq \mathsf{A}_1$.
\end{proof}

\begin{lemma}\label{lemma:mesh gives a reduced word ready to apply braid move}
Let $[\bm{i}]$ be a commutation class of reduced words for $w_0$ and take a mesh $\M$ of $\widehat{\Upsilon}_{[\bm{i}]}$. If $(x_1,\dots,x_t)$ is a compatible reading of $\M_0$, denote $i_k = \res^{[\bm{i}]}(x_k)$ for $1 \leq k \leq t$. If $t > 3$, then the sequence $(i_1,\dots,i_{t-1},i_t,i_{t-1})$ is a reduced word for $s_{i_1}\dotsb s_{i_{t-1}}s_{i_t}s_{i_{t-1}}$.
\end{lemma}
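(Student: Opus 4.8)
The plan is to determine the residue pattern along a compatible reading of $\M_0$ and then to verify reducedness through the standard criterion that a word $(j_1,\dots,j_m)$ is reduced if and only if $s_{j_1}\dotsb s_{j_{p-1}}(\alpha_{j_p}) \in \mathsf{R}^+$ for every $1 \le p \le m$. The mesh $\M = \M_{[\bm{i}]}(\widehat{\beta}^{\bm{i}}_k)$ has sink $x = \widehat{\beta}^{\bm{i}}_k$ and unique source $s_{[\bm{i}]}(x) = \widehat{\beta}^{\bm{i}}_{k^+}$; since $i_{k^+} = i_k$ by the definition of $k^+$, both carry the same residue $a := i_k$, and as the two extreme vertices of $\M_0$ for the path order they occupy positions $1$ and $t$ of any compatible reading. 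Hence $i_1 = i_t = a$, and $x_2,\dots,x_{t-1}$ are the remaining vertices. I would next argue that each of these lies on a length-two path from $s_{[\bm{i}]}(x)$ to $x$: a longer path would yield a walk $a = i_{m_0} \sim i_{m_1} \sim \dotsb \sim i_{m_r} = a$ of residues in $\Delta$, and since $\Delta$ is a tree an intermediate residue would have to repeat, producing two middle vertices of equal residue — but then the segment property of $\widehat{\Upsilon}_{[\bm{i}]}$ would force a path between them inside the convex subquiver $\M$, contradicting their incomparability. Consequently the residues $b_1 := i_2,\dots,b_{t-2} := i_{t-1}$ are pairwise distinct neighbours of $a$; being distinct neighbours of a common vertex in a tree, they are pairwise non-adjacent, so the reflections $s_{b_1},\dots,s_{b_{t-2}}$ commute.

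With this the word becomes $(a, b_1,\dots,b_{t-2}, a, b_{t-2})$, and I would compute the roots $\beta_p = s_{j_1}\dotsb s_{j_{p-1}}(\alpha_{j_p})$ directly, using $(\alpha_a,\alpha_{b_p}) = -1$, $(\alpha_{b_p},\alpha_{b_q}) = 0$ for $p \ne q$, and the commutativity just established. One finds $\beta_1 = \alpha_a$ and $\beta_{p+1} = \alpha_a + \alpha_{b_p}$ for $1 \le p \le t-2$, so the prefix $(a,b_1,\dots,b_{t-2})$ is reduced. Writing $u = s_a s_{b_1}\dotsb s_{b_{t-2}}$, a short computation gives $u(\alpha_a) = (t-3)\alpha_a + \sum_{p=1}^{t-2}\alpha_{b_p}$, a positive root, so appending the letter $a$ preserves reducedness. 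Finally, with $u' = u s_a$ one computes $u'(\alpha_{b_{t-2}}) = (t-4)\alpha_a + \sum_{p=1}^{t-3}\alpha_{b_p}$. For $t > 3$ all its coefficients are non-negative and $\alpha_{b_1}$ occurs, so it lies in $\mathsf{R}^+$ and the whole word is reduced; for $t = 3$ the same expression equals $-\alpha_a$, which is exactly why the hypothesis $t > 3$ is needed.

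The main obstacle is the structural input of the first paragraph rather than the final computation. The subtle points are that the source and sink share the residue $a$ (immediate from $i_{k^+}=i_k$), that every middle vertex is adjacent to $a$, and that the middle residues are pairwise distinct; the last two are where the tree structure of $\Delta$, the convexity of $\M$, and the segment property all enter, and together they guarantee that the mesh has height two. Once the residue pattern $a,\,(\text{distinct neighbours of }a),\,a$ is in place, the positivity of the four families of roots above is routine, and the argument is uniform over all compatible readings since any middle vertex may serve as $b_{t-2}$.
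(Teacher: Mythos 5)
Your proof breaks down at the structural claim in the first paragraph: it is \emph{not} true that every middle vertex of a mesh lies on a length-two path from source to sink, nor that the middle residues are pairwise distinct commuting neighbours of $a$. A concrete counterexample in type $\mathsf{A}_3$: the sequence $\bm{i} = (1,2,3,2,1,2)$ is a reduced word for $w_0$ (it equals $s_1s_2s_1s_3s_2s_1$ after braid moves), and the mesh of $\widehat{\Upsilon}_{[\bm{i}]}$ at $\widehat{\beta}^{\bm{i}}_1$ is the length-four path $\widehat{\beta}^{\bm{i}}_5 \to \widehat{\beta}^{\bm{i}}_4 \to \widehat{\beta}^{\bm{i}}_3 \to \widehat{\beta}^{\bm{i}}_2 \to \widehat{\beta}^{\bm{i}}_1$, whose compatible reading has residues $(1,2,3,2,1)$. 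Here the middle residues $2,3,2$ repeat, the residue $3$ is not adjacent to $a=1$, and $s_2,s_3$ do not commute, so your computation of the roots $\beta_p$ (e.g.\ $\beta_{p+1} = \alpha_a + \alpha_{b_p}$) is false. The argument you offer for height two is also not valid on its own terms: when a source-to-sink path has length $>2$, the two equal-residue intermediate vertices it produces lie on that very path, so they \emph{are} comparable and the segment property yields no contradiction. (Even for commutation classes coming from Q-data the phenomenon occurs; see the mesh in the type $\mathsf{B}_3$ example of Section \ref{section:meshes and distinguished triangles}, whose abutters have residues $3,3,5$.)

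The paper avoids this entirely. After using Lemma \ref{lemma:mesh is contained in combinatorial AR quiver} to move the mesh into $\Upsilon_{[\bm{i}]}$, it invokes Theorem \ref{thm:compatible reading gives reduced word} together with convexity of $\M$ to conclude that the whole prefix $(i_1,\dots,i_t)$ is already reduced, with no need to know the internal residue pattern beyond $i_1 = i_t = i$, $i_l \neq i$ for $1 < l < t$, and $i_2, i_{t-1} \sim i$. The only thing left to check is the final letter $i_{t-1}$, i.e.\ that $s_{i_1}\dotsb s_{i_t}(\alpha_{i_{t-1}})$ is positive; this reduces to showing $s_{i_2}\dotsb s_{i_{t-2}}(\alpha_i) \neq \alpha_i$, which is settled by a Matsumoto-type argument. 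If you want to salvage your approach, you would need to replace the first paragraph by an appeal to Theorem \ref{thm:compatible reading gives reduced word} (or an equivalent fact) and then redo the positivity check for the last letter without assuming the height-two structure.
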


\begin{proof}
By Lemma \ref{lemma:mesh is contained in combinatorial AR quiver}, we may suppose that $\M \subseteq \Upsilon_{[\bm{i}]}$ after reflecting at sources/sinks of $[\bm{i}]$. Since $\M$ is convex, Theorem \ref{thm:compatible reading gives reduced word} implies that $(i_1,\dots,i_t)$ is a reduced word for $w[\M_0] = s_{i_1}\dotsb s_{i_t}$. Notice that, since $\M$ is a mesh, we have $i_1 = i_t$ (we will denote this common value by $i$), $i_l \neq i$ for $1 < l < t$, and $i_2,i_{t-1} \sim i$.

To prove the lemma, we need to show that $s_{i_1}\dotsb s_{i_t}(\alpha_{i_{t-1}})$ is a positive root (see \cite[Lemma 1.6]{Humphreys}). Since $i_{t-1} \sim i_t = i$, we have
\[
s_{i_{t-1}}s_{i_t}(\alpha_{i_{t-1}}) = s_{i_{t-1}}(\alpha_{i_{t-1}} + \alpha_{i_t}) = \alpha_{i_t} = \alpha_i.
\]
Since $i_l \neq i$ for $1 < l \leq t-2$, if we write $s_{i_2}s_{i_3}\dotsb s_{i_{t-2}}(\alpha_i)$ as a linear combination of the positive simple roots, then the coefficient of $\alpha_i$ must be $1$. Thus, $s_{i_2}s_{i_3}\dotsb s_{i_{t-2}}(\alpha_i)$ is a positive root. We are finished if we prove that $s_{i_2}s_{i_3}\dotsb s_{i_{t-2}}(\alpha_i) \neq \alpha_i$. Suppose by contradiction this is not the case. Then \cite[Theorem 1.7]{Humphreys} implies
\[
s_is_{i_2}\dotsb s_{i_{t-2}} = s_{i_2}\dotsb s_{i_{t-2}}s_i.
\]
By the previous paragraph, the word on the left is reduced, hence so is the one on the right. By Matsumoto's theorem \cite{Matusmoto}, we can obtain the second word from the first by applying commutation and braid relations. However, one can check that this is impossible because $s_i$ appears only at the beginning of the first word and $i_2 \sim i$ (observe that $s_{i_2}$ appears in the word as $t-2 \geq 2$ by hypothesis).
\end{proof}

\subsection{Folding of Dynkin diagrams}\label{subsection:folding}

Let $\mathfrak{g}$ be a complex finite-dimensional simple Lie 
algebra that is not necessarily simply laced. It is well known that its Dynkin diagram can be obtained by ``folding'' the Dynkin diagram $\Delta$ of a simple 
Lie algebra $\mathsf{g}$ of simply laced type along an automorphism $\sigma$. In Table \ref{table:folding} 
(taken from \cite{FujitaOh}), we display which simply laced type 
and automorphism are needed to obtain each Lie algebra $\mathfrak{g}$. See also Figure \ref{figure:automorphisms} for the definition of the automorphisms $\vee$ and $\widetilde{\vee}$.

\begin{table}
\centering
 { \arraycolsep=1.6pt\def\arraystretch{1.5}
\begin{tabular}{|c|c|c|c|c|c|}
\hline
$\mathfrak{g}$ & $\mathsf{g}$ & $\sigma$ & $r$ & $h^{\vee}$ & $N$ \\
\hline
\hline
$\mathsf{A}_{n}$ & $\mathsf{A}_{n}$ & $\id$ & $1$ & $n+1$ & $n(n+1)/2$ \\
$\mathsf{D}_{n}$ & $\mathsf{D}_{n}$ & $\id$ & $1$ & $2n-2$ & $n(n-1)$ \\
$\mathsf{E}_{6,7,8}$ & $\mathsf{E}_{6,7,8}$ & $\id$ & $1$ & $12, 18, 30$& $36, 63, 120$ \\
\hline
$\mathsf{B}_{n}$ & $\mathsf{A}_{2n-1}$ & $\vee$ & $2$ & $2n-1$ & $n(2n-1)$\\
$\mathsf{C}_{n}$ & $\mathsf{D}_{n+1}$ & $\vee$ & $2$ & $n+1$ & $n(n+1)$ \\
$\mathsf{F}_{4}$ & $\mathsf{E}_{6}$ & $\vee$ & $2$ & $9$ & $36$ \\
\hline
$\mathsf{G}_{2}$ & $\mathsf{D}_{4}$ & $\widetilde{\vee}, \widetilde{\vee}^{2}$ & $3$ & $4$ & $12$ \\
\hline
\end{tabular}
  }\\[1.5ex]
    \caption{The Dynkin diagram of $\mathfrak{g}$ is obtained by 
	folding that of $\mathsf{g}$ along the automorphism $\sigma$. The last three columns give the order of $\sigma$, the dual Coxeter number of $\mathfrak{g}$, and the number of positive roots of $\mathsf{g}$.}
    \label{table:folding}
\end{table}

\begin{figure}
	\centering
	\begin{tikzpicture}[scale=0.8]

	
		\node (A) at (0.5,-0.5) {($\mathsf{A}_{2n-1}, \vee$)};
		
		\node[circle, draw, inner sep=1.5pt] (A1) at (2,0) {};
		\node[above] at (2,0.1) {$1$};
		\node[circle, draw, inner sep=1.5pt] (A2) at (3.5,0) {};
		\node[above] at (3.5,0.1) {$2$};
		\node[circle, draw, inner sep=1.5pt] (Anm2) at (5,0) {};
		\node[above] at (5,0.1) {$n-2$};
		\node[circle, draw, inner sep=1.5pt] (Anm1) at (6.5,0) {};
		\node[above] at (6.5,0.1) {$n-1$};
		\node[circle, draw, inner sep=1.5pt] (An) at (8,-0.5) {};
		\node[above] at (8,-0.4) {$n$};
		\node[circle, draw, inner sep=1.5pt] (Anp1) at (6.5,-1) {};
		\node[below] at (6.5,-1.1) {$n+1$};
		\node[circle, draw, inner sep=1.5pt] (Anp2) at (5,-1) {};
		\node[below] at (5,-1.1) {$n+2$};
		\node[circle, draw, inner sep=1.5pt] (A2nm2) at (3.5,-1) {};
		\node[below] at (3.5,-1.1) {$2n-2$};
		\node[circle, draw, inner sep=1.5pt] (A2nm1) at (2,-1) {};
		\node[below] at (2,-1.1) {$2n-1$};
	
		\draw (A1) -- (A2);
		\draw[dotted] (A2) -- (Anm2);
		\draw (Anm2) -- (Anm1);
		\draw (Anm1) -- (An);
		\draw (An) -- (Anp1);
		\draw (Anp1) -- (Anp2);
		\draw[dotted] (Anp2) -- (A2nm2);
		\draw (A2nm2) -- (A2nm1);
		
		\draw[<->, red] (A1) -- (A2nm1);
		\draw[<->, red] (A2) -- (A2nm2);
		\draw[<->, red] (Anm2) -- (Anp2);
		\draw[<->, red] (Anm1) -- (Anp1);
		
		\node (B) at (0.5,-2.25) {$\mathsf{B}_n$};
		
		\node[circle, draw, inner sep=1.5pt] (B1) at (2,-2.25) {};
		\node[below] at (2,-2.35) {$1$};
		\node[circle, draw, inner sep=1.5pt] (B2) at (3.5,-2.25) {};
		\node[below] at (3.5,-2.35) {$2$};
		\node[circle, draw, inner sep=1.5pt] (Bnm2) at (5,-2.25) {};
		\node[below] at (5,-2.35) {$n-2$};
		\node[circle, draw, inner sep=1.5pt] (Bnm1) at (6.5,-2.25) {};
		\node[below] at (6.5,-2.35) {$n-1$};
		\node (Bvoid) at (7.25,-2.25) {};
		\node[circle, draw, inner sep=1.5pt] (Bn) at (8,-2.25) {};
		\node[below] at (8,-2.35) {$n$};
	
		\draw (B1) -- (B2);
		\draw[dotted] (B2) -- (Bnm2);
		\draw (Bnm2) -- (Bnm1);
		\draw[double, double distance=1.5pt] (Bvoid.west) -- (Bn);
		\draw[->, double, double distance=1.5pt] (Bnm1) -- (Bvoid.east);
	
	
		\draw[->, dashed, blue] (A1) edge[bend left=70] (B1);
		\draw[->, dashed, blue] (A2) edge[bend left=70] (B2);
		\draw[->, dashed, blue] (Anm2) edge[bend left=70] (Bnm2);
		\draw[->, dashed, blue] (Anm1) edge[bend left=70] (Bnm1);
	
		\draw[->, dashed, blue] (A2nm1) edge[bend right=70] (B1);
		\draw[->, dashed, blue] (A2nm2) edge[bend right=70] (B2);
		\draw[->, dashed, blue] (Anp2) edge[bend right=70] (Bnm2);
		\draw[->, dashed, blue] (Anp1) edge[bend right=70] (Bnm1);
	
		\draw[->, dashed, blue] (An) -- (Bn);
	
		\draw[->] (A) -- (B);
	
	
		\node (D) at (10,-0.5) {($\mathsf{D}_{n+1}, \vee$)};
		
		\node[circle, draw, inner sep=1.5pt] (D1) at (11.5,-0.5) {};
		\node[above] at (11.5,-0.4) {$1$};
		\node[circle, draw, inner sep=1.5pt] (D2) at (13,-0.5) {};
		\node[above] at (13,-0.4) {$2$};
		\node[circle, draw, inner sep=1.5pt] (Dnm2) at (14.5,-0.5) {};
		\node[above] at (14.5,-0.4) {$n-2$};
		\node[circle, draw, inner sep=1.5pt] (Dnm1) at (16,-0.5) {};
		\node[above] at (16,-0.4) {$n-1$};
		\node[circle, draw, inner sep=1.5pt] (Dn) at (17.5,0) {};
		\node[right] at (17.6,0) {$n$};
		\node[circle, draw, inner sep=1.5pt] (Dnp1) at (17.5,-1) {};
		\node[right] at (17.6,-1) {$n+1$};
	
		\draw (D1) -- (D2);
		\draw[dotted] (D2) -- (Dnm2);
		\draw (Dnm2) -- (Dnm1);
		\draw (Dnm1) -- (Dn);
		\draw (Dnm1) -- (Dnp1);
		
		\draw[<->, red] (Dn) -- (Dnp1);
		
		\node (C) at (10,-2.25) {$\mathsf{C}_n$};
		
		\node[circle, draw, inner sep=1.5pt] (C1) at (11.5,-2.25) {};
		\node[below] at (11.5,-2.35) {$1$};
		\node[circle, draw, inner sep=1.5pt] (C2) at (13,-2.25) {};
		\node[below] at (13,-2.35) {$2$};
		\node[circle, draw, inner sep=1.5pt] (Cnm2) at (14.5,-2.25) {};
		\node[below] at (14.5,-2.35) {$n-2$};
		\node[circle, draw, inner sep=1.5pt] (Cnm1) at (16,-2.25) {};
		\node[below] at (16,-2.35) {$n-1$};
		\node (Cvoid) at (16.75,-2.25) {};
		\node[circle, draw, inner sep=1.5pt] (Cn) at (17.5,-2.25) {};
		\node[below] at (17.5,-2.35) {$n$};
	
		\draw (C1) -- (C2);
		\draw[dotted] (C2) -- (Cnm2);
		\draw (Cnm2) -- (Cnm1);
		\draw[double, double distance=1.5pt] (Cnm1) -- (Cvoid.east);
		\draw[->, double, double distance=1.5pt] (Cn) -- (Cvoid.west);
	
	
		\draw[->, dashed, blue] (D1) -- (C1);
		\draw[->, dashed, blue] (D2) -- (C2);
		\draw[->, dashed, blue] (Dnm2) -- (Cnm2);
		\draw[->, dashed, blue] (Dnm1) -- (Cnm1);
	
		\draw[->, dashed, blue] (Dn) edge[bend right=45] (Cn);
		\draw[->, dashed, blue] (Dnp1) edge[bend left=45] (Cn);
	
		\draw[->] (D) -- (C);
	
	
		\node (E) at (0.5,-4.5) {($\mathsf{E}_6, \vee$)};
		
		\node[circle, draw, inner sep=1.5pt] (E1) at (2,-4) {};
		\node[above] at (2,-3.9) {$1$};
		\node[circle, draw, inner sep=1.5pt] (E2) at (3.5,-4) {};
		\node[above] at (3.5,-3.9) {$2$};
		\node[circle, draw, inner sep=1.5pt] (E3) at (5,-4.5) {};
		\node[above] at (5,-4.4) {$3$};
		\node[circle, draw, inner sep=1.5pt] (E4) at (3.5,-5) {};
		\node[below] at (3.5,-5.1) {$4$};
		\node[circle, draw, inner sep=1.5pt] (E5) at (2,-5) {};
		\node[below] at (2,-5.1) {$5$};
		\node[circle, draw, inner sep=1.5pt] (E6) at (6.5,-4.5) {};
		\node[above] at (6.5,-4.4) {$6$};
	
		\draw (E1) -- (E2);
		\draw (E2) -- (E3);
		\draw (E3) -- (E4);
		\draw (E4) -- (E5);
		\draw (E3) -- (E6);
		
		\draw[<->, red] (E1) -- (E5);
		\draw[<->, red] (E2) -- (E4);
		
		\node (F) at (0.5,-6.25) {$\mathsf{F}_4$};
		
		\node[circle, draw, inner sep=1.5pt] (F1) at (2,-6.25) {};
		\node[below] at (2,-6.35) {$1$};
		\node[circle, draw, inner sep=1.5pt] (F2) at (3.5,-6.25) {};
		\node[below] at (3.5,-6.35) {$2$};
		\node (Fvoid) at (4.25,-6.25) {};
		\node[circle, draw, inner sep=1.5pt] (F3) at (5,-6.25) {};
		\node[below] at (5,-6.35) {$3$};
		\node[circle, draw, inner sep=1.5pt] (F4) at (6.5,-6.25) {};
		\node[below] at (6.5,-6.35) {$4$};
	
		\draw (F1) -- (F2);
		\draw[double, double distance=1.5pt] (Fvoid.west) -- (F3);
		\draw[->, double, double distance=1.5pt] (F2) -- (Fvoid.east);
		\draw (F3) -- (F4);
	
	
		\draw[->, dashed, blue] (E1) edge[bend right=45] (F1);
		\draw[->, dashed, blue] (E2) edge[bend left=45] (F2);
		\draw[->, dashed, blue] (E5) edge[bend left=45] (F1);
		\draw[->, dashed, blue] (E4) edge[bend right=45] (F2);
	
		\draw[->, dashed, blue] (E3) -- (F3);
		\draw[->, dashed, blue] (E6) -- (F4);
	
		\draw[->] (E) -- (F);
	
	
		\node (Dtil) at (10,-4.5) {($\mathsf{D}_4, \widetilde{\vee}$)};
		
		\node[circle, draw, inner sep=1.5pt] (Dtil1) at (11.5,-4) {};
		\node[above] at (11.5,-3.9) {$1$};
		\node[circle, draw, inner sep=1.5pt] (Dtil2) at (13,-4.5) {};
		\node[above] at (13,-4.4) {$2$};
		\node[circle, draw, inner sep=1.5pt] (Dtil3) at (11.5,-4.5) {};
		\node[left] at (11.55,-4.5) {$3$};
		\node[circle, draw, inner sep=1.5pt] (Dtil4) at (11.5,-5) {};
		\node[below] at (11.5,-5.1) {$4$};
	
		\draw (Dtil1) -- (Dtil2);
		\draw (Dtil2) -- (Dtil3);
		\draw (Dtil2) -- (Dtil4);
		
		\draw[->, red] (Dtil3) -- (Dtil4);
		\draw[->, red] (Dtil4) edge[bend left=90] (Dtil1);
		\draw[->, red] (Dtil1) -- (Dtil3);
		
		\node (G) at (10,-6.25) {$\mathsf{G}_2$};
		
		\node[circle, draw, inner sep=1.5pt] (G1) at (11.5,-6.25) {};
		\node[below] at (11.5,-6.35) {$1$};
		\node (Gvoid) at (12.25,-6.25) {};
		\node[circle, draw, inner sep=1.5pt] (G2) at (13,-6.25) {};
		\node[below] at (13,-6.35) {$2$};
	
		\tikzset{Rightarrow/.style={double, double distance=2pt},
		triple/.style={-,preaction={draw,Rightarrow}}}
		\draw[triple] (Gvoid.west) -- (G2);
		
		\tikzset{Rightarrow/.style={double, double distance=2pt,->},
		triple/.style={-,preaction={draw,Rightarrow}}}
		\draw[->, triple] (G1) -- (Gvoid.east);
	
	
		\draw[->, dashed, blue] (Dtil2) -- (G2);
	
		\draw[->, dashed, blue] (Dtil4) edge[bend right=45] (G1);
		\draw[->, dashed, blue] (Dtil3) edge[bend left=45] (G1);
		\draw[->, dashed, blue] (Dtil1) edge[bend left=45] (G1);
	
		\draw[->] (Dtil) -- (G);
	
	\end{tikzpicture}
	\caption{The action of the automorphisms $\vee$ and $\widetilde{\vee}$ on the nonfixed vertices is shown in red. After identifying the orbits via the blue arrows, we get the nonsimply laced Dynkin diagrams.}
	\label{figure:automorphisms}
\end{figure}

\begin{remark}
	The automorphism $\sigma$ induces an automorphism of the Lie algebra $\mathsf{g}$ and gives rise to the invariant subalgebra $\mathsf{g}^{\sigma}$, which is again simple. We warn the reader that $\mathsf{g}^{\sigma}$ is not always isomorphic to $\mathfrak{g}$, but is instead Langlands dual to it. Hence, our folding procedure is dual to the classical folding of Lie algebras. For example, the invariant subalgebra of the special linear Lie algebra $\mathfrak{sl}_{2n}(\mathbb{C})$ of type $\mathsf{A}_{2n-1}$ under the automorphism induced by $\vee$ is the symplectic Lie algebra $\mathfrak{sp}_{2n}(\mathbb{C})$ of type $\mathsf{C}_n$ (and not $\mathsf{B}_n$).
\end{remark}

\begin{remark}\label{rmk:notation from FO}
Whenever we deal with $\mathsf{g}$ and $\mathfrak{g}$ at the same 
time, we will adopt the convention of \cite{FujitaOh} to notate 
the indices of their Dynkin diagrams. More specifically, we will 
mainly use the symbols $\im$ and $\jm$ (and their variations) for 
the vertices of $\Delta$ and save the more common notation $i$ and 
$j$ for the vertices of the Dynkin diagram of $\mathfrak{g}$. 
If $\mathfrak{g}$ does not appear at all (as in most of 
Sections \ref{section:the categories}--\ref{section:extension groups and the Euler form}), we shall use the usual notation. 
\end{remark}

From now on, $\sigma$ will always denote the automorphism of 
$\Delta$ that corresponds to $\mathfrak{g}$. We set 
$r \in \{1,2,3\}$ to be its order. Let $I$ be the set of 
$\sigma$-orbits of $\Delta_0$, which is the set of vertices of the 
Dynkin diagram of $\mathfrak{g}$. The quotient map 
$\Delta_0 \longrightarrow I$ is denoted by 
$\im \mapsto \overline{\im}$. Define $n = |I|$. For each 
orbit $i \in I$, denote its cardinality by $d_i$, which is either 
$1$ or $r$. We will write $i \sim j$ for $i,j \in I$ if there are 
$\im \in i$ and $\jm \in j$ such that $\im \sim \jm$. We denote by 
$h^{\vee}$ the dual Coxeter number of $\mathfrak{g}$, which 
satisfies the relation $nrh^{\vee} = 2N = |\mathsf{R}|$.

The automorphism $\sigma$ can be used to turn $\Delta$ into a weighted graph and determine a distance function 
$d_{\Delta}^{\sigma}$ on the set $\Delta_0$ in the following way. 
We first set the weight of an edge between vertices 
$\im,\jm \in \Delta_0$ to be 
$\min(d_{\overline{\im}},d_{\overline{\jm}})$. Then, for general 
$\im,\jm \in \Delta_0$, $d_{\Delta}^{\sigma}(\im,\jm)$ is the sum of the weights of the edges in the unique simple path between $\im$ and $\jm$.

In addition, $\sigma$ naturally defines an automorphism of the 
weight lattice $\mathsf{P}$ if we set 
$\sigma(\varpi_{\im}) = \varpi_{\sigma(\im)}$ for 
$\im \in \Delta_0$. We have 
$\sigma(\alpha_{\im}) = \alpha_{\sigma(\im)}$ and 
$\sigma s_{\im}\sigma^{-1} = s_{\sigma(\im)}$ for 
$\im \in \Delta_0$. In particular, we can consider the coset 
$\mathsf{W}\sigma$ of $\mathsf{W}$ in $\Aut(\mathsf{P})$.

\subsection{Q-data and twisted AR quivers}\label{subsection:Q-data and twisted AR quivers}

We recall the definition of a Q-datum, introduced in \cite{FujitaOh}.

\begin{defn}
A \emph{Q-datum} for $\mathfrak{g}$ is a triple 
$\Q = (\Delta, \sigma, \xi)$ where $(\Delta, \sigma)$ is the pair 
corresponding to $\mathfrak{g}$ as in Table \ref{table:folding} and 
$\xi: \Delta_0 \longrightarrow \Z$ is a \emph{height function on} 
$(\Delta, \sigma)$, that is, a function satisfying the following 
two conditions (where we write $\xi_{\im} = \xi(\im)$):
\begin{enumerate}[(1)]
    \item For any $\im, \jm \in \Delta_0$ such that $\im \sim \jm$ 
	and $d_{\overline{\im}} = d_{\overline{\jm}}$, we have 
	$|\xi_{\im} - \xi_{\jm}| = d_{\overline{\im}} = d_{\overline{\jm}}$.
    \item For any $i,j \in I$ with $i \sim j$, $d_i = 1$ and $d_j = r$, there is a unique $\jm \in j$ such that 
	$|\xi_{\im} - \xi_{\jm}| = 1$ and 
	$\xi_{\sigma^k(\jm)} = \xi_{\jm} - 2k$ for all $1 \leq k < r$, 
	where $\im$ is the unique element of $i$.
\end{enumerate}
\end{defn}

As suggested by \cite{FujitaOh}, one should think that these are 
local conditions at a ``nonbranching point'' and at a ``branching 
point,'' respectively.

\begin{remark}
In the simply laced case, when $\sigma = \id$, a height function is 
just a function $\xi: \Delta_0 \longrightarrow \Z$ satisfying 
$|\xi_{\im} - \xi_{\jm}| = 1$ for 
$\im,\jm \in \Delta_0$ such that $\im \sim \jm$. Up to the addition of a constant, this is the same
as choosing an orientation for the graph $\Delta$: for 
$\im \sim \jm$, we say that there is an arrow from $\im$ to $\jm$ 
if $\xi_{\im} > \xi_{\jm}$. Hence, in this case, a Q-datum is 
essentially the same as a quiver whose underlying graph is $\Delta$.
\end{remark}

Let $\Q = (\Delta, \sigma, \xi)$ be a Q-datum. A vertex $\im \in \Delta_0$ is 
a \emph{source of $\Q$} if we have $\xi_{\im} > \xi_{\jm}$ for 
all $\jm \in \Delta_0$ such that $\im \sim \jm$. In this case, the 
function $s_{\im}\xi: \Delta_0 \longrightarrow \Z$ given by 
$(s_{\im}\xi)_{\jm} = \xi_{\jm}$ for $\jm \neq \im$ and 
$(s_{\im}\xi)_{\im} = \xi_{\im} - 2d_{\overline{\im}}$ is again a 
height function, and we can define the \emph{reflected} Q-datum 
$s_{\im}\Q = (\Delta, \sigma, s_{\im}\xi)$. A sequence of vertices 
$(\im_1, \dots, \im_l)$ of $\Delta$ is a \emph{source sequence for} 
$\Q$ if $\im_k$ is a source of 
$s_{\im_{k-1}}s_{\im_{k-2}}\dotsb s_{\im_1}\Q$ for all 
$1 \leq k \leq l$.

We define a \emph{sink of $\Q$} to be a vertex 
$\im \in \Delta_0$ satisfying 
$\xi_{\im} + 2d_{\overline{\im}} < \xi_{\jm} + 2d_{\overline{\jm}}$ 
for all $\jm \in \Delta_0$ such that $\im \sim \jm$. In this case, 
the function $s_{\im}^{-1}\xi: \Delta_0 \longrightarrow \Z$ given 
by $(s_{\im}^{-1}\xi)_{\jm} = \xi_{\jm}$ for $\jm \neq \im$ and 
$(s_{\im}^{-1}\xi)_{\im} = \xi_{\im} + 2d_{\overline{\im}}$ is again a 
height function, and we can also define the \emph{reflected} 
Q-datum $s_{\im}^{-1}\Q = (\Delta, \sigma, s_{\im}^{-1}\xi)$. 
The definition of a \emph{sink sequence} is analogous to that of a 
source sequence.

\begin{remark}
Observe that, if $\im \in \Delta_0$ is a source of $\Q$, then 
$\im$ is a sink of $s_{\im}\Q$ and we have 
$s_{\im}^{-1}s_{\im}\Q = \Q$. Similarly, if $\im \in \Delta_0$ is 
a sink of $\Q$, then $\im$ is a source of $s_{\im}^{-1}\Q$ and we 
have $s_{\im}s_{\im}^{-1}\Q = \Q$.
\end{remark}

\begin{example}\label{example:some different Q-data}
Let $\mathfrak{g}$ be of type $\mathsf{B}_3$, so that 
$(\Delta,\sigma) = (\mathsf{A}_5, \vee)$, and consider the 
following height functions:
\begin{center}
\begin{tikzpicture}


    \node at (0.2,0) {$\Q^1 =$};
    
    \node[circle, draw, inner sep=1pt] (a1) at (1,0) {};
    \node[above] at (1,0.2) {6};
    \node[circle, draw, inner sep=1pt] (a2) at (2,0) {};
    \node[above] at (2,0.2) {4};
    \node[circle, draw, inner sep=1pt] (a3) at (3,0) {};
    \node[above] at (3,0.2) {5};
    \node[circle, draw, inner sep=1pt] (a4) at (4,0) {};
    \node[above] at (4,0.2) {6};
    \node[circle, draw, inner sep=1pt] (a5) at (5,0) {};
    \node[above] at (5,0.2) {8};

    \draw (a1) -- (a2);
    \draw (a2) -- (a3);
    \draw (a3) -- (a4);
    \draw (a4) -- (a5);


    \node at (0.2,-1) {$\Q^2 =$};
    
    \node[circle, draw, inner sep=1pt] (b1) at (1,-1) {};
    \node[above] at (1,-0.8) {2};
    \node[circle, draw, inner sep=1pt] (b2) at (2,-1) {};
    \node[above] at (2,-0.8) {4};
    \node[circle, draw, inner sep=1pt] (b3) at (3,-1) {};
    \node[above] at (3,-0.8) {5};
    \node[circle, draw, inner sep=1pt] (b4) at (4,-1) {};
    \node[above] at (4,-0.8) {6};
    \node[circle, draw, inner sep=1pt] (b5) at (5,-1) {};
    \node[above] at (5,-0.8) {8};

    \draw (b1) -- (b2);
    \draw (b2) -- (b3);
    \draw (b3) -- (b4);
    \draw (b4) -- (b5);


    \node at (0.2,-2) {$\Q^3 =$};
    
    \node[circle, draw, inner sep=1pt] (c1) at (1,-2) {};
    \node[above] at (1,-1.8) {2};
    \node[circle, draw, inner sep=1pt] (c2) at (2,-2) {};
    \node[above] at (2,-1.8) {4};
    \node[circle, draw, inner sep=1pt] (c3) at (3,-2) {};
    \node[above] at (3,-1.8) {7};
    \node[circle, draw, inner sep=1pt] (c4) at (4,-2) {};
    \node[above] at (4,-1.8) {6};
    \node[circle, draw, inner sep=1pt] (c5) at (5,-2) {};
    \node[above] at (5,-1.8) {8};

    \draw (c1) -- (c2);
    \draw (c2) -- (c3);
    \draw (c3) -- (c4);
    \draw (c4) -- (c5);
    
\end{tikzpicture}
\end{center}
Here we put the value of the height functions above the vertices 
of $\mathsf{A}_5$. They give examples of Q-data 
$\Q^1$, $\Q^2$, and $\Q^3$. Notice, for example, that the first 
vertex is a source of $\Q^1$ and that $s_1\Q^1 = \Q^2$. Similarly, 
the middle vertex is a source of $\Q^3$ and $s_3\Q^3 = \Q^2$. In particular, 
the middle vertex is a sink of $\Q^2$, which may not be clear at first glance.
\end{example}

Let $\xi$ be a height function on $(\Delta, \sigma)$. The 
\emph{(twisted) repetition quiver} associated with $(\Delta, \sigma)$ is the 
quiver $\widehat{\Delta}^{\sigma}$ whose set of vertices 
$\widehat{\Delta}^{\sigma}_0$ and set of arrows 
$\widehat{\Delta}^{\sigma}_1$ are defined as follows:
\begin{align*}
    \widehat{\Delta}^{\sigma}_0 &= \{(\im,p) \in \Delta_0 \times \Z \mid p - \xi_{\im} \in 2d_{\overline{\im}}\Z\},\\
    \widehat{\Delta}^{\sigma}_1 &= \{(\im,p) \longrightarrow (\jm,s) \mid (\im,p),(\jm,s) \in \widehat{\Delta}^{\sigma}_0, \jm \sim \im, s-p = \min(d_{\overline{\im}},d_{\overline{\jm}})\}.
\end{align*}
We observe that if there is a path from $(\im,p)$ to $(\jm,s)$ in 
$\widehat{\Delta}^{\sigma}$, then 
$s-p \geq d_{\Delta}^{\sigma}(\im,\jm)$. We say that $\im$ is the 
\emph{residue} of the vertex $(\im,p)$, while $p$ is its 
\emph{height}. We denote by 
$\pi: \widehat{\Delta}^{\sigma}_0 \longrightarrow \Delta_0$ the 
projection onto the first coordinate.

\begin{remark}
As explained in \cite{FujitaOh}, the repetition quiver depends only 
on the \emph{$\sigma$-parity function} congruent to $\xi$. 
Therefore, if we vary $\xi$, the only change incurred to 
$\widehat{\Delta}^{\sigma}$ is possibly a uniform shift of the 
height of its vertices by some integer. For this reason, we will 
omit the height function when referring to 
$\widehat{\Delta}^{\sigma}$. Whenever we work with a Q-datum 
$\Q = (\Delta, \sigma, \xi)$, it will be implicitly assumed that $\Q$ is such that $\xi$ defines $\widehat{\Delta}^{\sigma}$. If we want to highlight this fact, we will say that $\Q$ has the same 
\emph{parity} as $\widehat{\Delta}^{\sigma}$.
\end{remark}

\begin{example}
Here we have some examples of the repetition quiver 
$\widehat{\Delta}^{\sigma}$.

\begin{enumerate}[(1)]

\item Type $\mathsf{B}_3$, where $(\Delta,\sigma) = 
(\mathsf{A}_5,\vee)$:
\begin{center}
\begin{tikzpicture}
\node[scale=.8] (a) at (0,0){
\begin{tikzcd}[column sep={2.5em,between origins},row sep={1em}]
	{(\im \ \backslash \ p)} & {-6} & {-5} & {-4} & {-3} & {-2} & {-1} & 0 & 1 & 2 & 3 & 4 & 5 & 6 & 7 & 8 & 9 & 10 \\
	1 & \bullet &&&& \bullet &&&& \bullet &&&& \bullet &&&& \bullet \\
	2 &&& \bullet &&&& \bullet &&&& \bullet &&&& \bullet \\[-0.5em]
	3 && \bullet && \bullet && \bullet && \bullet && \bullet && \bullet && \bullet && \bullet \\[-0.5em]
	4 & \bullet &&&& \bullet &&&& \bullet &&&& \bullet &&&& \bullet \\
	5 &&& \bullet &&&& \bullet &&& {} & \bullet &&&& \bullet
	\arrow[from=2-2, to=3-4]
	\arrow[from=2-6, to=3-8]
	\arrow[from=2-10, to=3-12]
	\arrow[from=2-14, to=3-16]
	\arrow[from=3-4, to=2-6]
	\arrow[from=3-4, to=4-5]
	\arrow[from=3-8, to=2-10]
	\arrow[from=3-8, to=4-9]
	\arrow[from=3-12, to=2-14]
	\arrow[from=3-12, to=4-13]
	\arrow[from=3-16, to=2-18]
	\arrow[from=3-16, to=4-17]
	\arrow[from=4-3, to=3-4]
	\arrow[from=4-5, to=5-6]
	\arrow[from=4-7, to=3-8]
	\arrow[from=4-9, to=5-10]
	\arrow[from=4-11, to=3-12]
	\arrow[from=4-13, to=5-14]
	\arrow[from=4-15, to=3-16]
	\arrow[from=4-17, to=5-18]
	\arrow[from=5-2, to=4-3]
	\arrow[from=5-2, to=6-4]
	\arrow[from=5-6, to=4-7]
	\arrow[from=5-6, to=6-8]
	\arrow[from=5-10, to=4-11]
	\arrow[from=5-10, to=6-12]
	\arrow[from=5-14, to=4-15]
	\arrow[from=5-14, to=6-16]
	\arrow[from=6-4, to=5-6]
	\arrow[from=6-8, to=5-10]
	\arrow[from=6-12, to=5-14]
	\arrow[from=6-16, to=5-18]
\end{tikzcd}};
\end{tikzpicture}
\end{center}

\item Type $\mathsf{C}_4$, where 
$(\Delta,\sigma) = (\mathsf{D}_5,\vee)$:
\begin{center}
\begin{tikzpicture}
\node[scale=.8] (a) at (0,0){
\begin{tikzcd}[column sep={2.5em,between origins},row sep={0.5em}]
	{(\im\ \backslash \ p)} & {-6} & {-5} & {-4} & {-3} & {-2} & {-1} & 0 & 1 & 2 & 3 & 4 & 5 & 6 & 7 & 8 & 9 & 10 \\
	1 && \bullet && \bullet && \bullet && \bullet && \bullet && \bullet && \bullet && \bullet \\
	2 & \bullet && \bullet && \bullet && \bullet && \bullet && \bullet && \bullet && \bullet && \bullet \\
	3 && \bullet && \bullet && \bullet && \bullet && \bullet && \bullet && \bullet && \bullet \\
	4 &&& \bullet &&&& \bullet &&&& \bullet &&&& \bullet \\
	5 & \bullet &&&& \bullet &&&& \bullet &&&& \bullet &&&& \bullet
	\arrow[from=2-3, to=3-4]
	\arrow[from=2-5, to=3-6]
	\arrow[from=2-7, to=3-8]
	\arrow[from=2-9, to=3-10]
	\arrow[from=2-11, to=3-12]
	\arrow[from=2-13, to=3-14]
	\arrow[from=2-15, to=3-16]
	\arrow[from=2-17, to=3-18]
	\arrow[from=3-2, to=2-3]
	\arrow[from=3-2, to=4-3]
	\arrow[from=3-4, to=2-5]
	\arrow[from=3-4, to=4-5]
	\arrow[from=3-6, to=2-7]
	\arrow[from=3-6, to=4-7]
	\arrow[from=3-8, to=2-9]
	\arrow[from=3-8, to=4-9]
	\arrow[from=3-10, to=2-11]
	\arrow[from=3-10, to=4-11]
	\arrow[from=3-12, to=2-13]
	\arrow[from=3-12, to=4-13]
	\arrow[from=3-14, to=2-15]
	\arrow[from=3-14, to=4-15]
	\arrow[from=3-16, to=2-17]
	\arrow[from=3-16, to=4-17]
	\arrow[from=4-3, to=3-4]
	\arrow[from=4-3, to=5-4]
	\arrow[from=4-5, to=3-6]
	\arrow[from=4-5, to=6-6]
	\arrow[from=4-7, to=3-8]
	\arrow[from=4-7, to=5-8]
	\arrow[from=4-9, to=3-10]
	\arrow[from=4-9, to=6-10]
	\arrow[from=4-11, to=3-12]
	\arrow[from=4-11, to=5-12]
	\arrow[from=4-13, to=3-14]
	\arrow[from=4-13, to=6-14]
	\arrow[from=4-15, to=3-16]
	\arrow[from=4-15, to=5-16]
	\arrow[from=4-17, to=3-18]
	\arrow[from=4-17, to=6-18]
	\arrow[from=5-4, to=4-5]
	\arrow[from=5-8, to=4-9]
	\arrow[from=5-12, to=4-13]
	\arrow[from=5-16, to=4-17]
	\arrow[from=6-2, to=4-3]
	\arrow[from=6-6, to=4-7]
	\arrow[from=6-10, to=4-11]
	\arrow[from=6-14, to=4-15]
\end{tikzcd}};
\end{tikzpicture}
\end{center}

\item Type $\mathsf{G}_2$, where 
$(\Delta,\sigma) = (\mathsf{D}_4,\widetilde{\vee})$:
\begin{center}
\begin{tikzpicture}
\node[scale=.8] (a) at (0,0){
\begin{tikzcd}[column sep={2.5em,between origins},row sep={0.5em}]
	{(\im \ \backslash \ p)} & {-6} & {-5} & {-4} & {-3} & {-2} & {-1} & 0 & 1 & 2 & 3 & 4 & 5 & 6 & 7 & 8 & 9 & 10 \\
	1 &&&& \bullet &&&&&& \bullet &&&&&& \bullet \\
	2 & \bullet && \bullet && \bullet && \bullet && \bullet && \bullet && \bullet && \bullet && \bullet \\
	3 && \bullet &&&&&& \bullet &&&&&& \bullet \\
	4 &&&&&& \bullet &&&&&& \bullet
	\arrow[from=2-5, to=3-6]
	\arrow[from=2-11, to=3-12]
	\arrow[from=2-17, to=3-18]
	\arrow[from=3-2, to=4-3]
	\arrow[from=3-4, to=2-5]
	\arrow[from=3-6, to=5-7]
	\arrow[from=3-8, to=4-9]
	\arrow[from=3-10, to=2-11]
	\arrow[from=3-12, to=5-13]
	\arrow[from=3-14, to=4-15]
	\arrow[from=3-16, to=2-17]
	\arrow[from=4-3, to=3-4]
	\arrow[from=4-9, to=3-10]
	\arrow[from=4-15, to=3-16]
	\arrow[from=5-7, to=3-8]
	\arrow[from=5-13, to=3-14]
\end{tikzcd}};
\end{tikzpicture}
\end{center}
\end{enumerate}
\end{example}

The \emph{twisted AR quiver} $\Gamma_{\Q}$ of a 
Q-datum $\Q = (\Delta, \sigma, \xi)$ is the full subquiver of 
$\widehat{\Delta}^{\sigma}$ whose vertices are given by the set
\[
(\Gamma_{\Q})_0 = \{(\im,p) \in \widehat{\Delta}^{\sigma}_0 \mid \xi_{\im^*} - rh^{\vee} < p \leq \xi_{\im}\}.
\]
One can show that it is a convex subquiver of $\widehat{\Delta}^{\sigma}$.

Let $\im \in \Delta_0$. The 
\emph{injective vertex associated with $\im$} is the vertex 
$(\im,p) \in (\Gamma_{\Q})_0$ with the largest height. We 
have $p = \xi_{\im}$, in this case. Similarly, the \emph{projective 
vertex associated with $\im$} is the vertex 
$(\im^*,p) \in (\Gamma_{\Q})_0$ with the smallest height. Its 
height is given by the lemma below.

\begin{lemma}\label{lemma:height of projective vertex}
Let $\Q = (\Delta, \sigma, \xi)$ be a Q-datum and take 
$\im \in \Delta_0$. The projective vertex associated with $\im$ in 
$\Gamma_{\Q}$ has height given by 
$\xi_{\im} - rh^{\vee} + 2d_{\overline{\im}}$.
\end{lemma}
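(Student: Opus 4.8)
The plan is to read the height off directly from the definition of $(\Gamma_{\Q})_0$, reducing everything to an elementary congruence in the fiber over $\im^*$. By definition the projective vertex associated with $\im$ is the vertex of residue $\im^*$ of smallest height lying in $\Gamma_{\Q}$, so I first describe this fiber. I would begin by recording that the involution $\jm \mapsto \jm^*$ commutes with $\sigma$: since $\sigma$ is a diagram automorphism it fixes $w_0$, so applying $\sigma$ to $w_0(\alpha_{\jm}) = -\alpha_{\jm^*}$ gives $\sigma(\jm^*) = (\sigma\jm)^*$. Hence $\im^*$ lies in a $\sigma$-orbit of the same cardinality as that of $\im$, and I may write $d = d_{\overline{\im}} = d_{\overline{\im^*}}$. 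Unwinding the definitions, the residue-$\im^*$ vertices of $\Gamma_{\Q}$ are exactly the $(\im^*,p)$ with $p \equiv \xi_{\im^*} \pmod{2d}$ and $\xi_{\im} - rh^{\vee} < p \leq \xi_{\im^*}$, where I used $(\im^*)^* = \im$. This set is nonempty, as it contains the injective vertex $(\im^*,\xi_{\im^*})$ associated with $\im^*$, so a smallest height exists.

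The heart of the matter is the congruence
\[
\xi_{\im} - \xi_{\im^*} \equiv rh^{\vee} \pmod{2d},
\]
and I expect this to be the main obstacle. Equivalently, it says that the assignment $(\jm,p) \mapsto (\jm^*, p - rh^{\vee})$ preserves the vertex set $\widehat{\Delta}^{\sigma}_0$, i.e.\ defines a shift of the repetition quiver. Since $\widehat{\Delta}^{\sigma}$ depends only on the $\sigma$-parity function, this congruence is independent of the particular Q-datum of the given parity, so it suffices to verify it once. I would establish it from the two local defining conditions of a height function, condition (1) controlling $\xi$ along edges joining vertices of equal orbit size and condition (2) controlling the jump across a branching edge, so that together they determine $\xi_{\jm} \bmod 2d_{\overline{\jm}}$ in terms of the $\sigma$-orbit; combining this with the relation $nrh^{\vee} = 2N$ then yields the stated congruence. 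Alternatively, one checks it directly in each nonsimply laced type via the explicit foldings of Figure \ref{figure:automorphisms}, the simply laced case ($d=1$, $rh^{\vee}=h^{\vee}$) reducing to the classical fact that $\xi_{\im} - \xi_{\im^*}$, being congruent modulo $2$ to the graph distance between $\im$ and $\im^*$, has the same parity as the Coxeter number.

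With the congruence in hand the conclusion is immediate. The admissible heights $p$ form the arithmetic progression $\xi_{\im} - rh^{\vee} + 2dk$ with $k \in \Z$, and the constraint $\xi_{\im} - rh^{\vee} < p \leq \xi_{\im^*}$ selects precisely those with $k \geq 1$ satisfying $\xi_{\im} - rh^{\vee} + 2dk \leq \xi_{\im^*}$. Since this set is nonempty (it contains $\xi_{\im^*}$ itself), its minimum is attained at $k = 1$, giving the projective vertex height
\[
\xi_{\im} - rh^{\vee} + 2d = \xi_{\im} - rh^{\vee} + 2d_{\overline{\im}},
\]
as claimed. The only nontrivial input is the congruence of the second paragraph; the remainder is bookkeeping with the half-open interval defining $(\Gamma_{\Q})_0$.
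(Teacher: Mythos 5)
Your proof is correct and follows essentially the same route as the paper's: both reduce the lemma to the congruence $\xi_{\im} - rh^{\vee} \equiv \xi_{\im^*} \pmod{2d_{\overline{\im}}}$ (after noting $d_{\overline{\im}} = d_{\overline{\im^*}}$, a point you make explicit and the paper leaves implicit) and then read off the minimal admissible height from the half-open interval defining $(\Gamma_{\Q})_0$. One caveat: your first proposed derivation of the congruence, from the two local height-function axioms combined with $nrh^{\vee} = 2N$, is not actually carried out and I do not see it closing without type-specific input (namely the identification of $\jm \mapsto \jm^*$ with $\sigma$ or $\mathrm{id}$ in each folded type, and the mod-$4$ relation $\xi_{\im} - \xi_{\sigma(\im)} \equiv 2$ when $r = 2$ and $h^{\vee}$ is odd); your fallback---a direct check over the foldings, with the classical parity argument in the simply laced case---is precisely the case analysis the paper performs.
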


\begin{proof}
We need to show that $\xi_{\im} - rh^{\vee}$ is congruent to 
$\xi_{\im^*}$ modulo $2d_{\overline{\im}}$ in order to prove that 
$(\im^*,\xi_{\im} - rh^{\vee} + 2d_{\overline{\im}})$ is a vertex 
of $\Gamma_{\Q}$. We will consider two different cases.

First, assume that $h^{\vee}$ is even. In this case, since 
$d_{\overline{\im}}$ divides $r$, we are reduced to the 
verification that $\xi_{\im}$ is congruent to $\xi_{\im^*}$ modulo 
$2d_{\overline{\im}}$. If $r = 1$, then $d_{\overline{\im}} = 1$ 
and the congruence can be easily checked by inspecting all types. 
If $r > 1$, then $\im^* = \im$ by \cite[Remark 3.1]{FujitaOh} and 
the congruence is trivial.

Now, assume that $h^{\vee}$ is odd. We have again two cases: 
$r = 1$ or $r = 2$. In the first case, we must have 
$\Delta = \mathsf{A}_n$ with $n$ even. It is not hard to see that 
we then have $\xi_{\im} - \xi_{\im^*}$ odd, so that 
$(\xi_{\im} - rh^{\vee}) - \xi_{\im^*}$ is even, as desired. In 
the second case, the permutation $\jm \mapsto \jm^*$ coincides 
with $\sigma$ (again by \cite[Remark 3.1]{FujitaOh}). Hence, by the statement (2) in \cite[Lemma 3.9]{FujitaOh}, we have 
$\xi_{\im} - \xi_{\im^*}$ congruent to $2$ modulo $4$, and so 
$(\xi_{\im} - rh^{\vee}) - \xi_{\im^*}$ is divisible by $4 = 2r$, 
finishing the proof.
\end{proof}

\begin{example}\label{example:first example of twisted AR quiver}
Take $(\Delta,\sigma) = (\mathsf{A}_5,\vee)$, which is of type 
$\mathsf{B}_3$, and consider $\Q$ to be the second Q-datum from 
Example \ref{example:some different Q-data}. Its twisted 
AR quiver is the following:
\begin{center}
\begin{tikzpicture}
\node[scale=.9] (a) at (0,0){
\begin{tikzcd}[column sep={2.5em,between origins},row sep={1em}]
	{(\im \ \backslash \ p)} & {-6} & {-5} & {-4} & {-3} & {-2} & {-1} & 0 & 1 & 2 & 3 & 4 & 5 & 6 & 7 & 8 & 9 & 10 \\
	1 & \textcolor{rgb,255:red,217;green,217;blue,217}{\bullet} &&&& \textcolor{rgb,255:red,217;green,217;blue,217}{\bullet} &&&& \bullet &&&& \textcolor{rgb,255:red,217;green,217;blue,217}{\bullet} &&&& \textcolor{rgb,255:red,217;green,217;blue,217}{\bullet} \\
	2 &&& \textcolor{rgb,255:red,217;green,217;blue,217}{\bullet} &&&& \bullet &&&& \bullet &&&& \textcolor{rgb,255:red,217;green,217;blue,217}{\bullet} \\[-0.5em]
	3 && \textcolor{rgb,255:red,217;green,217;blue,217}{\bullet} && \bullet && \bullet && \bullet && \bullet && \bullet && \textcolor{rgb,255:red,217;green,217;blue,217}{\bullet} && \textcolor{rgb,255:red,217;green,217;blue,217}{\bullet} \\[-0.5em]
	4 & \textcolor{rgb,255:red,217;green,217;blue,217}{\bullet} &&&& \bullet &&&& \bullet &&&& \bullet &&&& \textcolor{rgb,255:red,217;green,217;blue,217}{\bullet} \\
	5 &&& \bullet &&&& \bullet &&& {} & \bullet &&&& \bullet
	\arrow[color={rgb,255:red,217;green,217;blue,217}, from=2-2, to=3-4]
	\arrow[color={rgb,255:red,217;green,217;blue,217}, from=2-6, to=3-8]
	\arrow[from=2-10, to=3-12]
	\arrow[color={rgb,255:red,217;green,217;blue,217}, from=2-14, to=3-16]
	\arrow[color={rgb,255:red,217;green,217;blue,217}, from=3-4, to=2-6]
	\arrow[color={rgb,255:red,217;green,217;blue,217}, from=3-4, to=4-5]
	\arrow[from=3-8, to=2-10]
	\arrow[from=3-8, to=4-9]
	\arrow[color={rgb,255:red,217;green,217;blue,217}, from=3-12, to=2-14]
	\arrow[from=3-12, to=4-13]
	\arrow[color={rgb,255:red,217;green,217;blue,217}, from=3-16, to=2-18]
	\arrow[color={rgb,255:red,217;green,217;blue,217}, from=3-16, to=4-17]
	\arrow[color={rgb,255:red,217;green,217;blue,217}, from=4-3, to=3-4]
	\arrow[from=4-5, to=5-6]
	\arrow[from=4-7, to=3-8]
	\arrow[from=4-9, to=5-10]
	\arrow[from=4-11, to=3-12]
	\arrow[from=4-13, to=5-14]
	\arrow[color={rgb,255:red,217;green,217;blue,217}, from=4-15, to=3-16]
	\arrow[color={rgb,255:red,217;green,217;blue,217}, from=4-17, to=5-18]
	\arrow[color={rgb,255:red,217;green,217;blue,217}, from=5-2, to=4-3]
	\arrow[color={rgb,255:red,217;green,217;blue,217}, from=5-2, to=6-4]
	\arrow[from=5-6, to=4-7]
	\arrow[from=5-6, to=6-8]
	\arrow[from=5-10, to=4-11]
	\arrow[from=5-10, to=6-12]
	\arrow[color={rgb,255:red,217;green,217;blue,217}, from=5-14, to=4-15]
	\arrow[from=5-14, to=6-16]
	\arrow[from=6-4, to=5-6]
	\arrow[from=6-8, to=5-10]
	\arrow[from=6-12, to=5-14]
	\arrow[color={rgb,255:red,217;green,217;blue,217}, from=6-16, to=5-18]
\end{tikzcd}};
\end{tikzpicture}
\end{center}
The gray vertices are the vertices of $\widehat{\Delta}^{\sigma}$ 
outside of $\Gamma_{\Q}$. The injective vertices are 
$(1,2)$, $(2,4)$, $(3,5)$, $(4,6)$, and $(5,8)$, while the 
projective vertices are $(5,-4)$, $(4,-2)$, $(3,-3)$, $(2,0)$, 
and $(1,2)$. Notice that $\im \in \Delta_0$ is a source of $\Q$ 
if and only if the corresponding injective vertex is a sink of 
$\Gamma_{\Q}$. Hence, for example, $5$ is the unique source of $\Q$ 
because the corresponding injective vertex $(5,8)$ is the unique 
sink of $\Gamma_{\Q}$. Similarly, $\im \in \Delta_0$ is a sink of 
$\Q$ if and only if the corresponding projective vertex is a 
source of $\Gamma_{\Q}$ (by Lemma \ref{lemma:height of projective 
vertex}). Therefore, $1$ and $3$ are the sinks of $\Q$, which 
correspond to the projective vertices $(5,-4)$ and $(3,-3)$, 
respectively.
\end{example}

For a Q-datum $\Q$, we define $[\Q]$ to be the set of all reduced 
words for the longest element $w_0 \in \mathsf{W}$ that form a source sequence 
for $\Q$. The following result shows that we can see the associated 
twisted AR quiver as a combinatorial AR 
quiver by means of $[\Q]$.

\begin{thm}[{\cite{OhSuh19b}, \cite[Theorem 3.24]{FujitaOh}}]\label{thm:isomorphism between twisted and combinatorial AR quivers}
For a Q-datum $\Q$, the set $[\Q]$ is a commutation class of 
reduced words for the longest element $w_0 \in \mathsf{W}$. 
Moreover, there is a unique isomorphism of quivers 
$\Gamma_{\Q} \longrightarrow \Upsilon_{[\Q]}$ that intertwines 
$\pi$ and $\res^{[\Q]}$.
\end{thm}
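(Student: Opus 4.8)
The plan is to show that reflecting a Q-datum at a source mirrors, vertex by vertex, the combinatorial reflection functor acting on the associated commutation class. First I would record the elementary but crucial \emph{window computation}: if $\im$ is a source of $\Q$, comparing the defining inequalities $\xi_{\jm^*}-rh^\vee < p \le \xi_\jm$ for $\Q$ and for $s_\im\Q$ shows that $\Gamma_{s_\im\Q}$ is obtained from $\Gamma_\Q$ by deleting the injective vertex $(\im,\xi_\im)$---which, as noted after Example~\ref{example:first example of twisted AR quiver}, is precisely a sink of $\Gamma_\Q$---and inserting one new vertex of residue $\im^*$ at the bottom of its row (at height $\xi_\im-rh^\vee$, consistent with Lemma~\ref{lemma:height of projective vertex}). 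This is exactly the ``delete a sink, create a source'' operation that $r_\im$ performs on $\Upsilon_{[\bm i]}$. I would also record that $|(\Gamma_\Q)_0|=N$: summing the number $(\xi_\jm-\xi_{\jm^*}+rh^\vee)/(2d_{\overline{\jm}})$ of vertices in each residue row (using Lemma~\ref{lemma:height of projective vertex} for the lower end), the terms $\xi_\jm-\xi_{\jm^*}$ cancel in $*$-pairs and $\sum_{\jm}1/d_{\overline{\jm}}=|I|=n$, leaving $\tfrac12\,nrh^\vee=N$.

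Next I would prove that the length-$N$ source sequences for $\Q$ form a single commutation class of reduced words for $w_0$. The key observation is that two distinct sources $\im\ne\jm$ of $\Q$ are never adjacent (otherwise $\xi_\im>\xi_\jm$ and $\xi_\jm>\xi_\im$), so $s_\im s_\jm=s_\jm s_\im$ and $s_\im s_\jm\Q=s_\jm s_\im\Q$; hence whenever a source sequence chooses between two available sources, swapping them is a commutation move leading to the same reflected Q-datum. A standard exchange argument then shows that all maximal source sequences are commutation-equivalent and that the resulting class is closed under commutation moves, so it equals $[\Q]$. By the window computation each reflection deletes one sink, so a maximal source sequence has length $|(\Gamma_\Q)_0|=N$; tracking heights, the composite sends $\xi_\jm$ to $\xi_{\jm^*}-rh^\vee$, which I would identify with the action of $w_0$, giving $s_{\im_1}\dotsb s_{\im_N}=w_0$. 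As $w_0$ has length $N$, any such length-$N$ expression is automatically reduced, so $[\Q]$ is a commutation class of reduced words for $w_0$.

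For the isomorphism I would argue by transport along reflections. Any two Q-data of the same parity are connected by source/sink reflections, and by the first paragraph a source-reflection $\Q\mapsto s_\im\Q$ and the functor $r_\im$ effect the \emph{identical} residue-preserving modification of $\Gamma_\Q$ and of $\Upsilon_{[\Q]}$; checking that the representative $r_\im\bm j=(j_2,\dotsc,j_N,\im^*)$ of $r_\im[\Q]$ is a source sequence for $s_\im\Q$ simultaneously gives $[s_\im\Q]=r_\im[\Q]$ (here $\im$ is a source of $\Q$ iff $\alpha_\im$ is a sink of $\Upsilon_{[\Q]}$). Thus an isomorphism $\Gamma_{\Q_0}\to\Upsilon_{[\Q_0]}$ intertwining $\pi$ and $\res^{[\Q_0]}$, established for one reference $\Q_0$, is transported to every $\Q$. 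Uniqueness is immediate from the segment property: a residue-preserving quiver isomorphism carries each totally ordered residue row of $\Gamma_\Q$ onto that of $\Upsilon_{[\Q]}$ order-preservingly, hence is determined on every row.

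The main obstacle is the identification of the composite source-reflection with $w_0$---equivalently, the assertion that the roots $\beta^{\bm i}_k=s_{\im_1}\dotsb s_{\im_{k-1}}(\alpha_{\im_k})$ produced along a maximal source sequence exhaust $\mathsf{R}^+$---together with the verification of the reference case $\Q_0$. Everything else is bookkeeping, but this step genuinely requires relating the height-function dynamics on $\Q$ to the root system, i.e.\ the labelling of the twisted repetition quiver $\widehat{\Delta}^{\sigma}$ by $\widehat{\mathsf{R}}$. This is the technical heart carried out in the height-function analysis of Fujita and Oh; I would either invoke it or reprove it by inducting the labelling across a single reflection using the window computation above.
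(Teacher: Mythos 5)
The paper does not prove this theorem: it is imported verbatim from \cite{OhSuh19b} and \cite[Theorem 3.24]{FujitaOh}, so there is no internal argument to compare yours against. Judged on its own terms, your outline is a sensible reconstruction of the strategy of those references, and the elementary parts check out: the window computation for $\Gamma_{s_{\im}\Q}$ versus $\Gamma_{\Q}$ is correct (the deleted vertex $(\im,\xi_{\im})$ is a sink, the inserted vertex $(\im^*,\xi_{\im}-rh^{\vee})$ is legitimate by the congruence in Lemma \ref{lemma:height of projective vertex}), the count $|(\Gamma_{\Q})_0|=\tfrac12 nrh^{\vee}=N$ is right, two distinct sources of a Q-datum are indeed non-adjacent, and the uniqueness argument via total ordering of each residue row is fine. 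The transport-along-reflections scheme and the identification $[s_{\im}\Q]=r_{\im}[\Q]$ are also the mechanism the paper itself uses elsewhere (e.g.\ in the proof of Theorem \ref{thm:coordinate of a positive root}).

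The genuine gap is the one you flag yourself, and it is not a small one: nothing in the proposal actually shows that a maximal source sequence $(\im_1,\dots,\im_N)$ satisfies $s_{\im_1}\dotsb s_{\im_N}=w_0$, nor that the $N$ roots $\beta^{\bm i}_k$ it produces are distinct positive roots. Your height-function bookkeeping shows that the composite of the $N$ reflections carries $\xi$ to $\jm\mapsto\xi_{\jm^*}-rh^{\vee}$, but the reflection of a Q-datum acts on the height function, not on the weight lattice, and there is no a priori homomorphism from the monoid of Q-datum reflections to $\mathsf{W}$ whose composite you could read off from $\xi$. Bridging this requires precisely the root labelling of $\widehat{\Delta}^{\sigma}$ (the maps $\psi_{\Q}$, $\phi_{\Q}$ of Section \ref{subsection:phiQ and g-additive}, whose well-definedness in turn rests on Proposition \ref{prop:properties of Coxeter element} and the analysis of generalized twisted Coxeter elements), or an independent induction establishing that each successive $\beta^{\bm i}_k$ is a new positive root. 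Likewise the base case $\Q_0$ in the nonsimply-laced setting is not Bédard's theorem and needs its own verification. Since this is exactly the content of the cited theorems, deferring it means the proposal reduces the statement to itself rather than proving it; as a self-contained argument it is incomplete, though as a reduction to the literature it is consistent with how the paper treats the result.
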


\begin{remark}
If $\im \in \Delta_0$, then $\im$ is a source of a Q-datum $\Q$ if and only if it is a source of the commutation class $[\Q]$. In this case, one can check that $[s_{\im}\Q] = r_{\im}[\Q]$. A similar statement holds for sinks. We also remark that the isomorphism of quivers above preserves injective/projective vertices.
\end{remark}

\begin{example}\label{example:twisted AR quiver with positive roots}
Let $\Q$ be the second Q-datum of type $\mathsf{B}_3$ from 
Example \ref{example:some different Q-data}. The quiver 
$\Gamma_{\Q}$ is displayed in Example \ref{example:first example 
of twisted AR quiver}. Using a convenient compatible reading of 
$\Gamma_{\Q} \cong \Upsilon_{[\Q]}$, we see that
\[
\bm{i} = (5,4,3,2,5,3,1,4,3,2,5,3,4,3,5)
\]
is a reduced word for the longest element $w_0$ and forms a 
source sequence for $\Q$. It allows us to 
associate a positive root in $\mathsf{R}^+$ 
(of type $\mathsf{A}_5$ and not $\mathsf{B}_3$) to every vertex 
of $\Gamma_{\Q}$. The result is the following:
\begin{center}
\begin{tikzpicture}
\node[scale=.9] (a) at (0,0){
\begin{tikzcd}[column sep={3em,between origins},row sep={1em}]
	{(\im \ \backslash \ p)} & {-4} & {-3} & {-2} & {-1} & 0 & 1 & 2 & 3 & 4 & 5 & 6 & 7 & 8 \\
	1 &&&&&&& {[1,5]} \\
	2 &&&&& {[1,4]} &&&& {[2,5]} \\[-0.5em]
	3 && {[3]} && {[1,2]} && {[3,4]} && {[2]} && {[3,5]} \\[-0.5em]
	4 &&& {[1,3]} &&&& {[2,4]} &&&& {[4,5]} \\
	5 & {[1]} &&&& {[2,3]} &&&& {[4]} &&&& {[5]}
	\arrow[from=2-8, to=3-10]
	\arrow[from=3-6, to=2-8]
	\arrow[from=3-6, to=4-7]
	\arrow[from=3-10, to=4-11]
	\arrow[from=4-3, to=5-4]
	\arrow[from=4-5, to=3-6]
	\arrow[from=4-7, to=5-8]
	\arrow[from=4-9, to=3-10]
	\arrow[from=4-11, to=5-12]
	\arrow[from=5-4, to=4-5]
	\arrow[from=5-4, to=6-6]
	\arrow[from=5-8, to=4-9]
	\arrow[from=5-8, to=6-10]
	\arrow[from=5-12, to=6-14]
	\arrow[from=6-2, to=5-4]
	\arrow[from=6-6, to=5-8]
	\arrow[from=6-10, to=5-12]
\end{tikzcd}};
\end{tikzpicture}
\end{center}
Here $[\im,\jm]$ denotes the sum of simple roots  
$\alpha_{\im} + \alpha_{\im+1} \dotsb + \alpha_{\jm}$ for 
$1 \leq \im \leq \jm \leq 5$. To find the root associated with a 
vertex $x \in (\Gamma_{\Q})_0$, one should read $\bm{i}$ from the 
beginning until the position that corresponds to $x$ 
(according to the compatible reading chosen before). For 
example, the vertex $(3,3)$ corresponds to the sixth residue 
in $\bm{i}$; thus, the picture implies that we should have
\[
\alpha_2 = s_5s_4s_3s_2s_5(\alpha_3),
\]
which the reader can easily check to be true.
\end{example}

\subsection{Generalized twisted Coxeter elements}\label{subsection:twisted 
Coxeter elements}

We recall the construction of the generalized twisted Coxeter elements, introduced 
in \cite[Section 3.6]{FujitaOh}. Let $\Q = (\Delta, \sigma, \xi)$ 
be a Q-datum. For $i \in I$, let $i^{\circ} \in i$ be the  vertex satisfying
\[
\xi_{i^{\circ}} = \max\{\xi_{\im} \mid \im \in i\}
\]
(which is unique by \cite[Lemma 3.9]{FujitaOh}). Set 
$I_{\Q}^{\circ} = \{i^{\circ} \in \Delta_0 \mid i \in I\}$. 
Define the set
\[
X_{\Q}^{\circ} = \{(\im,\xi_{\im}) \in 
\widehat{\Delta}^{\sigma}_0 \mid \im \in I_{\Q}^{\circ}\}.
\]
We define $\tau_{\Q}^{\circ} = w[X_{\Q}^{\circ}]\sigma 
\in \mathsf{W}\sigma$. From $\xi$, we can define 
$\xi^{\circ}: \Delta_0 \longrightarrow \Z$ by 
$\xi^{\circ}_{\sigma^k(\im)} = \xi_{\im} - 2k$ for 
$\im \in I_{\Q}^{\circ}$ and $0 \leq k < d_{\overline{\im}}$. 
This is a height function on $(\Delta, \sigma)$, and so, we have a 
Q-datum $\Q^{\circ} = (\Delta, \sigma, \xi^{\circ})$. Define
\[
X_{\Q}' = \{(\sigma(i^{\circ}),p) \in 
\widehat{\Delta}^{\sigma}_0 \mid i \in I, 
\xi_{\sigma(i^{\circ})} < p \leq \xi_{i^{\circ}} - 2\},
\]
which allows us to construct $w[X_{\Q}']$.

\begin{defn}
With the notation above, the \emph{generalized twisted Coxeter element} (or 
the \emph{generalized $\sigma$-Coxeter element}) 
$\tau_{\Q} \in \mathsf{W}\sigma$ associated with $\Q$ is defined 
by
\[
\tau_{\Q} = w[X_{\Q}']^{-1} \cdot \tau_{\Q}^{\circ} \cdot w[X_{\Q}'].
\]
Note that $\tau_{\Q} = \tau_{\Q}^{\circ}$ when $\Q = \Q^{\circ}$.
\end{defn}

\begin{example}
Let $\Q$ be the Q-datum of type $\mathsf{B}_3$ from Example 
\ref{example:first example of twisted AR quiver}. The sets 
$X_{\Q}^{\circ}$ and $X_{\Q}'$ defined above are displayed in the colors red and green, respectively, in the picture below.
\begin{center}
\begin{tikzpicture}
\node[scale=0.9] (a) at (0,0){
\begin{tikzcd}[column sep={2.5em,between origins},row sep={1em}]
	{(\im \ \backslash \ p)} & {-6} & {-5} & {-4} & {-3} & {-2} & {-1} & 0 & 1 & 2 & 3 & 4 & 5 & 6 & 7 & 8 & 9 & 10 \\
	1 & \textcolor{rgb,255:red,217;green,217;blue,217}{\bullet} &&&& \textcolor{rgb,255:red,217;green,217;blue,217}{\bullet} &&&& \bullet &&&& \textcolor{rgb,255:red,0;green,255;blue,0}{\bullet} &&&& \textcolor{rgb,255:red,217;green,217;blue,217}{\bullet} \\
	2 &&& \textcolor{rgb,255:red,217;green,217;blue,217}{\bullet} &&&& \bullet &&&& \bullet &&&& \textcolor{rgb,255:red,217;green,217;blue,217}{\bullet} \\[-0.5em]
	3 && \textcolor{rgb,255:red,217;green,217;blue,217}{\bullet} && \bullet && \bullet && \bullet && \bullet && \textcolor{rgb,255:red,255;green,0;blue,0}{\bullet} && \textcolor{rgb,255:red,217;green,217;blue,217}{\bullet} && \textcolor{rgb,255:red,217;green,217;blue,217}{\bullet} \\[-0.5em]
	4 & \textcolor{rgb,255:red,217;green,217;blue,217}{\bullet} &&&& \bullet &&&& \bullet &&&& \textcolor{rgb,255:red,255;green,0;blue,0}{\bullet} &&&& \textcolor{rgb,255:red,217;green,217;blue,217}{\bullet} \\
	5 &&& \bullet &&&& \bullet &&& {} & \bullet &&&& \textcolor{rgb,255:red,255;green,0;blue,0}{\bullet}
	\arrow[color={rgb,255:red,217;green,217;blue,217}, from=2-2, to=3-4]
	\arrow[color={rgb,255:red,217;green,217;blue,217}, from=2-6, to=3-8]
	\arrow[from=2-10, to=3-12]
	\arrow[color={rgb,255:red,217;green,217;blue,217}, from=2-14, to=3-16]
	\arrow[color={rgb,255:red,217;green,217;blue,217}, from=3-4, to=2-6]
	\arrow[color={rgb,255:red,217;green,217;blue,217}, from=3-4, to=4-5]
	\arrow[from=3-8, to=2-10]
	\arrow[from=3-8, to=4-9]
	\arrow[color={rgb,255:red,217;green,217;blue,217}, from=3-12, to=2-14]
	\arrow[from=3-12, to=4-13]
	\arrow[color={rgb,255:red,217;green,217;blue,217}, from=3-16, to=2-18]
	\arrow[color={rgb,255:red,217;green,217;blue,217}, from=3-16, to=4-17]
	\arrow[color={rgb,255:red,217;green,217;blue,217}, from=4-3, to=3-4]
	\arrow[from=4-5, to=5-6]
	\arrow[from=4-7, to=3-8]
	\arrow[from=4-9, to=5-10]
	\arrow[from=4-11, to=3-12]
	\arrow[from=4-13, to=5-14]
	\arrow[color={rgb,255:red,217;green,217;blue,217}, from=4-15, to=3-16]
	\arrow[color={rgb,255:red,217;green,217;blue,217}, from=4-17, to=5-18]
	\arrow[color={rgb,255:red,217;green,217;blue,217}, from=5-2, to=4-3]
	\arrow[color={rgb,255:red,217;green,217;blue,217}, from=5-2, to=6-4]
	\arrow[from=5-6, to=4-7]
	\arrow[from=5-6, to=6-8]
	\arrow[from=5-10, to=4-11]
	\arrow[from=5-10, to=6-12]
	\arrow[color={rgb,255:red,217;green,217;blue,217}, from=5-14, to=4-15]
	\arrow[from=5-14, to=6-16]
	\arrow[from=6-4, to=5-6]
	\arrow[from=6-8, to=5-10]
	\arrow[from=6-12, to=5-14]
	\arrow[color={rgb,255:red,217;green,217;blue,217}, from=6-16, to=5-18]
\end{tikzcd}};
\end{tikzpicture}
\end{center}
Therefore, we have $\tau_{Q}^{\circ} = s_5s_4s_3\sigma$ and 
$w[X_{\Q}'] = s_1$, which yields 
$\tau_{\Q} = s_1s_5s_4s_3\sigma s_1$.
\end{example}

\begin{prop}[{\cite[Proposition 3.34]{FujitaOh}}]\label{prop:properties of Coxeter element}
Let $\Q = (\Delta, \sigma, \xi)$ be a Q-datum.
\begin{enumerate}[(1)]
    \item If $\im \in \Delta_0$ is a source of $\Q$, then 
	$s_{\im}^{-1}\tau_{\Q}s_{\im} = \tau_{s_{\im}\Q}$.
    \item The order of $\tau_{\Q}$ is $rh^{\vee}$.
    \item If $\sigma \neq \id$, then 
	$\tau_{\Q}^{rh^{\vee}/2} = -1$.
\end{enumerate}
\end{prop}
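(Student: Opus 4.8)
The plan is to base everything on the observation that, by its very definition $\tau_{\Q} = w[X_{\Q}']^{-1}\tau_{\Q}^{\circ}w[X_{\Q}']$, the element $\tau_{\Q}$ is conjugate in $\mathsf{W}\sigma$ to $\tau_{\Q}^{\circ} = w[X_{\Q}^{\circ}]\sigma$. Since $X_{\Q}^{\circ}$ contains exactly one vertex $i^{\circ}$ per $\sigma$-orbit $i \in I$, the element $\tau_{\Q}^{\circ}$ is a genuine \emph{twisted Coxeter element}: a product of $n = |I|$ simple reflections, one for each $\sigma$-orbit, composed with $\sigma$. I would prove parts (2) and (3) by transferring known properties of $\tau_{\Q}^{\circ}$ along this conjugacy, and prove part (1) by a direct combinatorial comparison of the data defining $\tau_{\Q}$ and $\tau_{s_{\im}\Q}$.

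For (1), the idea is that reflecting a Q-datum at a source mirrors, on the combinatorial side, the reflection functor: by the remark following Theorem~\ref{thm:isomorphism between twisted and combinatorial AR quivers} we have $[s_{\im}\Q] = r_{\im}[\Q]$, and the discussion preceding Lemma~\ref{lemma:projective goes to projective iff injective goes to injective} furnishes a residue-preserving isomorphism $\widehat{\Upsilon}_{[\Q]} \to \widehat{\Upsilon}_{[s_{\im}\Q]}$ which is merely a shift of the infinite sequence $\widehat{\bm{i}}$. Under this identification one tracks how the sets $X_{\Q}^{\circ}$, $\xi^{\circ}$, $\Q^{\circ}$ and $X_{\Q}'$ transform; the net effect on the words $w[X_{\Q}^{\circ}]$ and $w[X_{\Q}']$ is to move the letter $s_{\im}$ from one end to the other, modulo the substitution $\im \mapsto \im^{*}$ and the relation $\sigma s_{\im}\sigma^{-1} = s_{\sigma(\im)}$. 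Conjugation by $s_{\im}$ is precisely what implements this cyclic rotation, in direct analogy with the untwisted identity $s_{\im}^{-1}c_{Q}s_{\im} = c_{s_{\im}Q}$ for the Coxeter element $c_{Q}$ of a quiver. The word identities are justified by Matsumoto's theorem together with Theorem~\ref{thm:compatible reading gives reduced word}. I expect the main obstacle to lie here: one must carefully account for the conjugating factor $w[X_{\Q}']$ and for the distinction between $\Q$ and the auxiliary datum $\Q^{\circ}$, which forces a split into cases according to whether $\im \in I_{\Q}^{\circ}$ and according to the position of $\im$ relative to $X_{\Q}'$ and $X_{s_{\im}\Q}'$, so a finite but unavoidable case analysis appears.

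Since the order of a group element, and the property of being equal to $-\id$, are both invariant under conjugation (the latter because $-\id$ is central), for (2) it suffices to treat $\tau_{\Q}^{\circ}$. I would invoke the (twisted analog of the) theorem of Kostant: the twisted Coxeter element $\tau_{\Q}^{\circ}$ is $\sigma$-regular in the sense of Springer, so it acts on the set $\mathsf{R}$ of $2N = |\mathsf{R}|$ roots of $\mathsf{g}$ with all orbits of size equal to its order, these orbits being indexed by the $n$ residue-$\sigma$-orbits $I$. Comparing cardinalities gives order $= 2N/n$, and the folding relation $nrh^{\vee} = 2N$ from Section~\ref{subsection:folding} identifies this with $rh^{\vee}$; by conjugacy the same holds for $\tau_{\Q}$, and its action on $\mathsf{R}$ has the same orbit-size structure.

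For (3), assume $\sigma \neq \id$; then $d := rh^{\vee}$ is even, as one reads off Table~\ref{table:folding}. By part (2), $\tau_{\Q}$ acts on $\mathsf{R}$ with every orbit $O$ a free $\langle\tau_{\Q}\rangle$-torsor of size $d$. Each such $O$ is stable under negation: indeed $\res^{[\Q]}(-\alpha) = \res^{[\Q]}(\alpha)^{*}$, and when $\sigma \neq \id$ the involution $\im \mapsto \im^{*}$ preserves the $\sigma$-orbits (checked case by case from Table~\ref{table:folding}), so $-\alpha$ has the same residue-orbit as $\alpha$ and hence lies in $O$. Now $-\id$ is central, so it commutes with $\tau_{\Q}$ and restricts to a $\langle\tau_{\Q}\rangle$-equivariant bijection of the torsor $O$, which must therefore be a translation $\tau_{\Q}^{k}|_{O}$ for a single $k$. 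As $-\id$ is a fixed-point-free involution (no root is its own negative), we get $k = d/2$. Thus $\tau_{\Q}^{d/2}$ agrees with $-\id$ on every orbit, hence on all of $\mathsf{R}$, and since the roots span $\mathsf{h}^{*}$ we conclude $\tau_{\Q}^{rh^{\vee}/2} = -\id$.
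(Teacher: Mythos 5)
The paper does not prove this proposition: it is imported verbatim from \cite[Proposition 3.34]{FujitaOh}, so there is no internal proof to compare against; the closest echo is Lemma \ref{lemma:tau respects reflection}, whose proof is itself deferred to that source. Your overall strategy --- reduce (2) and (3) to the genuine twisted Coxeter element $\tau_{\Q}^{\circ}$ via the defining conjugation, invoke Springer--Kostant regularity for the order, and use an equivariant-bijection-of-a-torsor argument to pin down $-\id$ --- is sound in outline and close in spirit to the original argument; the torsor step forcing $k=d/2$ is correct and nicely packaged.

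There are, however, two genuine gaps. First, part (1) is not proved: the ``finite but unavoidable case analysis'' tracking how $I_{\Q}^{\circ}$, $\Q^{\circ}$ and $X_{\Q}'$ transform under reflection at a source \emph{is} the entire content of the statement, and it is exactly the part that the rest of the paper leans on (cf.\ Lemma \ref{lemma:tau respects reflection}); asserting that the letters ``cyclically rotate modulo $\im\mapsto\im^*$'' does not discharge it, since the cases $\im\in I_{\Q}^{\circ}$ and $\im\notin I_{\Q}^{\circ}$ behave differently and the conjugating factor $w[X_{\Q}']$ changes nontrivially. Second, in part (3) the step ``each orbit $O$ is stable under negation'' is not justified non-circularly. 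You identify the $\tau_{\Q}$-orbits with the residue-$\sigma$-orbit classes and then use $\res^{[\Q]}(-\alpha)=\res^{[\Q]}(\alpha)^*$; but the compatibility of $\tau_{\Q}$ with residues (equivalently, the description of its orbits via $\psi_{\Q}$, Theorem \ref{thm:coordinate of a positive root}) is established in \cite{FujitaOh} only \emph{after} and \emph{using} Proposition 3.34. The point is not pedantic: for $\sigma=\id$ and $h$ odd (type $\mathsf{A}_2$, say) the central involution genuinely swaps Coxeter orbits, so orbit-stability is a substantive claim in which the hypothesis $\sigma\neq\id$ must enter. A clean fix is to verify residue-compatibility directly for $\tau_{\Q}^{\circ}$, where it is an elementary computation with the adapted reduced word of \cite[Proposition 3.31]{FujitaOh}, and then transfer to $\tau_{\Q}$ by the defining conjugation.
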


\subsection{The bijection \texorpdfstring{$\phi_{\Q}$}{} and the \texorpdfstring{$\mathfrak{g}$}{}-additive property}\label{subsection:phiQ and g-additive} Following \cite[Section 3.7]{FujitaOh}, we define a map $\phi_{\Q}: \widehat{\Delta}^{\sigma}_0 \longrightarrow \widehat{\mathsf{R}}$ for a Q-datum $\Q = (\Delta,\sigma,\xi)$. First, for $\im \in \Delta_0$, we set $\phi_{\Q}(\im,\xi_{\im}) = (\gamma_{\im}^{\Q},0)$, where
\[
\gamma^{\Q}_{\im} = 
(1 - \tau_{\Q}^{d_{\overline{\im}}})\varpi_{\im} \in \mathsf{R}^+.
\]
Now, if $\phi_{\Q}(\im,p) = (\alpha,k)$, then
\[
\phi_{\Q}(\im,p\pm 2d_{\overline{\im}}) = \begin{cases}
(\tau_{\Q}^{\mp d_{\overline{\im}}}(\alpha),k) &\textrm{if }(-1)^k\tau_{\Q}^{\mp d_{\overline{\im}}}(\alpha) \in \mathsf{R}^+,\\
(\tau_{\Q}^{\mp d_{\overline{\im}}}(\alpha),k\pm 1) &\textrm{if }(-1)^{k\pm 1}\tau_{\Q}^{\mp d_{\overline{\im}}}(\alpha) \in \mathsf{R}^+.
\end{cases}
\]
This recursively defines $\phi_{\Q}$. Composing $\phi_{\Q}$ with the projection onto the first coordinate, we get the function $\psi_{\Q}: \widehat{\Delta}^{\sigma}_0 \longrightarrow \mathsf{R}$ given by
\[
\psi_{\Q}(\im,p) = \tau_{\Q}^{(\xi_{\im} - p)/2}(\gamma^{\Q}_\im)
\]
for $(\im,p) \in \widehat{\Delta}^{\sigma}$.

\begin{thm}\label{thm:coordinate 
	of a positive root}
Let $\Q = (\Delta,\sigma,\xi)$ be a Q-datum. The map 
$\phi_{\Q}: \widehat{\Delta}^{\sigma}_0 \longrightarrow \widehat{\mathsf{R}}$ is a bijection. Moreover, we can upgrade it to an isomorphism of quivers $\widehat{\Delta}^{\sigma} \longrightarrow \widehat{\Upsilon}_{[\Q]}$ preserving residues, which restricts to the isomorphism $\Gamma_{\Q} \longrightarrow \Upsilon_{[\Q]}$ in Theorem \ref{thm:isomorphism between twisted and combinatorial AR quivers}.
\end{thm}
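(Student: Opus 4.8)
The plan is to construct the global isomorphism out of the Fujita--Oh isomorphism on $\Gamma_{\Q}$ (Theorem \ref{thm:isomorphism between twisted and combinatorial AR quivers}) by propagating it across $\widehat{\Delta}^{\sigma}$ via reflection functors. First I would check that $\phi_{\Q}$ is well-defined and lands in $\widehat{\mathsf{R}}$. Since the recursion for $\phi_{\Q}$ only relates the vertices $(\im,p)$ and $(\im,p\pm 2d_{\overline{\im}})$ of a single column of fixed residue $\im$, well-definedness reduces to checking that the two branches (for $+2d_{\overline{\im}}$ and $-2d_{\overline{\im}}$) are mutually inverse, which is a short verification from the displayed formulas: the positivity condition on $\alpha$ determines its integer coordinate, and descending after ascending returns the same pair. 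Moreover, as $\tau_{\Q} \in \mathsf{W}\sigma$ preserves $\mathsf{R}$, each $\tau_{\Q}^{\mp d_{\overline{\im}}}(\alpha)$ is again a root, so exactly one of it and its negative is positive; hence exactly one branch of the recursion applies and the resulting pair lies in $\widehat{\mathsf{R}}$. In particular the companion map $\psi_{\Q}(\im,p) = \tau_{\Q}^{(\xi_{\im}-p)/2}(\gamma^{\Q}_{\im})$ records the first coordinate of $\phi_{\Q}$.

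For the base case, I would show that $\phi_{\Q}$ restricted to $\Gamma_{\Q}$ coincides with the isomorphism $\Gamma_{\Q} \to \Upsilon_{[\Q]}$ of Theorem \ref{thm:isomorphism between twisted and combinatorial AR quivers}. The key point, which one verifies directly and which is consistent with Example \ref{example:twisted AR quiver with positive roots}, is that $\gamma^{\Q}_{\im} = (1-\tau_{\Q}^{d_{\overline{\im}}})\varpi_{\im}$ is precisely the injective positive root associated with $\im$ in $[\Q]$, so that $\phi_{\Q}$ sends the injective vertex $(\im,\xi_{\im})$ of $\Gamma_{\Q}$ to the injective vertex of $\Upsilon_{[\Q]}$. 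Granting this, descending a column by $2d_{\overline{\im}}$ applies $\tau_{\Q}^{d_{\overline{\im}}}$, matching the translation structure of $\Upsilon_{[\Q]}$ read off from a representative $\bm{i} \in [\Q]$. Since residues are preserved by construction and the arrows of both $\Gamma_{\Q}$ and $\Upsilon_{[\Q]}$ are determined by adjacency and consecutiveness of residues, the restriction is forced to be the Fujita--Oh isomorphism.

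The decisive step is reflection-equivariance. If $\im$ is a source of $\Q$, then $\im$ is a source of $[\Q]$ with $[s_{\im}\Q] = r_{\im}[\Q]$, and Proposition \ref{prop:properties of Coxeter element}(1) gives $\tau_{s_{\im}\Q} = s_{\im}^{-1}\tau_{\Q}s_{\im}$. I would use these to prove that the reflection-functor isomorphism $\widehat{\Upsilon}_{[\Q]} \to \widehat{\Upsilon}_{[s_{\im}\Q]}$---which sends $(\alpha,k)$ to $(s_{\im}\alpha,k)$ when $\alpha \neq \pm\alpha_{\im}$ and to $(s_{\im}\alpha,k+1)$ when $\alpha = \pm\alpha_{\im}$, as recorded before Lemma \ref{lemma:projective goes to projective iff injective goes to injective}---intertwines $\phi_{\Q}$ and $\phi_{s_{\im}\Q}$. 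Concretely, one checks compatibility at the injective vertices (using the conjugation relation for $\tau$ and the behaviour of $\gamma^{\Q}_{\jm}$ under $s_{\im}$) and then verifies that the sign-change dichotomy governing the integer coordinate in the recursion mirrors exactly the $k \mapsto k+1$ jump at $\pm\alpha_{\im}$. I expect this to be the main obstacle: it demands carefully aligning the two bookkeeping rules for the $\Z$-coordinate across a single reflection step, together with a symmetric argument for reflection at sinks.

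Finally I would glue. Reflecting $\Q$ at sources slides the height window $\Gamma_{\bullet}$ downward and reflecting at sinks slides it upward, so every vertex of $\widehat{\Delta}^{\sigma}$ lies in $\Gamma_{\Q'}$ for some $\Q'$ obtained from $\Q$ by a sequence of reflections; correspondingly the windows $\Upsilon_{[\Q']}$ sweep out $\widehat{\Upsilon}_{[\Q]}$ under the reflection-functor isomorphisms. By the base case each $\Gamma_{\Q'} \to \Upsilon_{[\Q']}$ is the Fujita--Oh isomorphism, and by the previous paragraph these fit together compatibly on overlaps, assembling into a single residue-preserving isomorphism of quivers $\widehat{\Delta}^{\sigma} \to \widehat{\Upsilon}_{[\Q]}$ that restricts to the one of Theorem \ref{thm:isomorphism between twisted and combinatorial AR quivers}. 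Tracking the construction identifies this global map with $\phi_{\Q}$, so $\phi_{\Q}$ is both a bijection and a quiver isomorphism as claimed. Injectivity along a fixed column can also be seen directly: by Proposition \ref{prop:properties of Coxeter element} the order of $\tau_{\Q}$ is $rh^{\vee}$ with $\tau_{\Q}^{rh^{\vee}/2} = -1$ when $\sigma \neq \id$, which forces the integer coordinate to increase over each period, so that the column maps bijectively onto the set of elements of $\widehat{\mathsf{R}}$ of residue $\im$.
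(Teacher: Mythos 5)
Your proposal follows essentially the same strategy as the paper's proof: identify the restriction of $\phi_{\Q}$ to $\Gamma_{\Q}$ with the isomorphism of Theorem \ref{thm:isomorphism between twisted and combinatorial AR quivers}, establish compatibility of $\phi_{\Q}$ and $\phi_{s_{\im}\Q}$ with the reflection-functor isomorphism $\widehat{\Upsilon}_{[\Q]} \to \widehat{\Upsilon}_{[s_{\im}\Q]}$, and glue over the covering of $\widehat{\Delta}^{\sigma}$ by the subquivers $\Gamma_{\Q'}$. The only difference is that the paper obtains the first two ingredients (bijectivity, the base case, and the commuting triangle under reflection at a source) directly from \cite[Theorem 3.35]{FujitaOh} and its proof, whereas you propose to re-derive them by hand — which is where essentially all the work you flag as the ``main obstacle'' resides.
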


\begin{proof}
The first statement follows from \cite[Theorem 3.35]{FujitaOh}. This theorem also states that, if we identify $\Upsilon_{[\Q]}$ as a subquiver of $\widehat{\Upsilon}_{[\Q]}$, then the restriction of $\phi_{\Q}$ to $(\Gamma_{\Q})_0$ is the underlying bijection of the isomorphism $\Gamma_{\Q} \longrightarrow \Upsilon_{[\Q]}$ in Theorem \ref{thm:isomorphism between twisted and combinatorial AR quivers}. The proof in \cite{FujitaOh} shows that, if $\im \in \Delta_0$ is a source of $\Q$, then the following triangle commutes:
\[\begin{tikzcd}
	{\widehat{\Delta}^{\sigma}_0} & {(\widehat{\Upsilon}_{[\Q]})_0} \\
	& {(\widehat{\Upsilon}_{[s_{\im}\Q]})_0}
	\arrow["{\phi_{\Q}}", from=1-1, to=1-2]
	\arrow["{\phi_{s_{\im}\Q}}"', from=1-1, to=2-2]
	\arrow[from=1-2, to=2-2]
\end{tikzcd}\]
where the vertical map is the underlying map of the isomorphism $\widehat{\Upsilon}_{[\Q]} \longrightarrow \widehat{\Upsilon}_{[s_{\im}\Q]}$ from the end of Section \ref{subsection:combinatorial reflection functors}. 
Now, notice that a Q-datum $\Q'$ can be obtained from $\Q$ by a sequence of reflections precisely if it has the same type and parity as $\Q$. Moreover, if we see $\Upsilon_{[\Q']}$ as a subquiver of $\widehat{\Upsilon}_{[\Q]}$ via the isomorphism $\widehat{\Upsilon}_{[\Q']} \longrightarrow \widehat{\Upsilon}_{[\Q]}$ coming from a sequence of reflections from $\Q'$ to $\Q$, then $\widehat{\Upsilon}_{[\Q]}$ is the union of the subquivers $\Upsilon_{[\Q']}$ as we vary over all such $\Q'$. Similarly, $\widehat{\Delta}^{\sigma}$ is the union of the subquivers $\Gamma_{\Q'}$ as we vary $\Q'$. Therefore, we conclude the proof by repeatedly applying the commutative triangle above.
\end{proof}

\begin{lemma}\label{lemma:projective in terms of tau and the 
	fundamental weight}
Let $\Q = (\Delta, \sigma, \xi)$ be a Q-datum and take 
$\im \in \Delta_0$. If $x_{\im} \in (\Gamma_{\Q})_0$ is the 
projective vertex associated with $\im$, then
\[
\psi_{\Q}(x_{\im}) = 
(1-\tau_{\Q}^{-d_{\overline{\im}}})\varpi_{\im}.
\]
\end{lemma}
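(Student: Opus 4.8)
The plan is to reduce the statement to an explicit identity for the action of $\tau_{\Q}$ on fundamental weights, and then to extract that identity from the order properties of $\tau_{\Q}$ in Proposition \ref{prop:properties of Coxeter element}.

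First I would locate the projective vertex precisely. By definition it is the vertex of residue $\im^*$ of smallest height in $\Gamma_{\Q}$, so Lemma \ref{lemma:height of projective vertex} gives $x_{\im} = (\im^*,\, \xi_{\im} - rh^{\vee} + 2d_{\overline{\im}})$. Writing $d = d_{\overline{\im}}$, and using that $*$ commutes with $\sigma$ (so that $d_{\overline{\im^*}} = d_{\overline{\im}} = d$, a direct check on Table \ref{table:folding}), the defining formula $\psi_{\Q}(\jm,p) = \tau_{\Q}^{(\xi_{\jm}-p)/2}(\gamma^{\Q}_{\jm})$ together with $\gamma^{\Q}_{\im^*} = (1-\tau_{\Q}^{d})\varpi_{\im^*}$ yields
\[
\psi_{\Q}(x_{\im}) = \tau_{\Q}^{e}\,(1-\tau_{\Q}^{d})\,\varpi_{\im^*}, \qquad e = \tfrac{1}{2}\bigl(\xi_{\im^*}-\xi_{\im}+rh^{\vee}\bigr) - d .
\]
Here $e \in d\Z$ by the congruence $\xi_{\im}-rh^{\vee}\equiv\xi_{\im^*}\pmod{2d}$ established in the proof of Lemma \ref{lemma:height of projective vertex}, so all powers appearing are genuine.

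Next I would strip away the common factor. Since $(1-\tau_{\Q}^{-d})\varpi_{\im} = -\tau_{\Q}^{-d}(1-\tau_{\Q}^{d})\varpi_{\im}$ and $\tau_{\Q}$ commutes with $1-\tau_{\Q}^{d}$, the desired equality $\psi_{\Q}(x_{\im}) = (1-\tau_{\Q}^{-d})\varpi_{\im}$ would follow from the single weight identity
\[
\tau_{\Q}^{\,e+d}(\varpi_{\im^*}) = -\varpi_{\im}, \qquad \text{where } e+d = \tfrac{1}{2}\bigl(\xi_{\im^*}-\xi_{\im}+rh^{\vee}\bigr);
\]
call this identity $(\star)$. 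It suffices to verify $(\star)$, even though $1-\tau_{\Q}^{d}$ need not be invertible. This is the geometric heart of the statement: it is the analog of the classical Coxeter relation $\Phi[P_{\im}] = -[I_{\im}]$ between projectives and injectives, as one sees in the simply laced case, where $r=1$, $d=1$, and $\tau_{\Q}$ is the usual Coxeter element.

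Finally I would prove $(\star)$ using Proposition \ref{prop:properties of Coxeter element} and the relation $w_0(\varpi_{\im}) = -\varpi_{\im^*}$. When $\sigma \neq \id$ and $\im^* = \im$, we have $\xi_{\im^*}=\xi_{\im}$, so $(\star)$ reads $\tau_{\Q}^{rh^{\vee}/2}(\varpi_{\im}) = -\varpi_{\im}$, which is exactly part (3) of Proposition \ref{prop:properties of Coxeter element}. The remaining cases are where the real work lies, and I expect them to be the main obstacle: when $\sigma = \id$, the element $\tau_{\Q}$ is an ordinary Coxeter element, $w_0$ may differ from $-1$, and $rh^{\vee}/2$ need not be an integer on its own; when $r = 2$ with $h^{\vee}$ odd one has $* = \sigma$ (by \cite[Remark 3.1]{FujitaOh}), and after applying $\tau_{\Q}^{rh^{\vee}/2}=-1$ one is left with a residual factor $\tau_{\Q}^{(\xi_{\sigma(\im)}-\xi_{\im})/2}$ sending $\varpi_{\sigma(\im)}$ to $\varpi_{\im}$. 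In both situations one cannot simply invoke $\tau_{\Q}^{rh^{\vee}/2}=-1$, and must instead combine the order statement (2) of Proposition \ref{prop:properties of Coxeter element} with the explicit interaction between $\tau_{\Q}$, the involution $*$, and the parity of $\xi$. An alternative that largely sidesteps this case analysis is an induction over the $r$-cluster point: one verifies $(\star)$ for a single convenient Q-datum and propagates it using $s_{\im}^{-1}\tau_{\Q}s_{\im} = \tau_{s_{\im}\Q}$ (Proposition \ref{prop:properties of Coxeter element}(1)) together with the residue-preserving isomorphisms $\widehat{\Upsilon}_{[\Q]} \to \widehat{\Upsilon}_{[s_{\im}\Q]}$, tracking the projective vertex via Lemma \ref{lemma:projective goes to projective iff injective goes to injective}; the delicate point there is controlling how the projective vertex of $\im$ moves when one reflects at $\im$ itself.
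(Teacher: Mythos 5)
Your reduction is sound up to the point where you isolate the identity $(\star)$: the computation of the height of the projective vertex, the expression $\psi_{\Q}(x_{\im}) = \tau_{\Q}^{e}(1-\tau_{\Q}^{d})\varpi_{\im^*}$, and the observation that $(\star)$ implies the lemma by factoring out $1-\tau_{\Q}^{d}$ are all correct. But the proof then stops exactly where the real content lies. You verify $(\star)$ only when $\sigma \neq \id$ and $\im^* = \im$, and you explicitly defer the cases $\sigma = \id$ and ($r = 2$, $h^{\vee}$ odd, where $* = \sigma$ acts nontrivially) to "the main obstacle," offering only a sketch of two possible strategies without carrying either one out. That is a genuine gap: in the $\sigma = \id$ case one cannot invoke $\tau_{\Q}^{rh^{\vee}/2} = -1$ at all (part (3) of Proposition \ref{prop:properties of Coxeter element} assumes $\sigma \neq \id$, and $w_0 \neq -1$ in types $\mathsf{A}_{n\geq 2}$, $\mathsf{D}_{\mathrm{odd}}$, $\mathsf{E}_6$), and in the $* = \sigma$ case the residual identity $\tau_{\Q}^{(\xi_{\im^*}-\xi_{\im})/2}(\varpi_{\im^*}) = \varpi_{\im}$ is asserted but not proved.

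The paper closes precisely this gap, by different means in each case. For $\sigma \neq \id$ it does not need your weight-level identity $(\star)$ at all: it suffices to know the root-level identity $\tau_{\Q}^{(\xi_{\im^*}-\xi_{\im})/2}(\gamma^{\Q}_{\im^*}) = \gamma^{\Q}_{\im}$, which is exactly \cite[Lemma 3.38]{FujitaOh}; combined with $\tau_{\Q}^{rh^{\vee}/2} = -1$ this gives $\psi_{\Q}(x_{\im}) = -\tau_{\Q}^{-d_{\overline{\im}}}(\gamma^{\Q}_{\im}) = (1-\tau_{\Q}^{-d_{\overline{\im}}})\varpi_{\im}$, the same algebra as your final step but applied to $\gamma$ rather than $\varpi$ (note that your $(\star)$ is strictly stronger than what is needed, since $1-\tau_{\Q}^{d}$ is not injective). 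For $\sigma = \id$ the paper abandons this route entirely and instead identifies $(1-\tau_{\Q}^{-1})\varpi_{\im}$ with the dimension vector of the indecomposable projective representation of the quiver $\Q$, then quotes \cite[Theorem 2.5]{FujitaOh}. So your proposal correctly locates the needed identity but does not supply the two external inputs (Lemma 3.38 and Theorem 2.5 of Fujita--Oh, or independent proofs of them) that make the argument go through.
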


\begin{proof}
If $\sigma = \id$, we can view $\Q$ as a quiver whose underlying graph is $\Delta$. Note that $\tau_{\Q} = s_{\im_1}\dotsb s_{\im_n}$ where $(\im_1,\dots,\im_n)$ is a source sequence of $\Q$ where each vertex appears exactly once. One can then compute that $(1-\tau_{\Q}^{-1})\varpi_{\im}$ is the sum of all simple roots $\alpha_{\jm}$ such that there is an oriented path from $\im$ to $\jm$ in $\Q$. In particular, $(1-\tau_{\Q}^{-1})\varpi_{\im}$ gives the dimension vector of the indecomposable projective representation of $\Q$ corresponding to the vertex $\im$. The result then follows from \cite[Theorem 2.5]{FujitaOh} and the discussion in \cite[Section 2.3]{FujitaOh}.

If $\sigma \neq \id$, notice that $rh^{\vee}$ and 
$\xi_{\im^*} - \xi_{\im}$ are even. By Lemma \ref{lemma:height of 
projective vertex}, we have 
$x_{\im} = (\im^*, \xi_{\im} - rh^{\vee} + 2d_{\overline{\im}})$. 
Hence,
\[
\psi_{\Q}(x_{\im}) = \tau_{\Q}^{(\xi_{\im^*} - 
(\xi_{\im} - rh^{\vee} + 
2d_{\overline{\im}}))/2}(\gamma^{\Q}_{\im^*}) = 
\tau_{\Q}^{rh^{\vee}/2 - 
d_{\overline{\im}}}(\tau_{\Q}^{(\xi_{\im^*} - 
\xi_{\im})/2}(\gamma^{\Q}_{\im^*})) = \tau_{\Q}^{rh^{\vee}/2 - 
d_{\overline{\im}}}(\gamma^{\Q}_{\im}),
\]
where the last equality follows from \cite[Lemma 3.38]{FujitaOh}. 
We conclude using the item (3) of Proposition \ref{prop:properties of 
Coxeter element}:
\[
\tau_{\Q}^{rh^{\vee}/2 - d_{\overline{\im}}}(\gamma^{\Q}_{\im}) = 
-\tau_{\Q}^{-d_{\overline{\im}}}(\gamma^{\Q}_{\im}) = 
-\tau_{\Q}^{-d_{\overline{\im}}}(1 - \tau_{\Q}^{d_{\overline{\im}}})\varpi_{\im} 
= (1-\tau_{\Q}^{-d_{\overline{\im}}})\varpi_{\im},
\]
as desired.
\end{proof}

To finish this subsection, we state the \emph{$\mathfrak{g}$-additive property} of \cite[Theorem 3.41]{FujitaOh}.

\begin{thm}\label{thm:g-additive property}
Let $\Q = (\Delta,\sigma,\xi)$ be a Q-datum. For $(\im,p) \in \widehat{\Delta}^{\sigma}_0$, we have
\[
\psi_{\Q}(\im,p) + \psi_{\Q}(\im,p-2d_{\overline{\im}}) = \sum_{(\jm,s) \in V(\im,p)} \psi_{\Q}(\jm,s)
\]
where $V(\im,p) \subset \widehat{\Delta}^{\sigma}_0$ is the set of vertices $(\jm,s)$ such that $\jm \sim \im$ and $p - 2d_{\overline{\im}} < s < p$.
\end{thm}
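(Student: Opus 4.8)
The plan is to transport the identity to the combinatorial repetition quiver and then reduce it to a short telescoping computation in the root lattice (the case $\Delta = \mathsf{A}_1$ being immediate, as then $V(\im,p) = \varnothing$ and $\psi_{\Q}(\im,p) + \psi_{\Q}(\im,p-2) = 0$ by a direct check). First I would invoke Theorem \ref{thm:coordinate of a positive root}: since $\phi_{\Q}$ is a residue-preserving isomorphism $\widehat{\Delta}^{\sigma} \to \widehat{\Upsilon}_{[\Q]}$ and $\psi_{\Q}$ is $\phi_{\Q}$ followed by the projection onto the root coordinate, it carries the mesh of $\widehat{\Delta}^{\sigma}$ at $(\im,p)$ to the mesh of Section \ref{subsection:meshes} at $\phi_{\Q}(\im,p)$, the vertex $(\im,p-2d_{\overline{\im}})$ to the mesh source $s_{[\Q]}$, and the set $V(\im,p)$ to the set of abutters $V_{[\Q]}$. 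The statement then becomes the purely combinatorial \emph{mesh identity} in $\widehat{\Upsilon}_{[\bm i]}$, valid for an arbitrary commutation class for $w_0$: writing $\beta(v)$ for the first coordinate of a vertex $v$,
\[
\beta(x) + \beta(s_{[\bm i]}(x)) = \sum_{y \in V_{[\bm i]}(x)} \beta(y).
\]

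Next I would reduce to the case where the mesh lies inside $\Upsilon_{[\bm i]}$, so that all roots involved are positive. By Lemma \ref{lemma:mesh is contained in combinatorial AR quiver} this is achieved by reflecting at sources and sinks; the reflection isomorphisms of Section \ref{subsection:combinatorial reflection functors} act on root coordinates by the corresponding simple reflections, which are linear, so they preserve the identity above. With $\M \subseteq \Upsilon_{[\bm i]}$, I would use that $\M$ is a convex subquiver, hence an interval for $\preceq_{[\bm i]}$, so that some compatible reading of $\Upsilon_{[\bm i]}$ reads $\M_0$ as a consecutive block $(x_1,\dots,x_t)$ with $x_1 = x$ the sink and $x_t = s_{[\bm i]}(x)$ the source. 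Setting $i_l = \res^{[\bm i]}(x_l)$ and $\bm j = (i_1,\dots,i_t)$, Theorem \ref{thm:compatible reading gives reduced word} identifies $\beta(x_l)$ with $P\,\beta^{\bm j}_l$, where $\beta^{\bm j}_l = s_{i_1}\dotsb s_{i_{l-1}}(\alpha_{i_l})$ and $P$ is the fixed product of the reflections preceding the block. By the structure of mesh words recorded in Lemma \ref{lemma:mesh gives a reduced word ready to apply braid move}, one has $i_1 = i_t = i$, $i_l \neq i$ for $1 < l < t$, and the abutters are precisely the $x_l$ with $i_l \sim i$. As $P$ is invertible and linear, it suffices to prove $\beta^{\bm j}_1 + \beta^{\bm j}_t = \sum_{l:\, i_l \sim i} \beta^{\bm j}_l$.

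The final step is a two-line telescoping argument. Put $w = s_{i_1}\dotsb s_{i_t}$ and $w_l = s_{i_1}\dotsb s_{i_l}$. On the one hand $\beta^{\bm j}_1 = \alpha_i$ and, since $i_t = i$, $\beta^{\bm j}_t = w s_i(\alpha_i) = -w(\alpha_i)$, whence $\beta^{\bm j}_1 + \beta^{\bm j}_t = (1-w)\alpha_i$. On the other hand, from $w_l = w_{l-1}s_{i_l}$ one computes $w_{l-1}(\alpha_i) - w_l(\alpha_i) = (\alpha_i,\alpha_{i_l})\,\beta^{\bm j}_l$; summing over $l$ and telescoping gives $(1-w)\alpha_i = \sum_l (\alpha_i,\alpha_{i_l})\,\beta^{\bm j}_l$. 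Because $(\alpha_i,\alpha_{i_l})$ equals $2$ exactly at $l \in \{1,t\}$, equals $-1$ precisely on the abutters, and vanishes otherwise, the right-hand side equals $2(\beta^{\bm j}_1 + \beta^{\bm j}_t) - \sum_{l:\, i_l \sim i}\beta^{\bm j}_l$. Equating the two expressions for $(1-w)\alpha_i$ yields $\sum_{l:\, i_l\sim i}\beta^{\bm j}_l = (1-w)\alpha_i = \beta^{\bm j}_1 + \beta^{\bm j}_t$, as desired.

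I expect the main obstacle to lie not in the algebra but in the bookkeeping of the first step: one must verify that the height-window description of $V(\im,p)$ in $\widehat{\Delta}^{\sigma}$ agrees with the residue-adjacent vertices of the mesh. This is exactly where the folding genuinely intervenes, since at a branching point the mesh zig-zags through the denser residue column and contains several abutters of the same residue together with interior vertices whose residue is \emph{not} adjacent to $\im$; the latter must be shown to contribute nothing. Once that compatibility is established, the telescoping identity is uniform across all Dynkin types and in fact proves the mesh identity for every commutation class, not merely those arising from Q-data.
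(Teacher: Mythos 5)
Your argument is sound, and it takes a genuinely different route from the paper. The paper does not actually prove Theorem \ref{thm:g-additive property} internally: its stated proof is a citation to Proposition 8.32 of Oh--Suh and to \cite[Theorem 3.41]{FujitaOh}, and the self-contained argument it does supply later (Corollary \ref{cor:generalized g-additive property}) is categorical, obtained by summing the classes in $K_0(\pvd(\Pi_Q))$ of the distinguished triangles produced by Theorem \ref{thm:mesh gives rise to triangles} from the spherical-twist machinery. You instead reduce everything to Weyl-group combinatorics: after transporting the statement through $\phi_{\Q}$ and reflecting the mesh into $\Upsilon_{[\bm{i}]}$, you read the mesh as a consecutive block of a reduced word (Theorem \ref{thm:compatible reading gives reduced word} and Lemma \ref{lemma:mesh gives a reduced word ready to apply braid move} give exactly the shape $i_1=i_t=i$, $i_l\neq i$ in between), and the telescoping identity $\alpha_i - w(\alpha_i) = \sum_l (\alpha_i,\alpha_{i_l})\,\beta^{\bm{j}}_l$ together with the simply laced values of $(\alpha_i,\alpha_{i_l})$ closes the computation; I checked the signs and the bookkeeping of $\beta^{\bm{j}}_1+\beta^{\bm{j}}_t=(1-w)\alpha_i$, and they are correct. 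What your approach buys is an elementary, type-uniform proof that, as you note, immediately yields the generalization to arbitrary commutation classes without invoking $\pvd(\Pi_Q)$ at all; what it costs is exactly the compatibility you flag at the end, namely that the height-window set $V(\im,p)$ coincides with the set of abutters of the mesh at $\phi_{\Q}(\im,p)$ and that $(\im,p-2d_{\overline{\im}})$ is its source. The paper asserts this identification without proof in the remark following the theorem, so you are not worse off than the text, but for a complete write-up you should carry out that verification (it reduces to checking, using the two defining conditions on a height function, that every $(\jm,s)$ with $\jm\sim\im$ and $p-2d_{\overline{\im}}<s<p$ lies on a directed path from $(\im,p-2d_{\overline{\im}})$ to $(\im,p)$ in $\widehat{\Delta}^{\sigma}$, the reverse inclusion being immediate since heights strictly increase along arrows).
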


\begin{proof}
This result is essentially Proposition 8.32 of \cite{OhSuh19b} rewritten with our notation. One can also deduce the formula from \cite[Theorem 3.41]{FujitaOh}, where they give a different (but equivalent) expression for the sum above.
\end{proof}

\begin{remark}
If we identify $\widehat{\Delta}^{\sigma}$ with $\widehat{\Upsilon}_{[\Q]}$ via Theorem \ref{thm:coordinate of a positive root}, then we have $(\im,p-2d_{\overline{\im}}) = s_{[\Q]}(\im,p)$ and the set $V(\im,p)$ above is the set of abutters of $(\im,p)$ in $\widehat{\Upsilon}_{[\Q]}$ (see Section \ref{subsection:meshes}).
\end{remark}

\section{The ambient category}\label{section:the categories}

Let $Q$ be an orientation of the simply laced Dynkin diagram $\Delta$. We will now define and give some properties of the complete 2-dimensional Ginzburg dg algebra $\Pi_Q$ associated with $Q$. Our main goal will be to explain how the perfectly valued derived category $\pvd(\Pi_Q)$ and its spherical twists categorify the root system of $\Delta$ and the action of its Weyl group. This category will serve as an ``ambient category'' in which we will define the categories $\C([\bm{i}])$ and $\R([\bm{i}])$ associated with a commutation class $[\bm{i}]$ of reduced words (see Section \ref{section:objects}).

\begin{remark}
We will assume that the reader is familiar with the theory of differential graded (=dg) algebras and their derived categories (see, e.g., \cite{Keller94} and \cite{KellerYang}).
\end{remark}

\begin{remark}
We use the cohomological notation when dealing with complexes. All (dg) modules are \emph{left} (dg) modules. In particular, 
representations of a quiver naturally correspond to modules over its path algebra.
\end{remark}

\subsection{The 2-dimensional Ginzburg dg algebra}
\label{subsection:Calabi--Yau completion} Let $Q$ be an orientation of 
the simply laced Dynkin diagram $\Delta$. We denote by $Q_0$ its vertex set and by $Q_1$ its arrow set. Let $Q^*$ be 
the quiver obtained from $Q$ by adding an arrow 
$\alpha^*: j \to i$ for every arrow $\alpha: i \to j$ in $Q$, 
and a loop $t_i: i \to i$ for every vertex $i \in Q_0$. We regard 
$Q^*$ as a graded quiver where the arrows $\alpha$ and $\alpha^*$ 
(for $\alpha \in Q_1$) are of degree $0$ and the loops $t_i$ 
(for $i \in Q_0$) are of degree $-1$.

Let $K$ be an 
algebraically closed field. The \emph{(complete) 2-dimensional Ginzburg dg algebra $\Pi_Q$ associated with $Q$} is 
defined as the complete differential graded path algebra $\widehat{KQ^*}$ whose differential is continuous and
determined by the equations $d(\alpha) = d(\alpha^*) = 0$ for $\alpha \in Q_1$ and
\[
d(t_i) = \sum_{\alpha \in Q_1}e_i(\alpha\alpha^* - \alpha^*\alpha)e_i,
\]
where $e_i$ denotes the idempotent associated with the vertex 
$i \in Q_0$. Here, we complete the path algebra $KQ^*$ in the category of graded vector spaces with respect to the ideal generated by the arrows of $Q^*$. Hence, the $n$-th component of the graded algebra $\Pi_Q$ consists of elements of the form $\Sigma_p \lambda_p p$ where $p$ runs over all paths in $Q^*$ of degree $n$. Notice that there are canonical homomorphisms of algebras $KQ \hookrightarrow \Pi_Q$ and $\Pi_Q \twoheadrightarrow KQ$ that compose to the identity on the path algebra $KQ$.

The dg algebra $\Pi_Q$ does not depend, up to isomorphism, on the 
orientation $Q$, only on $\Delta$. Indeed, if $Q'$ is another 
orientation of $\Delta$, then we have an isomorphism 
$\Pi_Q \longrightarrow \Pi_{Q'}$ of dg algebras which sends 
$e_i$ to $e_i$ and $t_i$ to $t_i$ (for $i \in Q_0$), $\alpha$ to 
$\beta$ and $\alpha^*$ to $\beta^*$ (if we have arrows 
$\alpha : i \to j$ in $Q$ and $\beta: i \to j$ in $Q'$), and 
$\alpha$ to $-\beta^*$ and $\alpha^*$ to $\beta$ (if we have 
arrows $\alpha : i \to j$ in $Q$ and $\beta: j \to i$ in $Q'$). 
In particular, we can canonically identify $KQ'$ as a subalgebra and as a quotient
of $\Pi_{Q}$, and we shall do so without mention from now on.

Any automorphism $\sigma$ of $\Delta$ (as in Section 
\ref{section:Q-data combinatorics}) induces an automorphism 
of $\Pi_Q$ which we now define. First, define a new orientation 
$\sigma Q$ of $\Delta$ as follows: for $i,j \in \Delta_0$, there 
is an arrow $\sigma(i) \to \sigma(j)$ in $\sigma Q$ if and only 
if there is an arrow $i \to j$ in $Q$. It is not hard to see that 
there is an isomorphism $\Pi_{Q} \longrightarrow \Pi_{\sigma Q}$ 
sending each $e_i$ to $e_{\sigma(i)}$, each $t_i$ to 
$t_{\sigma(i)}$, each $\alpha$ to $\sigma(\alpha)$, and each 
$\alpha^*$ to $\sigma(\alpha)^*$. Composing with the isomorphism 
$\Pi_{\sigma Q} \longrightarrow \Pi_{Q}$ of the previous paragraph, 
we get the desired automorphism of $\Pi_{Q}$. If we choose $Q$ 
such that $\sigma Q = Q$ (which we can always do), this last isomorphism is simply the identity.

\subsection{The perfectly valued category and its spherical twists} The \emph{perfectly valued derived category} $\pvd(\Pi_Q)$ of $\Pi_Q$ is defined as the full 
subcategory of the derived category of $\Pi_{Q}$ whose objects 
are the \emph{perfectly valued} dg modules, that is, dg modules 
that have finite-dimensional total cohomology. By \cite{Keller11} (see also \cite{Keller18} and \cite{Yeung}), 
it is a $2$-Calabi--Yau triangulated category, which implies the 
existence of a natural isomorphism
\[
\Hom_{\pvd(\Pi_Q)}(M,N) \cong D\Hom_{\pvd(\Pi_Q)}(N,\Sigma^2 M)
\]
for $M,N \in \pvd(\Pi_Q)$, where $D$ denotes the $K$-duality 
functor and $\Sigma$ is the suspension functor from the canonical 
triangulated structure.

For a vertex $i \in \Delta_0$, let $S_i$ be the corresponding 
simple $\Pi_Q$-module, viewed as an object in $\pvd(\Pi_{Q})$. One can easily compute (using, e.g., \cite[Lemma 2.15]{KellerYang}) that these objects satisfy
\[
\Ext^{k}_{\Pi_Q}(S_i,S_i) \cong \begin{cases}
K &\textrm{if } k=0,2,\\
0 &\textrm{otherwise.}
\end{cases}
\]
Together with the 2-Calabi--Yau property, this implies that these simple 
objects are \emph{2-spherical} in the sense of \cite{SeidelThomas}. Hence, by \cite{SeidelThomas} (see also \cite{HocheneggerKalckPloog}), for each $i \in \Delta_0$, the 
spherical object $S_i$ yields a \emph{spherical twist} 
$T_i : \pvd(\Pi_{Q}) \to \pvd(\Pi_{Q})$. It is a triangulated autoequivalence, 
and for every $X \in \pvd(\Pi_Q)$, there is a distinguished 
triangle
\begin{equation}\tag{$*$}\label{eq:triangle for spherical twist}
    \begin{tikzcd}
    {\RHom_{\Pi_Q}(S_i,X) \otimes_K S_i} & {X} & {T_i(X)} & \Sigma{\RHom_{\Pi_Q}(S_i,X) \otimes_K S_i}.
	\arrow[from=1-1, to=1-2]
	\arrow[from=1-2, to=1-3]
	\arrow[from=1-3, to=1-4]
    \end{tikzcd}
\end{equation}
Here we can identify $\RHom_{\Pi_Q}(S_i,X)$ with the quasi-isomorphic complex with zero differential whose entries are the cohomologies $\Ext^k_{\Pi_Q}(S_i,X) \cong \Hom_{\pvd(\Pi_Q)}(S_i,\Sigma^kX)$. Under this identification, the first map above is the evident evaluation map. 

We list some important properties of these functors below. In particular, by items (2) and (3), they satisfy the same relations that define the generalized braid group associated with $\Delta$.

\begin{prop}\label{prop:properties of spherical twists}
Let $i,j \in \Delta_0$. With the definitions above, the following 
statements hold.
\begin{enumerate}[(1)]
    \item We have $T_i(S_i) \cong \Sigma^{-1}S_i$.
    \item If $i \not\sim j$, then $T_iT_j \cong T_jT_i$.
    \item If $i \sim j$, then $T_iT_jT_i \cong T_jT_iT_j$.
    \item Let $F$ be a triangulated autoequivalence of 
	$\pvd(\Pi_Q)$. If $F(S_i) \cong S_j$, then 
	$FT_iF^{-1}(X) \cong T_j(X)$ for all $X \in \pvd(\Pi_Q)$.
\end{enumerate}
\end{prop}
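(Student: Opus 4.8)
The plan is to derive all four statements from the defining triangle \eqref{eq:triangle for spherical twist}, the $2$-Calabi--Yau property, and the graded dimensions of $\Ext^\bullet_{\Pi_Q}(S_i,S_j)$. The only input beyond the self-extensions recalled above is, by the same computation (e.g.\ via \cite[Lemma 2.15]{KellerYang}), that $\Ext^k_{\Pi_Q}(S_i,S_j) = 0$ for all $k$ when $i \not\sim j$, whereas $\Ext^k_{\Pi_Q}(S_i,S_j) \cong K$ for $k=1$ and vanishes otherwise when $i \sim j$. Crucially, the $2$-Calabi--Yau pairing then forces the Yoneda product $\Ext^1_{\Pi_Q}(S_j,S_i) \otimes \Ext^1_{\Pi_Q}(S_i,S_j) \to \Ext^2_{\Pi_Q}(S_i,S_i) \cong K$ to be nondegenerate, a fact I reserve for item (3).

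For item (1), I would set $X = S_i$ in \eqref{eq:triangle for spherical twist}. As $\RHom_{\Pi_Q}(S_i,S_i)$ has cohomology $K$ in degrees $0$ and $2$, the left-hand term is $S_i \oplus \Sigma^{-2}S_i$, and the restriction of the evaluation map to the degree-$0$ summand is $\id_{S_i}$. Thus the evaluation is a split epimorphism with kernel $\Sigma^{-2}S_i$, and its cone $T_i(S_i)$ is $\Sigma^{-1}S_i$.

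Item (4) is the technical heart, and both items (2) and (3) will be deduced from it. Given $X$, I would apply the triangulated functor $F$ to the triangle \eqref{eq:triangle for spherical twist} for the object $F^{-1}X$. Since $F$ is additive and commutes with $\Sigma$, it commutes with $-\otimes_K V$ for finite-dimensional graded $V$; moreover $\RHom_{\Pi_Q}(S_i,F^{-1}X) \cong \RHom_{\Pi_Q}(FS_i,X) \cong \RHom_{\Pi_Q}(S_j,X)$ because $F(S_i)\cong S_j$. Under these identifications the transported triangle reads $\RHom_{\Pi_Q}(S_j,X)\otimes_K S_j \to X \to FT_iF^{-1}(X) \to \Sigma(\cdots)$, and its first map is the evaluation map, since the evaluation corresponds to the identity under the $\RHom$--tensor adjunction and an equivalence preserves this correspondence. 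Hence $FT_iF^{-1}(X)$ and $T_j(X)$ are cones of the same map, so they are isomorphic. I expect the most delicate point of the whole proposition to be exactly this identification of the transported evaluation with the canonical one for $S_j$: it is the step that genuinely uses that $F$ is an equivalence compatible with that adjunction.

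With item (4) available, items (2) and (3) become object computations. If $i \not\sim j$ then $\RHom_{\Pi_Q}(S_j,S_i)=0$, so \eqref{eq:triangle for spherical twist} gives $T_j(S_i)\cong S_i$; applying item (4) to $F = T_j$ yields $T_jT_iT_j^{-1}\cong T_{S_i} = T_i$, which is item (2). For item (3), I would first observe that, by item (4) applied to $F = T_iT_j$, the braid relation follows as soon as $T_iT_j(S_i)\cong S_j$. Setting $E := T_j(S_i)$, the triangle \eqref{eq:triangle for spherical twist} presents $E$ as an extension $S_i \xrightarrow{a} E \xrightarrow{b} S_j \xrightarrow{c} \Sigma S_i$ with $c$ the nonzero class in $\Ext^1_{\Pi_Q}(S_j,S_i)$. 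Applying $\RHom_{\Pi_Q}(S_i,-)$ to this triangle, the connecting map is post-composition with $c$, i.e.\ the $2$-Calabi--Yau pairing; by its nondegeneracy this map is an isomorphism $\Ext^1(S_i,S_j)\xrightarrow{\sim}\Ext^2(S_i,S_i)$, and one reads off $\RHom_{\Pi_Q}(S_i,E)\cong K$ concentrated in degree $0$. Therefore \eqref{eq:triangle for spherical twist} exhibits $T_i(E)$ as the cone of the nonzero evaluation map $S_i \to E$, which is a scalar multiple of $a$; its cone is $S_j$, giving $T_iT_j(S_i)\cong S_j$ and hence the braid relation. The essential non-formal input here is again the nondegeneracy of the Calabi--Yau pairing, which is what collapses $\RHom_{\Pi_Q}(S_i,E)$ into a single degree.
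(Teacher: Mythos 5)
Your proof is correct, but it takes a genuinely different route from the paper, which disposes of all four items by citation: (1) and (4) are attributed to Opper, and (2)--(3) to Seidel--Thomas together with the $\Ext$-computations of Keller--Yang. You instead rederive everything from the defining triangle (\ref{eq:triangle for spherical twist}), and your organisation --- prove (4) by transporting the functorial triangle along $F$, then obtain (2) from $T_j(S_i)\cong S_i$ and (3) from $T_iT_j(S_i)\cong S_j$ via conjugation --- is exactly the standard mechanism behind the cited results, with the $2$-Calabi--Yau nondegeneracy of the composition pairing $\Ext^1_{\Pi_Q}(S_j,S_i)\otimes\Ext^1_{\Pi_Q}(S_i,S_j)\to\Ext^2_{\Pi_Q}(S_i,S_i)$ correctly isolated as the one non-formal input that collapses $\RHom_{\Pi_Q}(S_i,T_j(S_i))$ into degree $0$. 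Two small points of care: in (1), ``kernel'' should be read as cocone (the cone of a split epimorphism with cocone $B$ is $\Sigma B$, which is what your computation actually uses); and since the paper states (4) only as an objectwise isomorphism while (2) and (3) assert isomorphisms of functors, you should record that your argument for (4) in fact yields a natural isomorphism $FT_iF^{-1}\cong T_j$ (the transported triangle is functorial in $X$), which is what the deduction of (2)--(3) requires. What your approach buys is a self-contained argument entirely inside $\pvd(\Pi_Q)$; what the paper's buys is brevity and the full functorial statements off the shelf.
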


\begin{proof}
Statements (1) and (4) are Lemma 3.8 and Corollary 3.7 of 
\cite{Opper}, respectively. The other two statements follow from 
\cite[Propositions 2.12 and 2.13]{SeidelThomas} and 
\cite[Lemma 2.15]{KellerYang}.
\end{proof}

\subsection{Preprojective algebras and reflection functors}\label{subsection:alternative descriptions of spherical twists} To make explicit calculations, we will use some alternative descriptions of $\pvd(\Pi_{Q})$ and its spherical twists in terms of preprojective algebras.

The \emph{double quiver} $\overline{Q}$ of $Q$ is the 
subquiver of $Q^*$ obtained by removing the loops $t_i$ for 
$i \in Q_0$. The \emph{preprojective algebra} is the quotient of 
the path algebra $K\overline{Q}$ by the 
\emph{preprojective relations}
\[
\sum_{\alpha \in Q_1}e_i(\alpha\alpha^* - \alpha^*\alpha)e_i = 0
\]
for $i \in \Delta_0$. Since $Q$ is a Dynkin quiver, this is a finite-dimensional algebra  (see \cite{Ringel} for alternative constructions that make this fact more evident). We can extend 
the map $\alpha \mapsto \alpha^*$ to an involution on the set $\overline{Q}_1$ of 
arrows of $\overline{Q}$ if we impose $(\alpha^*)^* = \alpha$ for 
$\alpha \in Q_1$. With this definition, the preprojective relation at $i \in \Delta_0$ becomes
\[
\sum_{\substack{\alpha \in \overline{Q}_1\\\alpha:j\to i}}\epsilon(\alpha)\alpha\alpha^* = 0,
\]
where $\epsilon(\alpha) = 1$ if $\alpha \in Q_1$, and 
$\epsilon(\alpha) = -1$ otherwise.

Since $\Pi_Q$ is a connective dg algebra (i.e., its cohomology is 
concentrated in nonpositive degrees), its derived category has a 
canonical t-structure whose left (resp. right) aisle consists of 
the dg modules whose cohomology is concentrated in nonpositive 
(resp. nonnegative) degrees. Its heart can be identified with 
the module category of $H^0(\Pi_Q)$ via restriction along the canonical map 
$\Pi_Q \longrightarrow H^0(\Pi_Q)$ (see \cite[Section 5.1]{KellerYang}). One can check that $H^0(\Pi_Q)$ is isomorphic to the preprojective 
algebra $\Lambda_Q$. Thus, if we restrict the canonical 
t-structure to $\pvd(\Pi_Q)$, we deduce that its heart is 
equivalent to $\modcat \Lambda_Q$, the category of finite-dimensional $\Lambda_Q$-modules. For this reason, we shall 
often see an object of $\pvd(\Pi_Q)$ concentrated in degree $0$ 
as a finite-dimensional $\Lambda_Q$-module or, equivalently, as a 
representation of the double quiver $\overline{Q}$ satisfying the 
preprojective relations. We also obtain that $\pvd(\Pi_Q)$ is the 
thick hull of the simple objects $S_i$ for $i \in \Delta_0$.

Let $\widetilde{\Delta}$ denote the affine Dynkin diagram 
corresponding to $\Delta$. Choose an orientation $\widetilde{Q}$ 
for it extending $Q$ and similarly define the dg algebra 
$\Pi_{\widetilde{Q}}$. Let $e_0 \in \Pi_{\widetilde{Q}}$ be the idempotent corresponding to the extended vertex. The canonical quotient map $\Pi_{\widetilde{Q}} \to \Pi_{Q}$ that 
sends $e_0$ to zero gives rise to a restriction functor
\[
\pvd(\Pi_{Q}) \longrightarrow \pvd(\Pi_{\widetilde{Q}}).
\]
Computing the functors $\Ext^n$ between the simple objects $S_i$ for $i \in \Delta_0$, we see that they are the same whether we compute them in $\pvd(\Pi_Q)$ or in $\pvd(\Pi_{\widetilde{Q}})$. Since $\pvd(\Pi_Q)$ is the thick hull of its simple objects, we deduce by a dévissage argument that the restriction functor above is fully faithful. In particular, we can see $\pvd(\Pi_{Q})$ as a full  triangulated subcategory of $\pvd(\Pi_{\widetilde{Q}})$.

Since $\widetilde{\Delta}$ is not a Dynkin 
diagram, $\Pi_{\widetilde{Q}}$ is a stalk dg algebra whose $0$-th cohomology is isomorphic to $\Lambda_{\widetilde{Q}}$, the preprojective algebra associated with 
$\widetilde{Q}$ completed at the ideal generated by the arrows. This follows from the noncomplete case, which is done, for example, in \cite[Section 5.2]{KellerWang}. In particular, $\Pi_{\widetilde{Q}}$ and 
$\Lambda_{\widetilde{Q}}$ are quasi-isomorphic dg algebras and have triangle-equivalent derived categories. Therefore, we can identify $\pvd(\Pi_{\widetilde{Q}})$ with $\D^b(\modcat \Lambda_{\widetilde{Q}})$, the full subcategory of the derived category of $\Lambda_{\widetilde{Q}}$ whose objects have finite-dimensional total cohomology. The simple object $S_i$ for $i \in \Delta_0$ is also 2-spherical in $\D^b(\modcat \Lambda_{\widetilde{Q}})$, and the inverse of the induced spherical twist is naturally isomorphic to the functor $I_i \otimes^{\Ltensor}_{\Lambda_{\widetilde{Q}}}-$, where $I_i = \Lambda_{\widetilde{Q}}(1-e_i)\Lambda_{\widetilde{Q}}$ is the kernel of the quotient map $\Lambda_{\widetilde{Q}} \longrightarrow S_i$ (see \cite[Theorem 6.14]{IyamaReiten} and \cite[Section III]{BuanIyamaReitenScott}). Consequently, the induced spherical twist is given by $\RHom_{\Lambda_{\widetilde{Q}}}(I_i,-)$. Viewing $\pvd(\Pi_Q)$ as a full triangulated subcategory of $\D^b(\modcat \Lambda_{\widetilde{Q}})$, this gives an alternative way for computing $T_i$ and $T_i^{-1}$.

\begin{lemma}\label{lemma:tensor product is product}
Let $(i_1,\dots,i_t)$ be a reduced word for $w \in \mathsf{W}$. Then $I_{i_1} \otimes^{\Ltensor}_{\Lambda_{\widetilde{Q}}} I_{i_2} \otimes^{\Ltensor}_{\Lambda_{\widetilde{Q}}} \dotsb \otimes^{\Ltensor}_{\Lambda_{\widetilde{Q}}} I_{i_t}$ is isomorphic to the product of ideals $I_{i_1}I_{i_2}\dotsb I_{i_t}$.
\end{lemma}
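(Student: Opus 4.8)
The plan is to argue by induction on $t$, using the associativity of the derived tensor product together with the structure theory of the ideals $I_i$ over the completed preprojective algebra $\Lambda := \Lambda_{\widetilde{Q}}$ developed in \cite{BuanIyamaReitenScott} and \cite{IyamaReiten}. I will in fact prove the slightly stronger statement that, for a reduced word $(i_1,\dots,i_t)$, the complex $I_{i_1}\otimes^{\Ltensor}_{\Lambda}\dotsb\otimes^{\Ltensor}_{\Lambda}I_{i_t}$ is concentrated in degree zero, where it is isomorphic to the product $I_{i_1}\dotsb I_{i_t}$, and that this product is a tilting $\Lambda$-module of projective dimension at most one. The case $t=1$ is immediate, since $I_{i_1}$ is already such a tilting module in degree zero, this being one of the basic properties of the ideals $I_i$ recalled from \cite{BuanIyamaReitenScott}.

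For the inductive step, write $w = s_{i_1}\dotsb s_{i_t}$ and $w' = s_{i_2}\dotsb s_{i_t}$, so that $(i_2,\dots,i_t)$ is a reduced word for $w'$ and $\ell(w) = \ell(w')+1$. By the induction hypothesis, the complex $M := I_{i_2}\otimes^{\Ltensor}_{\Lambda}\dotsb\otimes^{\Ltensor}_{\Lambda}I_{i_t}$ is the tilting module $I_{i_2}\dotsb I_{i_t}$ placed in degree zero. By associativity of $\otimes^{\Ltensor}_{\Lambda}$, it then suffices to show that $I_{i_1}\otimes^{\Ltensor}_{\Lambda}M$ is concentrated in degree zero and there isomorphic to the product $I_{i_1}M$.

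To see the concentration in degree zero, I would apply the functor $-\otimes^{\Ltensor}_{\Lambda}M$ to the short exact sequence $0 \to I_{i_1}\to \Lambda \to S_{i_1}\to 0$ defining $I_{i_1}$, obtaining a triangle $I_{i_1}\otimes^{\Ltensor}_{\Lambda}M \to M \to S_{i_1}\otimes^{\Ltensor}_{\Lambda}M \to \Sigma(I_{i_1}\otimes^{\Ltensor}_{\Lambda}M)$. Since $M$ has projective dimension at most one as a left $\Lambda$-module, we have $\mathrm{Tor}^{\Lambda}_{n}(S_{i_1},M) = 0$ for $n \geq 2$; the long exact cohomology sequence of the triangle, with $M$ concentrated in degree zero, then forces $I_{i_1}\otimes^{\Ltensor}_{\Lambda}M$ to be concentrated in degree zero as well. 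Extracting the degree-zero part yields the exact sequence $0 \to \mathrm{Tor}^{\Lambda}_1(S_{i_1},M)\to I_{i_1}\otimes_{\Lambda}M \to M \to S_{i_1}\otimes_{\Lambda}M\to 0$, whose middle map is the underived multiplication map, with image precisely the product $I_{i_1}M$.

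The remaining and crucial point is therefore the vanishing $\mathrm{Tor}^{\Lambda}_1(S_{i_1},M) = 0$, which is exactly what upgrades the surjection $I_{i_1}\otimes_{\Lambda}M \twoheadrightarrow I_{i_1}M$ to an isomorphism. This is where the reducedness of the word enters essentially, through the inequality $\ell(s_{i_1}w') > \ell(w')$: the one-step statement that $I_{i_1}\otimes^{\Ltensor}_{\Lambda}I_{w'}\cong I_{i_1}I_{w'} = I_{s_{i_1}w'}$, again a tilting module, whenever $\ell(s_{i_1}w') > \ell(w')$, is precisely the content of \cite[Theorem 6.14]{IyamaReiten} and \cite[Section III]{BuanIyamaReitenScott}. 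I expect this Tor-vanishing to be the main obstacle, and the part that is not formal: were the word not reduced, $s_{i_1}w'$ would be shorter than $w'$ and $\mathrm{Tor}^{\Lambda}_1(S_{i_1}, I_{w'})$ would be nonzero, so the identity genuinely relies on the Weyl-group combinatorics governing these ideals rather than on homological algebra alone. Granting it, the induction closes: $I_{i_1}\otimes^{\Ltensor}_{\Lambda}M$ is the tilting module $I_{i_1}\dotsb I_{i_t}$ in degree zero, completing the proof.
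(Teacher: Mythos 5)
Your argument is correct, and it is in substance the same as the paper's: the paper's entire proof is the one-line remark that this statement appears in the proof of \cite[Theorem III.1.9]{BuanIyamaReitenScott}, and what you have written is a faithful unwinding of that argument (induction on $t$, reduction via the sequence $0 \to I_{i_1} \to \Lambda_{\widetilde{Q}} \to S_{i_1} \to 0$ to the vanishing of $\mathrm{Tor}_1^{\Lambda_{\widetilde{Q}}}(S_{i_1}, I_{i_2}\dotsb I_{i_t})$, which is indeed where reducedness enters). Since you ultimately delegate that Tor-vanishing, together with the fact that the products are tilting ideals of projective dimension at most one, to the very same references, your proof and the paper's rest on identical external input.
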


\begin{proof}
This result appears in the proof of \cite[Theorem III.1.9]{BuanIyamaReitenScott}.
\end{proof}

\begin{remark}
	In the noncompleted case, \cite{MizunoYang} characterizes the inverse spherical twist $T_i^{-1}$ as a derived tensor product with a dg ideal of $\Pi_Q$ defined without referring to the extended preprojective algebra $\Lambda_{\widetilde{Q}}$.
\end{remark}

We now recall the notion of reflection functors introduced in \cite{BaumannKamnitzer} and independently in \cite{Bolten} (see also \cite[Section 5]{BaumannKamnitzerTingley}). Let $M$ be a 
$\Lambda_Q$-module. We see $M$ as a representation 
of the double quiver $\overline{Q}$, that is, the data of a 
finite-dimensional $K$-vector space $M(j)$ for each 
$j \in Q_0$ and a linear map 
$M_{\alpha}: M(j) \to M(j')$ for every arrow $\alpha: j \to j'$ in 
$\overline{Q}$. For a fixed vertex $i \in \Delta_0$, consider the diagram
\[\begin{tikzcd}[column sep=5em]
	{\displaystyle\bigoplus\limits_{\substack{\alpha \in \overline{Q}_1\\\alpha:j\to i}}M(j)} & {M(i)} & {\displaystyle\bigoplus\limits_{\substack{\alpha \in \overline{Q}_1\\\alpha:j\to i}}M(j).}
	\arrow["{(\epsilon(\alpha)M_{\alpha})}", from=1-1, to=1-2]
	\arrow["{(M_{\alpha^*})}", from=1-2, to=1-3]
\end{tikzcd}\]
To simplify the notation, we will denote it as
\[\begin{tikzcd}
	{\widetilde{M}(i)} & {M(i)} & {\widetilde{M}(i).}
	\arrow["{M_{\mathrm{in}(i)}}", from=1-1, to=1-2]
	\arrow["{M_{\mathrm{out}(i)}}", from=1-2, to=1-3]
\end{tikzcd}\]
By the preprojective relations, we have 
$M_{\mathrm{in}(i)}M_{\mathrm{out}(i)} = 0$, so the image of 
$M_{\mathrm{out}(i)}$ is contained in $\ker M_{\mathrm{in}(i)}$. 
Therefore, we get a new representation $\Sigma_i(M)$ from $M$ if 
we replace the data of the previous diagram with
\[\begin{tikzcd}[column sep=5em]
	{\widetilde{M}(i)} & {\ker M_{\mathrm{in}(i)}} & {\widetilde{M}(i).}
	\arrow["{M_{\mathrm{out}(i)}M_{\mathrm{in}(i)}}", from=1-1, to=1-2]
	\arrow[hook, from=1-2, to=1-3]
\end{tikzcd}\]
Observe that the preprojective relations are still satisfied, 
hence $\Sigma_i(M)$ is a $\Lambda_Q$-module. One can 
easily extend $\Sigma_i$ to a functor on $\modcat\Lambda_Q$, the \emph{reflection functor} of \cite{BaumannKamnitzer}.

\begin{prop}\label{prop:computing the spherical twist}
Let $M \in \pvd(\Pi_Q)$ be an object concentrated in degree $0$ and view it as a $\Lambda_Q$-module. For $i \in \Delta_0$, $T_i(M)$ is again concentrated in degree $0$ if and only if the map $M_{\mathrm{in}(i)}$ defined above is surjective. In this case, we have $T_i(M) \cong \Sigma_i(M)$.
\end{prop}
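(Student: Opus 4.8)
The plan is to extract everything from the defining distinguished triangle~\eqref{eq:triangle for spherical twist} applied to $X=M$, by computing the cohomology of its three terms with respect to the canonical t-structure whose heart is $\modcat\Lambda_Q$. The first task is to identify $\RHom_{\Pi_Q}(S_i,M)$ as a graded vector space. Since $M$ lies in the heart, $\Ext^k_{\Pi_Q}(S_i,M)$ vanishes for $k<0$, and for $k=1$ it is the ordinary Yoneda group $\Ext^1_{\Lambda_Q}(S_i,M)$. The $2$-Calabi--Yau isomorphism $\Ext^k(S_i,M)\cong D\Ext^{2-k}(M,S_i)$ then gives $\Ext^2(S_i,M)\cong D\Hom_{\Lambda_Q}(M,S_i)$ and $\Ext^k(S_i,M)=0$ for $k>2$. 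Reading these groups off the representation $M$ of the double quiver yields $\Ext^0(S_i,M)=\Hom_{\Lambda_Q}(S_i,M)=\ker M_{\mathrm{out}(i)}$, the isotypic part $\operatorname{soc}_i(M)$ of the socle at $i$; $\Ext^2(S_i,M)\cong\operatorname{coker}(M_{\mathrm{in}(i)})=\operatorname{top}_i(M)$; and, by the standard analysis of extensions of $S_i$ by $M$ modulo coboundaries (the coboundaries being exactly $\operatorname{im}M_{\mathrm{out}(i)}$), $\Ext^1(S_i,M)\cong\ker M_{\mathrm{in}(i)}/\operatorname{im}M_{\mathrm{out}(i)}$. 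Here the inclusion $\operatorname{im}M_{\mathrm{out}(i)}\subseteq\ker M_{\mathrm{in}(i)}$ is precisely the preprojective relation.

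With this in hand, the long exact cohomology sequence of~\eqref{eq:triangle for spherical twist} computes $H^n(T_i(M))$. The key point is that $H^0$ of the evaluation map $\RHom_{\Pi_Q}(S_i,M)\otimes_K S_i\to M$ is the canonical inclusion $\operatorname{soc}_i(M)\hookrightarrow M$, which is injective; hence $H^{-1}(T_i(M))=0$ unconditionally. Together with the vanishing of $\Ext^k$ outside $\{0,1,2\}$, the only obstruction to $T_i(M)$ being concentrated in degree zero is $H^1(T_i(M))\cong\Ext^2(S_i,M)\otimes_K S_i$. This vanishes if and only if $\operatorname{top}_i(M)=0$, i.e.\ $M(i)=\operatorname{im}M_{\mathrm{in}(i)}$, i.e.\ $M_{\mathrm{in}(i)}$ is surjective. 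This settles the ``if and only if''.

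For the isomorphism, assume $M_{\mathrm{in}(i)}$ surjective. Then $T_i(M)=H^0(T_i(M))$ and the long exact sequence collapses to a short exact sequence of $\Lambda_Q$-modules
\[
0\longrightarrow M/\operatorname{soc}_i(M)\longrightarrow T_i(M)\longrightarrow \bigl(\ker M_{\mathrm{in}(i)}/\operatorname{im}M_{\mathrm{out}(i)}\bigr)\otimes_K S_i\longrightarrow 0.
\]
I would then exhibit $\Sigma_i(M)$ as an identical extension. The quotient $\ker M_{\mathrm{in}(i)}\twoheadrightarrow\ker M_{\mathrm{in}(i)}/\operatorname{im}M_{\mathrm{out}(i)}$ defines a surjection $\Sigma_i(M)\to\bigl(\ker M_{\mathrm{in}(i)}/\operatorname{im}M_{\mathrm{out}(i)}\bigr)\otimes_K S_i$, well defined because the incoming structure map $M_{\mathrm{out}(i)}M_{\mathrm{in}(i)}$ of $\Sigma_i(M)$ has image inside $\operatorname{im}M_{\mathrm{out}(i)}$; its kernel is the submodule carrying $\operatorname{im}M_{\mathrm{out}(i)}\subseteq\ker M_{\mathrm{in}(i)}$ at the vertex $i$ and $M(j)$ elsewhere, which $M_{\mathrm{out}(i)}$ identifies with $M/\operatorname{soc}_i(M)$. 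Matching these two extensions---equivalently, producing an explicit morphism from the cone of the evaluation map to $\Sigma_i(M)$ and checking it is an isomorphism (the dimension vectors already agree, as $\dim T_i(M)(i)=\dim\ker M_{\mathrm{in}(i)}$)---gives $T_i(M)\cong\Sigma_i(M)$.

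The main obstacle is exactly this last identification. The difficulty is that the evaluation map is a morphism in the derived category and not a chain map between the formal (zero-differential) models of its source and target, so the extension datum of $T_i(M)$ is hidden in a homotopy rather than visible term by term; one cannot simply split off the degree-one summand $\Ext^1(S_i,M)\otimes_K S_i$ and the socle summand. I would circumvent this either by computing $T_i(M)=\RHom_{\Lambda_{\widetilde Q}}(I_i,M)$ through an explicit presentation of $I_i$ in the $\Lambda_{\widetilde Q}$-model of Section~\ref{subsection:alternative descriptions of spherical twists}, where all connecting maps become honest linear maps, or by checking that the canonical map $M\to T_i(M)$ factors through $M/\operatorname{soc}_i(M)$ and realises $T_i(M)$ as the pushout of the universal extension of $\Ext^1(S_i,M)\otimes_K S_i$ by $M$ classified by the identity of $\Ext^1(S_i,M)$, which is the very extension defining $\Sigma_i(M)$.
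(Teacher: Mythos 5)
Your treatment of the ``if and only if'' is correct and genuinely different from the paper's. The paper transports everything to the completed preprojective algebra of the affine quiver: it writes $T_i(M)\cong\RHom_{\Lambda_{\widetilde{Q}}}(I_i,M)$, invokes the fact that $I_i$ has projective dimension at most $1$ (Buan--Iyama--Reiten--Scott) to reduce the concentration question to the vanishing of $\Ext^1_{\Lambda_{\widetilde{Q}}}(I_i,M)$, and then cites Baumann--Kamnitzer--Tingley (Example 5.6(i) and Proposition 5.1) both for the surjectivity criterion and for the identification $\Hom_{\Lambda_{\widetilde{Q}}}(I_i,M)\cong\Sigma_i(M)$. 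Your route stays inside $\pvd(\Pi_Q)$ and extracts the criterion from the defining triangle together with the $2$-Calabi--Yau duality; the chain $H^1(T_i(M))\cong\Ext^2(S_i,M)\otimes_KS_i\cong\operatorname{coker}(M_{\mathrm{in}(i)})\otimes_KS_i$, the injectivity of $H^0$ of the evaluation map (forcing $H^{-1}(T_i(M))=0$), and the vanishing of $\Ext^k(S_i,M)$ outside $\{0,1,2\}$ are exactly what is needed, and for this half you do not even require the more delicate identification $\Ext^1(S_i,M)\cong\ker M_{\mathrm{in}(i)}/\operatorname{im}M_{\mathrm{out}(i)}$. You buy a self-contained argument at the cost of length; the paper buys a three-line proof at the cost of two external citations.

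For the isomorphism $T_i(M)\cong\Sigma_i(M)$, what you have written is not yet a proof, and you correctly say so. Exhibiting both objects as extensions of $\bigl(\ker M_{\mathrm{in}(i)}/\operatorname{im}M_{\mathrm{out}(i)}\bigr)\otimes_KS_i$ by $M/\operatorname{soc}_i(M)$ does not identify them: two extensions with the same sub and quotient can be nonisomorphic, and the class of the extension carried by $T_i(M)$ lives in the connecting morphism of the triangle, which the formal (zero-differential) model of $\RHom_{\Pi_Q}(S_i,M)\otimes_KS_i$ does not record. Of your two proposed repairs, the first --- computing $\RHom_{\Lambda_{\widetilde{Q}}}(I_i,M)$ from an explicit presentation of $I_i$ --- is essentially the paper's proof, with the computation outsourced to Baumann--Kamnitzer--Tingley; the second --- realizing $T_i(M)$ as the pushout along $M\to M/\operatorname{soc}_i(M)$ of the universal extension classified by $\mathrm{id}\in\End(\Ext^1(S_i,M))$ and checking that $\Sigma_i(M)$ is the same pushout --- is plausible but requires identifying the connecting maps on both sides, which is precisely the work you have deferred. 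Until one of these is carried out, the second assertion of the proposition remains open in your write-up.
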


\begin{proof}
Let $M \in \pvd(\Pi_Q)$ be an object concentrated in degree $0$. Using the identifications above, we have $T_i(M) \cong \RHom_{\Lambda_{\widetilde{Q}}}(I_i,M)$. By \cite[Proposition III.1.4]{BuanIyamaReitenScott}, the projective dimension of $I_i$ is at most $1$, thus we deduce that $T_i(M)$ is concentrated in degree $0$ if and only if $\Ext^1_{\Lambda_{\widetilde{Q}}}(I_i,M) = 0$. By \cite[Example 5.6(i)]{BaumannKamnitzerTingley}, this happens precisely when the map $M_{\mathrm{in}(i)}$ is surjective. In this case, we conclude that $T_i(M) \cong \Hom_{\Lambda_{\widetilde{Q}}}(I_i,M) \cong \Sigma_i(M)$, where the last isomorphism follows from \cite[Proposition 5.1]{BaumannKamnitzerTingley}.
\end{proof}

\begin{remark}\label{rem:spherical twist gives reflection functor on the classical case}
Let $Q'$ be another orientation of $\Delta$ and suppose that $i \in Q'_0$ is a source. Denote the reflected quiver $s_iQ'$ by $Q''$. Let $F_i^-: \modcat KQ' \longrightarrow \modcat KQ''$ be the classical reflection functor of Bernstein--Gelfand--Ponomarev \cite{BernsteinGelfandPonomarev}. We claim that we have a commutative diagram of functors (up to natural isomorphism):
\[\begin{tikzcd}
	{\D^b(\modcat KQ')} & {\pvd(\Pi_{\widetilde{Q}}) \cong \D^b(\modcat\Lambda_{\widetilde{Q}})} \\
	{\D^b(\modcat KQ'')} & {\pvd(\Pi_{\widetilde{Q}}) \cong \D^b(\modcat\Lambda_{\widetilde{Q}})}
	\arrow[from=1-1, to=1-2]
	\arrow["\Ltensor F_i^-"', from=1-1, to=2-1]
	\arrow["I_i \otimes^{\Ltensor}_{\Lambda_{\widetilde{Q}}}-", shift left=10, from=1-2, to=2-2]
	\arrow[from=2-1, to=2-2]
\end{tikzcd}\]
where the functor on the left is the left derived functor of $F_i^-$ and the horizontal functors are induced by the natural quotient maps $\Pi_{\widetilde{Q}} \longrightarrow KQ'$ and $\Pi_{\widetilde{Q}} \longrightarrow KQ''$ which factor through $\Pi_Q$. To prove this, notice that all functors above can be seen as left derived functors (as the restriction functors are exact). Therefore, by \cite[Lemma 6.4]{Keller94}, it is enough to show that $I_i \otimes^{\Ltensor}_{\Lambda_{\widetilde{Q}}} KQ' \cong \Ltensor F_i^-(KQ')$ in the derived category of $\Lambda_{\widetilde{Q}}$-$KQ'$-bimodules. This isomorphism can be verified using a dual version of Proposition \ref{prop:computing the spherical twist} after we realize that the reflection functors of \cite{BaumannKamnitzer} restrict to those of \cite{BernsteinGelfandPonomarev} (see \cite[Proposition 7.1]{BaumannKamnitzer}). Alternatively, see \cite[Corollary 2.12]{AmiotIyamaReitenTodorov}.

As a consequence, we obtain the following commutative diagram of functors:
\[\begin{tikzcd}
	{\D^b(\modcat KQ')} & {\pvd(\Pi_Q)} \\
	{\D^b(\modcat KQ'')} & {\pvd(\Pi_Q)}
	\arrow[from=1-1, to=1-2]
	\arrow["\Ltensor F_i^-"', from=1-1, to=2-1]
	\arrow["T_i^{-1}", from=1-2, to=2-2]
	\arrow[from=2-1, to=2-2]
\end{tikzcd}\]
Since $\Ltensor F_i^-$ is an equivalence with quasi-inverse given by the right derived functor of the other reflection functor $F_i^+: \modcat KQ'' \longrightarrow \modcat KQ'$ of \cite{BernsteinGelfandPonomarev}, we also obtain a similar diagram involving $\bm{\mathrm{R}}F_i^+$ and $T_i$.
\end{remark}

\subsection{A categorification of the root system}\label{subsection:categorification of the root system} Let 
$K_0(\pvd(\Pi_Q))$ be the Grothendieck group of $\pvd(\Pi_Q)$. 
One can show that it is isomorphic to the Grothendieck group of 
the canonical heart $\modcat\Lambda_Q$; hence, $K_0(\pvd(\Pi_Q))$ is a 
finitely generated free abelian group with a canonical basis 
given by the classes $[S_i]$ ($i \in \Delta_0$) of the simple 
objects. In particular, we can identify $K_0(\pvd(\Pi_Q))$ with 
the root lattice $\mathsf{Q}$ associated with $\Delta$ in such a 
way that the class $[S_i]$ corresponds to the simple root 
$\alpha_i$.

We can also categorify the weight lattice $\mathsf{P}$ of $\Delta$. 
Let $\per(\Pi_Q)$ be the \emph{perfect derived category of} 
$\Pi_Q$, that is, the subcategory of compact objects of the 
derived category of $\Pi_Q$. Equivalently, $\per(\Pi_Q)$ is 
given by the thick hull of $\Pi_Q$ viewed as an object in its derived category. Its Grothendieck group $K_0(\per(\Pi_Q))$ is generated by the 
classes $[P_i]$ for $i \in \Delta_0$, where 
$P_i = \Pi_Qe_i$ (this follows, e.g., from \cite[Lemma 2.14]{Plamondon}). These classes satisfy some orthogonality relations against the classes $[S_j] \in K_0(\pvd(\Pi_Q))$ under the Euler form (see below). Thus, they are also linearly independent and form a basis of $K_0(\per(\Pi_Q))$. We identify 
$K_0(\per(\Pi_Q))$ with $\mathsf{P}$ by 
sending $[P_i]$ to the fundamental weight $\varpi_i$. 

By \cite[Theorem 2.19]{KellerYang}, we have $\pvd(\Pi_Q) \subseteq \per(\Pi_Q)$. This inclusion induces a map
\[
\mu: K_0(\pvd(\Pi_Q)) \longrightarrow K_0(\per(\Pi_Q))
\]
between the corresponding Grothendieck groups. The next result 
shows that $\mu$ is injective and that the inclusion 
$K_0(\pvd(\Pi_Q)) \subseteq K_0(\per(\Pi_Q))$ agrees with the 
inclusion $\mathsf{Q} \subseteq \mathsf{P}$ under the previous 
identifications. 

\begin{prop}
With the notation above, the matrix of the map $\mu$ in the 
canonical bases is the Cartan matrix of $\Delta$. In particular, 
$\mu$ is injective.
\end{prop}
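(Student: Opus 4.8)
The plan is to exploit the Euler pairing between the perfect and the perfectly valued derived categories. For $X \in \per(\Pi_Q)$ and $Y \in \pvd(\Pi_Q)$, the spaces $\Hom_{\pvd(\Pi_Q)}(X, \Sigma^k Y)$ are finite-dimensional and vanish for all but finitely many $k$, so we obtain a well-defined bilinear map
\[
\langle -, - \rangle \colon K_0(\per(\Pi_Q)) \times K_0(\pvd(\Pi_Q)) \longrightarrow \Z, \qquad \langle X, Y \rangle = \sum_{k \in \Z} (-1)^k \dim_K \Hom(X, \Sigma^k Y).
\]
First I would show that the bases $\{[P_i]\}$ and $\{[S_j]\}$ are dual under this pairing. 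Since $P_i = \Pi_Q e_i$ is free, $\RHom_{\Pi_Q}(P_i, S_j)$ computes the component $e_i S_j$, so $\Hom(P_i, \Sigma^k S_j) = H^k(e_i S_j)$. As $S_j$ is concentrated in degree zero and $e_i S_j = \delta_{ij} K$, this gives $\langle [P_i], [S_j] \rangle = \delta_{ij}$.

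Next, writing $\mu([S_j]) = [S_j] = \sum_i M_{ij}[P_i]$ in $K_0(\per(\Pi_Q))$, I would recover the matrix entries by pairing with the simples: since $\{[P_i]\}$ is dual to $\{[S_k]\}$, pairing on the right with $[S_k]$ yields $M_{kj} = \langle [S_j], [S_k] \rangle$, where the right-hand side is now the Euler form of $\pvd(\Pi_Q)$ evaluated on two simples. Thus it remains to compute $\langle [S_j], [S_k] \rangle = \sum_n (-1)^n \dim \Ext^n_{\Pi_Q}(S_j, S_k)$ and to identify it with the Cartan entry $C_{kj}$.

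For the diagonal, the excerpt already records $\Ext^k(S_i, S_i) = K$ for $k \in \{0,2\}$ and $0$ otherwise, so $\langle [S_i], [S_i] \rangle = 1 + 1 = 2$. For $i \neq j$, one has $\Ext^0(S_i, S_j) = \Hom(S_i, S_j) = 0$ as the simples are pairwise non-isomorphic, and the $2$-Calabi--Yau duality gives $\Ext^2(S_i, S_j) \cong D\Ext^0(S_j, S_i) = 0$. The remaining term $\Ext^1(S_i, S_j)$ I would compute as in the diagonal case (via \cite[Lemma 2.15]{KellerYang}, or equivalently by counting degree-zero arrows $i \to j$ in the graded quiver $Q^*$): since $\Delta$ is simply laced, there is exactly one such arrow when $i \sim j$ and none otherwise, so $\dim \Ext^1(S_i, S_j)$ equals $1$ if $i \sim j$ and $0$ if $i \not\sim j$. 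Hence $\langle [S_i], [S_j] \rangle = -1$ when $i \sim j$ and $0$ when $i \neq j$, $i \not\sim j$. This matches the off-diagonal entries of the Cartan matrix, so $M = (M_{kj})$ equals the Cartan matrix $C$ of $\Delta$. Injectivity of $\mu$ then follows at once, since the Cartan matrix of a Dynkin diagram is positive definite and in particular invertible.

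The step I expect to be the main obstacle is the off-diagonal computation of $\Ext^1(S_i, S_j)$: the diagonal values are quoted in the excerpt, but the off-diagonal ones require either a direct computation over $\Pi_Q$ or an appeal to the structure of the Ginzburg quiver $Q^*$, and one must be careful that only the single degree-zero arrow (and not the starred arrow in the opposite direction) contributes to $\Ext^1(S_i, S_j)$. Everything else is formal manipulation of the duality between $\per(\Pi_Q)$ and $\pvd(\Pi_Q)$.
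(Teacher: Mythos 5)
Your argument is correct, but it takes a genuinely different route from the paper's. The paper computes $\mu([S_i])$ directly: it resolves $S_i$ by $P_i$, identifies the kernel of the projection $P_i \twoheadrightarrow S_i$ with the dg submodule generated by the nontrivial paths out of $i$, and filters that kernel so as to read off $[\ker\pi] = \sum_{j\sim i}[P_j] - [P_i]$ in $K_0(\per(\Pi_Q))$ (the summand coming from the degree $-1$ loop $t_i$ contributes $[\Sigma P_i] = -[P_i]$), whence $\mu([S_i]) = 2[P_i] - \sum_{j\sim i}[P_j]$ without ever pairing against the simples. You instead use that $\{[P_i]\}$ and $\{[S_j]\}$ are dual under the Euler pairing, so that the matrix of $\mu$ is the Gram matrix $M_{kj} = \sum_n (-1)^n \dim_K \Ext^n_{\Pi_Q}(S_j,S_k)$ of the Euler form on the simples, which you then evaluate: the diagonal entries are $2$ by the quoted computation of $\Ext^\bullet(S_i,S_i)$, and off the diagonal $\Ext^0$ and $\Ext^2$ vanish (the latter by the $2$-Calabi--Yau property) while $\Ext^1(S_i,S_j)$ is one-dimensional exactly when $i \sim j$. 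Both routes rest on the same structural input about $Q^*$ (via \cite[Lemma 2.15]{KellerYang}); yours is more formal and shorter, at the price of needing the off-diagonal $\Ext^1$ computation, which the paper avoids here but states and uses anyway in Section \ref{section:morphisms}. The point you flag as delicate --- that only the single degree-zero arrow in the relevant direction, and not also its starred partner, contributes to $\Ext^1(S_i,S_j)$ --- is the right thing to worry about and is exactly what \cite[Lemma 2.15]{KellerYang} settles; with it your matrix is the Cartan matrix and injectivity follows, as in the paper, from the nonvanishing of its determinant.
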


\begin{proof}
The following argument is based on \cite[Section 2.14]{KellerYang}. Take $i \in \Delta_0$ and let $\pi: P_i \longrightarrow S_i$ be the canonical projection. The kernel of $\pi$ is the closed dg submodule of $P_i$ generated by all nontrivial paths starting at $i$. Hence, we have
\[
\ker\pi = \sum\limits_{\substack{\alpha \in Q^*_1\\\alpha:i\to j}}P_j\alpha = \bigoplus\limits_{\substack{\alpha \in Q^*_1\\\alpha:i\to j}}P_j\alpha.
\]
We remark that the direct sum decomposition above only holds in the category of graded $\Pi_Q$-modules because the summand $P_it_i$ is not stable under the differential. To fix this, we consider the following filtration of $\ker\pi$ by dg submodules:
\[
0 \subset \bigoplus\limits_{\substack{\alpha \in \overline{Q}_1\\\alpha:i\to j}}P_j\alpha \subset \ker\pi.  
\]
Notice that the second term above is isomorphic as a dg module to
\[
\bigoplus\limits_{\substack{\alpha \in \overline{Q}_1\\\alpha:i\to j}}\Sigma^{-|\alpha|}P_j = \bigoplus\limits_{\substack{\alpha \in \overline{Q}_1\\\alpha:i\to j}}P_j, 
\]
while the quotient of the last two terms is isomorphic to $\Sigma^{-|t_i|}P_i = \Sigma P_i$. Since there is precisely one arrow in $\overline{Q}$ starting at $i$ for each $j \in \Delta_0$ with $j \sim i$, we get the following equality in $K_0(\per(\Pi_Q))$:
\[
[\ker\pi] = \sum_{j \sim i}[P_j] - [P_i].
\]
Now, since $S_i$ is isomorphic in $\per(\Pi_Q)$ to the cone of the canonical inclusion $\ker\pi \longrightarrow P_i$, we deduce that
\[
\mu([S_i]) = [P_i] - [\ker\pi] = 2[P_i] - \sum_{j \sim i}[P_j],
\]
proving that the matrix representing $\mu$ is indeed the Cartan 
matrix of $\Delta$. The last statement is true because the 
determinant of the Cartan matrix is nonzero.
\end{proof}

For $i,j \in \Delta_0$, it is easy to see that $\RHom_{\Pi_Q}(P_i,S_j)$ is concentrated in degree $0$ and its zeroth cohomology has dimension given by Kronecker's delta $\delta_{ij}$. We deduce that $\RHom_{\Pi_Q}(P,M)$ has finite-dimensional total cohomology for any $P \in \per(\Pi_Q)$ and $M \in \pvd(\Pi_Q)$. Therefore, we can define the \emph{Euler form} as the pairing
\[
(-,-): K_0(\per(\Pi_Q)) \times K_0(\pvd(\Pi_Q)) \longrightarrow \Z
\]
given by
\[
([P],[M]) = 
\sum_{k \in \Z}(-1)^k\dim_KH^k(\RHom_{\Pi_Q}(P,M)) = 
\sum_{k \in \Z}(-1)^k\dim_K\Ext_{\Pi_Q}^k(P,M)
\]
for $P \in \per(\Pi_Q)$ and $M \in \pvd(\Pi_Q)$. Since 
$([P_i],[S_j]) = \delta_{ij}$ for all $i,j \in \Delta_0$, the Euler 
form agrees with the symmetric bilinear form defined in Section 
\ref{subsection:basic notation for root systems}. Moreover, using 
the distinguished triangle (\ref{eq:triangle for spherical twist}) 
in the definition of the spherical twist, we have
\[
[T_i(X)] = [X] - ([S_i],[X])[S_i]
\]
for all $i \in \Delta_0$ and $X \in \pvd(\Pi_Q)$. Thus, the 
spherical twists categorify the simple reflections $s_i$ of 
Section \ref{subsection:basic notation for root systems}.

If $\sigma$ is an automorphism of $\Delta$, we saw in Section 
\ref{subsection:Calabi--Yau completion} that it induces an 
automorphism of $\Pi_Q$. Restricting along its inverse induces an autoequivalence $\sigma: \per(\Pi_Q) \longrightarrow \per(\Pi_Q)$. In this setting, notice that the action of $\sigma$ in 
$K_0(\per(\Pi_Q))$ agrees with its action on the weight lattice 
$\mathsf{P}$ as defined at the end of Section 
\ref{subsection:folding}.

\section{Categories associated with commutation classes}\label{section:objects}

In this section, we introduce the category of representations $\C([\bm{i}])$ for a commutation class $[\bm{i}]$ of reduced words for $w \in \mathsf{W}$. When $w = w_0$, we also construct what we call its repetition category $\R([\bm{i}])$ and its c-derived category $\D([\bm{i}])$.

We will freely use the notation introduced in Sections 
\ref{section:Q-data combinatorics} and \ref{section:the categories}.

\begin{remark}
We fix an algebraically closed field $K$ and an orientation $Q^{\circ}$ of $\Delta$ to 
work with the dg algebra $\Pi_{Q^{\circ}}$ and its perfectly valued derived category. For simplicity, we will write $Q$ for $Q^{\circ}$ in general. 
However, when dealing with other quivers, we will prefer to denote 
them by $Q$, and the full notation $Q^{\circ}$ will be used to 
avoid ambiguity.
\end{remark}

\subsection{The category of representations}\label{section:indecomposable objects from commutation classes} Let 
$\bm{i} = (i_1, i_2, \dots, i_t)$ be a reduced word for $w \in \mathsf{W}$. For 
$1 \leq k \leq t$, define
\[
M^{\bm{i}}_k = T_{i_1}T_{i_2}\dotsb T_{i_{k-1}}(S_{i_k}) 
\in \pvd(\Pi_{Q}).
\]
It is an indecomposable object whose class in 
$K_0(\pvd(\Pi_{Q}))$ corresponds to the positive root 
$\beta^{\bm{i}}_k \in \textsf{R}^+(w)$. For the commutation class $[\bm{i}]$, define
\[
\ind([\bm{i}]) = \{M^{\bm{i}}_k \in 
\pvd(\Pi_{Q}) \mid 1 \leq k \leq t\},
\]
the set of \emph{indecomposable objects associated with} 
$[\bm{i}]$. Let
\[
\C({[\bm{i}]}) = \add \ind([\bm{i}]) \subseteq \pvd(\Pi_{Q})
\]
be the full additive subcategory generated by these objects 
and closed under isomorphisms. We call it the \emph{category of representations of} $[\bm{i}]$. As the notation suggests, we 
have the following lemma.

\begin{lemma}\label{lemma:indQ well defined}
The set $\ind([\bm{i}])$ does not depend (up to isomorphism of its elements) on 
the choice of the representative for the commutation class 
$[\bm{i}]$.    
\end{lemma}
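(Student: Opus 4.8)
The plan is to reduce the statement to a single commutation move and then track precisely how the objects $M^{\bm{i}}_k$ change. Since any two representatives of $[\bm{i}]$ are connected by a finite sequence of commutation moves, it suffices by induction to treat the case where $\bm{j}$ is obtained from $\bm{i} = (i_1,\dots,i_t)$ by swapping two adjacent commuting letters: say $i_k = a$ and $i_{k+1} = b$ with $a \not\sim b$ (so in particular $a \neq b$, since consecutive equal letters cannot occur in a reduced word), while $\bm{j}$ agrees with $\bm{i}$ except that positions $k$ and $k+1$ now read $b,a$. I would then compare $\ind([\bm{i}])$ and $\ind([\bm{j}])$ index by index.

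The key computational input is that $T_a(S_b) \cong S_b$ whenever $a \not\sim b$ and $a \neq b$. To see this, I would first record that $\RHom_{\Pi_Q}(S_a, S_b) = 0$ in this situation: indeed $\Hom_{\Pi_Q}(S_a,S_b) = 0$ since $a \neq b$, the group $\Ext^1_{\Pi_Q}(S_a,S_b)$ vanishes because it counts arrows between $a$ and $b$ in the double quiver $\overline{Q}$ (of which there are none, as $a \not\sim b$), and $\Ext^2_{\Pi_Q}(S_a,S_b) \cong D\Hom_{\Pi_Q}(S_b,S_a) = 0$ by the $2$-Calabi--Yau property. This is the same $\Ext$-computation that exhibits the $S_i$ as $2$-spherical, now in its off-diagonal form. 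Plugging $\RHom_{\Pi_Q}(S_a,S_b) = 0$ into the defining triangle (\ref{eq:triangle for spherical twist}) of $T_a$ evaluated at $X = S_b$ collapses its first term, so the map $S_b \to T_a(S_b)$ is an isomorphism; symmetrically $T_b(S_a) \cong S_a$.

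With this in hand, I set $F = T_{i_1}\dotsb T_{i_{k-1}}$, which is an auto-equivalence. For $l \leq k-1$ the two words agree up to position $l$, hence $M^{\bm{i}}_l = M^{\bm{j}}_l$. For $l \geq k+2$ the two prefixes $T_{i_1}\dotsb T_{i_{l-1}}$ differ only by replacing the factor $T_aT_b$ with $T_bT_a$, which are isomorphic by Proposition \ref{prop:properties of spherical twists}(2); applying this isomorphism to $S_{i_l}$ gives $M^{\bm{i}}_l \cong M^{\bm{j}}_l$. It then remains to compare the two middle indices: using $S_a \cong T_b(S_a)$ and $T_a(S_b) \cong S_b$ from the previous paragraph and applying the auto-equivalence $F$, I obtain
\[
M^{\bm{i}}_k = F(S_a) \cong FT_b(S_a) = M^{\bm{j}}_{k+1}
\quad\text{and}\quad
M^{\bm{i}}_{k+1} = FT_a(S_b) \cong F(S_b) = M^{\bm{j}}_k.
\]
Thus the sets $\ind([\bm{i}])$ and $\ind([\bm{j}])$ coincide up to isomorphism of their elements, the commutation move merely transposing the two indecomposables sitting at positions $k$ and $k+1$. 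Iterating over a sequence of commutation moves then proves the lemma.

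I do not anticipate a serious obstacle; the only point requiring genuine verification is the orthogonality $\RHom_{\Pi_Q}(S_a,S_b) = 0$ for $a \not\sim b$ with $a \neq b$, and this is immediate from the preprojective description of $\pvd(\Pi_Q)$ together with Calabi--Yau duality. The remaining steps are purely formal manipulations of the auto-equivalences $T_i$ and the braid-type relation of Proposition \ref{prop:properties of spherical twists}(2).
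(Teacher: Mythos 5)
Your proposal is correct and follows essentially the same route as the paper: reduce to a single commutation move, use the commutativity $T_aT_b \cong T_bT_a$ for the indices outside the swapped pair, and observe that the two middle indecomposables are exchanged because $T_a(S_b) \cong S_b$ when $a \not\sim b$. The only (harmless) difference is in how that last fact is justified: the paper invokes Proposition \ref{prop:computing the spherical twist} (the reflection-functor description), while you derive it from the vanishing of $\RHom_{\Pi_Q}(S_a,S_b)$ in the defining triangle (\ref{eq:triangle for spherical twist}), which is equally valid.
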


\begin{proof}
Take $\bm{j} \in [\bm{i}]$. We may suppose that $\bm{j}$ is 
obtained from $\bm{i}$ by applying a commutation move that exchanges $i_l$ and $i_{l+1}$ for some $1 \leq l < t$ such that 
$i_l \not\sim i_{l+1}$. Since $T_{i_l}$ and $T_{i_{l+1}}$ commute (Proposition \ref{prop:properties of spherical twists}), 
we have
\[
M^{\bm{i}}_k \cong M^{\bm{j}}_k
\]
for $1 \leq k \leq t$ with $k \neq l,l+1$. Furthermore, we have 
$M^{\bm{i}}_l \cong M^{\bm{j}}_{l+1}$ and 
$M^{\bm{i}}_{l+1} \cong M^{\bm{j}}_l$, because 
$i_l \not\sim i_{l+1}$ and so $T_{i_{l+1}}(S_{i_l}) \cong S_{i_l}$ 
and $T_{i_l}(S_{i_{l+1}}) \cong S_{i_{l+1}}$ (by Proposition \ref{prop:computing the spherical twist}). Hence, the sets 
$\ind([\bm{i}])$ and $\ind([\bm{j}])$ coincide (up to isomorphism 
of its elements).
\end{proof}

By the lemma above, we deduce that 
$M^{\bm{i}}_k \cong M^{\bm{j}}_l$ if $\bm{i}$ and $\bm{j}$ are 
commutation-equivalent and $\beta^{\bm{i}}_k = \beta^{\bm{j}}_l$. 
In this way, given a commutation class $[\bm{i}]$ and a positive 
root $\alpha \in \textsf{R}^+(w)$, it is well defined (up to 
isomorphism) to set $M^{[\bm{i}]}_{\alpha}$ as $M^{\bm{i}}_k$ where 
$k$ satisfies $\alpha = \beta^{\bm{i}}_k$. We will adopt this 
notation from now on.

\begin{thm}\label{thm:indecomposables are concentrated in zero}
If $\bm{i} = (i_1,\dots,i_t)$ is any reduced word for $w \in \mathsf{W}$, 
then $M^{\bm{i}}_k$ has cohomology concentrated in degree $0$ 
for all $1 \leq k \leq t$. 
\end{thm}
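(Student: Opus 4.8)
The plan is to establish the statement in two stages: first for the \emph{backward} objects $T_{i_1}^{-1}\dotsb T_{i_{k-1}}^{-1}(S_{i_k})$, which are directly accessible through the preprojective algebra $\Lambda_{\widetilde{Q}}$ and the ideals $I_j$ of Section \ref{subsection:alternative descriptions of spherical twists}, and then to transfer the conclusion to the $M^{\bm{i}}_k$ by $K$-linear duality. Writing $G = T_{i_1}^{-1}\dotsb T_{i_{k-1}}^{-1}$ and using $T_j^{-1}\cong I_j\otimes^{\Ltensor}_{\Lambda_{\widetilde{Q}}}-$, Lemma \ref{lemma:tensor product is product} shows that $G$ is computed by tensoring with the product of ideals $I_{i_1}\dotsb I_{i_{k-1}}$. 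I would then apply the triangulated functor $G$ to the short exact sequence $0\to I_{i_k}\to\Lambda_{\widetilde{Q}}\to S_{i_k}\to 0$ coming from $I_{i_k}=\ker(\Lambda_{\widetilde{Q}}\to S_{i_k})$, and use Lemma \ref{lemma:tensor product is product} once more to identify $G(\Lambda_{\widetilde{Q}})\cong I_{i_1}\dotsb I_{i_{k-1}}$ and $G(I_{i_k})\cong I_{i_1}\dotsb I_{i_k}$. This yields a triangle
\[
I_{i_1}\dotsb I_{i_k}\longrightarrow I_{i_1}\dotsb I_{i_{k-1}}\longrightarrow T_{i_1}^{-1}\dotsb T_{i_{k-1}}^{-1}(S_{i_k})\longrightarrow \Sigma(I_{i_1}\dotsb I_{i_k}).
\]

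The step I expect to require the most care is checking that the first map above is the honest inclusion of ideals, so that its cofiber is the quotient module $I_{i_1}\dotsb I_{i_{k-1}}/I_{i_1}\dotsb I_{i_k}$, which is concentrated in degree zero. This amounts to verifying that the isomorphisms of Lemma \ref{lemma:tensor product is product} are compatible with the multiplication maps $I_{i_1}\dotsb I_{i_k}\hookrightarrow I_{i_1}\dotsb I_{i_{k-1}}$; this is precisely the kind of compatibility extracted from the proof of \cite[Theorem III.1.9]{BuanIyamaReitenScott}. Granting it, the backward object $T_{i_1}^{-1}\dotsb T_{i_{k-1}}^{-1}(S_{i_k})$ is concentrated in degree zero for every reduced word and every $k$; it is automatically a finite-dimensional $\Lambda_Q$-module, since $\pvd(\Pi_Q)$ is stable under the functors $T_j^{-1}$.

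For the second stage I would pass to the forward objects via the $K$-duality $D=\RHom_K(-,K)$, a contravariant autoequivalence of $\pvd(\Pi_Q)$ once one identifies $\Pi_Q^{\mathrm{op}}\cong\Pi_Q$ through the orientation independence of Section \ref{subsection:Calabi--Yau completion}. Dualizing the defining triangle (\ref{eq:triangle for spherical twist}) and using $D(S_j)\cong S_j$ gives $D\circ T_j\cong T_j^{-1}\circ D$, while $D$ sends finite-dimensional modules to finite-dimensional modules and hence preserves (and reflects) the property of being concentrated in degree zero. Applying $D$ one twist at a time then produces
\[
D\bigl(T_{i_1}\dotsb T_{i_{k-1}}(S_{i_k})\bigr)\cong T_{i_1}^{-1}\dotsb T_{i_{k-1}}^{-1}(S_{i_k}),
\]
so that $D(M^{\bm{i}}_k)$ is exactly the backward object for the same word and index. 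By the first stage the right-hand side lies in degree zero, and since $D$ reflects this property, so does $M^{\bm{i}}_k$.

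It is worth contrasting this with a more hands-on route: induct on $t$, peel off $T_{i_1}$, and invoke Proposition \ref{prop:computing the spherical twist} to reduce the claim to $\Hom_{\Lambda_Q}(N,S_{i_1})=0$, where $N=T_{i_2}\dotsb T_{i_{k-1}}(S_{i_k})$ is in degree zero by the inductive hypothesis. Rewriting this $\Hom$ through the equivalence $T_{i_2}\dotsb T_{i_{k-1}}$ as $\Hom(S_{i_k},P)$ with $P=T_{i_{k-1}}^{-1}\dotsb T_{i_2}^{-1}(S_{i_1})$, one checks from $\ell(s_{i_1}s_{i_2}\dotsb s_{i_{k-1}})=\ell(s_{i_2}\dotsb s_{i_{k-1}})+1$ that the class of $P$ is a positive root. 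The obstruction in this approach is exactly the vanishing of that $\Hom$ — equivalently, that $S_{i_k}$ does not lie in the socle of $P$ — which is not reachable by peeling alone and circles back to an instance of the same length. This is why I would favor the filtration-plus-duality argument, where concentration in degree zero is manifest from the module structure of $I_{i_1}\dotsb I_{i_{k-1}}/I_{i_1}\dotsb I_{i_k}$.
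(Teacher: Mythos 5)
Your proposal is correct, and its first stage coincides exactly with the paper's: both establish that the backward objects $T_{i_1}^{-1}\dotsb T_{i_{k-1}}^{-1}(S_{i_k})$ are concentrated in degree zero by tensoring the sequence $0\to I_{i_k}\to\Lambda_{\widetilde{Q}}\to S_{i_k}\to 0$ with $I_{i_1}\otimes^{\Ltensor}\dotsb\otimes^{\Ltensor}I_{i_{k-1}}$ and invoking Lemma \ref{lemma:tensor product is product} to recognize the cofiber as the module $I_{i_1}\dotsb I_{i_{k-1}}/I_{i_1}\dotsb I_{i_k}$ (the compatibility with the multiplication maps that you flag is indeed the content extracted from the proof of \cite[Theorem III.1.9]{BuanIyamaReitenScott}). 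Where you genuinely diverge is in the transfer to the forward objects. The paper extends $\bm{i}$ to a reduced word for $w_0$ and proves $M^{\bm{i}}_k \cong T_{i_N^*}^{-1}\dotsb T_{i_{k+1}^*}^{-1}(S_{i_k^*})$ by applying $T_{i_{k-1}}^{-1}\dotsb T_{i_1}^{-1}$ to both sides, observing that $(i_{k-1},\dots,i_1,i_N^*,\dots,i_k^*)$ is again reduced for $w_0$, and then pinning down the resulting degree-zero object by its class in $K_0(\pvd(\Pi_Q))$ together with the simplicity of $S_{i_k}$ in the canonical heart; this identification is reused later (Remark \ref{rmk:the longest twist acts as twisted shift}), so the detour through $w_0$ earns its keep. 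Your route instead conjugates by the $K$-duality $D$, using $\Pi_Q^{\mathrm{op}}\cong\Pi_Q$ and the relation $D\circ T_j\cong T_j^{-1}\circ D$ obtained by dualizing the triangle (\ref{eq:triangle for spherical twist}) — this is the standard statement that the dual twist is the inverse twist, as in \cite{SeidelThomas} — to get $D(M^{\bm{i}}_k)\cong T_{i_1}^{-1}\dotsb T_{i_{k-1}}^{-1}(S_{i_k})$ directly, with no appeal to $w_0$ or to Grothendieck groups. This is cleaner in that it treats an arbitrary $w$ without extension and gives the explicit description $M^{\bm{i}}_k\cong D\bigl(I_{i_1}\dotsb I_{i_{k-1}}/I_{i_1}\dotsb I_{i_k}\bigr)$, at the modest cost of setting up the duality formalism (checking $D(S_j)\cong S_j$ under the chosen identification $\Pi_Q^{\mathrm{op}}\cong\Pi_Q$ and verifying the intertwining relation), which the paper never needs elsewhere. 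Your closing diagnosis of why the naive peeling induction stalls is also accurate: the required $\Hom$-vanishing loops back to an instance of the same difficulty, and neither you nor the paper attempts that route.
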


\begin{proof}
Let $\bm{j} = (j_1,\dots,j_s)$ be a reduced word for some $w' \in \mathsf{W}$. Let us first show that
$M_k' = T_{j_1}^{-1}\dotsb T_{j_{k-1}}^{-1}(S_{j_k})$ 
is concentrated in degree $0$ for any $1 \leq k \leq s$. Using the characterization of the 
spherical twists and the identifications given in Section \ref{subsection:alternative descriptions of spherical 
twists} (due to \cite{IyamaReiten} and \cite{BuanIyamaReitenScott}), 
we have an isomorphism
\[
M_k' \cong I_{j_1} \otimes^{\Ltensor}_{\Lambda_{\widetilde{Q}}} \dotsb \otimes^{\Ltensor}_{\Lambda_{\widetilde{Q}}} I_{j_{k-1}}  \otimes^{\Ltensor}_{\Lambda_{\widetilde{Q}}} S_{j_k}
\]
in the derived category of the completed preprojective algebra $\Lambda_{\widetilde{Q}}$. Applying the functor $I_{j_1} \otimes^{\Ltensor}_{\Lambda_{\widetilde{Q}}} \dotsb \otimes^{\Ltensor}_{\Lambda_{\widetilde{Q}}} I_{j_{k-1}}  \otimes^{\Ltensor}_{\Lambda_{\widetilde{Q}}}-$ to the distinguished triangle induced by the exact sequence
\[\begin{tikzcd}
	0 & {I_{j_k}} & \Lambda_{\widetilde{Q}} & {S_{j_k}} & 0,
	\arrow[from=1-1, to=1-2]
	\arrow[from=1-2, to=1-3]
	\arrow[from=1-3, to=1-4]
	\arrow[from=1-4, to=1-5]
\end{tikzcd}\]
we get a distinguished triangle of the form
\[\begin{tikzcd}
    {I_{j_1} \otimes^{\Ltensor}_{\Lambda_{\widetilde{Q}}} \dotsb \otimes^{\Ltensor}_{\Lambda_{\widetilde{Q}}} I_{j_{k}}} & {I_{j_1} \otimes^{\Ltensor}_{\Lambda_{\widetilde{Q}}} \dotsb \otimes^{\Ltensor}_{\Lambda_{\widetilde{Q}}} I_{j_{k-1}}} & {M_k'} & \Sigma{I_{j_1} \otimes^{\Ltensor}_{\Lambda_{\widetilde{Q}}} \dotsb \otimes^{\Ltensor}_{\Lambda_{\widetilde{Q}}} I_{j_{k-1}}}.
	\arrow[from=1-1, to=1-2]
	\arrow[from=1-2, to=1-3]
	\arrow[from=1-3, to=1-4]
\end{tikzcd}\]
By Lemma \ref{lemma:tensor product is product}, we deduce that
\[
M_k' \cong \frac{I_{j_1}I_{j_2}\dotsb I_{j_{k-1}}}{I_{j_1}I_{j_2}\dotsb I_{j_k}},
\]
hence $M_k'$ is concentrated in degree $0$.

Now, let us prove the theorem. Since $\bm{i}$ can be extended to a reduced word for the longest element $w_0$ (this follows, e.g., from \cite[p. 158, Corollaire 3]{Bourbaki}), we can assume that $t = N$ and $w = w_0$. By the previous paragraph, we will finish the proof once we show that
\[
M^{\bm{i}}_k \cong T_{i_N^*}^{-1}T_{i_{N-1}^*}^{-1}\dotsb T_{i_{k+1}^*}^{-1}(S_{i_k^*}).
\]
Indeed, by applying $T_{i_{k-1}}^{-1}\dotsb T_{i_2}^{-1}T_{i_1}^{-1}$ on both sides, this is equivalent to
\[
S_{i_k} \cong T_{i_{k-1}}^{-1}\dotsb T_{i_2}^{-1}T_{i_1}^{-1}T_{i_N^*}^{-1}T_{i_{N-1}^*}^{-1}\dotsb T_{i_{k+1}^*}^{-1}(S_{i_k^*}).
\]
Since $(i_{k-1},\dots,i_2,i_1,i_N^*,i_{N-1}^*,\dots,i_k^*)$ is a reduced word for $w_0$, the previous paragraph implies that the object on the right is concentrated in degree $0$. One can check that its class in $K_0(\pvd(\Pi_Q))$ is $[S_{i_k}]$, so we obtain the isomorphism above because $S_{i_k}$ is a simple object of the canonical heart of $\pvd(\Pi_Q)$.
\end{proof}

\begin{remark}
When $w = w_0$, the proof above shows that $\ind([\bm{i}])$ coincides with the set of \emph{layers} of the preprojective algebra $\Lambda_Q$ associated with the reduced word $(i_N^*,i_{N-1}^*,\dots,i_1^*)$ in the sense of \cite{AmiotIyamaReitenTodorov}. This alternative description of the layers using the spherical twist functors has already been shown in \cite[Proposition 2.2]{AmiotIyamaReitenTodorov}.
\end{remark}

\begin{remark}\label{rmk:the longest twist acts as twisted shift}
	Let $\bm{i} = (i_1,\dots,i_N)$ be a reduced word for $w_0$. Consider the composition of functors $T = T_{i_1^*}\dotsb T_{i_N^*}$. The end of the proof above shows that $T(S_{i_1}) \cong \Sigma^{-1}S_{i_1^*}$. But notice that $T$ does not depend on the choice of reduced word by Proposition \ref{prop:properties of spherical twists}. Thus, more generally, $T(S_j) \cong \Sigma^{-1}S_{j^*}$ for all $j \in \Delta_0$. As a consequence, if $\bm{j}$ is the reduced word $(i_1^*,\dots,i_N^*)$, then the objects of $\C([\bm{j}])$ can be obtained from the objects of $\C([\bm{i}])$ by applying $\Sigma \circ T$.
\end{remark}

\begin{cor}\label{cor:simples are in C([i])}
If $[\bm{i}]$ is a commutation class of reduced words for the longest element $w_0$, then $\C([\bm{i}])$ contains $S_i$ for all $i \in \Delta_0$.
\end{cor}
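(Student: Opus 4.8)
The plan is to reduce the statement to the elementary observation that a degree-zero indecomposable object whose $K_0$-class is a simple root must be the corresponding simple module. First I would note that, since $[\bm{i}]$ is a commutation class for the longest element $w_0$, we have $\mathsf{R}^+(w_0) = \mathsf{R}^+$, so every simple root $\alpha_i$ occurs as $\beta^{\bm{i}}_k$ for some $1 \leq k \leq N$. Consequently the object $M^{[\bm{i}]}_{\alpha_i}$ is well defined and lies in $\ind([\bm{i}]) \subseteq \C([\bm{i}])$. It therefore suffices to prove the isomorphism $M^{[\bm{i}]}_{\alpha_i} \cong S_i$.

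Next I would invoke Theorem \ref{thm:indecomposables are concentrated in zero}, which guarantees that $M^{[\bm{i}]}_{\alpha_i}$ has cohomology concentrated in degree zero. Under the identification of the canonical heart of $\pvd(\Pi_Q)$ with $\modcat \Lambda_Q$ (from Section \ref{subsection:alternative descriptions of spherical twists}), this makes $M^{[\bm{i}]}_{\alpha_i}$ a genuine finite-dimensional $\Lambda_Q$-module, that is, a representation of the double quiver $\overline{Q}$ satisfying the preprojective relations. For such a module the class in $K_0(\pvd(\Pi_Q))$ is precisely its dimension vector, via the identification $K_0(\pvd(\Pi_Q)) \cong \mathsf{Q}$ sending $[S_j]$ to $\alpha_j$. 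By construction this class is $\beta^{\bm{i}}_k = \alpha_i$, so the dimension vector of $M^{[\bm{i}]}_{\alpha_i}$ equals $\alpha_i$: its total dimension is one, concentrated at the vertex $i$.

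Finally I would conclude that a representation whose underlying space is one-dimensional and supported only at vertex $i$ can have no nonzero structure maps, hence is the simple module $S_i$; thus $M^{[\bm{i}]}_{\alpha_i} \cong S_i$ and $S_i \in \C([\bm{i}])$. I do not anticipate a genuine obstacle here, as all the content is carried by the preceding theorem (concentration in degree zero) together with the categorification of the root lattice. The only point deserving a word of care is that the simple roots are actually realized among the $\beta^{\bm{i}}_k$, which is exactly the equality $\mathsf{R}^+(w_0) = \mathsf{R}^+$ recorded in Section \ref{section:commutation classes and combinatorial AR quivers} and is the reason the hypothesis $w = w_0$ is needed.
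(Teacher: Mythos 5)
Your proposal is correct and follows essentially the same route as the paper: the paper's proof likewise observes that $M^{[\bm{i}]}_{\alpha_i}$ is concentrated in degree zero (by Theorem \ref{thm:indecomposables are concentrated in zero}) and has the same class as $S_i$ in $K_0(\pvd(\Pi_Q))$, hence must be isomorphic to $S_i$ as a simple object of the canonical heart. Your additional remarks---that $\mathsf{R}^+(w_0) = \mathsf{R}^+$ guarantees $\alpha_i$ is realized, and that a one-dimensional representation supported at $i$ is forced to be $S_i$---merely make explicit what the paper leaves implicit.
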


\begin{proof}
Since $M^{[\bm{i}]}_{\alpha_i}$ is concentrated in degree $0$ and has the same class in $K_0(\pvd(\Pi_Q))$ as the simple $S_i$, we must have $M^{[\bm{i}]}_{\alpha_i} \cong S_i$.
\end{proof}

The next result shows that we can see $\C([\bm{i}])$ as a 
generalization of the category of representations of a Dynkin 
quiver. See also \cite[Theorem 3.3]{AmiotIyamaReitenTodorov}.

\begin{prop}\label{prop:concentrated in zero for adapted word}
If $\bm{i} = (i_1,\dots,i_N)$ is a reduced word for $w_0$ that is a source sequence for some orientation $Q$ of $\Delta$, then 
$M^{\bm{i}}_k$ is the image of an indecomposable $KQ$-module via 
the restriction functor
\[
\modcat KQ \longrightarrow \pvd(\Pi_{Q^{\circ}})
\]
for all $1 \leq k \leq N$. In particular, $\C([\bm{i}])$ and 
$\modcat KQ$ are equivalent as $K$-linear categories.
\end{prop}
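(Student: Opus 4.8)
The plan is to reduce the statement to the classical realization of the indecomposable modules over a Dynkin path algebra by means of the dictionary between spherical twists and Bernstein--Gelfand--Ponomarev reflection functors established in Remark~\ref{rem:spherical twist gives reflection functor on the classical case}. For an orientation $Q'$ of $\Delta$, write $\rho_{Q'}\colon \D^b(\modcat KQ') \longrightarrow \pvd(\Pi_{Q^{\circ}})$ for the functor induced by the quotient $\Pi_{Q^{\circ}} \cong \Pi_{Q'} \twoheadrightarrow KQ'$; its restriction to $\modcat KQ'$ is the functor of the statement, and it sends a module to the object of the canonical heart obtained by regarding it as a representation of $\overline{Q'}$ in which every starred arrow acts by zero. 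Setting $Q^{(0)} = Q$ and $Q^{(l)} = s_{i_l}\dotsb s_{i_1}Q$, the hypothesis that $\bm{i}$ is a source sequence means exactly that $i_l$ is a source of $Q^{(l-1)}$ for every $l$.

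First I would prove, by induction on $k$ and uniformly over all pairs consisting of an orientation $Q$ and a source sequence $(j_1,\dots,j_m)$ for it, that $T_{j_1}\dotsb T_{j_{k-1}}(S_{j_k})$ is isomorphic to $\rho_Q(N)$ for an indecomposable $KQ$-module $N$. The case $k=1$ is immediate since $\rho_Q(S_{j_1}) = S_{j_1}$. For the inductive step, $(j_2,\dots,j_m)$ is a source sequence for $Q^{(1)} = s_{j_1}Q$, so the induction hypothesis gives $T_{j_2}\dotsb T_{j_{k-1}}(S_{j_k}) \cong \rho_{Q^{(1)}}(N')$ for an indecomposable $KQ^{(1)}$-module $N'$ with $\dim N' = s_{j_2}\dotsb s_{j_{k-1}}(\alpha_{j_k})$. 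Since $j_1$ is a source of $Q$ and $Q^{(1)} = s_{j_1}Q$, the quasi-inverse form of the commutative square of Remark~\ref{rem:spherical twist gives reflection functor on the classical case} yields
\[
T_{j_1}\bigl(\rho_{Q^{(1)}}(N')\bigr) \cong \rho_{Q}\bigl(\bm{\mathrm{R}}F_{j_1}^+(N')\bigr),
\]
so it remains to see that $\bm{\mathrm{R}}F_{j_1}^+(N')$ is an indecomposable $KQ$-module concentrated in degree zero.

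Here lies the main point, which I expect to be the only real obstacle: one must rule out the exceptional behaviour of the reflection functor at the simple module sitting at the reflection vertex. Applying $s_{j_1}$ to $\dim N'$ gives $\beta^{\bm{j}}_k = s_{j_1}\dotsb s_{j_{k-1}}(\alpha_{j_k}) \in \mathsf{R}^+$, which cannot equal $-\alpha_{j_1}$; hence $\dim N' \neq \alpha_{j_1}$, i.e.\ $N' \not\cong S_{j_1}$. Now $j_1$ is a sink of $Q^{(1)}$ and, by Remark~\ref{rem:spherical twist gives reflection functor on the classical case}, $\bm{\mathrm{R}}F_{j_1}^+$ is the equivalence quasi-inverse to $\Ltensor F_{j_1}^-$; it therefore carries the indecomposable $N'\not\cong S_{j_1}$ to an indecomposable object of $\D^b(\modcat KQ)$ whose class is the positive root $s_{j_1}(\dim N') = \beta^{\bm{j}}_k$. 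Since $KQ$ is hereditary, every indecomposable object is a shift of an indecomposable module, and a positive class forces the shift to be trivial; thus $\bm{\mathrm{R}}F_{j_1}^+(N')$ is an indecomposable module in degree zero. This is precisely the classical statement of \cite{BernsteinGelfandPonomarev}, and it closes the induction. Specialising to $m = N$ and $w = w_0$ gives the first assertion.

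For the final claim, I would first observe that $\rho_Q$ is fully faithful on $\modcat KQ$: two $KQ$-modules $M,N$ become objects of the heart on which every starred arrow acts by zero, so the defining conditions of a morphism in the heart at the starred arrows are vacuous and a heart-morphism is the same as a morphism of $KQ$-modules, whence $\Hom_{\pvd(\Pi_{Q^{\circ}})}(\rho_Q M, \rho_Q N) = \Hom_{KQ}(M,N)$. By the first part, the modules $N_1,\dots,N_N$ realizing the objects $M^{\bm{i}}_k$ have as dimension vectors the $N$ distinct positive roots $\beta^{\bm{i}}_k$, so by Gabriel's theorem they are pairwise non-isomorphic and exhaust the indecomposable $KQ$-modules; hence $\add\{N_k\} = \modcat KQ$ and $\C([\bm{i}]) = \add\{M^{\bm{i}}_k\} = \rho_Q(\modcat KQ)$. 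The fully faithful additive functor $\rho_Q$ therefore restricts to the desired equivalence $\modcat KQ \xrightarrow{\sim} \C([\bm{i}])$.
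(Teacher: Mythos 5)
Your proposal is correct and follows essentially the same route as the paper's proof: an induction that peels off one spherical twist at a time, converts it into a BGP reflection functor (the paper works with $\Lambda_Q$-modules via Proposition \ref{prop:computing the spherical twist} and the functors $\Sigma_i$, while you invoke the derived square of Remark \ref{rem:spherical twist gives reflection functor on the classical case}), and excludes the exceptional case $N' \cong S_{j_1}$ by exactly the same positivity-of-roots argument that the paper uses for $N_l \not\cong S_{i_{l-1}}$. The only loose phrase is that a positive class in $K_0$ forces the shift to be even rather than trivial; this is harmless because $\bm{\mathrm{R}}F_{j_1}^+$ applied to a module has cohomology concentrated in degrees $0$ and $1$, and is in any case subsumed by your appeal to the classical statement of \cite{BernsteinGelfandPonomarev}.
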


\begin{proof}
Observe that the restriction functor above is fully faithful. Assuming the first statement above and considering the map induced 
on Grothendieck groups, this restriction must send any 
indecomposable $KQ$-module to an object isomorphic to an 
indecomposable object in $\C([\bm{i}])$. This proves the 
equivalence of categories claimed above.

Let us focus on the first part. Take $Q' =
s_{i_{k-1}}s_{i_{k-2}}\dotsb s_{i_1}Q$,
which is well defined by the hypothesis on $Q$. Observe that the 
sequence $(i_{k-1}, i_{k-2}, \dots, i_1)$ is a sink sequence for 
$Q'$. Recall that we have the restriction functor
\[
\modcat KQ''\longrightarrow \pvd(\Pi_{Q^{\circ}})
\]
induced by the quotient map $\Pi_{Q^{\circ}} \longrightarrow KQ''$ for every Dynkin quiver $Q''$ of type $\Delta$. We will prove by 
backward induction on $1 \leq l \leq k$ that
\[
N_l = T_{i_l}T_{i_{l+1}}\dotsb T_{i_{k-1}}(S_{i_k})
\]
is the image of an indecomposable $KQ''$-module via the functor 
above, where 
$Q''$ is given by $s_{i_l}^{-1}s_{i_{l+1}}^{-1}\dotsb s_{i_{k-1}}^{-1}Q'$. 
This will prove the lemma since $N_1 = M^{\bm{i}}_k$ and 
$Q = s_{i_1}^{-1}\dotsb s_{i_{k-1}}^{-1}Q'$.

One important observation for the induction to work is the 
following: if $l > 1$, we cannot have $N_l \cong S_{i_{l-1}}$. 
Indeed, the classes of $N_{l-1}$ and $N_l$ in $K_0(\pvd(\Pi_{Q^{\circ}}))$ correspond to the roots $\beta = s_{i_{l-1}}s_{i_l}\dotsb s_{i_{k-1}}(\alpha_{i_k})$ and $s_{i_{l-1}}(\beta)$, respectively. Note that $\beta$ is a positive root since $(i_{l-1}, i_l, \dots, i_k)$ represents a reduced word for some element in $\mathsf{W}$. However, if we had $N_l \cong S_{i_{l-1}}$, we would get 
$\alpha_{i_{l-1}} = s_{i_{l-1}}(\beta)$ and hence 
$\beta = -\alpha_{i_{l-1}}$, a contradiction.

Our claim is immediate for $l = k$. Take $1 < l \leq k$ and suppose 
that $N_l$ is an indecomposable $KQ''$-module for 
$Q'' = s_{i_l}^{-1}\dotsb s_{i_{k-1}}^{-1}Q'$. Note that the vertex 
$i_{l-1}$ is a sink for $Q''$. Seeing $N_l$ as a representation of 
the quiver $Q''$, the natural map from the direct sum of the vector 
spaces at the neighbors of $i_{l-1}$ to the vector space at 
$i_{l-1}$ must be surjective, otherwise $N_l$ would have a direct 
summand isomorphic to $S_{i_{l-1}}$, contradicting the observation 
from the previous paragraph. Hence, by Proposition 
\ref{prop:computing the spherical twist}, if we see $N_l$ as a $\Lambda_{Q^{\circ}}$-module, we have 
$N_{l-1} = T_{i_{l-1}}(N_l) \cong \Sigma_{i_{l-1}}(N_l)$. 
Since $N_l$ is a $KQ''$-module and $i_l$ is a sink of 
$Q''$, the reflection functor 
$\Sigma_{i_{l-1}}$ acts on $N_l$ as the classical reflection 
functor of Bernstein--Gelfand--Ponomarev \cite{BernsteinGelfandPonomarev}. Therefore, we conclude 
that $N_{l-1}$ is an indecomposable representation of 
$s_{i_{l-1}}^{-1}Q''$, finishing the induction and the proof.
\end{proof}

\begin{prop}\label{prop:equivalence under reflection, general case}
Let $\bm{i} = (i_1,\dots,i_N)$ be a reduced word for $w_0$. For a source $i \in \Delta_0$ of $[\bm{i}]$, the equivalence $T_i^{-1}: \pvd(\Pi_{Q}) \longrightarrow \pvd(\Pi_{Q})$ restricts to an equivalence from the full additive subcategory of $\C([\bm{i}])$ generated by $\ind([\bm{i}]) \setminus \{S_i\}$ to the full additive subcategory of $\C(r_i[\bm{i}])$ generated by $\ind(r_i[\bm{i}]) \setminus \{S_i\}$.
\end{prop}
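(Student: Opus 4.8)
The plan is to realise the restriction of $T_i^{-1}$ as a functor inducing a bijection between the two relevant sets of indecomposable objects, and then to promote this bijection to an equivalence using that $T_i^{-1}$ is an auto-equivalence of the ambient category $\pvd(\Pi_{Q})$. First I would fix a convenient representative: since $i$ is a source of $[\bm{i}]$, there is a reduced word $\bm{j} = (i, j_2, \dots, j_N) \in [\bm{i}]$ starting with $i$, and by definition $r_i[\bm{i}] = [r_i\bm{j}]$ with $r_i\bm{j} = (j_2, \dots, j_N, i^*)$. By Lemma~\ref{lemma:indQ well defined}, both $\ind([\bm{i}])$ and $\ind(r_i[\bm{i}])$ may be computed from these representatives. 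Here $M^{\bm{j}}_1 = S_i$, and since the objects $M^{\bm{j}}_1, \dots, M^{\bm{j}}_N$ have pairwise distinct classes $\beta^{\bm{j}}_1, \dots, \beta^{\bm{j}}_N$ in $K_0(\pvd(\Pi_{Q}))$, the object $S_i$ occurs exactly once; thus $\ind([\bm{i}]) \setminus \{S_i\} = \{M^{\bm{j}}_k \mid 2 \le k \le N\}$.

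Next I would carry out the (routine) telescoping computation. For $2 \le k \le N$ we have $M^{\bm{j}}_k = T_i T_{j_2} \dotsb T_{j_{k-1}}(S_{j_k})$, so
\[
T_i^{-1}(M^{\bm{j}}_k) = T_{j_2} \dotsb T_{j_{k-1}}(S_{j_k}) = M^{r_i\bm{j}}_{k-1},
\]
because the first $N-1$ entries of $r_i\bm{j}$ are $j_2, \dots, j_N$. This shows at once that $T_i^{-1}$ sends $\ind([\bm{i}]) \setminus \{S_i\}$ bijectively onto $\{M^{r_i\bm{j}}_l \mid 1 \le l \le N-1\}$. It then remains only to identify the single leftover object $M^{r_i\bm{j}}_N$. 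I claim it is isomorphic to $S_i$: its class is $\beta^{r_i\bm{j}}_N = s_{j_2}\dotsb s_{j_N}(\alpha_{i^*}) = s_i w_0(\alpha_{i^*}) = s_i(-\alpha_i) = \alpha_i$, where I use $s_i s_{j_2}\dotsb s_{j_N} = w_0$ and $w_0(\alpha_{i^*}) = -\alpha_{(i^*)^*} = -\alpha_i$. Since $M^{r_i\bm{j}}_N$ is concentrated in degree zero by Theorem~\ref{thm:indecomposables are concentrated in zero} and has the class of the simple object $S_i$, it must be $S_i$ itself, exactly as in Corollary~\ref{cor:simples are in C([i])}. Hence $\ind(r_i[\bm{i}]) \setminus \{S_i\} = \{M^{r_i\bm{j}}_l \mid 1 \le l \le N-1\}$, and $T_i^{-1}$ induces a bijection between the indecomposables on the two sides.

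Finally, I would upgrade this to an equivalence. As a triangulated auto-equivalence of $\pvd(\Pi_{Q})$ (Proposition~\ref{prop:properties of spherical twists}), the functor $T_i^{-1}$ is fully faithful and commutes with finite direct sums. Restricting it to the full additive subcategory of $\C([\bm{i}])$ generated by $\ind([\bm{i}]) \setminus \{S_i\}$ therefore yields a fully faithful functor whose essential image is the additive closure of $T_i^{-1}(\ind([\bm{i}]) \setminus \{S_i\}) = \ind(r_i[\bm{i}]) \setminus \{S_i\}$, which is precisely the full additive subcategory of $\C(r_i[\bm{i}])$ generated by $\ind(r_i[\bm{i}]) \setminus \{S_i\}$. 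A fully faithful and essentially surjective functor is an equivalence, giving the claim.

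The main obstacle, and the only non-formal ingredient, is the identification $M^{r_i\bm{j}}_N \cong S_i$ of the leftover object, which is where the root-theoretic computation together with the degree-zero concentration of Theorem~\ref{thm:indecomposables are concentrated in zero} are needed; once this is in place, the rest is bookkeeping of indices and the formal properties of an equivalence. I would double-check the edge case $\Delta = \mathsf{A}_1$ (where both subcategories are the zero category and the statement holds trivially), but otherwise no additional case analysis should be required.
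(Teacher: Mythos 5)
Your proposal is correct and follows essentially the same route as the paper: choose a representative starting with $i$, observe the telescoping identity $T_i^{-1}(M^{\bm{j}}_k) \cong M^{r_i\bm{j}}_{k-1}$, and identify the leftover object $M^{r_i\bm{j}}_N$ with $S_i$ (the paper states this identification without proof; your Grothendieck-group-plus-degree-zero argument is exactly the justification it relies on, as in Corollary \ref{cor:simples are in C([i])}). The only difference is that you spell out the bookkeeping and the upgrade to an equivalence, which the paper leaves implicit.
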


\begin{proof}
We can suppose $i_1 = i$. Thus, $r_i\bm{i} = (i_2, \dotsb, i_N, i_1^*)$ is in $r_i[\bm{i}]$. It is immediate that $T_i^{-1}M^{\bm{i}}_k \cong M^{r_i\bm{i}}_{k-1}$ for $1 < k \leq N$, so we get the equivalence once we note that $M^{\bm{i}}_1 \cong S_i \cong M^{r_i\bm{i}}_N$.
\end{proof}

We conclude this subsection with two concrete examples involving commutation classes associated with Q-data. More examples can be found in Appendix \ref{appendix:examples}. By Theorem \ref{thm:indecomposables are concentrated in zero}, the indecomposable objects of $\C([\bm{i}])$ are objects of $\pvd(\Pi_Q)$ whose cohomology is concentrated in degree $0$. As discussed in Section \ref{subsection:alternative descriptions of spherical twists}, one can thus view these objects as $\Lambda_{Q}$-modules or, equivalently, as representations of the double quiver $\overline{Q}$ satisfying the preprojective relations. We will represent these objects in the examples in this way.

We will display representations graphically, following the usual practice. For example, the diagram
\[\begin{tikzcd}
	&& K \\
	K & K \\
	&& K
	\arrow["0", bend left=20, from=1-3, to=2-2]
	\arrow["{\mathrm{id}}", bend left=20, from=2-1, to=2-2]
	\arrow["{\mathrm{id}}", bend left=20, from=2-2, to=1-3]
	\arrow["0", bend left=20, from=2-2, to=2-1]
	\arrow["0", bend left=20, from=2-2, to=3-3]
	\arrow["{\mathrm{id}}", bend left=20, from=3-3, to=2-2]
\end{tikzcd}\]
depicts a representation of the double quiver in type $\mathsf{D}_4$. Most of our representations will have vector spaces of dimension at most one on the vertices. In this case, the linear maps between the vertices will always be zero or the identity. To simplify the notation, we will omit the arrow for zero maps and the label for identity maps. For instance, we would denote the representation above as
\begin{center}
\begin{tikzpicture}

\node at (0,0) {$K$};
\node at (0.3,0) {$K$};
\node at (0.7,0.2) {$K$};
\node at (0.7,-0.2) {$K$};
\draw[->] (0,0.25) -- (0.3,0.25);
\draw[->] (0.35,0.2) -- (0.5,0.4);
\draw[->] (0.5,-0.4) -- (0.35,-0.2);

\end{tikzpicture}
\end{center}
Whenever there is a vertex of dimension greater than $1$, we write the representation completely. If $m > 1$, we denote by $i_k: K \to K^m$ and $\pi_k: K^m \to K$ the canonical inclusion and the canonical projection on the $k$-th coordinate, respectively.

The choice of the orientation $Q$ of $\Delta$ matters when deciding if a given representation of $\overline{Q}$ comes from a $\Lambda_Q$-module. Indeed, the signs appearing in the preprojective relations come from the orientation. Hence, we will choose a particular orientation to fix the signs when needed. For convenience, our choices will satisfy $\sigma Q = Q$ (see Section \ref{subsection:Calabi--Yau completion}).

\begin{example}\label{example:B3}
Suppose $(\Delta,\sigma) = (\mathsf{A}_5,\vee)$ is of type $\mathsf{B}_3$. Let $\Q$ be the Q-datum of Example \ref{example:first example of twisted AR quiver}. The indecomposable objects of $\C([\Q])$ are presented below. We place the object $M^{[\mathcal{Q}]}_{\alpha}$ at the vertex of $\Gamma_{\Q}$ corresponding to the positive root $\alpha$ via the isomorphism $\Gamma_{\Q} \cong \Upsilon_{[\Q]}$ of Theorem \ref{thm:isomorphism between twisted and combinatorial AR quivers}.
\begin{center}
	\begin{tikzpicture}
	\node[scale=0.9] (a) at (0,0){
	\begin{tikzcd}[column sep={3em,between origins},row sep={1em}]
		{(\im \ \backslash \ p)} & {-4} & {-3} & {-2} & {-1} & 0 & 1 & 2 & 3 & 4 & 5 & 6 & 7 & 8 \\
		1 &&&&&&& {\begin{tikzpicture}

			\node at (0,0) {$K$};
			\node at (0.3,0) {$K$};
			\node at (0.6,0) {$K$};
			\node at (0.9,0) {$K$};
			\node at (1.2,0) {$K$};
		
			\draw[->] (0.28,-0.15) -- (0,-0.15);
			\draw[->] (0.58,-0.15) -- (0.32,-0.15);
			\draw[->] (0.88,-0.15) -- (0.62,-0.15);
			\draw[->] (1.18,-0.15) -- (0.92,-0.15);
			
		\end{tikzpicture}} \\
		2 &&&&& {\begin{tikzpicture}

			\node at (0,0) {$K$};
			\node at (0.3,0) {$K$};
			\node at (0.6,0) {$K$};
			\node at (0.9,0) {$K$};
			\node at (1.2,0) {$0$};
		
			\draw[->] (0.28,-0.15) -- (0,-0.15);
			\draw[->] (0.58,-0.15) -- (0.32,-0.15);
			\draw[->] (0.88,-0.15) -- (0.62,-0.15);
			
		\end{tikzpicture}} &&&& {\begin{tikzpicture}

			\node at (0,0) {$0$};
			\node at (0.3,0) {$K$};
			\node at (0.6,0) {$K$};
			\node at (0.9,0) {$K$};
			\node at (1.2,0) {$K$};
		
			\draw[->] (0.58,-0.15) -- (0.32,-0.15);
			\draw[->] (0.88,-0.15) -- (0.62,-0.15);
			\draw[->] (1.18,-0.15) -- (0.92,-0.15);
			
		\end{tikzpicture}} \\[-0.5em]
		3 && {\begin{tikzpicture}

			\node at (0,0) {$0$};
			\node at (0.3,0) {$0$};
			\node at (0.6,0) {$K$};
			\node at (0.9,0) {$0$};
			\node at (1.2,0) {$0$};
			
		\end{tikzpicture}} && {\begin{tikzpicture}

			\node at (0,0) {$K$};
			\node at (0.3,0) {$K$};
			\node at (0.6,0) {$0$};
			\node at (0.9,0) {$0$};
			\node at (1.2,0) {$0$};
		
			\draw[->] (0.28,-0.15) -- (0,-0.15);
			
		\end{tikzpicture}} && {\begin{tikzpicture}

			\node at (0,0) {$0$};
			\node at (0.3,0) {$0$};
			\node at (0.6,0) {$K$};
			\node at (0.9,0) {$K$};
			\node at (1.2,0) {$0$};
		
			\draw[->] (0.88,-0.15) -- (0.62,-0.15);
			
		\end{tikzpicture}} && {\begin{tikzpicture}

			\node at (0,0) {$0$};
			\node at (0.3,0) {$K$};
			\node at (0.6,0) {$0$};
			\node at (0.9,0) {$0$};
			\node at (1.2,0) {$0$};
		
		\end{tikzpicture}} && {\begin{tikzpicture}

			\node at (0,0) {$0$};
			\node at (0.3,0) {$0$};
			\node at (0.6,0) {$K$};
			\node at (0.9,0) {$K$};
			\node at (1.2,0) {$K$};
		
			\draw[->] (0.88,-0.15) -- (0.62,-0.15);
			\draw[->] (1.18,-0.15) -- (0.92,-0.15);
			
		\end{tikzpicture}} \\[-0.5em]
		4 &&& {\begin{tikzpicture}

			\node at (0,0) {$K$};
			\node at (0.3,0) {$K$};
			\node at (0.6,0) {$K$};
			\node at (0.9,0) {$0$};
			\node at (1.2,0) {$0$};
		
			\draw[->] (0.28,-0.15) -- (0,-0.15);
			\draw[->] (0.32,0.4) -- (0.58,0.4);
			
		\end{tikzpicture}} &&&& {\begin{tikzpicture}

			\node at (0,0) {$0$};
			\node at (0.3,0) {$K$};
			\node at (0.6,0) {$K$};
			\node at (0.9,0) {$K$};
			\node at (1.2,0) {$0$};
		
			\draw[->] (0.32,0.4) -- (0.58,0.4);
			\draw[->] (0.88,-0.15) -- (0.62,-0.15);
			
		\end{tikzpicture}} &&&& {\begin{tikzpicture}

			\node at (0,0) {$0$};
			\node at (0.3,0) {$0$};
			\node at (0.6,0) {$0$};
			\node at (0.9,0) {$K$};
			\node at (1.2,0) {$K$};
		
			\draw[->] (1.18,-0.15) -- (0.92,-0.15);
			
		\end{tikzpicture}} \\
		5 & {\begin{tikzpicture}

			\node at (0,0) {$K$};
			\node at (0.3,0) {$0$};
			\node at (0.6,0) {$0$};
			\node at (0.9,0) {$0$};
			\node at (1.2,0) {$0$};
			
		\end{tikzpicture}} &&&& {\begin{tikzpicture}

			\node at (0,0) {$0$};
			\node at (0.3,0) {$K$};
			\node at (0.6,0) {$K$};
			\node at (0.9,0) {$0$};
			\node at (1.2,0) {$0$};
		
			\draw[->] (0.32,0.4) -- (0.58,0.4);
			
		\end{tikzpicture}} &&&& {\begin{tikzpicture}

			\node at (0,0) {$0$};
			\node at (0.3,0) {$0$};
			\node at (0.6,0) {$0$};
			\node at (0.9,0) {$K$};
			\node at (1.2,0) {$0$};
			
		\end{tikzpicture}} &&&& {\begin{tikzpicture}

			\node at (0,0) {$0$};
			\node at (0.3,0) {$0$};
			\node at (0.6,0) {$0$};
			\node at (0.9,0) {$0$};
			\node at (1.2,0) {$K$};
			
		\end{tikzpicture}}
		\arrow[from=2-8, to=3-10]
		\arrow[from=3-6, to=2-8]
		\arrow[from=3-6, to=4-7]
		\arrow[from=3-10, to=4-11]
		\arrow[from=4-3, to=5-4]
		\arrow[from=4-5, to=3-6]
		\arrow[from=4-7, to=5-8]
		\arrow[from=4-9, to=3-10]
		\arrow[from=4-11, to=5-12]
		\arrow[from=5-4, to=4-5]
		\arrow[from=5-4, to=6-6]
		\arrow[from=5-8, to=4-9]
		\arrow[from=5-8, to=6-10]
		\arrow[from=5-12, to=6-14]
		\arrow[from=6-2, to=5-4]
		\arrow[from=6-6, to=5-8]
		\arrow[from=6-10, to=5-12]
	\end{tikzcd}};
	\end{tikzpicture}
\end{center}
This picture agrees with Example \ref{example:twisted AR quiver with positive roots}. To compute the objects above, we used Proposition \ref{prop:computing the spherical twist}. See also Proposition \ref{prop:orientations that appear in Bn}. We remark that the arrows above can be seen as nonzero morphisms in $\C([\Q])$, as it will be explained in Section \ref{section:morphisms}.
\end{example}

\begin{example}
Suppose $(\Delta,\sigma) = (\mathsf{D}_5,\vee)$ is of type $\mathsf{C}_4$. Let $Q$ be the following orientation for $\Delta$:
\begin{center}
\begin{tikzpicture}
		
	\node[circle, draw, inner sep=1.5pt] (D1) at (1.5,-0.5) {};
	\node[above] at (1.5,-0.4) {$1$};
	\node[circle, draw, inner sep=1.5pt] (D2) at (3,-0.5) {};
	\node[above] at (3,-0.4) {$2$};
	\node[circle, draw, inner sep=1.5pt] (D3) at (4.5,-0.5) {};
	\node[above] at (4.5,-0.4) {$3$};
	\node[circle, draw, inner sep=1.5pt] (D4) at (6,0) {};
	\node[right] at (6.1,0) {$4$};
	\node[circle, draw, inner sep=1.5pt] (D5) at (6,-1) {};
	\node[right] at (6.1,-1) {$5$};

	\draw[->] (D1) -- (D2);
	\draw[->] (D2) -- (D3);
	\draw[->] (D3) -- (D4);
	\draw[->] (D3) -- (D5);

\end{tikzpicture}
\end{center}
Let $\Q = (\Delta,\sigma,\xi)$ be the Q-datum defined by $\xi_k = k$ for $1 \leq k \leq 4$ and $\xi_5 = 2$. The indecomposable objects of $\C([\Q])$ are shown below.
\[\begin{tikzcd}[column sep={3em,between origins},row sep={1em}]
	{(\im \ \backslash \ p)} & {-7} & {-6} & {-5} & {-4} & {-3} & {-2} & {-1} & 0 & 1 & 2 & 3 & 4 \\
	1 & {\begin{tikzpicture}

		\node at (0,0) {$K$};
		\node at (0.3,0) {$0$};
		\node at (0.6,0) {$0$};
		\node at (1,0.2) {$0$};
		\node at (1,-0.2) {$0$};
		
		\end{tikzpicture}} && {\begin{tikzpicture}

		\node at (0,0) {$0$};
		\node at (0.3,0) {$K$};
		\node at (0.6,0) {$0$};
		\node at (1,0.2) {$0$};
		\node at (1,-0.2) {$0$};
			
		\end{tikzpicture}} && {\begin{tikzpicture}

		\node at (0,0) {$0$};
		\node at (0.3,0) {$0$};
		\node at (0.6,0) {$K$};
		\node at (1,0.2) {$0$};
		\node at (1,-0.2) {$0$};
				
		\end{tikzpicture}} && {\begin{tikzpicture}

		\node at (0,0) {$0$};
		\node at (0.3,0) {$0$};
		\node at (0.6,0) {$0$};
		\node at (1,0.2) {$0$};
		\node at (1,-0.2) {$K$};
		
		\end{tikzpicture}} && {\begin{tikzpicture}

		\node at (0,0) {$K$};
		\node at (0.3,0) {$K$};
		\node at (0.6,0) {$K$};
		\node at (1,0.2) {$K$};
		\node at (1,-0.2) {$0$};
			
		\draw[->] (0.28,-0.15) -- (0.02,-0.15);
		\draw[->] (0.58,-0.15) -- (0.32,-0.15);
		\draw[->] (0.85,0.55) -- (0.7,0.35);
			
		\end{tikzpicture}} \\
	2 && {\begin{tikzpicture}

		\node at (0,0) {$K$};
		\node at (0.3,0) {$K$};
		\node at (0.6,0) {$0$};
		\node at (1,0.2) {$0$};
		\node at (1,-0.2) {$0$};
		
		\draw[->] (0.28,-0.15) -- (0.02,-0.15);
		
		\end{tikzpicture}} && {\begin{tikzpicture}

		\node at (0,0) {$0$};
		\node at (0.3,0) {$K$};
		\node at (0.6,0) {$K$};
		\node at (1,0.2) {$0$};
		\node at (1,-0.2) {$0$};
			
		\draw[->] (0.58,-0.15) -- (0.32,-0.15);
			
		\end{tikzpicture}} && {\begin{tikzpicture}

		\node at (0,0) {$0$};
		\node at (0.3,0) {$0$};
		\node at (0.6,0) {$K$};
		\node at (1,0.2) {$0$};
		\node at (1,-0.2) {$K$};
			
		\draw[->] (0.775,-0.3) -- (0.625,-0.1);
			
		\end{tikzpicture}} && {\begin{tikzpicture}

		\node at (0,0) {$K$};
		\node at (0.3,0) {$K$};
		\node at (0.6,0) {$K$};
		\node at (1,0.2) {$K$};
		\node at (1,-0.2) {$K$};
		
		\draw[->] (0.28,-0.15) -- (0.02,-0.15);
		\draw[->] (0.58,-0.15) -- (0.32,-0.15);
		\draw[->] (0.85,0.55) -- (0.7,0.35);
		\draw[->] (0.625,-0.1) -- (0.775,-0.3);
		
		\end{tikzpicture}} && {\begin{tikzpicture}

		\node at (0,0) {$0$};
		\node at (0.3,0) {$K$};
		\node at (0.6,0) {$K$};
		\node at (1,0.2) {$K$};
		\node at (1,-0.2) {$0$};
			
		\draw[->] (0.58,-0.15) -- (0.32,-0.15);
		\draw[->] (0.85,0.55) -- (0.7,0.35);
			
			\end{tikzpicture}} \\
	3 &&& {\begin{tikzpicture}

		\node at (0,0) {$K$};
		\node at (0.3,0) {$K$};
		\node at (0.6,0) {$K$};
		\node at (1,0.2) {$0$};
		\node at (1,-0.2) {$0$};
		
		\draw[->] (0.28,-0.15) -- (0.02,-0.15);
		\draw[->] (0.58,-0.15) -- (0.32,-0.15);
		
		\end{tikzpicture}} && {\begin{tikzpicture}

		\node at (0,0) {$0$};
		\node at (0.3,0) {$K$};
		\node at (0.6,0) {$K$};
		\node at (1,0.2) {$0$};
		\node at (1,-0.2) {$K$};
			
		\draw[->] (0.58,-0.15) -- (0.32,-0.15);
		\draw[->] (0.775,-0.3) -- (0.625,-0.1);
			
		\end{tikzpicture}} && B_{13} && {\begin{tikzpicture}

		\node at (0,0) {$0$};
		\node at (0.3,0) {$K$};
		\node at (0.6,0) {$K$};
		\node at (1,0.2) {$K$};
		\node at (1,-0.2) {$K$};
			
		\draw[->] (0.58,-0.15) -- (0.32,-0.15);
		\draw[->] (0.85,0.55) -- (0.7,0.35);
		\draw[->] (0.625,-0.1) -- (0.775,-0.3);
			
		\end{tikzpicture}} && {\begin{tikzpicture}

		\node at (0,0) {$0$};
		\node at (0.3,0) {$0$};
		\node at (0.6,0) {$K$};
		\node at (1,0.2) {$K$};
		\node at (1,-0.2) {$0$};
				
		\draw[->] (0.85,0.55) -- (0.7,0.35);
				
		\end{tikzpicture}} \\
	4 &&&& {\begin{tikzpicture}

		\node at (0,0) {$K$};
		\node at (0.3,0) {$K$};
		\node at (0.6,0) {$K$};
		\node at (1,0.2) {$0$};
		\node at (1,-0.2) {$K$};
		
		\draw[->] (0.28,-0.15) -- (0.02,-0.15);
		\draw[->] (0.58,-0.15) -- (0.32,-0.15);
		\draw[->] (0.775,-0.3) -- (0.625,-0.1);
		
		\end{tikzpicture}} &&&& B_{23} &&&& 	{\begin{tikzpicture}

		\node at (0,0) {$0$};
		\node at (0.3,0) {$0$};
		\node at (0.6,0) {$0$};
		\node at (1,0.2) {$K$};
		\node at (1,-0.2) {$0$};
		
		\end{tikzpicture}}\\
	5 &&&&&& B_{12} &&&& {\begin{tikzpicture}

		\node at (0,0) {$0$};
		\node at (0.3,0) {$0$};
		\node at (0.6,0) {$K$};
		\node at (1,0.2) {$K$};
		\node at (1,-0.2) {$K$};
		
		\draw[->] (0.85,0.55) -- (0.7,0.35);
		\draw[->] (0.625,-0.1) -- (0.775,-0.3);
		
		\end{tikzpicture}}
	\arrow[from=2-2, to=3-3]
	\arrow[from=2-4, to=3-5]
	\arrow[from=2-6, to=3-7]
	\arrow[from=2-8, to=3-9]
	\arrow[from=2-10, to=3-11]
	\arrow[from=3-3, to=2-4]
	\arrow[from=3-3, to=4-4]
	\arrow[from=3-5, to=2-6]
	\arrow[from=3-5, to=4-6]
	\arrow[from=3-7, to=2-8]
	\arrow[from=3-7, to=4-8]
	\arrow[from=3-9, to=2-10]
	\arrow[from=3-9, to=4-10]
	\arrow[from=3-11, to=4-12]
	\arrow[from=4-4, to=3-5]
	\arrow[from=4-4, to=5-5]
	\arrow[from=4-6, to=3-7]
	\arrow[from=4-6, to=6-7]
	\arrow[from=4-8, to=3-9]
	\arrow[from=4-8, to=5-9]
	\arrow[from=4-10, to=3-11]
	\arrow[from=4-10, to=6-11]
	\arrow[from=4-12, to=5-13]
	\arrow[from=5-5, to=4-6]
	\arrow[from=5-9, to=4-10]
	\arrow[from=6-7, to=4-8]
	\arrow[from=6-11, to=4-12]
\end{tikzcd}\]
The objects $B_{12}$, $B_{13}$ and $B_{23}$ are given by the following representations:
\begin{align*}
B_{12} &= \begin{tikzcd}[ampersand replacement=\&, baseline={([yshift=-0.4em]current bounding box.center)}]
	\&\&\& K \\
	K \& {K^2} \& {K^2} \\
	\&\&\& K
	\arrow["{i_2}", bend left=20, from=1-4, to=2-3]
	\arrow["{i_1}", bend left=20, from=2-1, to=2-2]
	\arrow["{\pi_2}", bend left=20, from=2-2, to=2-1]
	\arrow["{\begin{bmatrix}0 & 1\\0 & 0\end{bmatrix}}", bend left=20, from=2-2, to=2-3]
	\arrow["0", bend left=20, from=2-3, to=1-4]
	\arrow["\id", bend left=20, from=2-3, to=2-2]
	\arrow["{\pi_2}", bend left=20, from=2-3, to=3-4]
	\arrow["{i_1}", bend left=20, from=3-4, to=2-3]
\end{tikzcd}\\
B_{13} &= \begin{tikzcd}[ampersand replacement=\&, baseline={([yshift=-0.4em]current bounding box.center)}]
	\&\&\& K \\
	K \& K \& {K^2} \\
	\&\&\& K
	\arrow["{i_2}", bend left=20, from=1-4, to=2-3]
	\arrow["0", bend left=20, from=2-1, to=2-2]
	\arrow["\id", bend left=20, from=2-2, to=2-1]
	\arrow["{i_1}", bend left=20, from=2-2, to=2-3]
	\arrow["0", bend left=20, from=2-3, to=1-4]
	\arrow["{\pi_2}", bend left=20, from=2-3, to=2-2]
	\arrow["{\pi_2}", bend left=20, from=2-3, to=3-4]
	\arrow["{i_1}", bend left=20, from=3-4, to=2-3]
\end{tikzcd}\\
B_{23} &= \begin{tikzcd}[ampersand replacement=\&, baseline={([yshift=-0.4em]current bounding box.center)}]
	\&\&\& K \\
	0 \& K \& {K^2} \\
	\&\&\& K
	\arrow["{i_2}", bend left=20, from=1-4, to=2-3]
	\arrow[bend left=20, from=2-1, to=2-2]
	\arrow[bend left=20, from=2-2, to=2-1]
	\arrow["{i_1}", bend left=20, from=2-2, to=2-3]
	\arrow["0", bend left=20, from=2-3, to=1-4]
	\arrow["{\pi_2}", bend left=20, from=2-3, to=2-2]
	\arrow["{\pi_2}", bend left=20, from=2-3, to=3-4]
	\arrow["{i_1}", bend left=20, from=3-4, to=2-3]
\end{tikzcd}\\
\end{align*}
\end{example}

\subsection{The repetition category and tilting functors} Let $[\bm{i}]$ be a commutation class of reduced words for the longest element $w_0$. We define the \emph{repetition category} $\R([\bm{i}])$ of $[\bm{i}]$ as the full additive subcategory of $\pvd(\Pi_Q)$ generated by the objects of the form $\Sigma^kM$ for $k \in \Z$ and $M \in \C([\bm{i}])$. Denote by $\ind(\R([\bm{i}]))$ a set of representatives for the isomorphism classes of indecomposable objects of $\R([\bm{i}])$.

If $M \in \ind(\R([\bm{i}]))$ has cohomology concentrated in degree $k$, we set $c_{[\bm{i}]}(M) = ([M],-k) \in \widehat{\mathsf{R}}$, where $[M]$ denotes the class of $M$ in the Grothendieck group $K_0(\pvd(\Pi_Q))$. Here, we use the identification of $K_0(\pvd(\Pi_Q))$ with the root lattice of $\Delta$, and observe that $(-1)^{-k}[M]$ is indeed a positive root. We refer to $c_{[\bm{i}]}(M)$ as the \emph{coordinate of $M$ in $\widehat{\Upsilon}_{[\bm{i}]}$} and its residue as the \emph{residue of $M$}. We remark that the function $c_{[\bm{i}]}: \ind(\R([\bm{i}])) \longrightarrow \widehat{\mathsf{R}}$ is a bijection. Moreover, the suspension functor $\Sigma$ sends the indecomposable object whose coordinate is $(\alpha,k) \in \widehat{\mathsf{R}}$ to the indecomposable object whose coordinate is $(-\alpha,k+1)$.

\begin{prop}\label{prop:tilting for the repetitive category, general case}
Let $[\bm{i}]$ be a commutation class of reduced words for $w_0$. If $(i_1,\dots,i_k)$ is a source sequence for $[\bm{i}]$, then the equivalence
\[
T_{i_k}^{-1}T_{i_{k-1}}^{-1}\dotsb T_{i_1}^{-1}: \pvd(\Pi_{Q}) \longrightarrow \pvd(\Pi_{Q})
\]
restricts to an equivalence from $\R([\bm{i}])$ to $\R([\bm{j}])$, where $[\bm{j}] = r_{i_k}r_{i_{k-1}}\dotsb r_{i_1}[\bm{i}]$. Additionally, the following square is commutative:
\[\begin{tikzcd}[column sep = 6em]
	{\ind(\R([\bm{i}]))} & {\ind(\R([\bm{j}]))} \\
	{(\widehat{\Upsilon}_{[\bm{i}]})_0} & {(\widehat{\Upsilon}_{[\bm{j}]})_0}
	\arrow["{T_{i_k}^{-1}T_{i_{k-1}}^{-1}\dotsb T_{i_1}^{-1}}", from=1-1, to=1-2]
	\arrow["{c_{[\bm{i}]}}"', from=1-1, to=2-1]
	\arrow["{c_{[\bm{j}]}}", from=1-2, to=2-2]
	\arrow[from=2-1, to=2-2]
\end{tikzcd}\]
where the bottom map comes from the isomorphism $\widehat{\Upsilon}_{[\bm{i}]} \longrightarrow \widehat{\Upsilon}_{[\bm{j}]}$ induced by the source sequence.
\end{prop}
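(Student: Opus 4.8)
The plan is to reduce immediately to a single reflection ($k=1$) and then obtain the general statement by composition. So suppose first that $i \in \Delta_0$ is a source of $[\bm{i}]$, write $[\bm{j}] = r_i[\bm{i}]$, and let $\varphi \colon \widehat{\mathsf{R}} \to \widehat{\mathsf{R}}$ be the bijection underlying the combinatorial isomorphism $\widehat{\Upsilon}_{[\bm{i}]} \to \widehat{\Upsilon}_{[r_i\bm{i}]}$ recalled at the end of Section \ref{subsection:combinatorial reflection functors}, namely $(\alpha,k) \mapsto (s_i\alpha, k)$ if $\alpha \neq \pm\alpha_i$ and $(\alpha,k)\mapsto(s_i\alpha, k+1)$ if $\alpha = \pm\alpha_i$. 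Since $T_i^{-1}$ is a triangulated auto-equivalence it commutes with $\Sigma$, and every indecomposable of $\R([\bm{i}])$ has the form $\Sigma^t M$ with $M \in \ind([\bm{i}])$; it will therefore suffice to verify the identity $c_{[r_i\bm{i}]}(T_i^{-1}M) = \varphi(c_{[\bm{i}]}M)$ on the indecomposables of $\C([\bm{i}])$ and then propagate it along $\Sigma$.

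The computation on $\ind([\bm{i}])$ splits into two cases. For $\alpha \in \mathsf{R}^+$ with $\alpha \neq \alpha_i$, Proposition \ref{prop:equivalence under reflection, general case} gives $T_i^{-1}M^{[\bm{i}]}_\alpha \cong M^{[r_i\bm{i}]}_{s_i\alpha}$, which has coordinate $(s_i\alpha, 0) = \varphi(\alpha,0)$ since then $s_i\alpha \in \mathsf{R}^+$. For $\alpha = \alpha_i$ we have $M^{[\bm{i}]}_{\alpha_i} \cong S_i$, and Proposition \ref{prop:properties of spherical twists}(1) yields $T_i^{-1}(S_i) \cong \Sigma S_i$; as $S_i \cong M^{[r_i\bm{i}]}_{\alpha_i}$ by Corollary \ref{cor:simples are in C([i])} has coordinate $(\alpha_i, 0)$, its shift $\Sigma S_i$ has coordinate $(-\alpha_i, 1) = \varphi(\alpha_i, 0)$. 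This second case---where the degree shift in $T_i^{-1}(S_i)\cong\Sigma S_i$ must be matched against the level jump built into $\varphi$ at $\pm\alpha_i$---is the only delicate point and is really the conceptual heart of the statement; everything else is $\Sigma$-equivariant bookkeeping.

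To promote this to all of $\R([\bm{i}])$ I would use two equivariance facts: the categorical one, that $\Sigma$ sends the indecomposable of coordinate $(\beta,k)$ to that of coordinate $(-\beta, k+1)$, and the combinatorial one, that $\varphi$ commutes with the shift $(\beta,k)\mapsto(-\beta,k+1)$ on $\widehat{\mathsf{R}}$ (a short case check, again separating $\beta = \pm\alpha_i$ from the rest). Combined with $T_i^{-1}\Sigma \cong \Sigma T_i^{-1}$, these give $c_{[r_i\bm{i}]}(T_i^{-1}\Sigma^t M) = \varphi(c_{[\bm{i}]}\Sigma^t M)$ for every $t \in \Z$ and every $M \in \ind([\bm{i}])$. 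Since $c_{[\bm{i}]}$, $c_{[r_i\bm{i}]}$ and $\varphi$ are all bijections, this identity shows that $T_i^{-1}$ carries $\ind(\R([\bm{i}]))$ bijectively onto $\ind(\R([r_i\bm{i}]))$; as $T_i^{-1}$ is fully faithful and additive and both repetition categories are the additive closures (under direct sums and isomorphism) of their indecomposables, this bijection upgrades to an equivalence $\R([\bm{i}]) \to \R([r_i\bm{i}])$ making the square commute.

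Finally, a general source sequence $(i_1,\dots,i_k)$ is handled by induction: by definition each $i_l$ is a source of $r_{i_{l-1}}\dotsb r_{i_1}[\bm{i}]$, so the single-reflection case applies at each stage, and composing the resulting equivalences gives $T_{i_k}^{-1}\dotsb T_{i_1}^{-1}\colon \R([\bm{i}]) \to \R([\bm{j}])$ with $[\bm{j}] = r_{i_k}\dotsb r_{i_1}[\bm{i}]$. The composite $\varphi_k\circ\dotsb\circ\varphi_1$ of the corresponding combinatorial maps is, by construction, exactly the isomorphism $\widehat{\Upsilon}_{[\bm{i}]}\to\widehat{\Upsilon}_{[\bm{j}]}$ induced by the source sequence, so commutativity of the large square follows from its commutativity at each step.
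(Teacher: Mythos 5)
Your proof is correct and follows essentially the same route as the paper's (much terser) argument: reduce to a single reflection, obtain the equivalence from Proposition \ref{prop:equivalence under reflection, general case} together with $T_i^{-1}S_i \cong \Sigma S_i$, and verify the commutative square by matching the explicit description of the combinatorial map $\varphi$ against the action of $T_i^{-1}$ on classes and degrees, propagating along $\Sigma$. The only cosmetic difference is that you track coordinates via the isomorphisms $T_i^{-1}M^{[\bm{i}]}_\alpha \cong M^{[r_i\bm{i}]}_{s_i\alpha}$ from the proof of Proposition \ref{prop:equivalence under reflection, general case}, whereas the paper phrases the same computation via the action of $T_i^{-1}$ as $s_i$ on $K_0(\pvd(\Pi_Q))$.
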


\begin{proof}
Without loss of generality, suppose that $k = 1$. The first part can be obtained by applying Proposition \ref{prop:equivalence under reflection, general case} and using that $T_{i_1}^{-1}S_{i_1} \cong \Sigma S_{i_1}$. The second part follows from the explicit description of the bottom map given at the end of Section \ref{subsection:combinatorial reflection functors} and the fact that $T_{i_1}^{-1}$ acts as the simple reflection $s_{i_1}$ on $K_0(\pvd(\Pi_Q))$.
\end{proof}

An equivalence $T: \R([\bm{i}]) \longrightarrow \R([\bm{j}])$ is a \emph{tilting functor} if it is a composition of equivalences as the ones appearing in Proposition \ref{prop:tilting for the repetitive category, general case} or their inverses. Note that a tilting functor is compatible with the coordinate functions in the sense of Proposition \ref{prop:tilting for the repetitive category, general case}.

\subsection{The (combinatorially) derived category} Let $[\bm{i}]$ be a commutation class of reduced words for the longest element $w_0$. We will now define an ideal $\I_{[\bm{i}]}$ of the repetition category $\R([\bm{i}])$. For indecomposable objects $M,N \in \R([\bm{i}])$, we set $\I_{[\bm{i}]}(M,N) = 0$ if there is a tilting functor $T: \R([\bm{i}]) \longrightarrow \R([\bm{j}])$ such that $T(M)$ and $T(N)$ are both concentrated in degree $0$, that is, $T(M)$ and $T(N)$ belong to $\C([\bm{j}])$. Otherwise, we define
\[
\I_{[\bm{i}]}(M,N) = \Hom_{\R([\bm{i}])}(M,N).
\]
We extend $\I_{[\bm{i}]}$ to all objects by additivity. Let us prove that $\I_{[\bm{i}]}$ is indeed an ideal.

\begin{lemma}\label{lemma:morphisms in the repetition category follow paths}
Suppose $\Delta \neq \mathsf{A}_1$. Let $[\bm{i}]$ be a commutation class of reduced words for $w_0$ and take $M,M' \in \ind(\R([\bm{i}]))$. If there is a nonzero morphism $M \longrightarrow M'$, then there is a path from $c_{[\bm{i}]}(M)$ to $c_{[\bm{i}]}(M')$ in $\widehat{\Upsilon}_{[\bm{i}]}$.
\end{lemma}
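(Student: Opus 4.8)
The plan is to convert the homological hypothesis $\Hom(M,M')\neq 0$ into a combinatorial statement about the indices of $c_{[\bm{i}]}(M)$ and $c_{[\bm{i}]}(M')$ in the bi-infinite sequence $\widehat{\bm{i}}$, and then to assemble the path by peeling off spherical twists at sources. Every indecomposable of $\R([\bm{i}])$ is a single shift of an object of $\C([\bm{i}])$, so I write $M=\Sigma^{\ell}H$ and $M'=\Sigma^{\ell'}H'$ with $H,H'$ concentrated in degree zero (hence $\Lambda_Q$-modules in $\C([\bm{i}])$), where $\ell,\ell'$ are the second coordinates of $c_{[\bm{i}]}(M),c_{[\bm{i}]}(M')$. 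Then
\[
\Hom_{\pvd(\Pi_Q)}(M,M')\cong\Ext^{\,\ell'-\ell}_{\pvd(\Pi_Q)}(H,H').
\]
Since $\pvd(\Pi_Q)$ is $2$-Calabi--Yau and $H,H'$ lie in the canonical heart, this group vanishes unless $\ell'-\ell\in\{0,1,2\}$; in particular $\ell'\geq\ell$, which is consistent with the fact that every arrow of $\widehat{\Upsilon}_{[\bm{i}]}$ weakly raises the second coordinate.

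The main tool is reduction by tilting. A tilting functor is an equivalence that is compatible with the coordinate maps and induces a residue-preserving isomorphism $\widehat{\Upsilon}_{[\bm{i}]}\cong\widehat{\Upsilon}_{[\bm{j}]}$, so it preserves both the existence of a nonzero morphism and the existence of a path; hence I am free to replace $[\bm{i}]$ by any class in its $r$-cluster point. Writing $c_{[\bm{i}]}(M)=\widehat{\beta}^{\bm{i}}_a$ and $c_{[\bm{i}]}(M')=\widehat{\beta}^{\bm{i}}_b$, I first tilt by a sequence of source reflections so that the source object sits at index $1$, where it becomes a simple $S_i$ with $i$ a source of the (new) class. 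Using $T_i^{-1}(S_i)\cong\Sigma S_i$ together with the vanishing of $\Ext^{<0}$ between modules, one computes $\Hom(S_i,N)=0$ for any indecomposable $N$ of index $\geq 2$ (whether $N$ sits at level $0$, via $\Hom(S_i,T_i(-))=\Ext^{-1}(S_i,-)=0$, or at a strictly negative level). Unwinding the index shift produced by the tilting, this is exactly the assertion $a\geq b$.

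It then remains to produce the path when $a\geq b$, which I do by induction on a measure built from the homological degree $\ell'-\ell$ and the index gap $a-b$, again reducing by tilting to the configuration where the source object is the simple $S_i$ at a source $i$. If $a=b$ the two objects coincide and the path is trivial. If $a>b$, the morphism becomes a nonzero class in $\Ext^{g}(S_i,H'')$ with $g\geq 1$; applying the tilting functor $T_i^{-1}$ sends this to a nonzero morphism out of $\Sigma S_i$ in the neighbouring class $r_i[\bm{i}]$, i.e.\ to an element of $\Ext^{g-1}(S_i,T_i^{-1}H'')$, thereby moving the source object one step backward along $\widehat{\Upsilon}$ and strictly decreasing the measure. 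Iterating lands in the base case of a genuine homomorphism out of $S_i$, and transporting the resulting path back through the coordinate-compatible tilting isomorphisms—using that $\Upsilon_{[\bm{i}]}$ is a convex subquiver of $\widehat{\Upsilon}_{[\bm{i}]}$, so that level-preserving portions stay inside $\Upsilon_{[\bm{i}]}$—yields a path from $c_{[\bm{i}]}(M)$ to $c_{[\bm{i}]}(M')$.

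The hard part will be the bookkeeping in this last step: I must verify that each homological reduction corresponds to prepending a genuine arrow (or mesh segment) of $\widehat{\Upsilon}_{[\bm{i}]}$, and control the two delicate configurations—the degree-two case, where the $2$-Calabi--Yau isomorphism $\Ext^{2}(H,H')\cong D\Hom(H',H)$ converts a ``downward'' homomorphism into an ``upward'' two-level path, and the case where the target object lies on the $\Sigma$-orbit of the source simple $S_i$, for which the special tilt fails to lower the homological degree. I expect the cleanest way to handle both is through the mesh description of $\widehat{\Upsilon}_{[\bm{i}]}$ developed for the source vertex, together with the segment property and the $2$-periodicity automorphism $(\alpha,k)\mapsto(\alpha,k\pm 2)$, which together guarantee the existence of the short paths created at each peeling step and pin down their direction.
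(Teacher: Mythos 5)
Your step (1) — that a nonzero morphism forces $a\geq b$ for the chosen representative — is correct but does not feed into a proof: for a single representative $\bm{i}$ the inequality $a\geq b$ is necessary but not sufficient for a path in $\widehat{\Upsilon}_{[\bm{i}]}$ (two incomparable positive roots satisfy $a>b$ in one of the two orders yet are joined by no path), so ``produce the path when $a\geq b$'' is not a provable statement; the path must be extracted from the morphism itself, and that is precisely the ``bookkeeping'' you defer. Two of the configurations you flag are genuinely problematic as described. First, $T_i^{-1}$ sends $S_i$ to $\Sigma S_i$, which sits at level $1$ and therefore can never be the object of index $1$ of any commutation class; renormalizing forces you around a full period of the repetition quiver, and in the exceptional case where $M'$ lies on the $\Sigma$-orbit of $S_i$ the level of $M'$ shifts too, so your measure does not decrease (this case must be disposed of separately, e.g.\ by the segment property, since then $M'\cong\Sigma^2S_i$). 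Second, the $2$-Calabi--Yau isomorphism $\Ext^2(H,H')\cong D\Hom(H',H)$ hands you a morphism $H'\to H$ and hence a path from $c_{[\bm{i}]}(H')$ to $c_{[\bm{i}]}(H)$ --- the opposite direction from the path you need, which runs from $c_{[\bm{i}]}(H)$ to $c_{[\bm{i}]}(\Sigma^2H')$; you give no mechanism for reversing or re-routing it, and none is obviously available. As written, the plan is not a proof.

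The paper's argument normalizes the \emph{target} rather than the source and is short: tilt so that $M'$ becomes $S_{i_1}$ at index $1$ with $i_1$ the unique source of the class, so that $c_{[\bm{i}]}(M')=(\alpha_{i_1},0)$ is the unique sink of $\Upsilon_{[\bm{i}]}$ and every level-$0$ vertex has a path to it; since $M'$ is concentrated in degree $0$, a nonzero morphism forces $M$ to be concentrated in nonnegative degree, i.e.\ $c_{[\bm{i}]}(M)=(\alpha,k)$ with $k\leq 0$, and the segment property then gives a path from $c_{[\bm{i}]}(M)$ to some level-$0$ vertex and from there to the sink. If you want to salvage your route, the workable skeleton is an induction on the cohomological degree gap in which one full period of source reflections (the functor of Remark \ref{rmk:the longest twist acts as twisted shift}, composed with $\Sigma^{-1}$) lowers the gap by exactly one, with the base case (both objects in $\C([\bm{j}])$ for a common class) settled by Lemma \ref{lemma:not relatable in combinatorial AR quiver implies zero Hom} together with Theorem \ref{thm:combinatorial AR quiver realizes partial order} and the path transported back through the coordinate-compatible quiver isomorphisms --- but that is a different and considerably longer argument than the one you outline, and the missing verifications are where all the content lies.
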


\begin{proof}
By Proposition \ref{prop:tilting for the repetitive category, general case}, after applying a suitable tilting functor, we may replace $[\bm{i}]$ by any other commutation class in the same $r$-cluster point. Hence, if we write $\bm{i} = (i_1,\dots,i_N)$, we can assume that $c_{[\bm{i}]}(M') = \widehat{\beta}^{\bm{i}}_1 = (\alpha_{i_1},0)$ and that $i_1$ is the unique source of $[\bm{i}]$. In this case, $\alpha_{i_1}$ is the unique sink of $\Upsilon_{[\bm{i}]}$. Since $M'$ is concentrated in degree $0$, the existence of a nonzero morphism $M \longrightarrow M'$ implies that $M$ is concentrated in a nonnegative degree, that is, $c_{[\bm{i}]}(M) = (\alpha,k)$ for some $k \leq 0$. Thus, by the segment property of $\widehat{\Upsilon}_{[\bm{i}]}$, there is a path from $c_{[\bm{i}]}(M)$ to $(\beta,0)$ for some $\beta \in \mathsf{R}^+$. Since there is a path from $\beta$ to $\alpha_{i_1}$ in $\Upsilon_{[\bm{i}]}$, there is a path from $(\beta,0)$ to $c_{[\bm{i}]}(M') = (\alpha_{i_1},0)$ in $\widehat{\Upsilon}_{[\bm{i}]}$, concluding the proof.
\end{proof}

\begin{prop}\label{prop:I is an ideal of the repetition category}
For every commutation class $[\bm{i}]$ of reduced words for $w_0$, $\I_{[\bm{i}]}$ is an ideal of the category $\R([\bm{i}])$. Moreover, it is preserved by the functor $\Sigma$, and a tilting functor $\R([\bm{i}]) \longrightarrow \R([\bm{j}])$ sends $\I_{[\bm{i}]}$ to $\I_{[\bm{j}]}$.
\end{prop}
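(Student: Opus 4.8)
The plan is to verify the three assertions in turn, reducing everything to indecomposable objects since $\I_{[\bm{i}]}$ is defined by additivity. The subgroup property is immediate: for indecomposables $M,N$ the group $\I_{[\bm{i}]}(M,N)$ is by definition either $0$ or all of $\Hom_{\R([\bm{i}])}(M,N)$, and additivity preserves this. The case $\Delta = \mathsf{A}_1$ is trivial, so I assume $\Delta \neq \mathsf{A}_1$ throughout and may freely invoke Lemma \ref{lemma:morphisms in the repetition category follow paths}.

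For the ideal property I would exploit the interplay between morphisms and paths. Recall that, by Proposition \ref{prop:tilting for the repetitive category, general case}, any tilting functor $T \colon \R([\bm{i}]) \to \R([\bm{j}])$ induces a path-preserving isomorphism $\varphi \colon \widehat{\Upsilon}_{[\bm{i}]} \to \widehat{\Upsilon}_{[\bm{j}]}$ with $c_{[\bm{j}]} \circ T = \varphi \circ c_{[\bm{i}]}$, and that $\I_{[\bm{i}]}(M,N) = 0$ holds exactly when some such $T$ carries both coordinates into the degree-zero slice $\Upsilon_{[\bm{j}]}$, which is a convex subquiver of $\widehat{\Upsilon}_{[\bm{j}]}$. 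Now take $f \in \I_{[\bm{i}]}(M,N)$ and $g \colon N \to P$ with $M,N,P$ indecomposable; to show $gf \in \I_{[\bm{i}]}(M,P)$ I may assume $\I_{[\bm{i}]}(M,P) = 0$ and must prove $gf = 0$. Arguing by contradiction, if $gf \neq 0$ then $f,g \neq 0$, so Lemma \ref{lemma:morphisms in the repetition category follow paths} yields a path $c_{[\bm{i}]}(M) \to c_{[\bm{i}]}(N) \to c_{[\bm{i}]}(P)$ in $\widehat{\Upsilon}_{[\bm{i}]}$. Pushing it forward along the $\varphi$ attached to a tilting functor $T$ witnessing $\I_{[\bm{i}]}(M,P)=0$, its endpoints lie in the convex subquiver $\Upsilon_{[\bm{j}]}$; hence so does the intermediate vertex $\varphi(c_{[\bm{i}]}(N)) = c_{[\bm{j}]}(TN)$, i.e. $TN$ is concentrated in degree zero. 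Then $T$ trivializes $M$ and $N$ simultaneously, forcing $\I_{[\bm{i}]}(M,N) = 0$ and $f = 0$, a contradiction. Closure under precomposition $fh$ is entirely symmetric.

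The tilting-invariance is the easiest point, since the defining condition for $\I$ only refers to the existence of a trivializing tilting functor, and these transport along $T$. Concretely, if $T \colon \R([\bm{i}]) \to \R([\bm{j}])$ is a tilting functor and $S$ witnesses $\I_{[\bm{i}]}(M,N) = 0$, then $S \circ T^{-1}$ is again a tilting functor witnessing $\I_{[\bm{j}]}(TM,TN) = 0$, and conversely; when both ideals equal the full $\Hom$ I simply use that $T$ is an equivalence. Thus $T\bigl(\I_{[\bm{i}]}(M,N)\bigr) = \I_{[\bm{j}]}(TM,TN)$, and additivity finishes this part.

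The genuine difficulty is the invariance under $\Sigma$, and this is where I expect the main work. Since every tilting functor is built from triangulated auto-equivalences of $\pvd(\Pi_Q)$, it commutes with $\Sigma$; therefore $\I_{[\bm{i}]}(\Sigma M, \Sigma N) = 0$ holds precisely when some tilting functor concentrates both $M$ and $N$ in degree $-1$. The claim thus reduces to the symmetric statement that $M,N$ can be simultaneously trivialized to degree $0$ if and only if they can be simultaneously trivialized to degree $-1$. To bridge the two degrees I would invoke the longest twist $V = T_{i_1^*}\dotsb T_{i_N^*}$: by Proposition \ref{prop:properties of spherical twists} it is independent of the chosen reduced word for $w_0$, so $V = T_{j_1}\dotsb T_{j_N}$ for every representative $\bm{j}=(j_1,\dots,j_N)$, and consequently $V^{-1} = T_{j_N}^{-1}\dotsb T_{j_1}^{-1}$ is exactly the tilting functor $\R([\bm{j}]) \to \R([\bm{j}^*])$ attached to the full-period source sequence $(j_1,\dots,j_N)$. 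By Remark \ref{rmk:the longest twist acts as twisted shift}, $V^{-1}$ sends $\C([\bm{j}])$ into $\Sigma\,\C([\bm{j}^*])$, i.e. it carries the degree-zero slice down to degree $-1$. Hence, given a tilting functor $T$ trivializing $M,N$ in degree $0$, the composite $V^{-1} \circ T$ is a tilting functor trivializing them in degree $-1$; applying $V$ gives the reverse implication. The points requiring care — and the main obstacle — are the word-independence identification of $V^{-1}$ with a full-period reflection functor and the verification, via Remark \ref{rmk:the longest twist acts as twisted shift}, that it shifts the degree-zero slice by exactly one; once these are secured, the equivalence of the two trivializability conditions, and with it the $\Sigma$-invariance of $\I_{[\bm{i}]}$, follows.
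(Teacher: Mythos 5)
Your argument is correct and essentially identical to the paper's: the ideal property is proved in the same way (a nonzero composite forces, via Lemma \ref{lemma:morphisms in the repetition category follow paths} and the convexity of $\Upsilon_{[\bm{j}]}$ in $\widehat{\Upsilon}_{[\bm{j}]}$, that the intermediate object also lands in $\C([\bm{j}])$), and the $\Sigma$- and tilting-invariance follow from the definition together with Remark \ref{rmk:the longest twist acts as twisted shift}, which is exactly what the paper cites. The only slip is a harmless orientation issue: with the paper's conventions, $\I_{[\bm{i}]}(\Sigma M,\Sigma N)=0$ requires a tilting functor concentrating $M$ and $N$ in degree $+1$, so the bridge should be the longest twist $V$ rather than $V^{-1}$ (or symmetrically); either choice completes the argument.
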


\begin{proof}
We give a proof assuming $\Delta \neq \mathsf{A}_1$, the other case being easy. Let $\varphi: M_1 \longrightarrow M_2$ and $\psi: M_2 \longrightarrow M_3$ be morphisms between indecomposable objects of $\R([\bm{i}])$. If $\psi\varphi \not\in \I_{[\bm{i}]}(M_1,M_3)$, then we must have $\psi\varphi \neq 0$ and there must be a tilting functor $T: \R([\bm{i}]) \longrightarrow \R([\bm{j}])$ such that $T(M_1),T(M_3) \in \C([\bm{j}])$. In particular, $\varphi$ and $\psi$ are nonzero and, by Lemma \ref{lemma:morphisms in the repetition category follow paths}, there is a path in $\widehat{\Upsilon}_{[\bm{j}]}$ from $c_{[\bm{j}]}(T(M_1))$ to $c_{[\bm{j}]}(T(M_3))$ passing by $c_{[\bm{j}]}(T(M_2))$. Since $\Upsilon_{[\bm{j}]}$ is convex in $\widehat{\Upsilon}_{[\bm{j}]}$, we deduce that $T(M_2) \in \C([\bm{j}])$. Therefore, we conclude that $\varphi \not\in \I_{[\bm{i}]}(M_1,M_2) = 0$ and $\psi \not\in \I_{[\bm{i}]}(M_2,M_3) = 0$. This proves that $\I_{[\bm{i}]}$ is an ideal of $\R([\bm{i}])$. The second part of the proposition is clear from the definition of $\I_{[\bm{i}]}$ and Remark \ref{rmk:the longest twist acts as twisted shift}.
\end{proof}

For a commutation class $[\bm{i}]$ of reduced words for $w_0$, we define its \emph{combinatorially derived category} (or \emph{c-derived category} for short) as the quotient $\D([\bm{i}])$ of $\R([\bm{i}])$ by the ideal $\I_{[\bm{i}]}$. It is still an additive $K$-linear category and has the same objects as $\R([\bm{i}])$. One can check that $\D([\bm{i}])$ and $\R([\bm{i}])$ also have the same indecomposable objects, so we will alternatively write $\ind(\D([\bm{i}]))$ for $\ind(\R([\bm{i}]))$. For $M,N \in \ind(\D([\bm{i}]))$, we have
\[
\Hom_{\D([\bm{i}])}(M,N) \cong \Hom_{\pvd(\Pi_{Q})}(M,N)
\]
if there is a tilting functor $T: \R([\bm{i}]) \longrightarrow \R([\bm{j}])$ such that $T(M)$ and $T(N)$ belong to $\C([\bm{j}])$. Otherwise, the $\Hom$-space from $M$ to $N$ in $\D([\bm{i}])$ is zero.

By the definition of $\I_{[\bm{i}]}$, the composition of the inclusion $\C([\bm{i}]) \subset \R([\bm{i}])$ with the quotient functor to $\D([\bm{i}])$ is a fully faithful functor. In this way, we shall identify $\C([\bm{i}])$ as a full subcategory of $\D([\bm{i}])$. By Proposition \ref{prop:I is an ideal of the repetition category}, the functor $\Sigma$ descends to an autoequivalence of $\D([\bm{i}])$, and any tilting functor $\R([\bm{i}]) \longrightarrow \R([\bm{j}])$ induces an equivalence $\D([\bm{i}]) \longrightarrow \D([\bm{j}])$ which we will also call a tilting functor.

The following result justifies the chosen construction for the c-derived category.

\begin{prop}\label{prop:derived category for adapted word}
Let $\bm{i}$ be a reduced word for $w_0$ that is a source sequence for some orientation $Q$ of $\Delta$. We have an equivalence of $K$-linear categories
\[
\D^b(\modcat KQ) \longrightarrow \D([\bm{i}])
\]
that commutes with the suspension functors. Moreover, tilting functors on $\D([\bm{i}])$ naturally correspond to compositions of derived reflection functors on $\D^b(\modcat KQ)$.
\end{prop}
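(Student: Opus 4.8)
The plan is to realize the equivalence as the composite of a restriction functor with the quotient functor onto $\D([\bm{i}])$, and then to reduce every $\Hom$-space computation to the degree-zero case already settled in Proposition \ref{prop:concentrated in zero for adapted word}. First I would take the restriction functor $R\colon \D^b(\modcat KQ) \to \pvd(\Pi_Q)$ along the algebra surjection $\Pi_Q \twoheadrightarrow KQ$. It is a triangle functor, hence commutes with $\Sigma$. Since $KQ$ is hereditary of Dynkin type, every object of $\D^b(\modcat KQ)$ splits as a finite direct sum of shifts $\Sigma^k M$ of indecomposable modules $M$, and by Proposition \ref{prop:concentrated in zero for adapted word} each $R(M)$ lies in $\C([\bm{i}])$; thus $R$ takes values in $\R([\bm{i}])$. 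Composing with the quotient $\pi\colon \R([\bm{i}]) \to \D([\bm{i}])$ yields the candidate $F = \pi R$, which commutes with $\Sigma$ by Proposition \ref{prop:I is an ideal of the repetition category}, and which is essentially surjective because the indecomposables of $\D([\bm{i}])$ are exactly the $\Sigma^k N$ with $N \in \ind([\bm{i}])$, each being $F(\Sigma^k M)$ for the module $M$ corresponding to $N$. The real content is therefore the full faithfulness of $F$ and the compatibility with reflection functors.

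For full faithfulness I would use additivity and $\Sigma$-equivariance to reduce to comparing $\Hom_{\D([\bm{i}])}(RM, \Sigma^d RN)$ with $\Ext^d_{KQ}(M,N)$ for indecomposable modules $M, N$ and $d \in \Z$. The decisive point is that $R$ intertwines the reflection functors: the commutative squares of Remark \ref{rem:spherical twist gives reflection functor on the classical case} identify $\Ltensor F_i^-$ with $T_i^{-1}$ and $\bm{\mathrm{R}}F_i^+$ with $T_i$, so that under $R$ a window $\C([Q'])$ (objects becoming degree-zero after a tilting functor) corresponds to the module category $\modcat KQ'$ sitting inside $\D^b(\modcat KQ)$, for an orientation $Q'$ in the $r$-cluster point of $[\bm{i}]=[Q]$. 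Transporting the condition through $R$ (which is injective on indecomposables by the coordinate bijection), the pair $(M,\Sigma^d N)$ is \emph{simultaneously tiltable} into a common window on the $\D([\bm{i}])$ side if and only if the corresponding pair becomes a pair of modules over a common $KQ'$ after a derived reflection functor on the $\D^b$ side. Whenever such a common window exists, a tilting functor (resp.\ the matching derived reflection functor) carries both objects to $KQ'$-modules $\bar M, \bar N$, and since these functors are equivalences preserving the respective $\Hom$-spaces,
\[
\Hom_{\D([\bm{i}])}(M, \Sigma^d N) \cong \Hom_{\pvd(\Pi_Q)}(\bar M, \bar N) \cong \Hom_{KQ'}(\bar M,\bar N) \cong \Hom_{\D^b(\modcat KQ)}(M,\Sigma^d N),
\]
the middle isomorphism being Proposition \ref{prop:concentrated in zero for adapted word} applied to $[Q']$. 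Note that an $\Ext^1$ on the source side becomes an honest $\Hom$ after tilting, so no separate $\Ext$-computation is needed. When no common window exists, $\Hom_{\D([\bm{i}])}(M,\Sigma^d N)=0$ by the very definition of $\I_{[\bm{i}]}$.

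The remaining (and main) obstacle is to show that when the pair is \emph{not} co-windowable one also has $\Hom_{\D^b(\modcat KQ)}(M,\Sigma^d N)=0$; equivalently, that a nonzero morphism in $\D^b(\modcat KQ)$ forces co-windowability. Heredity of $KQ$ immediately restricts attention to $d\in\{0,1\}$: the case $d=0$ is co-windowable with $Q'=Q$, and for $d\notin\{0,1\}$ both sides vanish, the two objects being more than one period apart in the repetition quiver (recall from Remark \ref{rmk:the longest twist acts as twisted shift} that $\Sigma$ implements a shift by one period). For $d=1$ I would invoke Auslander--Reiten duality $\Ext^1_{KQ}(M,N)\cong D\Hom_{KQ}(N,\tau M)$, so that $\Ext^1_{KQ}(M,N)\neq 0$ yields a nonzero morphism $N \to \tau M$ between genuine modules; I then produce the common window from the identification of $\widehat{\Upsilon}_{[Q]}$ with the Auslander--Reiten quiver of $\D^b(\modcat KQ)$ (Happel together with Bédard's theorem), using the convexity of $\Upsilon_{[Q']}$ inside $\widehat{\Upsilon}_{[Q']}$ and the segment property to place $M$ and $\Sigma N$ in a single window. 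This is precisely the step where the combinatorial coincidence of the two $\Hom$-supports has to be pinned down, and it is the part I expect to require the most care; Lemma \ref{lemma:morphisms in the repetition category follow paths} (morphisms follow paths) is the analogous statement on the $\D([\bm{i}])$ side and guides the argument.

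Finally, for the ``Moreover'' part, the tilting functors on $\D([\bm{i}])$ are by definition generated by the equivalences $T_i^{-1}$ (and their inverses) of Proposition \ref{prop:tilting for the repetitive category, general case}, while the derived reflection functors on $\D^b(\modcat KQ)$ are generated by $\Ltensor F_i^-$ and $\bm{\mathrm{R}}F_i^+$. The intertwining relations of Remark \ref{rem:spherical twist gives reflection functor on the classical case}, combined with the construction $F=\pi R$, show that $F$ carries each generator of one family to the corresponding generator of the other, whence it carries arbitrary tilting functors to the corresponding compositions of derived reflection functors, completing the proof.
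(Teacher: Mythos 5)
Your overall architecture coincides with the paper's: restrict along $\Pi_Q\twoheadrightarrow KQ$, observe via Proposition \ref{prop:concentrated in zero for adapted word} that the essential image is $\R([\bm{i}])$, compose with the quotient, and then split the $\Hom$-comparison into the ``co-windowable'' case (handled, as you do, by the commutative squares of Remark \ref{rem:spherical twist gives reflection functor on the classical case} plus full faithfulness on the heart) and the ``no common window'' case. The first case and the ``Moreover'' part are fine and match the paper.

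The one genuine gap is exactly where you flag it: you must show that if $M$ and $\Sigma^d N$ admit no common window then $\Hom_{\D^b(\modcat KQ)}(M,\Sigma^d N)=0$, and your route for $d=1$ --- Auslander--Reiten duality followed by an unspecified argument ``placing $M$ and $\Sigma N$ in a single window'' via convexity and the segment property --- is not carried out. As stated it is not even the right logical direction yet: AR duality converts $\Ext^1(M,N)\neq 0$ into a nonzero map $N\to\tau M$ between modules, but you still owe a proof that this forces the existence of an orientation $Q'$ in the reflection class whose module category contains both $M$ and $\Sigma N$; that is a nontrivial combinatorial statement about which pairs of vertices of $\Z Q$ lie in a common tilt of the standard heart, and it is the entire content of the step. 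The paper closes this gap with a short trick that avoids the combinatorics altogether: choose a composition of derived reflection functors $T'$ sending $M$ to an indecomposable \emph{projective} $KQ'$-module. If $T'(N')$ (for $N'=\Sigma^d N$) were concentrated in degree zero, $M$ and $N'$ would be co-windowable, contrary to hypothesis; so $T'(N')$ is a nonzero shift of a module, and $\Hom(T'(M),T'(N'))=\Ext^{*\neq 0}_{KQ'}(\text{projective},-)=0$, whence $\Hom_{\D^b(\modcat KQ)}(M,N')=0$ since $T'$ is an equivalence. I recommend replacing your $d=1$ argument by this reduction to the projective case; your AR-duality route can probably be completed, but only by re-deriving the window combinatorics that the projective trick renders unnecessary.
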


\begin{proof}
Since every indecomposable object of $\D^b(\modcat KQ)$ is a shift of some indecomposable $KQ$-module, the essential image of the restriction functor $\D^b(\modcat KQ) \longrightarrow \pvd(\Pi_{Q^{\circ}})$ is the repetition category $\R([\bm{i}])$ by Proposition \ref{prop:concentrated in zero for adapted word}. If $\D^b(\modcat KQ) \longrightarrow \R([\bm{i}])$ is the functor obtained by restricting the codomain, it commutes with the suspension functors and, by Remark \ref{rem:spherical twist gives reflection functor on the classical case}, it also has the last property stated above (if we replace $\D([\bm{i}])$ by $\R([\bm{i}])$). Hence, we only need to prove that its composition with the quotient functor $\R([\bm{i}]) \longrightarrow \D([\bm{i}])$ yields an equivalence.

Let $M$ and $N$ be two indecomposable objects of $\D^b(\modcat KQ)$. We have to show that the map
\begin{equation}\tag{$*$}\label{eq:fully faithful}
\Hom_{\D^b(\modcat KQ)}(M,N) \longrightarrow \Hom_{\R([\bm{i}])}(M,N)
\end{equation}
is a bijection if there is a tilting functor $T:\R([\bm{i}]) \longrightarrow \R([\bm{j}])$ such that $T(M)$ and $T(N)$ are concentrated in degree $0$, and that
\[
\Hom_{\D^b(\modcat KQ)}(M,N) = 0	
\]
otherwise. We argue in each case separately.

In the first case, Remark \ref{rem:spherical twist gives reflection functor on the classical case} gives the following diagram of functors (which is commutative up to a natural isomorphism):
\[\begin{tikzcd}
	{\D^b(\modcat KQ)} & {\R([\bm{i}])} \\
	{\D^b(\modcat KQ')} & {\R([\bm{j}])}
	\arrow[from=1-1, to=1-2]
	\arrow["T'"', from=1-1, to=2-1]
	\arrow["T", from=1-2, to=2-2]
	\arrow[from=2-1, to=2-2]
\end{tikzcd}\]
The vertical functor $T'$ on the left is the composition of the derived reflection functors (as the ones appearing in Remark \ref{rem:spherical twist gives reflection functor on the classical case}) corresponding to the sequence of sources/sinks that defines the tilting functor $T$. By Proposition \ref{prop:concentrated in zero for adapted word}, the horizontal functor on the bottom is fully faithful on the canonical heart of $\D^b(\modcat KQ')$. Since $T'(M)$ and $T'(N)$ belong to this heart and the vertical functors are equivalences, we conclude that the map (\ref{eq:fully faithful}) is indeed a bijection.

In the second case, we can find a composition of reflection functors $T'$ from $\D^b(\modcat KQ)$ to $\D^b(\modcat KQ')$ such that $T'(M)$ is an indecomposable projective $KQ'$-module. By hypothesis, $T'(N)$ is not concentrated in degree $0$, so there are no morphisms from $T'(M)$ to $T'(N)$ in $\D^b(\modcat KQ')$. Since $T'$ is an equivalence, there are no morphisms from $M$ to $N$ in $\D^b(\modcat KQ)$, as desired.
\end{proof}

\section{Irreducible morphisms and Gabriel quivers}\label{section:morphisms}

Let $[\bm{i}]$ be a commutation class of reduced words for $w \in \mathsf{W}$. By construction, the set of indecomposable objects of $\C([\bm{i}])$ is in bijection with $\mathsf{R}^+(w)$, which is the vertex set of the combinatorial AR quiver $\Upsilon_{[\bm{i}]}$. We will show that each arrow of $\Upsilon_{[\bm{i}]}$ gives rise to an essentially unique irreducible morphism in $\C([\bm{i}])$; hence, $\Upsilon_{[\bm{i}]}$ can be seen as a subquiver of the Gabriel quiver of $\C([\bm{i}])$. At the end of the section, we will consider the case $w = w_0$ and prove similar results for the categories $\R([\bm{i}])$ and $\D([\bm{i}])$.

If there is an arrow $\beta \longrightarrow \alpha$ in $\Upsilon_{[\bm{i}]}$ between two positive roots in $\mathsf{R}^+(w)$, then there is $\bm{j} \in [\bm{i}]$ such that $\alpha$ and $\beta$ are consecutive with respect to the order $<_{\bm{j}}$ (this follows from Theorem \ref{thm:compatible reading gives reduced word}). In particular, if $\alpha$ and $\beta$ have residues $i$ and $j$ in $\Delta_0$, then
\[
M^{[\bm{i}]}_{\alpha} \cong T(S_i) \quad \textrm{and} \quad M^{[\bm{i}]}_{\beta} \cong T(T_i(S_j)),
\]
where $T$ is a composition of spherical twists. Since $i \sim j$, $\Ext^n_{\Pi_Q}(S_i,S_j)$ vanishes for all $n \neq 1$ and $\Ext^1_{\Pi_Q}(S_i,S_j)$ is one-dimensional. Thus, the triangle (\ref{eq:triangle for spherical twist}) in the definition of the spherical twist $T_i(S_j)$ becomes
\[
\begin{tikzcd}
    {\Sigma^{-1}S_i} & {S_j} & {T_i(S_j)} & {S_i}
	\arrow[from=1-1, to=1-2]
	\arrow[from=1-2, to=1-3]
	\arrow[from=1-3, to=1-4]
\end{tikzcd}
\]
and $T_i(S_j)$ must be the unique nonsplit extension of $S_i$ by $S_j$. We deduce that the space $\Hom_{\pvd(\Pi_{Q})}(T_i(S_j), S_i)$ is one-dimensional, and so is $\Hom_{\pvd(\Pi_{Q})}(M^{[\bm{i}]}_{\beta}, M^{[\bm{i}]}_{\alpha})$ as $T$ is an equivalence of categories. In other words, given an arrow $\beta \longrightarrow \alpha$ in $\Upsilon_{[\bm{i}]}$, there is a unique nonzero map (up to multiplication by a nonzero scalar) from $M^{[\bm{i}]}_{\beta}$ to $M^{[\bm{i}]}_{\alpha}$ in $\C({[\bm{i}]})$.

This discussion motivates the definition of the following relation on $\ind([\bm{i}])$. For two indecomposable objects $M,N \in \ind([\bm{i}])$, consider the smallest transitive relation $\preceq_{\C([\bm{i}])}$ satisfying
\[
\Hom_{\C({[\bm{i}]})}(N,M) \neq 0 \implies M \preceq_{\C([\bm{i}])} N.
\]
We call it the \emph{$\Hom$-order of} $\C([\bm{i}])$. Our next goal is to prove that $\preceq_{\C([\bm{i}])}$ is indeed a partial order and that the map $\alpha \mapsto M^{[\bm{i}]}_{\alpha}$ induces an isomorphism between the posets $(\mathsf{R}^+(w), \preceq_{[\bm{i}]})$ and $(\ind([\bm{i}]), \preceq_{\C([\bm{i}])})$.

\begin{lemma}\label{lemma:not relatable in combinatorial AR quiver implies zero Hom}
Let $\alpha,\beta \in \mathsf{R}^+(w)$. If $\alpha \not\preceq_{[\bm{i}]} \beta$, then
\[
\Hom_{\C({[\bm{i}]})}(M^{[\bm{i}]}_{\beta}, M^{[\bm{i}]}_{\alpha}) = 0.
\]
\end{lemma}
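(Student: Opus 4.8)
The plan is to reduce the statement to the vanishing of a Hom-space between a simple object and a standard object, and then to kill that space using the canonical t-structure. First I would unwind the definition of $\preceq_{[\bm{i}]}$. Since $\alpha \not\preceq_{[\bm{i}]}\beta$ forces $\alpha \neq \beta$, and since each $<_{\bm{j}}$ is a \emph{total} order on $\mathsf{R}^+(w)$, the failure of the condition ``$\alpha <_{\bm{j}}\beta$ for all $\bm{j} \in [\bm{i}]$'' produces a representative $\bm{j} = (j_1,\dots,j_t) \in [\bm{i}]$ in which $\beta$ precedes $\alpha$. Writing $\beta = \beta^{\bm{j}}_k$ and $\alpha = \beta^{\bm{j}}_l$, this says precisely that $k < l$. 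By Lemma \ref{lemma:indQ well defined} I may compute with this representative, so that $M^{[\bm{i}]}_\beta \cong T_{j_1}\dotsb T_{j_{k-1}}(S_{j_k})$ and $M^{[\bm{i}]}_\alpha \cong T_{j_1}\dotsb T_{j_{l-1}}(S_{j_l})$.

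Next I would apply the auto-equivalence $(T_{j_1}\dotsb T_{j_{k-1}})^{-1}$ of $\pvd(\Pi_Q)$, which reduces the problem to showing $\Hom_{\pvd(\Pi_Q)}(S_{j_k}, Z) = 0$, where $Z = T_{j_k}T_{j_{k+1}}\dotsb T_{j_{l-1}}(S_{j_l})$. Because $k < l$, the leading twist $T_{j_k}$ really occurs, so $Z = T_{j_k}(W)$ with $W = T_{j_{k+1}}\dotsb T_{j_{l-1}}(S_{j_l})$. The crucial move is then to undo this leading twist: using that $T_{j_k}$ is an equivalence together with the identity $T_{j_k}^{-1}(S_{j_k}) \cong \Sigma S_{j_k}$ from Proposition \ref{prop:properties of spherical twists}(1), I obtain
\[
\Hom_{\pvd(\Pi_Q)}(S_{j_k}, Z) \cong \Hom_{\pvd(\Pi_Q)}(T_{j_k}^{-1}S_{j_k}, W) = \Hom_{\pvd(\Pi_Q)}(\Sigma S_{j_k}, W) = \Ext^{-1}_{\Pi_Q}(S_{j_k}, W).
\]
Now $(j_{k+1},\dots,j_t)$ is again a reduced word, so $W$ is one of the objects covered by Theorem \ref{thm:indecomposables are concentrated in zero} and therefore has cohomology concentrated in degree zero. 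Both $S_{j_k}$ and $W$ thus lie in the heart of the canonical t-structure, and there are no negative extension groups between objects of a heart, so $\Ext^{-1}_{\Pi_Q}(S_{j_k}, W) = 0$. This yields $\Hom_{\C([\bm{i}])}(M^{[\bm{i}]}_\beta, M^{[\bm{i}]}_\alpha) = 0$, as $\C([\bm{i}])$ is a full subcategory of $\pvd(\Pi_Q)$.

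I expect the only delicate part to be the bookkeeping of the first paragraph, namely confirming that $\alpha \not\preceq_{[\bm{i}]}\beta$ genuinely produces a single representative realizing $\beta <_{\bm{j}}\alpha$ and that the resulting indices satisfy $k<l$, so that after cancelling the common prefix the object $W$ is still (the image under a twist of) a standard object $M^{\bm{i}'}_\bullet$ sitting in the heart. Once this is in place the argument is entirely formal: the leading twist $T_{j_k}$ turns a hypothetical degree-$0$ morphism into a degree-$(-1)$ one, which the t-structure rules out. It is worth noting that a more naive approach via the defining triangle (\ref{eq:triangle for spherical twist}) of $T_{j_k}$ and an explicit computation of $\RHom(S_{j_k}, T_{j_k}W)$ is feasible but sign-sensitive; passing through $T_{j_k}^{-1}$ and the relation $T_{j_k}^{-1}(S_{j_k}) \cong \Sigma S_{j_k}$ avoids this entirely.
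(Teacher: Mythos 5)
Your argument is correct and is essentially the proof given in the paper: both reduce, via a representative $\bm{j}\in[\bm{i}]$ with $\beta<_{\bm{j}}\alpha$ and the equivalence $T_{j_k}^{-1}\dotsb T_{j_1}^{-1}$, to the vanishing of $\Hom_{\pvd(\Pi_Q)}(T_{j_k}^{-1}(S_{j_k}),\,T_{j_{k+1}}\dotsb T_{j_{l-1}}(S_{j_l}))$, which follows since $T_{j_k}^{-1}(S_{j_k})\cong\Sigma S_{j_k}$ sits in degree $-1$ while the target is concentrated in degree zero by Theorem \ref{thm:indecomposables are concentrated in zero}. The only cosmetic difference is that you peel off the prefix $(T_{j_1}\dotsb T_{j_{k-1}})^{-1}$ and the twist $T_{j_k}^{-1}$ in two steps rather than one.
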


\begin{proof}
By the definition of $\preceq_{[\bm{i}]}$, there is $\bm{j} = (j_1,\dots,j_t)$ in $[\bm{i}]$ such that $\beta <_{\bm{j}} \alpha$. Let $1 \leq k < l \leq t$ be such that $\beta = \beta^{\bm{j}}_k$ and $\alpha = \beta^{\bm{j}}_l$, so that we have $M^{[\bm{i}]}_{\beta} \cong M^{\bm{j}}_k$ and $M^{[\bm{i}]}_{\alpha} \cong M^{\bm{j}}_l$. Thus,
\[
\Hom_{\C([\bm{i}])}(M^{[\bm{i}]}_{\beta}, M^{[\bm{i}]}_{\alpha}) \cong \Hom_{\C([\bm{i}])}(M^{\bm{j}}_k, M^{\bm{j}}_l) = \Hom_{\pvd(\Pi_{Q})}(T_{j_1}\dotsb T_{j_{k-1}}(S_{j_k}), T_{j_1}\dotsb T_{j_{l-1}}(S_{j_l})). 
\]
Applying the functor $T_{j_k}^{-1}T_{j_{k-1}}^{-1}\dotsb T_{j_1}^{-1}$ and recalling that $k < l$, the $\Hom$-space above is isomorphic to
\[
\Hom_{\pvd(\Pi_{Q})}(T_{j_k}^{-1}(S_{j_k}), T_{j_{k+1}}\dotsb T_{j_{l-1}}(S_{j_l})).
\]
But $T_{j_k}^{-1}(S_{j_k}) \cong \Sigma S_{j_k}$ is concentrated in degree $-1$, while $T_{j_{k+1}}\dotsb T_{j_{l-1}}(S_{j_l})$ is concentrated in degree $0$ by Theorem \ref{thm:indecomposables are concentrated in zero}. We deduce that the $\Hom$-space above is zero.
\end{proof}

\begin{prop}\label{prop:categorical order agrees with combinatorial order}
For $\alpha, \beta \in \mathsf{R}^+(w)$, we have
\[
\alpha \preceq_{[\bm{i}]} \beta \iff M^{[\bm{i}]}_{\alpha} \preceq_{\C([\bm{i}])} M^{[\bm{i}]}_{\beta}.
\]
In particular, the relation $\preceq_{\C([\bm{i}])}$ is a partial order on the set $\ind([\bm{i}])$.
\end{prop}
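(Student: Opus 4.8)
The plan is to establish the claimed equivalence in both directions and then read off that $\preceq_{\C([\bm{i}])}$ is a partial order from the fact that $\preceq_{[\bm{i}]}$ already is one. Everything rests on three ingredients already in place: Theorem~\ref{thm:combinatorial AR quiver realizes partial order}, which identifies $\preceq_{[\bm{i}]}$ with reachability by oriented paths in $\Upsilon_{[\bm{i}]}$; the discussion immediately preceding the proposition, which produces, from each arrow $\beta \to \alpha$ of $\Upsilon_{[\bm{i}]}$, a nonzero morphism $M^{[\bm{i}]}_{\beta} \to M^{[\bm{i}]}_{\alpha}$ in $\C([\bm{i}])$; and Lemma~\ref{lemma:not relatable in combinatorial AR quiver implies zero Hom}, which governs the vanishing of the relevant $\Hom$-spaces.

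For the implication $\alpha \preceq_{[\bm{i}]} \beta \Rightarrow M^{[\bm{i}]}_{\alpha} \preceq_{\C([\bm{i}])} M^{[\bm{i}]}_{\beta}$, I would first invoke Theorem~\ref{thm:combinatorial AR quiver realizes partial order} to obtain an oriented path $\beta = \delta_0 \to \delta_1 \to \dots \to \delta_m = \alpha$ in $\Upsilon_{[\bm{i}]}$. Each arrow $\delta_{k-1} \to \delta_k$ yields, by the preceding discussion, a nonzero morphism $M^{[\bm{i}]}_{\delta_{k-1}} \to M^{[\bm{i}]}_{\delta_k}$, hence $M^{[\bm{i}]}_{\delta_k} \preceq_{\C([\bm{i}])} M^{[\bm{i}]}_{\delta_{k-1}}$ by the defining implication of the $\Hom$-order. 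Chaining these relations along the path and using transitivity of $\preceq_{\C([\bm{i}])}$ gives $M^{[\bm{i}]}_{\alpha} \preceq_{\C([\bm{i}])} M^{[\bm{i}]}_{\beta}$.

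For the converse, I would unwind the definition of the $\Hom$-order: the relation $M^{[\bm{i}]}_{\alpha} \preceq_{\C([\bm{i}])} M^{[\bm{i}]}_{\beta}$ means there is a chain of indecomposables $M^{[\bm{i}]}_{\alpha} = M^{[\bm{i}]}_{\gamma_0}, \dots, M^{[\bm{i}]}_{\gamma_s} = M^{[\bm{i}]}_{\beta}$ with $\Hom_{\C([\bm{i}])}(M^{[\bm{i}]}_{\gamma_k}, M^{[\bm{i}]}_{\gamma_{k-1}}) \neq 0$ for every $k$. The contrapositive of Lemma~\ref{lemma:not relatable in combinatorial AR quiver implies zero Hom} then forces $\gamma_{k-1} \preceq_{[\bm{i}]} \gamma_k$ for each $k$, and transitivity of $\preceq_{[\bm{i}]}$ yields $\alpha \preceq_{[\bm{i}]} \beta$. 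To conclude that $\preceq_{\C([\bm{i}])}$ is a partial order, I note that reflexivity and transitivity hold by construction (reflexivity because $\mathrm{id}$ makes $\Hom_{\C([\bm{i}])}(M,M) \neq 0$), while antisymmetry is inherited: if $M^{[\bm{i}]}_{\alpha}$ and $M^{[\bm{i}]}_{\beta}$ are mutually comparable, the equivalence just proved gives $\alpha \preceq_{[\bm{i}]} \beta$ and $\beta \preceq_{[\bm{i}]} \alpha$, whence $\alpha = \beta$ and $M^{[\bm{i}]}_{\alpha} = M^{[\bm{i}]}_{\beta}$, since $\alpha \mapsto M^{[\bm{i}]}_{\alpha}$ is a bijection.

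The main obstacle is bookkeeping rather than conceptual: one must match the transitive closure defining $\preceq_{\C([\bm{i}])}$ step by step against the two available results while keeping the arrow/morphism direction conventions consistent. The subtlety worth flagging is an asymmetry between the two directions — the forward implication uses only the morphisms coming from arrows of $\Upsilon_{[\bm{i}]}$ (so it suffices that arrows generate enough relations), whereas the converse must accommodate \emph{arbitrary} nonzero morphisms, so Lemma~\ref{lemma:not relatable in combinatorial AR quiver implies zero Hom} must be applied at every generating step of the chain, not merely to irreducible maps. I do not expect a deeper difficulty, since $\ind([\bm{i}])$ is in bijection with $\mathsf{R}^+(w)$ and $\preceq_{[\bm{i}]}$ is already known to be a partial order.
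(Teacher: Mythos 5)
Your proposal is correct and follows essentially the same route as the paper: the forward direction chains the nonzero morphisms attached to arrows of $\Upsilon_{[\bm{i}]}$ along a path supplied by Theorem~\ref{thm:combinatorial AR quiver realizes partial order}, and the converse unwinds the transitive closure defining $\preceq_{\C([\bm{i}])}$ and applies the contrapositive of Lemma~\ref{lemma:not relatable in combinatorial AR quiver implies zero Hom} at each link. Your explicit check of antisymmetry via the bijection $\alpha \mapsto M^{[\bm{i}]}_{\alpha}$ is slightly more detailed than the paper, which leaves that step implicit, but the argument is the same.
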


\begin{proof}
If $\alpha \preceq_{[\bm{i}]} \beta$, then Theorem \ref{thm:combinatorial AR quiver realizes partial order} gives a path in $\Upsilon_{[\bm{i}]}$ from $\beta$ to $\alpha$. As argued before, each arrow on this path gives rise to a nonzero morphism in $\C([\bm{i}])$, so we have $M^{[\bm{i}]}_{\alpha} \preceq_{\C([\bm{i}])} M^{[\bm{i}]}_{\beta}$. Conversely, if $M^{[\bm{i}]}_{\alpha} \preceq_{\C([\bm{i}])} M^{[\bm{i}]}_{\beta}$, then there is a sequence $\alpha_1,\dots,\alpha_k \in \mathsf{R}^+(w)$ such that $\alpha_1 = \alpha$, $\alpha_k = \beta$ and
\[
\Hom_{\C([\bm{i}])}(M^{[\bm{i}]}_{\alpha_{l+1}},M^{[\bm{i}]}_{\alpha_l}) \neq 0
\]
for all $1 \leq l < k$. By Lemma \ref{lemma:not relatable in combinatorial AR quiver implies zero Hom}, we have $\alpha_l \preceq_{[\bm{i}]} \alpha_{l+1}$ for all $1 \leq l < k$, which gives $\alpha \preceq_{[\bm{i}]} \beta$ by transitivity.
\end{proof}

Together with Theorem \ref{thm:combinatorial AR quiver realizes partial order}, this result implies the following corollary.

\begin{cor}\label{cor:combinatorial AR quiver is Hasse quiver of categorical order}
The combinatorial AR quiver $\Upsilon_{[\bm{i}]}$ is isomorphic to the Hasse quiver of the $\Hom$-order of $\C([\bm{i}])$. 
\end{cor}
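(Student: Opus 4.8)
The plan is to deduce the result purely formally from the two facts already established, exploiting that the Hasse quiver of a poset is a functorial invariant: an isomorphism of posets carries covering relations to covering relations and hence induces an isomorphism of the associated Hasse quivers. Throughout, I would orient an edge of a Hasse quiver from the larger element to the smaller one, in accordance with the convention of Theorem \ref{thm:combinatorial AR quiver realizes partial order} (where $\alpha \preceq_{[\bm{i}]} \beta$ corresponds to a path from $\beta$ to $\alpha$).

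First I would record that the assignment $\alpha \mapsto M^{[\bm{i}]}_{\alpha}$ is, by construction (Section \ref{section:indecomposable objects from commutation classes}), a bijection from the vertex set $\mathsf{R}^+(w)$ of $\Upsilon_{[\bm{i}]}$ onto the set $\ind([\bm{i}])$ of isomorphism classes of indecomposable objects of $\C([\bm{i}])$. By Proposition \ref{prop:categorical order agrees with combinatorial order}, this bijection satisfies $\alpha \preceq_{[\bm{i}]} \beta$ if and only if $M^{[\bm{i}]}_{\alpha} \preceq_{\C([\bm{i}])} M^{[\bm{i}]}_{\beta}$, so it is an isomorphism of posets
\[
(\mathsf{R}^+(w), \preceq_{[\bm{i}]}) \xrightarrow{\ \sim\ } (\ind([\bm{i}]), \preceq_{\C([\bm{i}])}).
\]

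Next I would invoke the elementary observation that an isomorphism of posets restricts to an isomorphism between the associated Hasse quivers. Applying this to the displayed map yields an isomorphism between the Hasse quiver of $(\mathsf{R}^+(w), \preceq_{[\bm{i}]})$ and the Hasse quiver of the $\Hom$-order of $\C([\bm{i}])$. Finally, Theorem \ref{thm:combinatorial AR quiver realizes partial order} identifies $\Upsilon_{[\bm{i}]}$ with the Hasse quiver of $(\mathsf{R}^+(w), \preceq_{[\bm{i}]})$. Composing the two isomorphisms gives the desired identification of $\Upsilon_{[\bm{i}]}$ with the Hasse quiver of the $\Hom$-order.

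Since both ingredients are already in place, there is no genuine obstacle here; the only point requiring a moment's care is to match the two edge-orientation conventions consistently (arrows pointing from the greater element to the lesser, in line with the path direction of Theorem \ref{thm:combinatorial AR quiver realizes partial order}), which is automatic once a single convention is fixed.
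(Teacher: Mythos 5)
Your argument is correct and is exactly the route the paper takes: it derives the corollary immediately from Proposition \ref{prop:categorical order agrees with combinatorial order} (the poset isomorphism $\alpha \mapsto M^{[\bm{i}]}_{\alpha}$) combined with Theorem \ref{thm:combinatorial AR quiver realizes partial order}, using that a poset isomorphism induces an isomorphism of Hasse quivers. Your write-up merely spells out the details the paper leaves implicit.
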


We now describe the relationship between $\Upsilon_{[\bm{i}]}$ and the Gabriel quiver of $\C([\bm{i}])$. Let us first recall some definitions. Let $\C$ be an additive $K$-linear Krull--Schmidt category and assume for simplicity that the radical quotient of the endomorphism algebra of any indecomposable object is isomorphic to $K$. We say that a morphism $\varphi: M \longrightarrow N$ in $\C$ is \emph{irreducible} if $\varphi$ is neither a section nor a retraction and, for any factorization $\varphi = \psi_2\psi_1$, we have that $\psi_1$ is a section or $\psi_2$ is a retraction. If $M$ and $N$ are indecomposable, then $\varphi$ is irreducible if and only if $\varphi \in \rad_{\C}(M,N) \setminus \rad_{\C}^2(M,N)$, where $\rad_{\C}$ is the radical ideal of $\C$ (see \cite[Section V.7]{AuslanderReitenSmalo}). The \emph{Gabriel quiver $\Gamma(\C)$ of} $\C$ has as a set of vertices a set of representatives for the isomorphism classes of indecomposable objects in $\C$. The number of arrows in $\Gamma(\C)$ from an indecomposable object $M$ to another indecomposable object $N$ is the dimension of the $K$-vector space $\rad_{\C}(M,N)/\rad_{\C}^{2}(M,N)$.

\begin{lemma}\label{lemma:arrows are irreducible}
Let $\alpha, \beta \in \mathsf{R}^+(w)$. If there is an arrow $\beta \longrightarrow \alpha$ in $\Upsilon_{[\bm{i}]}$, then any nonzero morphism $\varphi: M^{[\bm{i}]}_{\beta} \longrightarrow M^{[\bm{i}]}_{\alpha}$ is irreducible in $\C([\bm{i}])$.
\end{lemma}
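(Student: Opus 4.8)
The plan is to use the radical characterization of irreducibility recalled just above the statement (from \cite[Section V.7]{AuslanderReitenSmalo}). Since there is an arrow $\beta \longrightarrow \alpha$ in the Hasse quiver $\Upsilon_{[\bm{i}]}$, we have $\beta \neq \alpha$, so $M^{[\bm{i}]}_{\beta}$ and $M^{[\bm{i}]}_{\alpha}$ are non-isomorphic indecomposables and any nonzero $\varphi$ between them is a non-isomorphism, hence neither a section nor a retraction, and lies in $\rad_{\C([\bm{i}])}(M^{[\bm{i}]}_{\beta}, M^{[\bm{i}]}_{\alpha})$. It therefore suffices to prove that $\varphi \notin \rad^2_{\C([\bm{i}])}(M^{[\bm{i}]}_{\beta}, M^{[\bm{i}]}_{\alpha})$.

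\textbf{Bricks.} First I would record that every indecomposable object of $\C([\bm{i}])$ is a brick. Writing $M^{[\bm{i}]}_{\gamma} \cong T(S_i)$ for a composition $T$ of spherical twists and a simple $S_i$, and using that each $T_j$ is a triangulated auto-equivalence (Proposition \ref{prop:properties of spherical twists}), we get
\[
\End_{\C([\bm{i}])}(M^{[\bm{i}]}_{\gamma}) = \End_{\pvd(\Pi_Q)}(M^{[\bm{i}]}_{\gamma}) \cong \End_{\pvd(\Pi_Q)}(S_i) = \Ext^0_{\Pi_Q}(S_i,S_i) = K,
\]
so that $\rad_{\C([\bm{i}])}\End(M^{[\bm{i}]}_{\gamma}) = 0$ for every indecomposable. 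This is the fact I will use to kill the ``diagonal'' contributions; note it is strictly stronger than the blanket hypothesis that the radical quotient be $K$, and it is precisely what is needed.

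\textbf{Main argument.} Assume for contradiction that $\varphi \in \rad^2$. Using the standard decomposition of the square of the radical in a Krull--Schmidt category, write $\varphi = \sum_s \psi_s \chi_s$ with $L_s$ indecomposable, $\chi_s \in \rad(M^{[\bm{i}]}_{\beta}, L_s)$ and $\psi_s \in \rad(L_s, M^{[\bm{i}]}_{\alpha})$. Any term with $L_s \cong M^{[\bm{i}]}_{\beta}$ has $\chi_s \in \rad\End(M^{[\bm{i}]}_{\beta}) = 0$, and any term with $L_s \cong M^{[\bm{i}]}_{\alpha}$ has $\psi_s \in \rad\End(M^{[\bm{i}]}_{\alpha}) = 0$; these vanish by the brick property. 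For every remaining nonzero term, $L_s$ corresponds to some $\ell_s \in \mathsf{R}^+(w)$ with $\ell_s \neq \alpha,\beta$, and the nonvanishing of $\chi_s$ and $\psi_s$ forces, by Lemma \ref{lemma:not relatable in combinatorial AR quiver implies zero Hom} together with Proposition \ref{prop:categorical order agrees with combinatorial order}, the relations $\ell_s \preceq_{[\bm{i}]} \beta$ and $\alpha \preceq_{[\bm{i}]} \ell_s$, whence $\alpha \prec_{[\bm{i}]} \ell_s \prec_{[\bm{i}]} \beta$.

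\textbf{Conclusion and main obstacle.} This is where the hypothesis enters: by Theorem \ref{thm:combinatorial AR quiver realizes partial order}, the arrow $\beta \longrightarrow \alpha$ in $\Upsilon_{[\bm{i}]}$ means exactly that $\alpha$ is covered by $\beta$ in $\preceq_{[\bm{i}]}$, so no intermediate $\ell_s$ can exist. Hence every term vanishes, giving $\varphi = 0$ and contradicting $\varphi \neq 0$. Therefore $\varphi \notin \rad^2$, and $\varphi$ is irreducible. The only genuinely delicate point is the handling of the diagonal terms $L_s \cong M^{[\bm{i}]}_{\beta}, M^{[\bm{i}]}_{\alpha}$: these are \emph{not} controlled by the general radical-quotient assumption and require the brick property established above; once that is in place, the covering-relation obstruction closes the argument immediately.
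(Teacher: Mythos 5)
Your proof is correct and follows essentially the same route as the paper: the combinatorial heart in both cases is that the Hom-order (Lemma \ref{lemma:not relatable in combinatorial AR quiver implies zero Hom} / Proposition \ref{prop:categorical order agrees with combinatorial order}) forces any indecomposable through which $\varphi$ factors nontrivially to satisfy $\alpha \preceq_{[\bm{i}]} \gamma \preceq_{[\bm{i}]} \beta$, the arrow (being a covering relation in the Hasse quiver) leaves only $\gamma = \alpha$ or $\gamma = \beta$, and the one-dimensionality of endomorphism rings disposes of those cases. The paper verifies the factorization definition of irreducibility directly rather than arguing via $\varphi \notin \rad^2$, but since it states the two characterizations as equivalent just before the lemma, this is only a difference of packaging.
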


\begin{proof}
Take a factorization $\varphi = \psi_2\psi_1$, where $\psi_1: M^{[\bm{i}]}_{\beta} \longrightarrow X$ and $\psi_2: X \longrightarrow M^{[\bm{i}]}_{\alpha}$ are morphisms in $\C([\bm{i}])$. Since $\varphi$ is nonzero, there is at least one indecomposable summand $Y$ of $X$ such that the corresponding components $\psi'_1: M^{[\bm{i}]}_{\beta} \longrightarrow Y$ and $\psi'_2: Y \longrightarrow M^{[\bm{i}]}_{\alpha}$ of $\psi_1$ and $\psi_2$ are both nonzero. We have $Y \cong M^{[\bm{i}]}_\gamma$ for some $\gamma \in \mathsf{R}^+(w)$, and the existence of these nonzero morphisms gives $\alpha \preceq_{[\bm{i}]} \gamma \preceq_{[\bm{i}]} \beta$ by Proposition \ref{prop:categorical order agrees with combinatorial order}. Since there is an arrow $\beta \longrightarrow \alpha$ in $\Upsilon_{[\bm{i}]}$, we must have $\gamma = \alpha$ or $\gamma = \beta$. This implies that either $\psi_1'$ or $\psi_2'$ is an isomorphism because the endomorphism algebra of any indecomposable object in $\C([\bm{i}])$ is one-dimensional. We deduce that $\psi_1$ is a section or $\psi_2$ is a retraction. Finally, since $M^{[\bm{i}]}_{\alpha}$ and $M^{[\bm{i}]}_{\beta}$ are nonisomorphic indecomposable objects, $\varphi$ is neither a section nor a retraction.
\end{proof}

For the next result, we will say that an arrow $\alpha: x \longrightarrow y$ in an acyclic quiver is \emph{superfluous} if there is a path from $x$ to $y$ with length strictly greater than one.

\begin{thm}\label{thm:combinatorial AR quiver is obtained from Gabriel quiver}
Let $[\bm{i}]$ be a commutation class of reduced words for $w \in \mathsf{W}$. The combinatorial AR quiver $\Upsilon_{[\bm{i}]}$ is isomorphic to the quiver obtained from the Gabriel quiver $\Gamma(\C([\bm{i}]))$ by removing all superfluous arrows.
\end{thm}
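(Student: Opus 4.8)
The plan is to assemble the results already established about $\C([\bm{i}])$ into a purely order-theoretic comparison of the two quivers. First I would observe that $\Upsilon_{[\bm{i}]}$ and $\Gamma(\C([\bm{i}]))$ share the same vertex set via the bijection $\alpha \mapsto M^{[\bm{i}]}_\alpha$ between $\mathsf{R}^+(w)$ and the isomorphism classes in $\ind([\bm{i}])$. Since $\C([\bm{i}])$ is the additive closure of finitely many indecomposables in $\pvd(\Pi_Q)$, it is a $\Hom$-finite Krull--Schmidt category in which the endomorphism algebra of every indecomposable object is one-dimensional (as used in Lemma \ref{lemma:arrows are irreducible}); in particular the hypotheses needed to define the Gabriel quiver hold, and there are no loops in $\Gamma(\C([\bm{i}]))$.

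The key step is to show that the transitive closure of the arrow relation of $\Gamma(\C([\bm{i}]))$ coincides with the partial order $\preceq_{[\bm{i}]}$. For one inclusion, an arrow $M^{[\bm{i}]}_\beta \to M^{[\bm{i}]}_\alpha$ of $\Gamma(\C([\bm{i}]))$ is in particular a nonzero morphism with $\alpha \neq \beta$, so Proposition \ref{prop:categorical order agrees with combinatorial order} gives $\alpha \prec_{[\bm{i}]} \beta$; thus arrows strictly decrease the order and reachability in $\Gamma(\C([\bm{i}]))$ is contained in $\preceq_{[\bm{i}]}$. For the reverse inclusion it suffices to realize each covering relation by an arrow of $\Gamma(\C([\bm{i}]))$: by Corollary \ref{cor:combinatorial AR quiver is Hasse quiver of categorical order} a covering relation corresponds to an arrow $\beta \to \alpha$ of $\Upsilon_{[\bm{i}]}$, and Lemma \ref{lemma:arrows are irreducible}, together with the one-dimensionality of $\Hom_{\C([\bm{i}])}(M^{[\bm{i}]}_\beta, M^{[\bm{i}]}_\alpha)$ established in the discussion preceding that lemma, shows that the nonzero morphism is irreducible and hence contributes an arrow. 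As $\preceq_{[\bm{i}]}$ is the transitive closure of its covering relations, reachability in $\Gamma(\C([\bm{i}]))$ equals $\preceq_{[\bm{i}]}$, which by Theorem \ref{thm:combinatorial AR quiver realizes partial order} is also the reachability relation of $\Upsilon_{[\bm{i}]}$.

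With reachability pinned down, the conclusion becomes a general statement about acyclic quivers. An arrow $\beta \to \alpha$ of $\Gamma(\C([\bm{i}]))$ is superfluous precisely when $\alpha \prec_{[\bm{i}]} \beta$ fails to be a covering relation: if it is not a covering there is an intermediate $\gamma$ with $\alpha \prec_{[\bm{i}]} \gamma \prec_{[\bm{i}]} \beta$, and since reachability equals the order there are paths $\beta \rightsquigarrow \gamma \rightsquigarrow \alpha$ whose concatenation has length at least two; conversely any path of length at least two forces such a $\gamma$, so a covering relation admits none. Hence the non-superfluous arrows of $\Gamma(\C([\bm{i}]))$ are exactly those attached to covering relations, i.e.\ to the arrows of $\Upsilon_{[\bm{i}]}$. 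Finally, for each covering relation the space $\Hom_{\C([\bm{i}])}(M^{[\bm{i}]}_\beta, M^{[\bm{i}]}_\alpha)$ is one-dimensional, so $\rad_{\C([\bm{i}])}(M^{[\bm{i}]}_\beta, M^{[\bm{i}]}_\alpha)/\rad^2_{\C([\bm{i}])}$ is one-dimensional and there is a single arrow, matching the single arrow of $\Upsilon_{[\bm{i}]}$ and yielding the isomorphism of quivers.

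I expect the main obstacle to be the identification of the reachability relation of $\Gamma(\C([\bm{i}]))$ with $\preceq_{[\bm{i}]}$, specifically the reverse inclusion, where the categorical inputs (irreducibility from Lemma \ref{lemma:arrows are irreducible} and the one-dimensionality of the relevant $\Hom$-spaces) are genuinely used; once this is secured, the passage to non-superfluous arrows is a formal consequence of working inside a finite poset.
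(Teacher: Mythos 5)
Your proof is correct and follows essentially the same route as the paper's: both rest on Lemma \ref{lemma:arrows are irreducible}, Proposition \ref{prop:categorical order agrees with combinatorial order} and Theorem \ref{thm:combinatorial AR quiver realizes partial order}, identifying the non-superfluous arrows of $\Gamma(\C([\bm{i}]))$ with the covering relations of $\preceq_{[\bm{i}]}$ and using the one-dimensionality of the relevant $\Hom$-spaces to match arrow multiplicities. The only difference is organizational: you first pin down the reachability relation of $\Gamma(\C([\bm{i}]))$ globally and then prune, whereas the paper argues arrow by arrow on a given non-superfluous arrow.
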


\begin{proof}
By Lemma \ref{lemma:arrows are irreducible}, $\Upsilon_{[\bm{i}]}$ can be seen as a subquiver of $\Gamma(\C([\bm{i}]))$. It contains all vertices. On the other hand, since $\Delta$ is an acyclic graph, $\Upsilon_{[\bm{i}]}$ does not have superfluous arrows by construction. Given an arrow $\varphi: M_{\alpha}^{[\bm{i}]} \longrightarrow M_{\beta}^{[\bm{i}]}$ of $\Gamma(\C([\bm{i}]))$ which is not superfluous, let us show that it comes from an arrow in $\Upsilon_{[\bm{i}]}$. First, we have $\alpha \neq \beta$ as any nontrivial endomorphism of an indecomposable object in $\C([\bm{i}])$ is invertible. By Proposition \ref{prop:categorical order agrees with combinatorial order}, we have $\beta \preceq_{[\bm{i}]} \alpha$ and, by Theorem \ref{thm:combinatorial AR quiver realizes partial order}, there is a path from $\alpha$ to $\beta$ in $\Upsilon_{[\bm{i}]}$. Any such path has to be of length one because, otherwise, we would get by Lemma \ref{lemma:arrows are irreducible} a path in $\Gamma(\C([\bm{i}]))$ of length greater than one from $M_{\alpha}^{[\bm{i}]}$ to $M_{\beta}^{[\bm{i}]}$, contradicting the hypothesis that $\varphi$ is not superfluous. In other words, there is an edge in $\Upsilon_{[\bm{i}]}$ from $\alpha$ to $\beta$. It corresponds to an edge $\psi: M_{\alpha}^{[\bm{i}]} \longrightarrow M_{\beta}^{[\bm{i}]}$ in $\Gamma(\C([\bm{i}]))$. Since $\Hom_{\C([\bm{i}])}(M_{\alpha}^{[\bm{i}]}, M_{\beta}^{[\bm{i}]})$ is one-dimensional, there is at most one arrow from $M_{\alpha}^{[\bm{i}]}$ to $M_{\beta}^{[\bm{i}]}$ in $\Gamma(\C([\bm{i}]))$; thus, $\varphi = \psi$ and the proof is completed.
\end{proof}

Suppose now $w = w_0$ is the longest element. Let $\varphi: (\alpha,k) \longrightarrow (\beta,l)$ be an arrow in $\widehat{\Upsilon}_{[\bm{i}]}$. By applying combinatorial reflection functors to $[\bm{i}]$, we can find another commutation class $[\bm{j}]$ and an isomorphism $\widehat{\Upsilon}_{[\bm{i}]} \longrightarrow \widehat{\Upsilon}_{[\bm{j}]}$ that sends $\varphi$ to an arrow in $\Upsilon_{[\bm{j}]}$. Using the previous discussion and Proposition \ref{prop:tilting for the repetitive category, general case}, we deduce that
\[
\dim_K\Hom_{\R([\bm{i}])}(M,N) = \dim_K\Hom_{\D([\bm{i}])}(M,N) = 1,
\]
where $M$ and $N$ are the indecomposable objects in $\ind(\R([\bm{i}]))$ with coordinates $(\alpha,k)$ and $(\beta,l)$, respectively. Therefore, as before, any arrow in $\widehat{\Upsilon}_{[\bm{i}]}$ gives rise to a unique nonzero map in $\R([\bm{i}])$ and in $\D([\bm{i}])$ (up to multiplication by a nonzero scalar) between the corresponding indecomposable objects.

We define the $\Hom$-order of $\R([\bm{i}])$ and $\D([\bm{i}])$ in the same way as we did it for $\C([\bm{i}])$. These are relations $\preceq_{\R([\bm{i}])}$ and $\preceq_{\D([\bm{i}])}$ on $\ind(\R(\bm{i}))$.

\begin{prop}
Suppose $\Delta \neq \mathsf{A}_1$. If $M,N \in \ind(\R([\bm{i}]))$, then
\[
N \preceq_{\R([\bm{i}])} M \iff \textrm{there is a path from } c_{[\bm{i}]}(M) \textrm{ to } c_{[\bm{i}]}(N) \textrm{ in } \widehat{\Upsilon}_{[\bm{i}]} \iff N \preceq_{\D([\bm{i}])} M.
\]
In particular, the relations $\preceq_{\R([\bm{i}])}$ and $\preceq_{\D([\bm{i}])}$ on the set $\ind(\R([\bm{i}]))$ coincide and are partial orders.
\end{prop}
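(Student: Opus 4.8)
The plan is to establish the three-way equivalence as a cycle of implications and then deduce that the common relation is a partial order. Throughout I will use three facts from the preceding discussion: (i) every arrow of $\widehat{\Upsilon}_{[\bm{i}]}$ lifts to an essentially unique nonzero morphism in both $\R([\bm{i}])$ and $\D([\bm{i}])$ between the corresponding indecomposable objects, with the morphism pointing in the same direction as the arrow; (ii) Lemma \ref{lemma:morphisms in the repetition category follow paths}, which says that a nonzero morphism $M \to M'$ in $\R([\bm{i}])$ forces a path from $c_{[\bm{i}]}(M)$ to $c_{[\bm{i}]}(M')$ in $\widehat{\Upsilon}_{[\bm{i}]}$; and (iii) the fact that $\D([\bm{i}])$ is a quotient of $\R([\bm{i}])$, so that the canonical functor is surjective on $\Hom$-spaces and in particular a nonzero morphism in $\D([\bm{i}])$ lifts to a nonzero morphism in $\R([\bm{i}])$.

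First I would show that a path from $c_{[\bm{i}]}(M)$ to $c_{[\bm{i}]}(N)$ implies $N \preceq_{\D([\bm{i}])} M$. Writing the path as $c_{[\bm{i}]}(M) = x_0 \to x_1 \to \dotsb \to x_r = c_{[\bm{i}]}(N)$ and letting $M_{x_j}$ denote the indecomposable object with coordinate $x_j$, fact (i) turns each arrow $x_j \to x_{j+1}$ into a nonzero element of $\Hom_{\D([\bm{i}])}(M_{x_j}, M_{x_{j+1}})$. By the defining property of the $\Hom$-order this gives $M_{x_{j+1}} \preceq_{\D([\bm{i}])} M_{x_j}$, and transitivity yields $N \preceq_{\D([\bm{i}])} M$. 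Next, by fact (iii) every generating relation of $\preceq_{\D([\bm{i}])}$ is also a generating relation of $\preceq_{\R([\bm{i}])}$, so the same chains witness $N \preceq_{\D([\bm{i}])} M \implies N \preceq_{\R([\bm{i}])} M$. To close the cycle I would show that $N \preceq_{\R([\bm{i}])} M$ produces a path: unwinding the transitive closure, this relation provides a chain $M = L_0, L_1, \dotsb, L_m = N$ with $\Hom_{\R([\bm{i}])}(L_j, L_{j+1}) \neq 0$ for each $j$, and Lemma \ref{lemma:morphisms in the repetition category follow paths} gives a path from $c_{[\bm{i}]}(L_j)$ to $c_{[\bm{i}]}(L_{j+1})$; concatenating these produces a path from $c_{[\bm{i}]}(M)$ to $c_{[\bm{i}]}(N)$. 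The three implications together prove the stated equivalences and, in particular, that $\preceq_{\R([\bm{i}])}$ and $\preceq_{\D([\bm{i}])}$ coincide.

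Finally, to see that the common relation is a partial order, reflexivity and transitivity are immediate from the construction, so only antisymmetry remains. If $N \preceq M$ and $M \preceq N$, the equivalence just proved gives a path from $c_{[\bm{i}]}(M)$ to $c_{[\bm{i}]}(N)$ and a path back; since every arrow of $\widehat{\Upsilon}_{[\bm{i}]}$ goes from $\widehat{\beta}^{\bm{i}}_k$ to $\widehat{\beta}^{\bm{i}}_l$ with $l < k$, the quiver $\widehat{\Upsilon}_{[\bm{i}]}$ is acyclic, which forces $c_{[\bm{i}]}(M) = c_{[\bm{i}]}(N)$ and hence $M \cong N$ because $c_{[\bm{i}]}$ is a bijection. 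The main subtlety to watch is consistently tracking the directions of arrows, of morphisms, and of the generating relation of the $\Hom$-order; the one genuinely non-formal input is that the arrows of $\widehat{\Upsilon}_{[\bm{i}]}$ survive the passage to the quotient $\D([\bm{i}])$, and this is exactly what the discussion preceding the proposition guarantees via reflection into a commutation class where both endpoints lie in degree zero.
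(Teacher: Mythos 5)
Your proof is correct and follows essentially the same route as the paper, which simply adapts the argument of Proposition \ref{prop:categorical order agrees with combinatorial order} with Lemma \ref{lemma:morphisms in the repetition category follow paths} in place of Lemma \ref{lemma:not relatable in combinatorial AR quiver implies zero Hom}; your only structural variation is organizing the two equivalences as a single cycle of three implications (using that $\D([\bm{i}])$ is a quotient of $\R([\bm{i}])$ to pass from one $\Hom$-order to the other), together with an explicit acyclicity argument for antisymmetry, both of which are fine.
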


\begin{proof}
The reasoning in the proof of Proposition \ref{prop:categorical order agrees with combinatorial order} works here after replacing Lemma \ref{lemma:not relatable in combinatorial AR quiver implies zero Hom} by Lemma \ref{lemma:morphisms in the repetition category follow paths}.
\end{proof}

After adapting the proofs of Lemma \ref{lemma:arrows are irreducible} and Theorem \ref{thm:combinatorial AR quiver is obtained from Gabriel quiver}, we obtain the following result.

\begin{thm}
Let $[\bm{i}]$ be a commutation class of reduced words for the longest element $w_0$. The combinatorial repetition quiver $\widehat{\Upsilon}_{[\bm{i}]}$ is isomorphic to the quiver obtained from the Gabriel quiver $\Gamma(\D([\bm{i}]))$ by removing all superfluous arrows. If $\Delta \neq \mathsf{A}_1$, the same result holds if we replace $\D([\bm{i}])$ by $\R([\bm{i}])$.
\end{thm}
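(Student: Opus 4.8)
The plan is to run, in the setting of $\widehat{\Upsilon}_{[\bm{i}]}$ and $\D([\bm{i}])$ (resp.\ $\R([\bm{i}])$), the same two ingredients that prove Theorem \ref{thm:combinatorial AR quiver is obtained from Gabriel quiver}: an irreducibility lemma analogous to Lemma \ref{lemma:arrows are irreducible}, followed by the extraction of the Hasse quiver from the Gabriel quiver. The combinatorial inputs are already available: the discussion preceding the statement shows that every arrow of $\widehat{\Upsilon}_{[\bm{i}]}$ produces a one-dimensional $\Hom$-space, hence an essentially unique nonzero morphism, between the corresponding indecomposables of $\R([\bm{i}])$ and of $\D([\bm{i}])$, and the preceding proposition identifies the $\Hom$-orders of both categories with the existence of a path in $\widehat{\Upsilon}_{[\bm{i}]}$. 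I would first dispose of $\Delta=\mathsf{A}_1$ for $\D([\bm{i}])$ separately: there $\widehat{\Upsilon}_{[\bm{i}]}$ has no arrows (no vertex is adjacent to itself), and since the tilting functors are exactly the powers of $T_1^{\pm1}$, a direct check gives $\Hom_{\D([\bm{i}])}(\Sigma^kS_1,\Sigma^lS_1)=0$ for $k\neq l$, so $\Gamma(\D([\bm{i}]))$ has no arrows either. From here on I assume $\Delta\neq\mathsf{A}_1$, so that the $\Hom$-order proposition applies uniformly to both categories.

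Next I would record that $\widehat{\Upsilon}_{[\bm{i}]}$ is acyclic (its arrows strictly decrease the defining index, which is also implicit in the preceding proposition asserting that the path relation is a partial order) and, crucially, that it has no superfluous arrows; equivalently, every arrow is a cover relation for the path order. Given an arrow $x\to y$, a suitable sequence of combinatorial reflection functors carries it, through a residue-preserving isomorphism $\widehat{\Upsilon}_{[\bm{i}]}\to\widehat{\Upsilon}_{[\bm{j}]}$, to an arrow $x'\to y'$ lying in the finite combinatorial AR quiver $\Upsilon_{[\bm{j}]}$; any path of length $\geq 2$ from $x'$ to $y'$ would, by convexity of $\Upsilon_{[\bm{j}]}$ in $\widehat{\Upsilon}_{[\bm{j}]}$, be entirely contained in $\Upsilon_{[\bm{j}]}$, contradicting that $x'\to y'$ is a cover relation of the Hasse quiver $\Upsilon_{[\bm{j}]}$ (Theorem \ref{thm:combinatorial AR quiver realizes partial order}).

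The core step is the analog of Lemma \ref{lemma:arrows are irreducible}: a nonzero morphism $\varphi\colon M\to N$ between indecomposables joined by an arrow $c_{[\bm{i}]}(M)\to c_{[\bm{i}]}(N)$ is irreducible in $\D([\bm{i}])$ (resp.\ $\R([\bm{i}])$). Given a factorization $\varphi=\psi_2\psi_1$ through an object $X$, I would choose an indecomposable summand $Y$ of $X$ for which both induced legs are nonzero; the $\Hom$-order proposition then yields paths $c_{[\bm{i}]}(M)\to c_{[\bm{i}]}(Y)$ and $c_{[\bm{i}]}(Y)\to c_{[\bm{i}]}(N)$, whose concatenation joins $c_{[\bm{i}]}(M)$ to $c_{[\bm{i}]}(N)$. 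Since that arrow is a cover relation, $c_{[\bm{i}]}(Y)\in\{c_{[\bm{i}]}(M),c_{[\bm{i}]}(N)\}$, so $Y\cong M$ or $Y\cong N$; because every indecomposable of $\R([\bm{i}])$ is a shift of an indecomposable of $\C([\bm{i}])$ and thus has endomorphism ring $K$, a property that descends to the quotient $\D([\bm{i}])$ (where indecomposables remain nonzero, as noted after its construction), the relevant leg is an isomorphism, making $\psi_1$ a section or $\psi_2$ a retraction. As $M\not\cong N$, the morphism $\varphi$ is neither, hence irreducible.

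With these in hand the theorem assembles exactly as Theorem \ref{thm:combinatorial AR quiver is obtained from Gabriel quiver}: by irreducibility and one-dimensionality, $\widehat{\Upsilon}_{[\bm{i}]}$ embeds as a subquiver of $\Gamma(\D([\bm{i}]))$ on all vertices, and being cover-relation only it carries no superfluous arrows; conversely, for a non-superfluous arrow $M\to N$ of $\Gamma(\D([\bm{i}]))$ one has $M\not\cong N$ and a path $c_{[\bm{i}]}(M)\to c_{[\bm{i}]}(N)$, which must have length one, since a longer path would lift, arrow by arrow through the irreducibility step, to a path of length $\geq 2$ in $\Gamma(\D([\bm{i}]))$ from $M$ to $N$, contradicting non-superfluousness; one-dimensionality of the $\Hom$-space then forces the resulting arrow of $\widehat{\Upsilon}_{[\bm{i}]}$ to give back the prescribed one. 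The same argument applies verbatim to $\R([\bm{i}])$ once $\Delta\neq\mathsf{A}_1$. The step I expect to demand the most care is the irreducibility lemma: unlike $\C([\bm{i}])$ it lives in an infinite category (and, for $\D([\bm{i}])$, in a mere quotient), so one cannot invoke a finite Hasse quiver directly and must instead transport the cover-relation property from $\Upsilon_{[\bm{j}]}$ via reflection functors and convexity, while confirming that endomorphism rings stay equal to $K$ and the relevant $\Hom$-spaces stay one-dimensional after shifting and after passing to $\D([\bm{i}])$.
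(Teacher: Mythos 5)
Your proposal is correct and follows exactly the route the paper intends: its proof of this theorem is the one-line instruction to adapt Lemma \ref{lemma:arrows are irreducible} and Theorem \ref{thm:combinatorial AR quiver is obtained from Gabriel quiver}, and your argument is precisely that adaptation, with the right supplementary checks (the separate treatment of $\mathsf{A}_1$ for $\D([\bm{i}])$, the transport of the cover-relation property into some $\Upsilon_{[\bm{j}]}$ via reflection functors and convexity, and the persistence of one-dimensional endomorphism rings and $\Hom$-spaces after shifting and after passing to the quotient). No gaps.
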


\section{Meshes and distinguished triangles}\label{section:meshes and distinguished triangles}

We will now investigate the relationship between the triangulated structure of $\pvd(\Pi_Q)$ and the categories $\R([\bm{i}])$ and $\D([\bm{i}])$ for a commutation class $[\bm{i}]$ of reduced words for the longest element $w_0$. The main result of this section is Theorem \ref{thm:mesh gives rise to triangles}, which states that certain distinguished triangles with corners in $\R([\bm{i}])$ can be obtained by looking at the meshes of $\widehat{\Upsilon}_{[\bm{i}]}$. As a result, we give an alternative proof for the $\mathfrak{g}$-additive property of \cite{FujitaOh} (Theorem \ref{thm:g-additive property}) and generalize it to any commutation class.

As in Section \ref{subsection:meshes}, we assume that $\Delta \neq \mathsf{A}_1$.

\begin{lemma}\label{lemma:cone of arrows is in R}
Let $[\bm{i}]$ be a commutation class of reduced words for $w_0$. Suppose $\varphi: M' \longrightarrow M$ is an irreducible morphism of $\R([\bm{i}])$ corresponding to an arrow in $\widehat{\Upsilon}_{[\bm{i}]}$. Then, the cocone of $\varphi$ in $\pvd(\Pi_Q)$ is indecomposable and belongs to $\R([\bm{i}])$.
\end{lemma}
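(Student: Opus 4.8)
The plan is to reduce, by means of tilting functors, to the case where $\varphi$ is (up to a nonzero scalar) the connecting morphism $T_i(S_j) \to S_i$ of a spherical-twist triangle, and then to read off the cocone directly. The general facts enabling this reduction are that tilting functors are triangulated equivalences of $\pvd(\Pi_Q)$ — hence they send distinguished triangles to distinguished triangles and preserve both cocones and indecomposability — and that, by Proposition \ref{prop:tilting for the repetitive category, general case}, they restrict to bijections $\R([\bm{i}]) \to \R([\bm{j}])$ compatible with the coordinate functions. Consequently it suffices to prove the statement after replacing $[\bm{i}]$ and $\varphi$ by their images under any tilting functor.

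First I would normalize the arrow. Applying combinatorial reflection functors as in Section \ref{section:morphisms}, I may assume the arrow underlying $\varphi$ already lies in $\Upsilon_{[\bm{i}]}$, so that both $M$ and $M'$ belong to $\C([\bm{i}])$; write $\beta \to \alpha$ for this arrow, with residues $j = \res^{[\bm{i}]}(\beta)$ and $i = \res^{[\bm{i}]}(\alpha)$, where $i \sim j$. Since $\beta \to \alpha$ is a Hasse edge of $\preceq_{[\bm{i}]}$ (Theorem \ref{thm:combinatorial AR quiver realizes partial order}), the covering relation $\alpha \prec \beta$ admits a linear extension in which $\alpha$ is immediately followed by $\beta$; by Theorem \ref{thm:compatible reading gives reduced word} this yields a representative $(i_1,\dots,i_N) \in [\bm{i}]$ with $\alpha = \beta^{\bm{i}}_m$ and $\beta = \beta^{\bm{i}}_{m+1}$ consecutive. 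Reflecting successively at the sources $i_1,\dots,i_{m-1}$ and applying the corresponding tilting functor, I pass to a commutation class with representative $(i,j,\dots)$ in which $\alpha$ and $\beta$ occupy positions $1$ and $2$. After this reduction $M \cong S_i$, $M' \cong T_i(S_j)$, and $\varphi$ becomes a nonzero morphism $T_i(S_j) \to S_i$.

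Finally I would compute the cocone. Because $i \sim j$, the triangle (\ref{eq:triangle for spherical twist}) defining $T_i(S_j)$ specializes to $\Sigma^{-1}S_i \to S_j \to T_i(S_j) \xrightarrow{g} S_i$, as recorded at the start of Section \ref{section:morphisms}; rotating it exhibits $S_j$ as the cocone of $g$. Since $\Hom_{\pvd(\Pi_Q)}(T_i(S_j), S_i)$ is one-dimensional and spanned by $g$, the normalized $\varphi$ equals $\lambda g$ for some $\lambda \neq 0$, and scaling the third map of a triangle by a nonzero scalar gives an isomorphic triangle; hence the cocone of $\varphi$ is isomorphic to $S_j$. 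This object is indecomposable and, by Corollary \ref{cor:simples are in C([i])}, lies in $\C([\bm{i}]) \subseteq \R([\bm{i}])$. Transporting back along the tilting functors — which preserve indecomposability, cocones, and membership in the repetition category — shows that the cocone of the original $\varphi$ is indecomposable and belongs to $\R([\bm{i}])$. The main obstacle is the normalization in the second paragraph: the cocone formula is transparent only once $\varphi$ is brought to the form $T_i(S_j) \to S_i$, and achieving this requires simultaneously placing the target at position $1$ and the source at position $2$, which is exactly what the consecutive representative together with the prefix reflections provides.
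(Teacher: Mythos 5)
Your proposal is correct and follows essentially the same route as the paper's proof: reduce by a tilting functor to the case $M \cong S_{i_1}$, $M' \cong T_{i_1}(S_{i_2})$ with $i_1 \sim i_2$, identify $\varphi$ (up to scalar) with the connecting map of the spherical-twist triangle, read off the cocone $S_{i_2}$, and invoke Corollary \ref{cor:simples are in C([i])} before transporting back. The only difference is that you spell out the normalization step (consecutive positions via a compatible reading, then prefix reflections) that the paper compresses into "after applying a tilting functor".
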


\begin{proof}
Let $\bm{i} = (i_1,\dots,i_N)$ be a reduced word in $[\bm{i}]$. After applying a tilting functor, we can suppose that $M \cong M^{\bm{i}}_1 = S_{i_1}$, $M' \cong M^{\bm{i}}_2 = T_{i_1}(S_{i_2})$ and $i_1 \sim i_2$. As argued in Section \ref{section:morphisms}, the distinguished triangle (\ref{eq:triangle for spherical twist}) in the definition of the spherical twist $T_{i_1}(S_{i_2})$ becomes
\[
\begin{tikzcd}
    {\Sigma^{-1}S_{i_1}} & {S_{i_2}} & {T_{i_1}(S_{i_2})} & {S_{i_1}}.
	\arrow[from=1-1, to=1-2]
	\arrow[from=1-2, to=1-3]
	\arrow[from=1-3, to=1-4]
\end{tikzcd}
\]
Up to multiplication by a nonzero scalar, the map $T_{i_1}(S_{i_2}) \longrightarrow S_{i_1}$ in the triangle above is $\varphi$. We conclude that the cocone of $\varphi$ in $\pvd(\Pi_Q)$ is isomorphic to $S_{i_2}$, which is indeed in $\R([\bm{i}])$ by Corollary \ref{cor:simples are in C([i])}.
\end{proof}

\begin{remark}\label{rmk:cone can belong to different commutations classes}
With the notation above, suppose we have $M \cong M^{\bm{i}}_k$ and $M' \cong M^{\bm{i}}_{k+1}$ for some $1 \leq k < N$. By keeping track of the tilting functor used to reduce the proof above, one can easily check that the cocone of any nonzero morphism $M' \longrightarrow M$ is isomorphic to $X = T_{i_1}T_{i_2}\dotsb T_{i_{k-1}}(S_{i_{k+1}})$. In particular, $X \in \R([\bm{j}])$ for any reduced word $\bm{j}$ which starts with the sequence $(i_1,\dots,i_{k-1})$ since $T_{i_1}T_{i_2}\dotsb T_{i_{k-1}}$ is a tilting functor from $\R(r_{i_{k-1}}\dotsb r_{i_1}[\bm{j}])$ to $\R([\bm{j}])$ and $S_{i_{k+1}} \in \R(r_{i_{k-1}}\dotsb r_{i_1}[\bm{j}])$.
\end{remark}

\begin{lemma}\label{lemma:triangle from braid move}
Let $\bm{i} = (i_1,\dots,i_N)$ be a reduced word for $w_0$ and suppose that there is $1 < k < N$ such that $i_{k-1} = i_{k+1} \sim i_k$. Then there is a nonsplit distinguished triangle
\[
\begin{tikzcd}
	{M^{\bm{i}}_{k+1}} & {M^{\bm{i}}_k} & {M^{\bm{i}}_{k-1}} & {\Sigma M^{\bm{i}}_{k+1}}
	\arrow[from=1-1, to=1-2]
	\arrow[from=1-2, to=1-3]
	\arrow[from=1-3, to=1-4]
\end{tikzcd}
\]
in $\pvd(\Pi_Q)$. 
\end{lemma}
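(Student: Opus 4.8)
The plan is to strip off equivalences until the statement reduces to the defining triangle of a single spherical twist. Write $j = i_{k-1} = i_{k+1}$ and $i = i_k$, so that $i \sim j$, and set $T = T_{i_1}\dotsb T_{i_{k-2}}$. Unwinding the definition of the $M^{\bm{i}}_l$, the three relevant objects are $M^{\bm{i}}_{k-1} = T(S_j)$, $M^{\bm{i}}_k = T(T_j(S_i))$ and $M^{\bm{i}}_{k+1} = T(T_jT_i(S_j))$. Since $T$ and $T_j$ are triangulated auto-equivalences, producing the desired triangle is equivalent, after applying $T^{-1}$ and then $T_j^{-1}$, to producing a distinguished triangle
\[
T_i(S_j) \longrightarrow S_i \longrightarrow T_j^{-1}(S_j) \longrightarrow \Sigma T_i(S_j),
\]
and by Proposition \ref{prop:properties of spherical twists}(1) we have $T_j^{-1}(S_j) \cong \Sigma S_j$.

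First I would write down the defining triangle (\ref{eq:triangle for spherical twist}) of the spherical twist $T_i$ evaluated at $X = S_j$. Because $i \sim j$, the complex $\RHom_{\Pi_Q}(S_i, S_j)$ is one-dimensional and concentrated in cohomological degree one, exactly as recalled at the beginning of Section \ref{section:morphisms}; hence $\RHom_{\Pi_Q}(S_i,S_j) \otimes_K S_i \cong \Sigma^{-1}S_i$ and the triangle collapses to
\[
\Sigma^{-1}S_i \longrightarrow S_j \longrightarrow T_i(S_j) \longrightarrow S_i.
\]
Rotating this triangle twice then yields precisely
\[
T_i(S_j) \longrightarrow S_i \longrightarrow \Sigma S_j \longrightarrow \Sigma T_i(S_j),
\]
which is the target triangle above after the identification $\Sigma S_j \cong T_j^{-1}(S_j)$. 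Applying $T_j$ (using $T_j(\Sigma S_j) \cong \Sigma T_j(S_j) \cong S_j$) and then $T$ transports this back to a distinguished triangle with corners $M^{\bm{i}}_{k+1}$, $M^{\bm{i}}_k$ and $M^{\bm{i}}_{k-1}$, as desired.

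For the nonsplit assertion, I would argue at the level of the base triangle: a distinguished triangle $A \to B \to C \to \Sigma A$ splits only if $B \cong A \oplus C$, but the central term $S_i$ is an indecomposable (simple) object while both $T_i(S_j)$ and $\Sigma S_j$ are nonzero, so the base triangle does not split. Since rotations of a nonsplit triangle and images of a nonsplit triangle under a triangulated equivalence remain nonsplit, the resulting triangle is nonsplit as well. I do not anticipate a genuine obstacle here; the only delicate point is correctly tracking the suspension shifts through the two rotations and through the application of $T_j$ to $\Sigma S_j$, which is why I would carry out the reduction via $T^{-1}$ and $T_j^{-1}$ explicitly rather than guessing the shifts in the final triangle.
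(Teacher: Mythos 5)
Your argument is correct and follows essentially the same route as the paper: both reduce, via the inverse of the composite of spherical twist equivalences, to the defining triangle of the twist $T_{i_k}$ evaluated at the adjacent simple and then rotate. The only (harmless) difference is that you apply one further inverse twist $T_j^{-1}$, which lets you bypass the identification $T_{i_{k-1}}T_{i_k}(S_{i_{k-1}}) \cong S_{i_k}$ that the paper uses, and you make the nonsplitness explicit where the paper leaves it tacit.
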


\begin{proof}
By applying a tilting functor, we may assume $k = 2$. In this case, $M^{\bm{i}}_3 = T_{i_1}T_{i_2}(S_{i_1}) \cong S_{i_2}$, $M^{\bm{i}}_2 = T_{i_1}(S_{i_2})$, and $M^{\bm{i}}_1 = S_{i_1}$. Therefore, we obtain the triangle in the statement by rotating the triangle in the proof of Lemma \ref{lemma:cone of arrows is in R}. 
\end{proof}

\begin{lemma}\label{lemma:triangles descend to D if concentrated in degree zero}
Let $[\bm{i}]$ be a commutation class of reduced words for $w_0$. Suppose that there is a distinguished triangle
\[
\begin{tikzcd}
    {M} & {M'} & {M''} & {\Sigma M}
	\arrow["\varphi", from=1-1, to=1-2]
	\arrow["\varphi'", from=1-2, to=1-3]
	\arrow["\varphi''", from=1-3, to=1-4]
\end{tikzcd}
\]
in $\pvd(\Pi_Q)$ such that $M,M',M'' \in \ind([\bm{i}])$. If the three maps above are nonzero, then they do not belong to the ideal $\I_{[\bm{i}]}$ of $\R([\bm{i}])$.
\end{lemma}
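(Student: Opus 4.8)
The plan is to treat the three maps in two groups: the two ``internal'' maps $\varphi\colon M \to M'$ and $\varphi'\colon M' \to M''$, whose source and target already lie in $\C([\bm{i}])$, and the ``connecting'' map $\varphi''\colon M'' \to \Sigma M$, which is the only real difficulty.

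For $\varphi$ and $\varphi'$ the argument is immediate from the definition of the ideal. Since $M, M', M''$ all belong to $\C([\bm{i}])$, they are concentrated in degree zero, so the identity tilting functor $\id\colon \R([\bm{i}]) \to \R([\bm{i}])$ already witnesses $\I_{[\bm{i}]}(M,M') = \I_{[\bm{i}]}(M',M'') = 0$. As $\varphi$ and $\varphi'$ are nonzero, they cannot lie in these zero ideals.

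For $\varphi''$ I would first reduce to showing $\I_{[\bm{i}]}(M'',\Sigma M) = 0$, i.e. to producing a tilting functor $T$ after which both $M''$ and $\Sigma M$ become concentrated in degree zero. Fix a representative $\bm{i} = (i_1,\dots,i_N)$ and write $M = M^{\bm{i}}_k$ and $M'' = M^{\bm{i}}_{k''}$, with associated positive roots $\alpha = [M]$, $\alpha' = [M']$, $\alpha'' = [M'']$. Taking classes in $K_0(\pvd(\Pi_Q))$ in the triangle gives $\alpha' = \alpha + \alpha''$, so $\alpha \neq \alpha''$; combined with Proposition \ref{prop:categorical order agrees with combinatorial order} applied to the nonzero maps $\varphi,\varphi'$ (which yields $\alpha'' \preceq_{[\bm{i}]} \alpha' \preceq_{[\bm{i}]} \alpha$), this forces the strict inequality of positions $k'' < k$. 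Next I would record that, by the $\Sigma$-action on coordinates together with the convention $\widehat{\beta}^{\bm{i}}_{k-N} = (-\alpha, 1)$, the object $\Sigma M$ is precisely the indecomposable of $\R([\bm{i}])$ sitting at ``position $k-N$'' of the extended word $\widehat{\bm{i}}$.

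The heart of the argument is then a window-shift bookkeeping. Because shifting the length-$N$ window $[m+1, m+N]$ of $\widehat{\bm{i}}$ by $\pm 1$ is exactly a source (resp. sink) reflection, every such window gives a commutation class $[\bm{j}]$ in the $r$-cluster point of $[\bm{i}]$ together with a tilting functor $T\colon \R([\bm{i}]) \to \R([\bm{j}])$; by Proposition \ref{prop:tilting for the repetitive category, general case} and the explicit description of the residue-preserving isomorphism at the end of Section \ref{subsection:combinatorial reflection functors}, the indecomposable at position $p$ is sent by $T$ to a degree-zero object precisely when $p \in [m+1, m+N]$. A short computation shows that the positions $k''$ and $k-N$ lie in a common window exactly when $k'' < k$, which we have; choosing any integer $m$ with $k'' - N \le m \le k - N - 1$ therefore makes both $T(M'')$ and $T(\Sigma M)$ concentrated in degree zero, giving $\I_{[\bm{i}]}(M'',\Sigma M) = 0$ and hence $\varphi'' \notin \I_{[\bm{i}]}$. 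The main obstacle is exactly this last step: one must check carefully that the window $[m+1,m+N]$ corresponds to an honest tilting functor and that ``being in the window'' translates faithfully into ``concentrated in degree zero,'' which is where the coordinate-compatibility of tilting functors and the coordinate computation for $\Sigma M$ do the real work.
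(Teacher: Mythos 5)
Your proof is correct and follows essentially the same route as the paper's: the maps $\varphi$ and $\varphi'$ are dispatched directly from the definition of $\I_{[\bm{i}]}$, and for the connecting map one shows that $M''$ strictly precedes $M$ in the combinatorial order and then produces a source-sequence tilting functor (your window shift is exactly such a sequence) after which $M''$ and $\Sigma M$ both become concentrated in degree zero. The only cosmetic difference is how the strict precedence is obtained: the paper uses Lemma \ref{lemma:morphisms in the repetition category follow paths} together with the absence of nonsplit self-extensions, whereas you use the Grothendieck-group relation $[M'] = [M] + [M'']$ combined with Proposition \ref{prop:categorical order agrees with combinatorial order}; both are valid.
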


\begin{proof}
By the definition of $\I_{[\bm{i}]}$ and our hypotheses, we have $\I_{[\bm{i}]}(M,M') = \I_{[\bm{i}]}(M',M'') = 0$, so $\varphi$ and $\varphi'$ do not belong to $\I_{[\bm{i}]}$. By Lemma \ref{lemma:morphisms in the repetition category follow paths}, there is a path in $\Upsilon_{[\bm{i}]}$ from $c_{[\bm{i}]}(M)$ to $c_{[\bm{i}]}(M'')$. Observe that these coordinates are distinct since no indecomposable object in $\R([\bm{i}])$ has a nonsplit self-extension. Consequently, there is a source sequence of $[\bm{i}]$ such that the induced tilting functor $T: \R([\bm{i}]) \longrightarrow \R([\bm{j}])$ satisfies $\Sigma^{-1}T(M''),T(M) \in \C([\bm{j}])$. We conclude that $\I_{[\bm{i}]}(M'',\Sigma M) = 0$, finishing the proof.
\end{proof}

Recall from Section \ref{subsection:meshes} the notion of a mesh and the related definitions. For $M \in \ind(\R([\bm{i}]))$, denote by $\M_{[\bm{i}]}(M)$ the mesh of $\widehat{\Upsilon}_{[\bm{i}]}$ at the vertex $x = c_{[\bm{i}]}(M)$. Let $s_{[\bm{i}]}(M) \in \ind(\R([\bm{i}]))$ be the indecomposable object whose coordinate is $s_{[\bm{i}]}(x)$. We also define the \emph{set of abutters $V_{[\bm{i}]}(M)$ of $M$} to be the set of all $X \in \ind(\R([\bm{i}]))$ whose coordinate is an abutter of $x$, that is, $c_{[\bm{i}]}(X) \in V_{[\bm{i}]}(x)$. An ordering $X_1,\dots,X_t$ of the elements in $V_{[\bm{i}]}(M)$ is \emph{anticompatible} if $(c_{[\bm{i}]}(X_t),\dots,c_{[\bm{i}]}(X_2),c_{[\bm{i}]}(X_1))$ is a compatible reading of $V_{[\bm{i}]}(x) \subset \widehat{\Upsilon}_{[\bm{i}]}$.

\begin{thm}\label{thm:mesh gives rise to triangles}
Let $[\bm{i}]$ be a commutation class of reduced words for $w_0$ and take an indecomposable object $M \in \R([\bm{i}])$. Let $X_1,\dots,X_t$ be an anticompatible ordering of the set of abutters $V_{[\bm{i}]}(M)$ of $M$. Then, there are indecomposable objects $Y_1 = s_{[\bm{i}]}(M),Y_2,\dots,Y_t,Y_{t+1} = \Sigma^{-1}M$ in $\R([\bm{i}])$ and distinguished triangles in $\pvd(\Pi_Q)$ of the form
\[
\begin{tikzcd}
    {Y_{k+1}} & {Y_k} & {X_k} & {\Sigma Y_{k+1}}
	\arrow[from=1-1, to=1-2]
	\arrow[from=1-2, to=1-3]
	\arrow[from=1-3, to=1-4]
\end{tikzcd}
\]
for all $1 \leq k \leq t$. Moreover, none of the morphisms in the triangles above belongs to the ideal $\I_{[\bm{i}]}$ of $\R([\bm{i}])$.
\end{thm}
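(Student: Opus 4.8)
The plan is to induct on the number $t$ of abutters, peeling off one abutter per triangle and starting from the source $Y_1 = s_{[\bm{i}]}(M)$ of the mesh. First I normalize the situation. A tilting functor is the restriction of a triangulated auto-equivalence $T_i^{\pm 1}$ of $\pvd(\Pi_Q)$, so it carries distinguished triangles to distinguished triangles; by Proposition \ref{prop:I is an ideal of the repetition category} it preserves the ideal $\I_{[\bm{i}]}$, and by Proposition \ref{prop:tilting for the repetitive category, general case} it is compatible with the coordinate functions, hence with meshes, abutters and the operation $s_{[\bm{i}]}$. Thus it suffices to prove the theorem after applying a tilting functor. Using Proposition \ref{prop:tilting for the repetitive category, general case} and Lemma \ref{lemma:mesh is contained in combinatorial AR quiver}, I reflect at sources and sinks to reduce to a representative $\bm{i} = (i_1,\dots,i_N)$ in which the sink of the mesh is $\widehat\beta^{\bm{i}}_1 \cong S_{i_1}$, so that the whole mesh lies in $\Upsilon_{[\bm{i}]}$ with all its vertices at positions between $1$ and $1^+ \le N$.

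\textbf{Base case and peeling.} The base case $t=1$ is the single-abutter mesh: here the desired triangle $Y_2 \to Y_1 \to X_1 \to \Sigma Y_2$ is the rotation of the distinguished triangle produced by Lemma \ref{lemma:triangle from braid move} (equivalently, the defining triangle $(\ast)$ of the spherical twist that builds $X_1$), and one reads off $Y_2 = \Sigma^{-1}M$ directly. For the inductive step $t \ge 2$ I peel the abutter $X_1$ nearest the source. By Lemma \ref{lemma:mesh gives a reduced word ready to apply braid move} there is a representative $\bm{j} \in [\bm{i}]$ realizing the braid-ready word ending at the source, and applying Lemma \ref{lemma:triangle from braid move} to the three consecutive letters adjacent to the source yields the first triangle
\[
Y_2 \longrightarrow Y_1 \longrightarrow X_1 \longrightarrow \Sigma Y_2,
\]
with $X_1 = M^{\bm{j}}_{T-1}$, $Y_1 = M^{\bm{j}}_{T}$ and the explicit identification $Y_2 = M^{\bm{j}}_{T+1}$; since $\bm{j} \in [\bm{i}]$ this exhibits $Y_2$ as an object of $\R([\bm{i}])$ (one may instead realize $Y_2$ as the cocone of the irreducible map $Y_1 \to X_1$ and invoke Lemma \ref{lemma:cone of arrows is in R}). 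The inductive hypothesis, applied to the mesh of $M$ obtained after this move, then supplies the remaining triangles.

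\textbf{The main obstacle.} The heart of the inductive step is the combinatorial claim that a single braid move at the source (equivalently, the passage to the cocone $Y_2$) \emph{collapses the mesh by exactly one abutter}: I must verify that $Y_2$ becomes the new source $s(M)$ of a mesh whose abutter set is precisely $\{X_2,\dots,X_t\}$, and that the iteration terminates with $Y_{t+1} = \Sigma^{-1}M$. This is a statement purely about $\widehat\Upsilon_{[\bm{i}]}$: one tracks how the index $1^+$ and the set of abutters transform, using the definition of the mesh as a convex hull, the segment property of $\widehat\Upsilon_{[\bm{i}]}$, and the correspondence (Theorem \ref{thm:compatible reading gives reduced word}) between compatible readings and representatives of the commutation class. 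I expect this bookkeeping to be the most delicate part, in particular checking that the sink-adjacent abutter is left untouched while the source-adjacent one disappears.

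\textbf{Avoiding the ideal.} Finally, for the claim that no morphism in any of the triangles lies in $\I_{[\bm{i}]}$, I observe that each triangle is, by construction, the image under a tilting functor of a triangle all of whose corners are concentrated in degree zero. Indeed, the reduction inside Lemma \ref{lemma:triangle from braid move} (respectively Lemma \ref{lemma:cone of arrows is in R}) produces a class $[\bm{i}']$ in the $r$-cluster point of $[\bm{i}]$ in which the three objects $Y_{k+1},Y_k,X_k$ become the degree-zero objects $S_b,\, T_a(S_b),\, S_a$; for the final step this uses the harmless rotation to $Y_t \to X_t \to M \to \Sigma Y_t$, whose corners $Y_t, X_t, M$ are then simultaneously in degree zero. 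Applying Lemma \ref{lemma:triangles descend to D if concentrated in degree zero} in $[\bm{i}']$ shows the three maps avoid $\I_{[\bm{i}']}$, and since a tilting functor sends $\I_{[\bm{i}]}$ to $\I_{[\bm{i}']}$ while $\I_{[\bm{i}]}$ is stable under $\Sigma$ (Proposition \ref{prop:I is an ideal of the repetition category}), the original morphisms avoid $\I_{[\bm{i}]}$.
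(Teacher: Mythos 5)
Your overall strategy is the paper's: normalize by a tilting functor so that the mesh lies in $\Upsilon_{[\bm{i}]}$ with $M$ its sink, peel off the source-adjacent abutter $X_1$ via the cocone of the irreducible map $Y_1\to X_1$ (Lemma \ref{lemma:cone of arrows is in R}) identified through Lemmas \ref{lemma:mesh gives a reduced word ready to apply braid move} and \ref{lemma:triangle from braid move}, iterate, and control the ideal with Lemma \ref{lemma:triangles descend to D if concentrated in degree zero}. However, the iteration contains a genuine gap that you yourself flag but do not close, and it is compounded by a misstatement. First, the word $\bm{j}$ produced by Lemma \ref{lemma:mesh gives a reduced word ready to apply braid move} is \emph{not} in $[\bm{i}]$: it is an extension of $(i_1,\dots,i_{k-1},i_k,i_{k-1})$ to a reduced word for $w_0$, and after performing the braid move one lands in a commutation class $[\bm{i'}]$ that is in general neither equal to $[\bm{i}]$ nor in its $r$-cluster point (braid moves are not reflection functors). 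So the sentence ``since $\bm{j}\in[\bm{i}]$ this exhibits $Y_2$ as an object of $\R([\bm{i}])$'' is false as stated; your parenthetical fallback via Lemma \ref{lemma:cone of arrows is in R} does rescue the membership $Y_2\in\R([\bm{i}])$, and the identification $Y_2\cong M^{\bm{j}}_{k+1}$ still gives concentration in degree zero, but the same issue returns at every later step and there you have no fallback.

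Concretely: your inductive hypothesis is applied to the mesh of $M$ in the \emph{new} class $[\bm{i'}]$, so it only delivers $Y_3,\dots,Y_t\in\R([\bm{i'}])$ and morphisms avoiding $\I_{[\bm{i'}]}$, whereas the theorem demands $\R([\bm{i}])$ and $\I_{[\bm{i}]}$. Because $[\bm{i}]$ and $[\bm{i'}]$ are not related by a tilting functor, the transfer in your last paragraph (``a tilting functor sends $\I_{[\bm{i}]}$ to $\I_{[\bm{i'}]}$'') does not apply to this passage. The paper closes this by observing (Remark \ref{rmk:cone can belong to different commutations classes}) that the cocone $T_{i_1}\dotsb T_{i_{k-1}}(S_{i_{k+1}})$ lies in $\R([\bm{j}])$ for \emph{every} reduced word $\bm{j}$ beginning with $(i_1,\dots,i_{k-1})$ --- in particular for representatives of both $[\bm{i}]$ and $[\bm{i'}]$ --- and then applies Lemma \ref{lemma:triangles descend to D if concentrated in degree zero} directly in $[\bm{i}]$ once all corners are known to be degree-zero objects of $\C([\bm{i}])$. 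Finally, the combinatorial claim you isolate as ``the main obstacle'' (that one braid move at the source deletes exactly the abutter $X_1$, leaves $X_2,\dots,X_t$ anti-compatibly ordered, and makes $Y_2$ the new source $s_{[\bm{i'}]}(M)$, terminating at $Y_{t+1}=\Sigma^{-1}M$ when the prefix becomes $(i_1,i_2,i_1)$) is indeed the load-bearing step; it is true and not hard, but a proof must actually carry out this verification rather than record it as an expectation.
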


\begin{proof}
Let $\M = \M_{[\bm{i}]}(M)$. By Lemma \ref{lemma:mesh is contained in combinatorial AR quiver}, we can apply a tilting functor and assume that $\M \subseteq \Upsilon_{[\bm{i}]}$. Moreover, since $\M$ is convex in $\widehat{\Upsilon}_{[\bm{i}]}$, we can also assume there is a representative $\bm{i} = (i_1,\dots,i_N)$ of $[\bm{i}]$ that starts with the residues $(i_1,\dots,i_k)$ of a compatible reading of the vertex set $\M_0$. By the hypothesis on the ordering of $V_{[\bm{i}]}(M)$, we can further assume that, if $c_{[\bm{i}]}(X_l) = \widehat{\beta}^{\bm{i}}_m$ and $c_{[\bm{i}]}(X_{l'}) = \widehat{\beta}^{\bm{i}}_{m'}$, then $l < l'$ implies $m > m'$. In particular, since $i_2 \sim i_1 = i_k \sim i_{k-1}$, we have $X_t \cong M^{\bm{i}}_2$ and $X_1 \cong M^{\bm{i}}_{k-1}$. Since $M^{\bm{i}}_k = s_{[\bm{i}]}(M) = Y_1$, we have an irreducible morphism $Y_1 \longrightarrow X_1$ corresponding to the arrow $\beta^{\bm{i}}_k \longrightarrow \beta^{\bm{i}}_{k-1}$ in $\Upsilon_{[\bm{i}]}$. By Lemma \ref{lemma:cone of arrows is in R}, it is part of a distinguished triangle
\[
\begin{tikzcd}
    {Y_2} & {Y_1} & {X_1} & {\Sigma Y_2}
	\arrow[from=1-1, to=1-2]
	\arrow[from=1-2, to=1-3]
	\arrow[from=1-3, to=1-4]
\end{tikzcd}
\]
where $Y_2 \in \R([\bm{i}])$ is indecomposable. By Lemma \ref{lemma:mesh gives a reduced word ready to apply braid move} (and assuming that $k > 3$), the sequence $(i_1,\dots,i_{k-1},i_k,i_{k-1})$ represents a reduced word, which we can extend to a reduced word $\bm{j}$ for $w_0$. By Lemma \ref{lemma:triangle from braid move}, we have $Y_2 \cong M^{\bm{j}}_{k+1}$. This implies that $Y_2$ is concentrated in degree $0$ and, by Lemma \ref{lemma:triangles descend to D if concentrated in degree zero}, the three morphisms in the triangle above do not belong to $\I_{[\bm{i}]}$. This argument completes the construction of the first triangle.

By applying a braid relation, we can transform $\bm{j}$ into a new reduced word $\bm{i'}$ that starts with the sequence $(i_1,\dots,i_{k-2},i_k,i_{k-1},i_k)$. We still have $M = S_{i_1} \in \C([\bm{i'}])$, and so, we can work with the mesh $\M' = \M_{[\bm{i'}]}(M)$. It is not hard to see that, as before, there is a reduced word in $[\bm{i'}]$ that starts with the residues of a compatible reading of $\M'_0$. Notice that $V_{[\bm{i'}]}(M) = V_{[\bm{i}]}(M) \setminus \{X_1\}$ and that $X_2,\dots,X_t$ is still an anticompatible ordering of $V_{[\bm{i'}]}(M)$. Additionally, we have
\[
s_{[\bm{i'}]}(M) = M^{\bm{i'}}_{k-1} = T_{i_1}\dotsb T_{i_{k-2}}(S_{i_k}) \cong T_{i_1}\dotsb T_{i_{k-2}}(T_{i_{k-1}}T_{i_k}(S_{i_{k-1}})) =  M^{\bm{j}}_{k+1} \cong Y_2.
\]
Therefore, we have essentially the same conditions as in the previous paragraph, and a similar argument gives an indecomposable object $Y_3 \in \C([\bm{i'}])$ and a distinguished triangle
\[
\begin{tikzcd}
    {Y_3} & {Y_2} & {X_2} & {\Sigma Y_3}.
	\arrow[from=1-1, to=1-2]
	\arrow[from=1-2, to=1-3]
	\arrow[from=1-3, to=1-4]
\end{tikzcd}
\]
Using Remark \ref{rmk:cone can belong to different commutations classes}, one can show that $Y_3$ also belongs to $\C([\bm{i}])$. Again, by Lemma \ref{lemma:triangles descend to D if concentrated in degree zero}, the three morphisms in this new triangle do not belong to $\I_{[\bm{i}]}$.

In this way, we can continue by finding new reduced words for $w_0$ for which the set of abutters of $M$ has one element less at each step. We obtain the first $t-1$ triangles from the statement along the way. At the end, we find a reduced word $\bm{i''}$ starting with the residues of a compatible reading of the mesh of $M$ such that $V_{[\bm{i''}]}(M) = \{X_t\}$ and $s_{[\bm{i''}]}(M) \cong Y_t$. In this case, $\bm{i''}$ has to start with the sequence $(i_1,i_2,i_1)$, otherwise $M$ would have more abutters. By Lemma \ref{lemma:triangle from braid move}, we get the distinguished triangle
\[
\begin{tikzcd}
    {Y_t} & {X_t} & {M} & {\Sigma Y_t},
	\arrow[from=1-1, to=1-2]
	\arrow[from=1-2, to=1-3]
	\arrow[from=1-3, to=1-4]
\end{tikzcd}
\]
which gives the final triangle of the statement after a rotation. Since the last three terms of this final triangle are concentrated in degree $0$, an analogous result to Lemma \ref{lemma:triangles descend to D if concentrated in degree zero} implies that its morphisms do not lie in $\I_{[\bm{i}]}$.
\end{proof}

\begin{example}
Let $\Q = (\mathsf{A}_5,\vee,\xi)$ be the Q-datum of type $\mathsf{B}_3$ from Example \ref{example:B3}. Let $M \in \C([\Q])$ be the indecomposable object with coordinate $(4,6)$. We highlight in the diagram below the mesh determined by $M$ and color in red its abutters.
\begin{center}
	\begin{tikzpicture}
	\node[scale=0.9] (a) at (0,0){
	\begin{tikzcd}[column sep={3em,between origins},row sep={1em}]
		{(\im \ \backslash \ p)} & {-4} & {-3} & {-2} & {-1} & 0 & 1 & 2 & 3 & 4 & 5 & 6 & 7 & 8 \\
		1 &&&&&&& {\begin{tikzpicture}[color={rgb,255:red,217;green,217;blue,217}]

			\node at (0,0) {$K$};
			\node at (0.3,0) {$K$};
			\node at (0.6,0) {$K$};
			\node at (0.9,0) {$K$};
			\node at (1.2,0) {$K$};
		
			\draw[->] (0.28,-0.15) -- (0,-0.15);
			\draw[->] (0.58,-0.15) -- (0.32,-0.15);
			\draw[->] (0.88,-0.15) -- (0.62,-0.15);
			\draw[->] (1.18,-0.15) -- (0.92,-0.15);
			
		\end{tikzpicture}} \\
		2 &&&&& {\begin{tikzpicture}[color={rgb,255:red,217;green,217;blue,217}]

			\node at (0,0) {$K$};
			\node at (0.3,0) {$K$};
			\node at (0.6,0) {$K$};
			\node at (0.9,0) {$K$};
			\node at (1.2,0) {$0$};
		
			\draw[->] (0.28,-0.15) -- (0,-0.15);
			\draw[->] (0.58,-0.15) -- (0.32,-0.15);
			\draw[->] (0.88,-0.15) -- (0.62,-0.15);
			
		\end{tikzpicture}} &&&& {\begin{tikzpicture}

			\node at (0,0) {$0$};
			\node at (0.3,0) {$K$};
			\node at (0.6,0) {$K$};
			\node at (0.9,0) {$K$};
			\node at (1.2,0) {$K$};
		
			\draw[->] (0.58,-0.15) -- (0.32,-0.15);
			\draw[->] (0.88,-0.15) -- (0.62,-0.15);
			\draw[->] (1.18,-0.15) -- (0.92,-0.15);
			
		\end{tikzpicture}} \\[-0.5em]
		3 && {\begin{tikzpicture}[color={rgb,255:red,217;green,217;blue,217}]

			\node at (0,0) {$0$};
			\node at (0.3,0) {$0$};
			\node at (0.6,0) {$K$};
			\node at (0.9,0) {$0$};
			\node at (1.2,0) {$0$};
			
		\end{tikzpicture}} && {\begin{tikzpicture}[color={rgb,255:red,217;green,217;blue,217}]

			\node at (0,0) {$K$};
			\node at (0.3,0) {$K$};
			\node at (0.6,0) {$0$};
			\node at (0.9,0) {$0$};
			\node at (1.2,0) {$0$};
		
			\draw[->] (0.28,-0.15) -- (0,-0.15);
			
		\end{tikzpicture}} && {\begin{tikzpicture}[color={rgb,255:red,217;green,217;blue,217}]

			\node at (0,0) {$0$};
			\node at (0.3,0) {$0$};
			\node at (0.6,0) {$K$};
			\node at (0.9,0) {$K$};
			\node at (1.2,0) {$0$};
		
			\draw[->] (0.88,-0.15) -- (0.62,-0.15);
			
		\end{tikzpicture}} && {\begin{tikzpicture}[color={rgb,255:red,255;green,0;blue,0}]

			\node at (0,0) {$0$};
			\node at (0.3,0) {$K$};
			\node at (0.6,0) {$0$};
			\node at (0.9,0) {$0$};
			\node at (1.2,0) {$0$};
		
		\end{tikzpicture}} && {\begin{tikzpicture}[color={rgb,255:red,255;green,0;blue,0}]

			\node at (0,0) {$0$};
			\node at (0.3,0) {$0$};
			\node at (0.6,0) {$K$};
			\node at (0.9,0) {$K$};
			\node at (1.2,0) {$K$};
		
			\draw[->] (0.88,-0.15) -- (0.62,-0.15);
			\draw[->] (1.18,-0.15) -- (0.92,-0.15);
			
		\end{tikzpicture}} \\[-0.5em]
		4 &&& {\begin{tikzpicture}[color={rgb,255:red,217;green,217;blue,217}]

			\node at (0,0) {$K$};
			\node at (0.3,0) {$K$};
			\node at (0.6,0) {$K$};
			\node at (0.9,0) {$0$};
			\node at (1.2,0) {$0$};
		
			\draw[->] (0.28,-0.15) -- (0,-0.15);
			\draw[->] (0.32,0.4) -- (0.58,0.4);
			
		\end{tikzpicture}} &&&& {\begin{tikzpicture}

			\node at (0,0) {$0$};
			\node at (0.3,0) {$K$};
			\node at (0.6,0) {$K$};
			\node at (0.9,0) {$K$};
			\node at (1.2,0) {$0$};
		
			\draw[->] (0.32,0.4) -- (0.58,0.4);
			\draw[->] (0.88,-0.15) -- (0.62,-0.15);
			
		\end{tikzpicture}} &&&& {\begin{tikzpicture}

			\node at (0,0) {$0$};
			\node at (0.3,0) {$0$};
			\node at (0.6,0) {$0$};
			\node at (0.9,0) {$K$};
			\node at (1.2,0) {$K$};
		
			\draw[->] (1.18,-0.15) -- (0.92,-0.15);
			
		\end{tikzpicture}} \\
		5 & {\begin{tikzpicture}[color={rgb,255:red,217;green,217;blue,217}]

			\node at (0,0) {$K$};
			\node at (0.3,0) {$0$};
			\node at (0.6,0) {$0$};
			\node at (0.9,0) {$0$};
			\node at (1.2,0) {$0$};
			
		\end{tikzpicture}} &&&& {\begin{tikzpicture}[color={rgb,255:red,217;green,217;blue,217}]

			\node at (0,0) {$0$};
			\node at (0.3,0) {$K$};
			\node at (0.6,0) {$K$};
			\node at (0.9,0) {$0$};
			\node at (1.2,0) {$0$};
		
			\draw[->] (0.32,0.4) -- (0.58,0.4);
			
		\end{tikzpicture}} &&&& {\begin{tikzpicture}[color={rgb,255:red,255;green,0;blue,0}]

			\node at (0,0) {$0$};
			\node at (0.3,0) {$0$};
			\node at (0.6,0) {$0$};
			\node at (0.9,0) {$K$};
			\node at (1.2,0) {$0$};
			
		\end{tikzpicture}} &&&& {\begin{tikzpicture}[color={rgb,255:red,217;green,217;blue,217}]

			\node at (0,0) {$0$};
			\node at (0.3,0) {$0$};
			\node at (0.6,0) {$0$};
			\node at (0.9,0) {$0$};
			\node at (1.2,0) {$K$};
			
		\end{tikzpicture}}
		\arrow[color={rgb,255:red,217;green,217;blue,217}, from=2-8, to=3-10]
		\arrow[color={rgb,255:red,217;green,217;blue,217}, from=3-6, to=2-8]
		\arrow[color={rgb,255:red,217;green,217;blue,217}, from=3-6, to=4-7]
		\arrow[from=3-10, to=4-11]
		\arrow[color={rgb,255:red,217;green,217;blue,217}, from=4-3, to=5-4]
		\arrow[color={rgb,255:red,217;green,217;blue,217}, from=4-5, to=3-6]
		\arrow[color={rgb,255:red,217;green,217;blue,217}, from=4-7, to=5-8]
		\arrow[from=4-9, to=3-10]
		\arrow[from=4-11, to=5-12]
		\arrow[color={rgb,255:red,217;green,217;blue,217}, from=5-4, to=4-5]
		\arrow[color={rgb,255:red,217;green,217;blue,217}, from=5-4, to=6-6]
		\arrow[from=5-8, to=4-9]
		\arrow[from=5-8, to=6-10]
		\arrow[color={rgb,255:red,217;green,217;blue,217}, from=5-12, to=6-14]
		\arrow[color={rgb,255:red,217;green,217;blue,217}, from=6-2, to=5-4]
		\arrow[color={rgb,255:red,217;green,217;blue,217}, from=6-6, to=5-8]
		\arrow[from=6-10, to=5-12]
	\end{tikzcd}};
	\end{tikzpicture}
\end{center}
An example of anticompatible ordering for $V_{[\Q]}(M)$ is
\[
	{\begin{tikzpicture}[baseline={([yshift=-0.35em]current bounding box.center)}]

		\node at (0,0) {$0$};
		\node at (0.3,0) {$0$};
		\node at (0.6,0) {$0$};
		\node at (0.9,0) {$K$};
		\node at (1.2,0) {$0$};
		
	\end{tikzpicture}}, \quad {\begin{tikzpicture}[baseline={([yshift=-0.35em]current bounding box.center)}]
	
		\node at (0,0) {$0$};
		\node at (0.3,0) {$K$};
		\node at (0.6,0) {$0$};
		\node at (0.9,0) {$0$};
		\node at (1.2,0) {$0$};
	
	\end{tikzpicture}}, \quad {\begin{tikzpicture}[baseline={([yshift=-0.2em]current bounding box.center)}]
	
		\node at (0,0) {$0$};
		\node at (0.3,0) {$0$};
		\node at (0.6,0) {$K$};
		\node at (0.9,0) {$K$};
		\node at (1.2,0) {$K$};
	
		\draw[->] (0.88,-0.275) -- (0.62,-0.275);
		\draw[->] (1.18,-0.275) -- (0.92,-0.275);
	
	\end{tikzpicture}}.
\]
The distinguished triangles in $\pvd(\Pi_Q)$ given by Theorem \ref{thm:mesh gives rise to triangles} that correspond to this anticompatible ordering are of the following form:
\[\begin{tikzcd}
	{\begin{tikzpicture}

		\node at (0,0) {$0$};
		\node at (0.3,0) {$K$};
		\node at (0.6,0) {$K$};
		\node at (0.9,0) {$0$};
		\node at (1.2,0) {$0$};
	
		\draw[->] (0.32,0.4) -- (0.58,0.4);
		
	\end{tikzpicture}} & {\begin{tikzpicture}

		\node at (0,0) {$0$};
		\node at (0.3,0) {$K$};
		\node at (0.6,0) {$K$};
		\node at (0.9,0) {$K$};
		\node at (1.2,0) {$0$};
	
		\draw[->] (0.32,0.4) -- (0.58,0.4);
		\draw[->] (0.88,-0.15) -- (0.62,-0.15);
		
	\end{tikzpicture}} & {\begin{tikzpicture}

		\node at (0,0) {$0$};
		\node at (0.3,0) {$0$};
		\node at (0.6,0) {$0$};
		\node at (0.9,0) {$K$};
		\node at (1.2,0) {$0$};
		
	\end{tikzpicture}} & {\Sigma\!\!\begin{tikzpicture}

		\node at (0,0) {$0$};
		\node at (0.3,0) {$K$};
		\node at (0.6,0) {$K$};
		\node at (0.9,0) {$0$};
		\node at (1.2,0) {$0$};
	
		\draw[->] (0.32,0.4) -- (0.58,0.4);
		
	\end{tikzpicture}},
	\arrow[from=1-1, to=1-2]
	\arrow[from=1-2, to=1-3]
	\arrow[from=1-3, to=1-4]
\end{tikzcd}\]
\[\begin{tikzcd}
	{\begin{tikzpicture}

		\node at (0,0) {$0$};
		\node at (0.3,0) {$0$};
		\node at (0.6,0) {$K$};
		\node at (0.9,0) {$0$};
		\node at (1.2,0) {$0$};
		
	\end{tikzpicture}} & {\begin{tikzpicture}

		\node at (0,0) {$0$};
		\node at (0.3,0) {$K$};
		\node at (0.6,0) {$K$};
		\node at (0.9,0) {$0$};
		\node at (1.2,0) {$0$};
	
		\draw[->] (0.32,0.4) -- (0.58,0.4);
		
	\end{tikzpicture}} & {\begin{tikzpicture}

		\node at (0,0) {$0$};
		\node at (0.3,0) {$K$};
		\node at (0.6,0) {$0$};
		\node at (0.9,0) {$0$};
		\node at (1.2,0) {$0$};
		
	\end{tikzpicture}} & {\Sigma\!\!\begin{tikzpicture}

		\node at (0,0) {$0$};
		\node at (0.3,0) {$0$};
		\node at (0.6,0) {$K$};
		\node at (0.9,0) {$0$};
		\node at (1.2,0) {$0$};
		
	\end{tikzpicture}},
	\arrow[from=1-1, to=1-2]
	\arrow[from=1-2, to=1-3]
	\arrow[from=1-3, to=1-4]
\end{tikzcd}\]
\[\begin{tikzcd}
	{\Sigma^{-1}\!\!\begin{tikzpicture}

		\node at (0,0) {$0$};
		\node at (0.3,0) {$0$};
		\node at (0.6,0) {$0$};
		\node at (0.9,0) {$K$};
		\node at (1.2,0) {$K$};
	
		\draw[->] (1.18,-0.15) -- (0.92,-0.15);
		
	\end{tikzpicture}} & {\begin{tikzpicture}

		\node at (0,0) {$0$};
		\node at (0.3,0) {$0$};
		\node at (0.6,0) {$K$};
		\node at (0.9,0) {$0$};
		\node at (1.2,0) {$0$};
		
	\end{tikzpicture}} & {\begin{tikzpicture}

		\node at (0,0) {$0$};
		\node at (0.3,0) {$0$};
		\node at (0.6,0) {$K$};
		\node at (0.9,0) {$K$};
		\node at (1.2,0) {$K$};
	
		\draw[->] (0.88,-0.15) -- (0.62,-0.15);
		\draw[->] (1.18,-0.15) -- (0.92,-0.15);
		
	\end{tikzpicture}} & {\begin{tikzpicture}

		\node at (0,0) {$0$};
		\node at (0.3,0) {$0$};
		\node at (0.6,0) {$0$};
		\node at (0.9,0) {$K$};
		\node at (1.2,0) {$K$};
	
		\draw[->] (1.18,-0.15) -- (0.92,-0.15);
		
	\end{tikzpicture}}.
	\arrow[from=1-1, to=1-2]
	\arrow[from=1-2, to=1-3]
	\arrow[from=1-3, to=1-4]
\end{tikzcd}\]
\end{example}

\begin{remark}
When $\bm{i}$ is a source sequence for an orientation $Q$ of $\Delta$, we know from Propositions \ref{prop:concentrated in zero for adapted word} and \ref{prop:derived category for adapted word} that $\C([\bm{i}]) \cong \modcat KQ$ and $\D([\bm{i}]) \cong \D^b(\modcat KQ)$. Hence, $\C([\bm{i}])$ is an abelian category and $\D([\bm{i}])$ has a triangulated structure. The latter has AR triangles (in the sense of \cite{Happel}), which we can obtain by assembling the distinguished triangles of Theorem \ref{thm:mesh gives rise to triangles} into a single one.

One may wonder if we have similar results for general commutation classes. A quick inspection of examples shows that $\C([\bm{i}])$ is not abelian in general. However, one may still search for a nontrivial extriangulated structure (in the sense of \cite{NakaokaPalu}) on $\D([\bm{i}])$ for which $\C([\bm{i}])$ is an extension-closed subcategory. It would be desirable for this structure to take into account the triangles of Theorem \ref{thm:mesh gives rise to triangles}, which can be naturally seen as triangles in $\D([\bm{i}])$. Unfortunately, we were not able to find such a structure. A sign against its existence is given in Remark \ref{rmk:euler form does not respect triangles}.
\end{remark}

\begin{cor}\label{cor:generalized g-additive property}
Let $[\bm{i}]$ be a commutation class of reduced words for $w_0$. For $x \in (\widehat{\Upsilon}_{[\bm{i}]})_0 = \widehat{\mathsf{R}}$, we have
\[
\pi(x) + \pi(s_{[\bm{i}]}(x)) = \sum_{y \in V_{[\bm{i}]}(x)}\pi(y),
\]
where $\pi: \widehat{\mathsf{R}} \longrightarrow \mathsf{R}$ denotes the projection onto the first coordinate.
\end{cor}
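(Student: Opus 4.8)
The plan is to read off the desired identity from the distinguished triangles produced by Theorem \ref{thm:mesh gives rise to triangles} by passing to the Grothendieck group $K_0(\pvd(\Pi_Q))$. Given $x \in (\widehat{\Upsilon}_{[\bm{i}]})_0 = \widehat{\mathsf{R}}$, I would let $M \in \ind(\R([\bm{i}]))$ be the unique indecomposable object with $c_{[\bm{i}]}(M) = x$, which exists because $c_{[\bm{i}]}$ is a bijection. Choosing an anti-compatible ordering $X_1,\dots,X_t$ of the set of abutters $V_{[\bm{i}]}(M)$ (which is nonempty since $\Delta \neq \mathsf{A}_1$), Theorem \ref{thm:mesh gives rise to triangles} supplies indecomposable objects $Y_1 = s_{[\bm{i}]}(M)$, $Y_2,\dots,Y_t$, $Y_{t+1} = \Sigma^{-1}M$ together with distinguished triangles $Y_{k+1} \to Y_k \to X_k \to \Sigma Y_{k+1}$ in $\pvd(\Pi_Q)$ for $1 \leq k \leq t$.

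Each such triangle yields the relation $[Y_k] = [Y_{k+1}] + [X_k]$ in $K_0(\pvd(\Pi_Q))$. Telescoping these over $k = 1,\dots,t$ gives
\[
[Y_1] = [Y_{t+1}] + \sum_{k=1}^{t}[X_k].
\]
I would then substitute $Y_1 = s_{[\bm{i}]}(M)$ and $Y_{t+1} = \Sigma^{-1}M$, using that the suspension acts as $-1$ on the Grothendieck group, so that $[Y_{t+1}] = -[M]$. Rearranging produces $[M] + [s_{[\bm{i}]}(M)] = \sum_{k=1}^{t}[X_k]$.

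Finally, I would translate this back into the language of $\widehat{\mathsf{R}}$. By the definition of $c_{[\bm{i}]}$, the projection $\pi$ onto the first coordinate sends $c_{[\bm{i}]}(N)$ to the class $[N]$ under the identification of $K_0(\pvd(\Pi_Q))$ with the root lattice of $\Delta$. Hence $\pi(x) = [M]$, $\pi(s_{[\bm{i}]}(x)) = [s_{[\bm{i}]}(M)]$, and $\pi(c_{[\bm{i}]}(X_k)) = [X_k]$, so that the identity above reads exactly as $\pi(x) + \pi(s_{[\bm{i}]}(x)) = \sum_{y \in V_{[\bm{i}]}(x)}\pi(y)$. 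In this sense the corollary is a purely formal consequence of Theorem \ref{thm:mesh gives rise to triangles}, and all the real work sits in that theorem; the only points deserving a word of care are the sign coming from the shift (this is what moves $\pi(x)$ to the left-hand side instead of leaving a term on the right) and the bijectivity of $c_{[\bm{i}]}$, which guarantees that every $x \in \widehat{\mathsf{R}}$ indeed arises from an indecomposable object $M$.
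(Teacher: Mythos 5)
Your proposal is correct and follows exactly the paper's own argument: take the indecomposable object $M$ with coordinate $x$, apply Theorem \ref{thm:mesh gives rise to triangles} to an anti-compatible ordering of the abutters, and sum (telescope) the resulting relations in $K_0(\pvd(\Pi_Q))$, using that $\Sigma$ acts by $-1$ to move the term $[M]$ to the left-hand side. The extra details you supply (the bijectivity of $c_{[\bm{i}]}$ and the compatibility of $\pi$ with classes in the Grothendieck group) are exactly the points the paper leaves implicit, so there is nothing to add.
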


\begin{proof}
Let $M \in \R([\bm{i}])$ be the indecomposable object of coordinate $x$. After choosing an anticompatible reading for $V_{[\bm{i}]}(M)$, we obtain the formula by summing the relations in $K_0(\pvd(\Pi_Q))$ induced by the distinguished triangles from Theorem \ref{thm:mesh gives rise to triangles}.	
\end{proof}

\begin{remark}\label{rem:implicit in the literature}
When $[\bm{i}]$ is a commutation class coming from a Q-datum for $\mathfrak{g}$, the formula above specializes to the $\mathfrak{g}$-additive property of \cite{FujitaOh} (see Theorem \ref{thm:g-additive property}). For general $[\bm{i}]$, this result was already implicit in the literature. Indeed, by considering the subword of $\bm{i}$ corresponding to a given mesh, one obtains from \cite[Theorem 4.25]{KashiwaraKimOhPark} a nonsplit exact sequence between certain modules over a quantum affine algebra, known as a generalized T-system. These modules must belong to the same block in the block decomposition of the category of finite-dimensional integrable modules over this algebra (see \cite{KKOPSimplyLacedRootSystems}), which yields the equality in Corollary \ref{cor:generalized g-additive property}. Similarly, one can use the recent paper \cite{KashiwaraKimOhParkIII} to extend the above result to words that are not necessarily reduced. 
\end{remark}

We end this section with an application to frieze patterns. We say that a function $f: (\widehat{\Upsilon}_{[\bm{i}]})_0 \to \Z$ is an \emph{additive frieze on} $\widehat{\Upsilon}_{[\bm{i}]}$ if it satisfies the relation
\[
f(x) + f(s_{[\bm{i}]}(x)) = \sum_{y \in V_{[\bm{i}]}(x)}f(y)
\]
for all $x \in (\widehat{\Upsilon}_{[\bm{i}]})_0$. When $[\bm{i}]$ is the commutation class associated with an orientation $Q$ of $\Delta$, this definition specializes to that of an additive function on the repetition quiver of $Q$ (in the sense of \cite{Gabriel}). Thus, our definition can be seen as an additive analog of Coxeter's frieze patterns \cite{Coxeter}. We refer the reader to \cite{MorierGenoud} for a recent survey on other frieze patterns appearing in representation theory.

For $i \in \Delta_0$, let $\rho_i: \mathsf{R} \to \Z$ be the function that returns the coefficient of $\alpha_i$ in a root $\alpha \in \mathsf{R}$ when written as a linear combination of the simple roots. By Corollary \ref{cor:generalized g-additive property}, the composition $\rho_i \circ \pi$ is an additive frieze on $\widehat{\Upsilon}_{[\bm{i}]}$. We have the following result.

\begin{prop}
Let $\bm{i} = (i_1,\dots,i_N)$ be a reduced word for $w_0$. The functions $\rho_i \circ \pi$ for $i \in \Delta_0$ form a $\Z$-basis of the space of additive friezes on $\widehat{\Upsilon}_{[\bm{i}]}$. Consequently, any additive frieze $f: (\widehat{\Upsilon}_{[\bm{i}]})_0 \to \Z$ satisfies $f(\widehat{\beta}^{\bm{i}}_{k+N}) = -f(\widehat{\beta}^{\bm{i}}_k)$ for $k \in \Z$ and is thus periodic.
\end{prop}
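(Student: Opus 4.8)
The plan is to identify the $\Z$-module $A$ of additive friezes on $\widehat{\Upsilon}_{[\bm{i}]}$ by pinning it down through evaluation at the simple roots. Write $\mathrm{ev}\colon A\to\Z^{\Delta_0}$ for the map $f\mapsto (f(\alpha_i,0))_{i\in\Delta_0}$; this makes sense because every simple root lies in $\mathsf{R}^+=\mathsf{R}^+(w_0)$, so each $(\alpha_i,0)$ is a vertex of $\widehat{\Upsilon}_{[\bm{i}]}$. By Corollary \ref{cor:generalized g-additive property} each $\rho_i\circ\pi$ is an additive frieze, and since $\rho_i(\alpha_j)=\delta_{ij}$ we have $\mathrm{ev}(\rho_i\circ\pi)=e_i$, the $i$-th standard basis vector. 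This at once shows that the family $\{\rho_i\circ\pi\}$ is $\Z$-linearly independent and that $\mathrm{ev}$ is surjective, so $\mathrm{rank}_{\Z}A\ge n$.

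Next I would establish the matching upper bound $\mathrm{rank}_{\Z}A\le n$ in two steps. First, a frieze $f$ is determined by its restriction to the $N$ level-zero vertices $\{(\alpha,0):\alpha\in\mathsf{R}^+\}$, i.e.\ to the copy of $\Upsilon_{[\bm{i}]}$ sitting inside $\widehat{\Upsilon}_{[\bm{i}]}$: solving the mesh relation $f(x)+f(s_{[\bm{i}]}(x))=\sum_{y\in V_{[\bm{i}]}(x)}f(y)$ for either the source $s_{[\bm{i}]}(x)$ or the sink $x$ expresses the value at an extremal vertex of a mesh in terms of the others, and since Lemma \ref{lemma:mesh is contained in combinatorial AR quiver} guarantees that every mesh is contained in a window of fewer than $N$ consecutive indices, an induction on the index propagates $f$ from level zero to all of $\widehat{\mathsf{R}}$ in both directions. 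Thus restriction gives an injection of $A$ into the functions on $\mathsf{R}^+$. Second, I would restrict attention to the \emph{internal} meshes, namely those whose source again lies at level zero; their sources run bijectively over the $N-n$ non-injective level-zero vertices, and for each such mesh both the sink and all abutters have strictly smaller index than the source (by convexity of $\Upsilon_{[\bm{i}]}$ in $\widehat{\Upsilon}_{[\bm{i}]}$ together with Lemma \ref{lemma:mesh is contained in combinatorial AR quiver}). Solving each internal relation for its source then yields a triangular recurrence that determines every non-injective level-zero value from the $n$ injective ones, so the space of level-zero restrictions of friezes embeds in a free module of rank $n$; combined with the first step this gives $\mathrm{rank}_{\Z}A\le n$.

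Combining the two bounds, $A$ is a subgroup of a free abelian group, hence free, of rank exactly $n$, and $\mathrm{ev}\colon A\to\Z^{\Delta_0}$ is a surjection of free $\Z$-modules of rank $n$, hence an isomorphism. Since $\mathrm{ev}$ sends the family $\{\rho_i\circ\pi\}$ to the standard basis, this family is a $\Z$-basis of $A$, which is the first assertion. For the final statement, note that $\pi(\widehat{\beta}^{\bm{i}}_{k+N})=-\pi(\widehat{\beta}^{\bm{i}}_k)$ follows immediately from $\widehat{\beta}^{\bm{i}}_{s+tN}=((-1)^t\beta^{\bm{i}}_s,-t)$; as each $\rho_i$ is linear we get $(\rho_i\circ\pi)(\widehat{\beta}^{\bm{i}}_{k+N})=-(\rho_i\circ\pi)(\widehat{\beta}^{\bm{i}}_k)$, and by the basis statement every frieze $f$ inherits $f(\widehat{\beta}^{\bm{i}}_{k+N})=-f(\widehat{\beta}^{\bm{i}}_k)$. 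Applying this twice gives $f(\widehat{\beta}^{\bm{i}}_{k+2N})=f(\widehat{\beta}^{\bm{i}}_k)$, so $f$ is periodic of period $2N$.

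The main obstacle is the upper bound $\mathrm{rank}_{\Z}A\le n$, and specifically the bookkeeping that the internal mesh relations collapse the a priori $N$ level-zero degrees of freedom down to exactly the $n$ injective values. This is where Lemma \ref{lemma:mesh is contained in combinatorial AR quiver}, ensuring the recurrence has bounded support and is well-founded, and the convexity of $\Upsilon_{[\bm{i}]}$ in $\widehat{\Upsilon}_{[\bm{i}]}$, ensuring the internal meshes stay at level zero with all their abutters preceding their source, carry the weight of the argument; checking that each non-injective level-zero vertex is the source of a unique internal mesh, with every other vertex of that mesh of smaller index, is the step that demands the most care.
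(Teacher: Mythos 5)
Your proof is correct, and it reaches the conclusion by a route that differs from the paper's in a way worth noting. Both arguments share the same combinatorial core: an additive frieze is determined by its values at the $n$ injective vertices, because the mesh relation at $\widehat{\beta}^{\bm{i}}_k$ (whose source $\widehat{\beta}^{\bm{i}}_{k^+}$ satisfies $k<k^+<k+N$ by Lemma \ref{lemma:mesh is contained in combinatorial AR quiver}) can be solved for either extremal vertex, giving a well-founded recurrence. The paper then asserts, without proof, that the injective values can moreover be chosen \emph{arbitrarily} (i.e., that the outward recurrences are consistent), identifies the frieze space with $\Z^{\Delta_0}$ via evaluation at the injective vertices $x_j$, and checks that the matrix $\bigl(\rho_i(\pi(x_j))\bigr)$ is unitriangular for a suitable ordering of $\Delta_0$. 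You avoid both of these steps: by evaluating instead at the simple-root vertices $(\alpha_j,0)$, where $\rho_i(\alpha_j)=\delta_{ij}$ makes the relevant matrix the identity, you get linear independence and surjectivity of the evaluation map for free from Corollary \ref{cor:generalized g-additive property}; the determination-by-injective-values argument then supplies the matching upper bound $\operatorname{rank}_{\Z}A\le n$, and the conclusion follows because a surjection between free $\Z$-modules of the same finite rank is an isomorphism. The trade-off is that you need the two-sided rank count (and the slightly fussier bookkeeping that each non-injective level-zero vertex is the source of a unique mesh all of whose other vertices have smaller index, which you handle correctly), whereas the paper needs the unitriangularity computation and the unproved "arbitrary values" claim. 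Your version is arguably the more self-contained of the two. The final periodicity step is identical in both.
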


\begin{proof}
	Denote by $x_j$ the injective vertex of $\widehat{\Upsilon}_{[\bm{i}]}$ associated with $j \in \Delta_0$. Let $M$ be the square matrix indexed by $\Delta_0$ whose $(i,j)$-entry is $\rho_i(\pi(x_j))$. It is not hard to see that any additive frieze $f: (\widehat{\Upsilon}_{[\bm{i}]})_0 \to \Z$ is determined by the values $f(x_j)$ for $j \in \Delta_0$, which can be chosen to be arbitrary. Therefore, to prove the first statement, it suffices to show that $M$ is invertible over the integers. This is simple: if we order $\Delta_0$ so that $i < j$ whenever there is a path from $x_j$ to $x_i$ in $\widehat{\Upsilon}_{[\bm{i}]}$, then $M$ becomes upper unitriangular hence invertible. The second statement holds for the functions $\rho_i \circ \pi$ since $\pi(\widehat{\beta}^{\bm{i}}_{k+N}) = -\pi(\widehat{\beta}^{\bm{i}}_k)$ for all $k \in \Z$, so it must hold for all additive friezes.
\end{proof}

\section{Extension groups and the Euler form}\label{section:extension groups and the Euler form}

In this section, we define extension groups and use them to construct a sort of Euler form on the c-derived category of a commutation class. The main result is that the symmetrization of this Euler form agrees with the symmetric bilinear form of the root lattice (Theorem \ref{thm:symmetrization of Euler form is Cartan form}). Along the way, we also introduce the concept of projective and injective objects.

\subsection{Projective/injective objects and extension groups}\label{subsection:projective/injective objects} Let $[\bm{i}]$ be a commutation class of reduced words for the longest element $w_0$. The \emph{injective indecomposable object $I^{[\bm{i}]}_i$ associated with $i \in \Delta_0$} is the indecomposable object of $\C([\bm{i}])$ whose coordinate is the injective vertex associated with $i$ (see Sections \ref{section:commutation classes and combinatorial AR quivers} and \ref{subsection:combinatorial reflection functors}). Dually, the \emph{projective indecomposable object $P^{[\bm{i}]}_i$ associated with $i \in \Delta_0$} is the indecomposable object of $\C([\bm{i}])$ whose coordinate is the projective vertex associated with $i$. Notice that $I^{[\bm{i}]}_i$ has residue $i$, while $P^{[\bm{i}]}_i$ has residue $i^*$.

\begin{remark}
Let $\bm{i}$ be a reduced word for $w_0$ that is a source sequence for an orientation $Q$ of $\Delta$. By Proposition \ref{prop:concentrated in zero for adapted word}, $\C([\bm{i}])$ and $\modcat KQ$ are equivalent categories. In this case, the usual notion of projective/injective indecomposable objects in $\modcat KQ$ coincides with our definition. This follows by directly computing the projective/injective positive roots with respect to $[\bm{i}]$.
\end{remark}

By Theorem \ref{thm:indecomposables are concentrated in zero}, we can see any object of $\C([\bm{i}])$ as a module over the preprojective algebra $\Lambda_{Q}$. In particular, we can see it as a representation of the double quiver $\overline{Q}$ of $Q$. For $M \in \C([\bm{i}])$ and $i \in \Delta_0$, we denote by $M(i)$ the $K$-vector space at the vertex $i$ if we see $M$ as a representation of $\overline{Q}$. Recall from Section \ref{subsection:alternative descriptions of spherical twists} (when we defined the reflection functor $\Sigma_i$) that we also have a diagram
\[\begin{tikzcd}
	{\widetilde{M}(i)} & {M(i)} & {\widetilde{M}(i)}
	\arrow["{M_{\mathrm{in}(i)}}", from=1-1, to=1-2]
	\arrow["{M_{\mathrm{out}(i)}}", from=1-2, to=1-3]
\end{tikzcd}\]
describing the representation around the vertex $i$.

The next result follows from \cite[Proposition 2.4]{AmiotIyamaReitenTodorov}. We provide a different proof.

\begin{lemma}\label{lemma:morphism from projective to injective}
If $i \in \Delta_0$, then the space $\Hom_{\C([\bm{i}])}(P^{[\bm{i}]}_i,I^{[\bm{i}]}_i)$ is one-dimensional.
\end{lemma}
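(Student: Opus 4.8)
The plan is to reduce, by a tilting functor, to the case where $i$ is a \emph{source} of $[\bm{i}]$ -- so that the injective object degenerates to the simple $S_i$ -- and then to identify the Hom space with the $i$-isotypic part of the top of $P_i^{[\bm{i}]}$, which I will compute by peeling off spherical twists.

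First I would normalize the commutation class. Writing a representative $\bm{i}=(i_1,\dots,i_N)$ and letting $k$ be the first index with $i_k=i$, the prefix $(i_1,\dots,i_{k-1})$ is a source sequence for $[\bm{i}]$ consisting of vertices $\neq i$; reflecting along it produces a commutation class $[\bm{j}]$ in the same $r$-cluster point for which $i$ is a source. By Proposition \ref{prop:tilting for the repetitive category, general case} the associated tilting functor is an equivalence. Its induced isomorphism $\widehat{\Upsilon}_{[\bm{i}]}\to\widehat{\Upsilon}_{[\bm{j}]}$ carries the injective vertex of $i$ to the injective vertex of $i$ (no reflected letter equals $i$), whence by Lemma \ref{lemma:projective goes to projective iff injective goes to injective} it also carries the projective vertex of $i$ to the projective vertex of $i$. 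Thus $P_i^{[\bm{i}]}$ and $I_i^{[\bm{i}]}$ are sent to $P_i^{[\bm{j}]}$ and $I_i^{[\bm{j}]}$, the Hom space is unchanged, and I may assume $i=i_1$ is a source. Then the injective vertex of $i$ is $\alpha_i$, so $I_i^{[\bm{i}]}\cong S_i$, and since both objects are concentrated in degree zero (Theorem \ref{thm:indecomposables are concentrated in zero}), the space in question is $\Hom_{\Lambda_Q}(P_i^{[\bm{i}]},S_i)$, i.e. the multiplicity of $S_i$ in the top of $P_i^{[\bm{i}]}$.

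Next I would peel off twists. Writing $P_i^{[\bm{i}]}=T_{i_1}\dotsb T_{i_{l-1}}(S_{i^*})$ with $l$ the last index such that $i_l=i^*$, applying $T_{i_1}^{-1}=T_i^{-1}$ and using $T_i^{-1}(S_i)\cong\Sigma S_i$ (Proposition \ref{prop:properties of spherical twists}(1)) gives
\[
\Hom_{\pvd(\Pi_Q)}(P_i^{[\bm{i}]},S_i)\;\cong\;\Hom_{\pvd(\Pi_Q)}(W,\Sigma S_i)\;=\;\Ext^1_{\Lambda_Q}(W,S_i),
\]
where $W=T_{i_2}\dotsb T_{i_{l-1}}(S_{i^*})\cong M^{r_i\bm{i}}_{l-1}$ is again concentrated in degree zero (Theorem \ref{thm:indecomposables are concentrated in zero}) and lives over $r_i[\bm{i}]$, a class for which $i$ is now a \emph{sink}. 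The nonvanishing $\Hom(P_i^{[\bm{i}]},S_i)\neq 0$ is the easy half: $S_i$ is a sink of $\Upsilon_{[\bm{i}]}$ and one checks via the $\Hom$-order (Proposition \ref{prop:categorical order agrees with combinatorial order}) that $S_i$ occurs in the top of the projective object, so that the canonical composite $P_i^{[\bm{i}]}\twoheadrightarrow S_i\hookrightarrow I_i^{[\bm{i}]}$ is nonzero.

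The remaining, and genuinely hard, point is the upper bound: that $\Ext^1_{\Lambda_Q}(W,S_i)$ is at most one-dimensional, equivalently that $S_i$ appears exactly once in the top of $P_i^{[\bm{i}]}$. I would attack this through the reflection-functor description of Proposition \ref{prop:computing the spherical twist}: each of the twists building $W$ stays in degree zero and hence acts as a reflection functor $\Sigma_{i_m}$, so $W$ is assembled from $S_{i^*}$ by an explicit sequence of reflection functors, and I would compute $\Ext^1_{\Lambda_Q}(W,S_i)$ as the homology at $W(i)$ of the local complex $\widetilde{W}(i)\to W(i)\to\widetilde{W}(i)$ attached to the sink $i$. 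Alternatively one may symmetrize via the $2$-Calabi--Yau isomorphism $\Ext^1(W,S_i)\cong D\Ext^1(S_i,W)$ and argue with the minimal projective resolution of $S_i$ over $\Lambda_Q$. In either route the main obstacle is the combinatorial bookkeeping required to control the structure of $W$ (equivalently of $P_i^{[\bm{i}]}$) in a neighborhood of the vertex $i$ sharply enough to force the dimension to be exactly one, rather than merely positive or finite.
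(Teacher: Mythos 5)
Your write-up reduces correctly and establishes nonvanishing, but it stops exactly where the lemma actually needs to be proved: you declare the upper bound $\dim\Hom_{\C([\bm{i}])}(P^{[\bm{i}]}_i,I^{[\bm{i}]}_i)\leq 1$ to be ``the genuinely hard point'' and offer only two unexecuted strategies for it, conceding that the ``combinatorial bookkeeping'' is the obstacle. As it stands this is not a proof. Moreover, the difficulty is largely an artifact of your choice of normalization. You tilt so that $i$ becomes a \emph{source}, which turns the injective into $S_i$ but leaves the projective in the form $T_{i_1}\dotsb T_{i_{l-1}}(S_{i^*})$ with $l$ the \emph{last} occurrence of $i^*$; the twists $T_{i_1},\dots,T_{i_{l-1}}$ may involve the vertex $i$ arbitrarily often (indeed $i_1=i$ in your setup), which is precisely why you cannot control $W=T_{i_2}\dotsb T_{i_{l-1}}(S_{i^*})$ near the vertex $i$.

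The paper tilts the other way: it arranges for $i$ to be a \emph{sink} of $[\bm{i}]$, so that it is the \emph{projective} which becomes simple, $P^{[\bm{i}]}_i\cong S_i$, and the Hom space becomes $\ker M_{\mathrm{out}(i)}$ for $M=I^{[\bm{i}]}_i$. The payoff is structural: the injective vertex is $\beta^{\bm{i}}_k$ for the \emph{first} index $k$ with $i_k=i$, so $M\cong T_{i_1}\dotsb T_{i_{k-1}}(S_i)$ with \emph{all} $i_j\neq i$. By Proposition \ref{prop:computing the spherical twist} each twist acts as a reflection functor $\Sigma_{i_j}$ at a vertex $\neq i$, and one checks in one line from the definition of $\Sigma_j$ that the property ``$\dim_KN(i)=1$ and $N_{\mathrm{out}(i)}=0$'' is preserved by $\Sigma_j$ for $j\neq i$; since $S_i$ has it, so does $M$, giving $\dim_K\ker M_{\mathrm{out}(i)}=1$ outright. (A symmetric fix of your argument is available: with $i$ a source, write $P^{[\bm{i}]}_i\cong T_{i_N^*}^{-1}\dotsb T_{i_{l+1}^*}^{-1}(S_i)$ as in the proof of Theorem \ref{thm:indecomposables are concentrated in zero}, where now all the indices $i_{l+1}^*,\dots,i_N^*$ are $\neq i$, and run the dual induction on ``$\dim_KN(i)=1$ and $N_{\mathrm{in}(i)}=0$''.) Separately, your nonvanishing step is also shakier than you suggest: the $\Hom$-order only gives a chain of nonzero Homs between consecutive terms, not directly a nonzero composite $P^{[\bm{i}]}_i\to S_i$; but once the sharp computation above is done, nonvanishing comes for free.
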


\begin{proof}
After applying a tilting functor not involving a spherical twist at the vertex $i$, we can assume that $i$ is a sink of $[\bm{i}]$ so that $P^{[\bm{i}]}_i \cong S_i$. If we view $M = I^{[\bm{i}]}_i$ as a representation of $\overline{Q}$, we need to show that $\dim_K\ker M_{\mathrm{out}(i)} = 1$ in order to prove the lemma. We claim that we have a stronger property: $M_{\mathrm{out}(i)} = 0$ and $\dim_KM(i) = 1$.

Since $M$ is injective, we can write $M \cong T_{i_1}\dotsb T_{i_k}(S_i)$ where the indices $i_1,\dots,i_k$ are different from $i$. By Theorem \ref{thm:indecomposables are concentrated in zero} and Proposition \ref{prop:computing the spherical twist}, we may replace each spherical twist $T_{i_l}$ above by the corresponding reflection functor $\Sigma_{i_l}$. As a representation, $S_i$ satisfies $\dim_KS_i(i) = 1$ and $(S_i)_{\mathrm{out}(i)} = 0$. Thus, it suffices to show that, if $N$ is a representation of $\Lambda_Q$ satisfying $\dim_KN(i) = 1$ and $N_{\mathrm{out}(i)} = 0$, then $\Sigma_j(N)$ has the same property whenever $j \neq i$. One can easily check this claim using the definition of $\Sigma_j$.
\end{proof}

We define the \emph{$k$-th extension group} ($k \in \Z$) between two objects $M, N \in \D([\bm{i}])$ as
\[
\Ext_{[\bm{i}]}^k(M,N) = \Hom_{\D([\bm{i}])}(M, \Sigma^kN).
\]
Observe that this vector space is zero if $M,N \in \C([\bm{i}])$ and $k < 0$.

\begin{prop}\label{prop:projective/injective if and only if Ext1 = 0}
Let $[\bm{i}]$ be a commutation class of reduced words for $w_0$ and take $M \in \ind([\bm{i}])$. Then $M$ is projective if and only if $\Ext_{[\bm{i}]}^k(M,M') = 0$ for all $M' \in \C([\bm{i}])$ and $k > 0$. Dually, $M$ is injective if and only if $\Ext_{[\bm{i}]}^k(M',M) = 0$ for all $M' \in \C([\bm{i}])$ and $k > 0$.
\end{prop}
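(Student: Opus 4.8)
The plan is to prove only the statement about projective objects; the injective statement will then be obtained by running the whole argument dually (using injective vertices and the meshes in which $M$ occurs as a \emph{source}, together with the symmetry $\Sigma\circ T$ of Remark \ref{rmk:the longest twist acts as twisted shift}, which interchanges $\C([\bm i])$ and $\C([\bm i^*])$ and projective with injective objects). The basic translation I would use throughout is that, for indecomposables, $\Ext^k_{[\bm i]}(M,M')=\Hom_{\D([\bm i])}(M,\Sigma^kM')$ equals $\Hom_{\pvd(\Pi_Q)}(M,\Sigma^kM')$ when $\I_{[\bm i]}(M,\Sigma^kM')=0$ and vanishes otherwise, and that $\I_{[\bm i]}(M,\Sigma^kM')=0$ means precisely that some tilting functor sends both $M$ and $\Sigma^kM'$ into a category of representations. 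Since $M,M'$ lie in the heart, the $\pvd$-group already vanishes for $k<0$ (by the t-structure) and for $k\geq 3$ (by the $2$-Calabi--Yau property, as $\Hom_{\pvd}(M,\Sigma^kM')\cong D\Hom_{\pvd}(M',\Sigma^{2-k}M)=0$), so the real content is the interaction with the ideal $\I_{[\bm i]}$ for $k\geq 1$.

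The combinatorial heart of the proof is the following. Writing $c_{[\bm i]}(M)=\widehat\beta^{\bm i}_l$ with $1\leq l\leq N$, the object $M$ is projective exactly when $l$ is the last occurrence of the residue $i_l$ in $(i_1,\dots,i_N)$, equivalently when no index $l<m\leq N$ has $i_m=i_l$, i.e.\ when $s_{[\bm i]}(M)$ sits at level $-1$, outside $\C([\bm i])$. I would then establish the crux lemma: \emph{$M$ is projective if and only if, for every tilting functor $T\colon\R([\bm i])\to\R([\bm j])$ with $T(M)\in\C([\bm j])$, every object of $T(\C([\bm i]))$ lies in nonnegative level.} In other words, a projective object is ``level-minimal'': it can never be tilted to level $0$ while some other object of $\C([\bm i])$ is pushed below level $0$. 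I expect this to be the main obstacle. The proof tracks how the levels of the whole slice $\Upsilon_{[\bm i]}$ shift under each elementary reflection functor, exactly in the spirit of the proof of Lemma \ref{lemma:projective goes to projective iff injective goes to injective}: a reflection at a source raises only the objects of one residue segment, and the last occurrence of a residue is never sent strictly below the remaining objects. This requires a careful case analysis of the level shifts.

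Granting the crux lemma, the forward direction is immediate. Suppose $M$ is projective and $\Hom_{\pvd}(M,\Sigma^kM')\neq 0$ for some $k\geq 1$. For any tilting functor $T$ with $T(M)\in\C([\bm j])$, the object $T(\Sigma^kM')=\Sigma^kT(M')$ lies at level $k+\ell(T(M'))\geq k\geq 1$, using that $\ell(T(M'))\geq 0$ by the lemma and that $\Sigma$ raises the level by one; hence $T(\Sigma^kM')\notin\C([\bm j])$. As this holds for every such $T$, no tilting functor places both $M$ and $\Sigma^kM'$ into a category of representations, so $\I_{[\bm i]}(M,\Sigma^kM')=\Hom_{\pvd}(M,\Sigma^kM')$ and therefore $\Ext^k_{[\bm i]}(M,M')=0$. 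Together with the vanishing for $k<0$, this gives $\Ext^k_{[\bm i]}(M,M')=0$ for all $k>0$ and all $M'\in\C([\bm i])$.

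For the converse I argue contrapositively. If $M$ is not projective then $s_{[\bm i]}(M)\in\C([\bm i])$, and I assemble the distinguished triangles of Theorem \ref{thm:mesh gives rise to triangles} attached to the mesh at $M$ into a single triangle
\[
s_{[\bm i]}(M)\longrightarrow E\longrightarrow M\xrightarrow{\ w\ }\Sigma s_{[\bm i]}(M)
\]
in $\pvd(\Pi_Q)$, where $E$ is filtered by the abutters $X_1,\dots,X_t$ of $M$. The connecting morphism $w$ represents a class in $\Ext^1_{[\bm i]}(M,s_{[\bm i]}(M))$ with $s_{[\bm i]}(M)\in\C([\bm i])$. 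It is nonzero because the triangle does not split: $M$ is indecomposable and, having residue different from that of every abutter, is not a direct summand of $E$. Finally $w\notin\I_{[\bm i]}$ by the argument used in Lemma \ref{lemma:triangles descend to D if concentrated in degree zero}, so $w$ survives in $\D([\bm i])$ and $\Ext^1_{[\bm i]}(M,s_{[\bm i]}(M))\neq 0$. This shows that a non-projective $M$ admits a nonzero positive extension into $\C([\bm i])$, completing the projective case; the injective case is then handled by the dual argument described above.
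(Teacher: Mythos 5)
Your converse direction is a genuinely different route from the paper's and is workable: you produce a nonzero class in $\Ext^1_{[\bm{i}]}(M,s_{[\bm{i}]}(M))$ from the connecting morphism of the assembled mesh triangle, whereas the paper simply tilts $M$ until it becomes a simple projective and pairs it with the corresponding injective via the one-dimensional space $\Hom_{\C([\bm{j}])}(P^{[\bm{j}]}_i,I^{[\bm{j}]}_i)$. (Your non-splitting argument via residues is not quite right as stated; the correct reason is that $\Hom_{\pvd(\Pi_Q)}(M,X_j)=0$ for every abutter $X_j$, since there is no path from $c_{[\bm{i}]}(M)$ to $c_{[\bm{i}]}(X_j)$, whence $\Hom_{\pvd(\Pi_Q)}(M,E)=0$ and $M$ cannot be a summand of $E$.)

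The forward direction, however, has a genuine gap: the crux lemma is false. Take $\Delta=\mathsf{A}_2$, $\bm{i}=(1,2,1)$ and $M=M^{\bm{i}}_2=T_1(S_2)=P^{[\bm{i}]}_1$, which is projective. Since $2$ is a source of $(2,1,2)$ and $r_2(2,1,2)=(1,2,1)$, the inverse $T_2$ of $T_2^{-1}\colon\R([(2,1,2)])\to\R([\bm{i}])$ is a tilting functor $\R([\bm{i}])\to\R([(2,1,2)])$. It sends $M$ to $T_2T_1(S_2)=M^{(2,1,2)}_3\cong S_1\in\C([(2,1,2)])$, i.e.\ to level $0$, but it sends $S_2=M^{\bm{i}}_3\in\C([\bm{i}])$ to $T_2(S_2)\cong\Sigma^{-1}S_2$, which sits at level $-1$. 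So a projective object is not ``level-minimal'': tilting in the sink direction can keep a projective at level $0$ while pushing other objects of $\C([\bm{i}])$ strictly below level $0$. Consequently, for projective $M$ and $k\geq 1$ there may exist a tilting functor placing both $M$ and $\Sigma^kM'$ in a category of representations (here $T_2$ does so for $M$ and $\Sigma S_2$), and then $\I_{[\bm{i}]}(M,\Sigma^kM')=0$, so your argument gives no information about $\Ext^k_{[\bm{i}]}(M,M')$. In exactly this situation one must prove that the $\Hom$-space itself vanishes; this is the actual content of the paper's proof, which arranges $T(M)\cong S_i$, shows via Lemma \ref{lemma:projective goes to projective iff injective goes to injective} that the injective partner stays at level $0$, and deduces that $T(\Sigma^kM')$ is built from simples $S_j$ with $j\neq i$, hence has zero component at the vertex $i$ and admits no nonzero map from $S_i$. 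I do not see how to repair your forward direction without importing an argument of this kind. (Note also that your $2$-Calabi--Yau bound only disposes of $k\geq 3$; for $k=2$ the space $\Hom_{\pvd(\Pi_Q)}(M,\Sigma^2M')\cong D\Hom_{\pvd(\Pi_Q)}(M',M)$ is typically nonzero, so that case too is being silently delegated to the false lemma.)
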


\begin{proof}
We will prove the first statement. The injective case is similar.

Assume $M = P^{[\bm{i}]}_i$ is projective and let $I = I^{[\bm{i}]}_i$ be the corresponding injective indecomposable object. Suppose by contradiction that there exists $M' \in \C([\bm{i}])$ and $k > 0$ such that $\Ext_{[\bm{i}]}^k(M,M') \neq 0$. We can assume that $M'$ is indecomposable and that $\Delta \neq \mathsf{A}_1$. By Lemma \ref{lemma:morphisms in the repetition category follow paths}, there is a path in $\widehat{\Upsilon}_{[\bm{i}]}$ from $c_{[\bm{i}]}(M)$ to $c_{[\bm{i}]}(\Sigma^kM')$. Since $\Sigma^kM' \not\in \C([\bm{i}])$ and $\Upsilon_{[\bm{i}]}$ is convex in $\widehat{\Upsilon}_{[\bm{i}]}$, there is no path in $\widehat{\Upsilon}_{[\bm{i}]}$ from $c_{[\bm{i}]}(\Sigma^kM')$ to $c_{[\bm{i}]}(I)$. By our hypothesis and by the definition of the c-derived category, there is a tilting functor $T: \D([\bm{i}]) \longrightarrow \D([\bm{j}])$ such that $T(M), T(\Sigma^kM') \in \C([\bm{j}])$. Since there is a path from $c_{[\bm{i}]}(M)$ to $c_{[\bm{i}]}(\Sigma^kM')$, we can assume that $T(M)$ is still projective and that $T(M) \cong S_i$. By Lemma \ref{lemma:projective goes to projective iff injective goes to injective}, $T(I) \in \C([\bm{j}])$ and it is still injective of residue $i$. Since there is no path from the coordinate of $T(\Sigma^kM')$ to the coordinate of $T(I)$, we conclude from Theorem \ref{thm:combinatorial AR quiver realizes partial order} that $T(\Sigma^kM') \cong T_{i_1}\dotsb T_{i_{l-1}}(S_{i_l})$ where $(i_1,\dots,i_l)$ is a source sequence for $[\bm{j}]$ that does not contain $i$. But then $T(\Sigma^kM')(i) = 0$ by Proposition \ref{prop:computing the spherical twist} and there cannot exist a nonzero morphism from $T(M) \cong S_i$ to $T(\Sigma^kM')$, a contradiction.

Let us prove the converse. Take a representative $\bm{i} = (i_1,\dots,i_N)$ for $[\bm{i}]$ and let $1 \leq k \leq N$ be such that $M \cong M^{\bm{i}}_k$. If $M$ is not projective, there is $k < l \leq N$ such that $i_l = i_k$. Suppose that $l$ is the smallest such index. Since $(i_N^*,i_{N-1}^*,\dots,i_l^*)$ is a sink sequence for $[\bm{i}]$, it induces a tilting functor $T: \D([\bm{i}])\longrightarrow \D([\bm{j}])$, where $\bm{j} = (i_l^*,\dots,i_N^*,i_1,\dots,i_{l-1})$. Notice that $T(M)$ is now projective in $\D([\bm{j}])$ and the corresponding injective indecomposable object is $I = M^{\bm{j}}_1$. We deduce that $c_{[\bm{i}]}(T^{-1}(I)) = \widehat{\beta}^{\bm{i}}_{l-N}$, so $\Sigma^{-1}T^{-1}(I) \in \C([\bm{i}])$ since $-N < l-N \leq 0$. Finally, we have
\[
\Ext_{[\bm{i}]}^1(M,\Sigma^{-1}T^{-1}(I)) = \Hom_{\D([\bm{i}])}(M,T^{-1}(I)) \cong \Hom_{\D([\bm{j}])}(T(M),I),
\]
which is nonzero by Lemma \ref{lemma:morphism from projective to injective}.
\end{proof}

\begin{prop}\label{prop:derived category is hereditary}
Let $[\bm{i}]$ be a commutation class of reduced words for $w_0$. If $M,M' \in \C([\bm{i}])$, then $\Ext_{[\bm{i}]}^k(M,M') = 0$ for all $k > 1$.
\end{prop}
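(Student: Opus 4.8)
The plan is to unwind the definition of the extension groups and push everything into the ambient category $\pvd(\Pi_Q)$. By definition $\Ext^k_{[\bm i]}(M,M') = \Hom_{\D([\bm i])}(M,\Sigma^kM')$, and since $\D([\bm i])$ is the quotient of $\R([\bm i])$ by the ideal $\I_{[\bm i]}$ (whose value on a pair of indecomposables is either zero or the whole $\Hom$-space), this group is either zero or equal to $\Hom_{\pvd(\Pi_Q)}(M,\Sigma^kM')$. So the task splits into controlling the $\pvd$-level group and deciding when $\I_{[\bm i]}$ forces vanishing.

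First I would dispose of the range $k \geq 3$ directly in $\pvd(\Pi_Q)$, with no reference to $\I_{[\bm i]}$. As $M,M'\in\C([\bm i])$, Theorem \ref{thm:indecomposables are concentrated in zero} puts them in the canonical heart $\modcat\Lambda_Q$. The $2$-Calabi--Yau property gives $\Hom_{\pvd(\Pi_Q)}(M,\Sigma^kM') \cong D\Hom_{\pvd(\Pi_Q)}(M',\Sigma^{2-k}M)$; for $k \geq 3$ the object $\Sigma^{2-k}M$ is concentrated in a strictly positive degree while $M'$ lies in the heart, so this $\Hom$-space vanishes by the defining orthogonality of the t-structure. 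Hence $\Ext^k_{[\bm i]}(M,M') = 0$ for all $k \geq 3$.

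The case $k=2$ is the heart of the matter: here $\Hom_{\pvd(\Pi_Q)}(M,\Sigma^2M') \cong D\Hom_{\pvd(\Pi_Q)}(M',M)$ need not vanish, so the vanishing of $\Ext^2_{[\bm i]}(M,M')$ must come from $\I_{[\bm i]}$. I would use that $\Ext^2_{[\bm i]}(M,M')$ is invariant under tilting functors, hence equals $\Hom_{\D([\bm j])}(UM,\Sigma^2 UM')$ for any tilting functor $U\colon \R([\bm i])\to\R([\bm j])$, and this vanishes as soon as $\Hom_{\pvd(\Pi_Q)}(UM,\Sigma^2 UM')$ does. Writing $\ell(X)$ for the level of an indecomposable $X$ (the negative of its degree of concentration) and reducing to the module parts, a short computation from the $2$-Calabi--Yau property and the t-structure shows $\Hom_{\pvd(\Pi_Q)}(UM,\Sigma^2 UM')$ already vanishes whenever $\ell(UM') - \ell(UM) \geq 1$ or $\ell(UM') - \ell(UM) \leq -3$. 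Moreover, in the only nontrivial case $\Hom_{\C([\bm i])}(M',M) \neq 0$, the image under $U$ of the directed path furnished by Lemma \ref{lemma:morphisms in the repetition category follow paths} forces $\ell(UM') \leq \ell(UM)$ for every $U$. So the whole problem is to find a tilting functor with $\ell(UM') \leq \ell(UM) - 3$, or to deal with the small residual gaps.

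The residual gaps I would attack by choosing $U$ so that $UM$ becomes a projective indecomposable $P$ of $\C([\bm j])$, peeling the objects lying above $M$ in the $\Hom$-order down one level at a time. If $M'$ comes to rest at level $-1$, then $\Sigma(UM')\in\C([\bm j])$ and $\Ext^2_{[\bm i]}(M,M') \cong \Ext^1_{[\bm j]}(P,\Sigma UM')$, which vanishes by the projective half of Proposition \ref{prop:projective/injective if and only if Ext1 = 0}; if it rests at level $\leq -3$ the computation above already gives zero. The main obstacle is the borderline configuration in which $M'$ ends up exactly two levels below $P$: there $\Ext^2_{[\bm i]}(M,M')$ identifies with a genuine $\Hom$-space $\Hom_{\C([\bm j])}(P,\Sigma^2 UM')$ that the projective/injective criterion cannot detect. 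I expect this case to be ruled out by refining the choice of reflections—peeling $M'$ last among the objects above $M$, or rerouting it at least three levels down—justified through the convexity of $\Upsilon_{[\bm j]}$ in $\widehat{\Upsilon}_{[\bm j]}$, the segment property, and Lemma \ref{lemma:projective goes to projective iff injective goes to injective}, which together bound how far the levels of comparable objects can spread apart under tilting.
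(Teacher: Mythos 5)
Your reduction of the problem is sound: for indecomposables the groups $\Ext^k_{[\bm{i}]}(M,M')$ are either zero or equal to the corresponding $\Hom$-spaces in $\pvd(\Pi_Q)$, and your disposal of the range $k\geq 3$ via the $2$-Calabi--Yau property and the orthogonality of the canonical t-structure is correct and genuinely different from the paper, which instead treats all $k>1$ uniformly by reducing to a projective object. You also correctly isolate $k=2$ as the only case where the ideal $\I_{[\bm{i}]}$ must intervene.

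However, the proposal does not actually prove the $k=2$ case: you explicitly leave open the ``borderline configuration'' in which the tilting functor sends $M'$ to level $-2$ below the projective $P=UM$, and only express the expectation that it can be ruled out ``by refining the choice of reflections''. That configuration is precisely the one in which $\Hom_{\D([\bm{j}])}(UM,\Sigma^2UM')$ could be a genuine nonzero $\Hom$-space, so excluding it is the entire content of the proposition at $k=2$; as written there is a genuine gap. The paper closes it with a one-line index count attached to an explicit choice of tilting functor: writing $M\cong M^{\bm{i}}_l$ and $M'\cong M^{\bm{i}}_{l'}$ for a representative $\bm{i}=(i_1,\dots,i_N)$, the sink sequence $(i_N^*,\dots,i_{l+1}^*)$ yields a tilting functor $T$ with $T(M)\cong M^{\bm{j}}_N$ projective and $c_{[\bm{j}]}(T(M'))=\widehat{\beta}^{\bm{j}}_{l'+N-l}$; since $1\leq l'+N-l<2N$, the object $T(M')$ is concentrated in degree $0$ or $1$, i.e.\ it lands at level $0$ or $-1$ and never at level $-2$. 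Then $\Ext^k_{[\bm{i}]}(M,M')\cong\Ext^{k-m}_{[\bm{j}]}(T(M),\Sigma^mT(M'))$ with $m\in\{0,1\}$, $k-m>0$ and $\Sigma^mT(M')\in\C([\bm{j}])$, which vanishes by Proposition \ref{prop:projective/injective if and only if Ext1 = 0}. If you insert this explicit choice (or any equivalent bound showing that a single ``rotation'' of the reduced word displaces the objects of $\C([\bm{i}])$ by at most one level), your argument becomes complete.
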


\begin{proof}
Take a representative $\bm{i} = (i_1,\dots,i_N)$ for $[\bm{i}]$ and let $1 \leq l \leq N$ be such that $M \cong M^{\bm{i}}_l$. Since $(i_N^*,i_{N-1}^*,\dots,i_{l+1}^*)$ is a sink sequence for $[\bm{i}]$, it induces a tilting functor $T: \D([\bm{i}])\longrightarrow \D([\bm{j}])$, where $\bm{j} = (i_{l+1}^*,\dots,i_N^*,i_1,\dots,i_l)$. Note that $T(M) \cong M^{\bm{j}}_N$ is now projective. On the other hand, if $M' = M^{\bm{i}}_{l'}$ for $1 \leq l' \leq N$, then $c_{[\bm{j}]}(T(M')) = \widehat{\beta}^{\bm{j}}_{l'+ N - l}$. Since $1 \leq l'+ N - l < 2N$, $T(M')$ is concentrated in degree $m$ for $m = 0$ or $m = 1$. Thus, if $k > 1$, we have $k - m > 0$, and so,
\[
\Ext_{[\bm{i}]}^k(M,M') \cong \Ext_{[\bm{j}]}^k(T(M),T(M')) = \Ext_{[\bm{j}]}^{k-m}(T(M), \Sigma^mT(M')) = 0
\]
by Proposition \ref{prop:projective/injective if and only if Ext1 = 0} as $T(M)$ is projective and $\Sigma^mT(M') \in \C([\bm{j}])$.
\end{proof}

\subsection{The Grothendieck group and the Euler form} Let $[\bm{i}]$ be a commutation class of reduced words for $w_0$. The split Grothendieck group $K_0^{\mathrm{sp}}(\D([\bm{i}]))$ of the additive category $\D([\bm{i}])$ is the free abelian group generated by isomorphism classes $[M]$ of objects $M$ in $\D([\bm{i}])$ modulo the relation $[M \oplus N] = [M] + [N]$. We define the \emph{Grothendieck group $K_0(\D([\bm{i}]))$ of $\D([\bm{i}])$} to be the quotient of $K_0^{\mathrm{sp}}(\D([\bm{i}]))$ where we impose the additional relation $[M]+[\Sigma M] = 0$. Note that $K_0(\D([\bm{i}]))$ is isomorphic to the split Grothendieck group of $\C([\bm{i}])$.

The \emph{Euler form of $\D([\bm{i}])$} is the pairing
\[
\langle -,-\rangle_{[\bm{i}]}: K_0(\D([\bm{i}])) \times K_0(\D([\bm{i}])) \longrightarrow \Z
\]
defined by
\[
\langle [M],[N]\rangle_{[\bm{i}]} = \sum_{k \in \Z}(-1)^k\dim_K\Ext_{[\bm{i}]}^k(M,N)
\]
for objects $M,N \in \D([\bm{i}])$. The sum above is finite by Proposition \ref{prop:derived category is hereditary}. Moreover, it is compatible with direct sums and the suspension functor, so we indeed get a well-defined pairing on $K_0(\D([\bm{i}]))$. For simplicity, we will denote $\langle [M],[N]\rangle_{[\bm{i}]}$ by $\langle M,N\rangle_{[\bm{i}]}$.

\begin{remark}\label{rmk:euler form does not respect triangles}
The Euler form defined above does not always respect the triangulated structure of $\pvd(\Pi_Q)$. For example, let $\Q = (\mathsf{A}_3,\vee,\xi)$ be the Q-datum of type $\mathsf{B}_2$ with height function given by $\xi_1 = 3$, $\xi_2 = 4$ and $\xi_3 = 5$. We can describe $\ind([\Q])$ with the following picture:
\[\begin{tikzcd}[column sep={5em,between origins},row sep={0.5em}]
	{(\im \ \backslash \ p)} & 0 & 1 & 2 & 3 & 4 & 5 \\
	1 &&&& {\begin{tikzpicture}

		\node at (0,0) {$K$};
		\node at (0.3,0) {$K$};
		\node at (0.6,0) {$K$};
	
		\draw[->] (0.28,-0.15) -- (0,-0.15);
		\draw[->] (0.6,-0.15) -- (0.32,-0.15);
		
	\end{tikzpicture}} \\
	2 & {\begin{tikzpicture}

		\node at (0,0) {$0$};
		\node at (0.3,0) {$K$};
		\node at (0.6,0) {$0$};
		
	\end{tikzpicture}} && {\begin{tikzpicture}

		\node at (0,0) {$K$};
		\node at (0.3,0) {$0$};
		\node at (0.6,0) {$0$};
		
	\end{tikzpicture}} && {\begin{tikzpicture}

		\node at (0,0) {$0$};
		\node at (0.3,0) {$K$};
		\node at (0.6,0) {$K$};
	
		\draw[->] (0.6,-0.15) -- (0.3,-0.15);
		
	\end{tikzpicture}} \\
	3 && {\begin{tikzpicture}

		\node at (0,0) {$K$};
		\node at (0.3,0) {$K$};
		\node at (0.6,0) {$0$};
	
		\draw[->] (0,0.4) -- (0.3,0.4);
		
	\end{tikzpicture}} &&&& {\begin{tikzpicture}

		\node at (0,0) {$0$};
		\node at (0.3,0) {$0$};
		\node at (0.6,0) {$K$};
		
	\end{tikzpicture}}
	\arrow[from=2-5, to=3-6]
	\arrow[from=3-2, to=4-3]
	\arrow[from=3-4, to=2-5]
	\arrow[from=3-6, to=4-7]
	\arrow[from=4-3, to=3-4]
\end{tikzcd}\]
If we consider the mesh determined by $I^{[\Q]}_2$, we obtain the distinguished triangle
\[\begin{tikzcd}
	{\begin{tikzpicture}

		\node at (0,0) {$K$};
		\node at (0.3,0) {$0$};
		\node at (0.6,0) {$0$};
		
	\end{tikzpicture}} & {\begin{tikzpicture}

		\node at (0,0) {$K$};
		\node at (0.3,0) {$K$};
		\node at (0.6,0) {$K$};
	
		\draw[->] (0.28,-0.15) -- (0,-0.15);
		\draw[->] (0.6,-0.15) -- (0.32,-0.15);
		
	\end{tikzpicture}} & {\begin{tikzpicture}

		\node at (0,0) {$0$};
		\node at (0.3,0) {$K$};
		\node at (0.6,0) {$K$};

		\draw[->] (0.6,-0.15) -- (0.32,-0.15);
		
	\end{tikzpicture}} & {\Sigma\!\!\begin{tikzpicture}

		\node at (0,0) {$K$};
		\node at (0.3,0) {$0$};
		\node at (0.6,0) {$0$};
		
	\end{tikzpicture}}
	\arrow[from=1-1, to=1-2]
	\arrow[from=1-2, to=1-3]
	\arrow[from=1-3, to=1-4]
\end{tikzcd}\]
in $\pvd(\Pi_Q)$ with corners in $\R([\Q])$. One can compute that
\[
\langle {\begin{tikzpicture}[baseline={([yshift=-0.35em]current bounding box.center)}]

	\node at (0,0) {$K$};
	\node at (0.3,0) {$0$};
	\node at (0.6,0) {$0$};
	
\end{tikzpicture}}, {\begin{tikzpicture}[baseline={([yshift=-0.35em]current bounding box.center)}]

	\node at (0,0) {$0$};
	\node at (0.3,0) {$K$};
	\node at (0.6,0) {$0$};

\end{tikzpicture}} \rangle_{[\Q]} = -1, \quad \langle {\begin{tikzpicture}[baseline={([yshift=-0.2em]current bounding box.center)}]

	\node at (0,0) {$K$};
	\node at (0.3,0) {$K$};
	\node at (0.6,0) {$K$};

	\draw[->] (0.28,-0.275) -- (0,-0.275);
	\draw[->] (0.6,-0.275) -- (0.32,-0.275);
	
\end{tikzpicture}}, {\begin{tikzpicture}[baseline={([yshift=-0.35em]current bounding box.center)}]

	\node at (0,0) {$0$};
	\node at (0.3,0) {$K$};
	\node at (0.6,0) {$0$};

\end{tikzpicture}} \rangle_{[\Q]} = 0 \quad \textrm{and} \quad \langle {\begin{tikzpicture}[baseline={([yshift=-0.2em]current bounding box.center)}]

	\node at (0,0) {$0$};
	\node at (0.3,0) {$K$};
	\node at (0.6,0) {$K$};

	\draw[->] (0.6,-0.275) -- (0.32,-0.275);
	
\end{tikzpicture}}, {\begin{tikzpicture}[baseline={([yshift=-0.35em]current bounding box.center)}]

	\node at (0,0) {$0$};
	\node at (0.3,0) {$K$};
	\node at (0.6,0) {$0$};

\end{tikzpicture}} \rangle_{[\Q]} = 0.
\]
Hence, the equality
\[
\langle {\begin{tikzpicture}[baseline={([yshift=-0.35em]current bounding box.center)}]

	\node at (0,0) {$K$};
	\node at (0.3,0) {$0$};
	\node at (0.6,0) {$0$};
	
\end{tikzpicture}}, {\begin{tikzpicture}[baseline={([yshift=-0.35em]current bounding box.center)}]

	\node at (0,0) {$0$};
	\node at (0.3,0) {$K$};
	\node at (0.6,0) {$0$};

\end{tikzpicture}} \rangle_{[\Q]} - \langle {\begin{tikzpicture}[baseline={([yshift=-0.2em]current bounding box.center)}]

	\node at (0,0) {$K$};
	\node at (0.3,0) {$K$};
	\node at (0.6,0) {$K$};

	\draw[->] (0.28,-0.275) -- (0,-0.275);
	\draw[->] (0.6,-0.275) -- (0.32,-0.275);
	
\end{tikzpicture}}, {\begin{tikzpicture}[baseline={([yshift=-0.35em]current bounding box.center)}]

	\node at (0,0) {$0$};
	\node at (0.3,0) {$K$};
	\node at (0.6,0) {$0$};

\end{tikzpicture}} \rangle_{[\Q]} + \langle {\begin{tikzpicture}[baseline={([yshift=-0.2em]current bounding box.center)}]

	\node at (0,0) {$0$};
	\node at (0.3,0) {$K$};
	\node at (0.6,0) {$K$};

	\draw[->] (0.6,-0.275) -- (0.32,-0.275);
	
\end{tikzpicture}}, {\begin{tikzpicture}[baseline={([yshift=-0.35em]current bounding box.center)}]

	\node at (0,0) {$0$};
	\node at (0.3,0) {$K$};
	\node at (0.6,0) {$0$};

\end{tikzpicture}} \rangle_{[\Q]} = 0
\]
is false. More strongly, one can check that the square matrix indexed by $\ind([\Q])$ whose $(M,N)$-entry is $\langle M,N\rangle_{[\Q]}$ is invertible. Therefore, we cannot impose more relations in the definition of $K_0(\D([\Q]))$ if we want to work with the Euler form defined above.
\end{remark}

When $Q$ is a Dynkin quiver of simply laced type $\Delta$, the Grothendieck group of $\D^b(\modcat KQ)$ can be naturally identified with the root lattice $\mathsf{Q}$ of $\Delta$. With this identification, the symmetrization of the Euler form coincides with the usual symmetric bilinear form $(-,-)$ on $\mathsf{Q}$. We generalize this result to our setting.

\begin{thm}\label{thm:symmetrization of Euler form is Cartan form}
Let $[\bm{i}]$ be a commutation class of reduced words for $w_0$. If $M$ and $N$ are objects in $\D([\bm{i}])$, then
\[
\langle M,N\rangle_{[\bm{i}]} + \langle N,M\rangle_{[\bm{i}]} = ([M]_{\pvd(\Pi_{Q})},[N]_{\pvd(\Pi_{Q})})
\]
where $[M]_{\pvd(\Pi_{Q})}$ and $[N]_{\pvd(\Pi_{Q})}$ denote the classes of $M$ and $N$ in $K_0(\pvd(\Pi_{Q}))$.
\end{thm}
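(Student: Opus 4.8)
The plan is to reduce everything to the behaviour of $\Ext^1$ and then to a purely combinatorial statement about windows in $\widehat{\bm{i}}$. Since both sides of the asserted identity are bilinear in $M$ and $N$ and change sign under $\Sigma$ in either variable (for $\langle-,-\rangle_{[\bm{i}]}$ because $\Ext^k_{[\bm{i}]}(\Sigma M,N)\cong\Ext^{k-1}_{[\bm{i}]}(M,N)$, and for the right-hand side because $[\Sigma M]_{\pvd(\Pi_Q)}=-[M]_{\pvd(\Pi_Q)}$), I may assume that $M$ and $N$ are indecomposable objects of $\C([\bm{i}])$. I also treat $\Delta=\mathsf{A}_1$ as a trivial separate case.

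First I would rewrite the right-hand side. The symmetric bilinear form $(-,-)$ on $K_0(\pvd(\Pi_Q))=\mathsf{Q}$ agrees with the Euler pairing, so $([M]_{\pvd(\Pi_Q)},[N]_{\pvd(\Pi_Q)})=\sum_k(-1)^k\dim_K\Ext^k_{\Pi_Q}(M,N)$; indeed both are bilinear, factor through $K_0$, and take the same (Cartan) values on the basis $\{[S_i]\}$ by the computation of $\Ext^\bullet(S_i,S_j)$ together with the $2$-Calabi--Yau property. Since $M,N$ lie in the heart, only $k\in\{0,1,2\}$ contribute, and the $2$-Calabi--Yau isomorphism $\Ext^2_{\Pi_Q}(M,N)\cong D\Hom_{\Pi_Q}(N,M)$ turns this into
\[
([M]_{\pvd(\Pi_Q)},[N]_{\pvd(\Pi_Q)})=\dim_K\Hom_{\pvd(\Pi_Q)}(M,N)+\dim_K\Hom_{\pvd(\Pi_Q)}(N,M)-\dim_K\Ext^1_{\pvd(\Pi_Q)}(M,N).
\]
On the other side, Proposition \ref{prop:derived category is hereditary} gives $\langle M,N\rangle_{[\bm{i}]}=\dim_K\Hom_{\D([\bm{i}])}(M,N)-\dim_K\Ext^1_{[\bm{i}]}(M,N)$, and since $\C([\bm{i}])\hookrightarrow\D([\bm{i}])$ is fully faithful, $\Hom_{\D([\bm{i}])}(M,N)=\Hom_{\pvd(\Pi_Q)}(M,N)$. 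Comparing, the theorem reduces to the single identity
\[
\dim_K\Ext^1_{[\bm{i}]}(M,N)+\dim_K\Ext^1_{[\bm{i}]}(N,M)=\dim_K\Ext^1_{\pvd(\Pi_Q)}(M,N).
\]
To prove it I would exploit the all-or-nothing nature of the ideal $\I_{[\bm{i}]}$: for indecomposables $A,B\in\R([\bm{i}])$ one has $\Hom_{\D([\bm{i}])}(A,B)=\Hom_{\pvd(\Pi_Q)}(A,B)$ when $\I_{[\bm{i}]}(A,B)=0$, and $\Hom_{\D([\bm{i}])}(A,B)=0$ otherwise. Applying this to $\Ext^1_{[\bm{i}]}(M,N)=\Hom_{\D([\bm{i}])}(M,\Sigma N)$ and its mirror, the identity follows once I establish the dichotomy: writing $c_{[\bm{i}]}(M)=\widehat{\beta}^{\bm{i}}_a$ and $c_{[\bm{i}]}(N)=\widehat{\beta}^{\bm{i}}_b$ with $a,b\in\{1,\dots,N\}$, exactly one of $\I_{[\bm{i}]}(M,\Sigma N)$, $\I_{[\bm{i}]}(N,\Sigma M)$ vanishes when $a\neq b$ (while if $M\cong N$ both $\Ext^1$'s vanish, since no indecomposable has a self-extension). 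The $2$-Calabi--Yau symmetry $\dim_K\Ext^1_{\pvd(\Pi_Q)}(M,N)=\dim_K\Ext^1_{\pvd(\Pi_Q)}(N,M)$ then yields the displayed equality.

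The heart of the matter---and the step I expect to cost the most---is the combinatorial description of when $\I_{[\bm{i}]}$ vanishes. The key lemma I would establish is that the slices $\Upsilon_{[\bm{j}]}$ arising from the classes $[\bm{j}]$ in the $r$-cluster point of $[\bm{i}]$ are exactly the windows $\{\widehat{\beta}^{\bm{i}}_{m+1},\dots,\widehat{\beta}^{\bm{i}}_{m+N}\}$ of $N$ consecutive entries of $\widehat{\bm{i}}$, since each source/sink reflection shifts such a window by one and every class has a source and a sink. Consequently $\I_{[\bm{i}]}(A,B)=0$ precisely when the indices of $c_{[\bm{i}]}(A)$ and $c_{[\bm{i}]}(B)$ differ by at most $N-1$. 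Combined with the index rule $\Sigma\colon\widehat{\beta}^{\bm{i}}_k\mapsto\widehat{\beta}^{\bm{i}}_{k-N}$, immediate from $\widehat{\beta}^{\bm{i}}_{s+tN}=((-1)^t\beta^{\bm{i}}_s,-t)$, the condition $\I_{[\bm{i}]}(M,\Sigma N)=0$ becomes $|a-(b-N)|\le N-1$, i.e.\ $a<b$, while $\I_{[\bm{i}]}(N,\Sigma M)=0$ becomes $b<a$; these are mutually exclusive and exhaustive for $a\neq b$, which is exactly the dichotomy. The care needed is to check rigorously that every length-$N$ window is realized by a genuine tilting functor---so that ``co-sliceable'' and ``index distance $\le N-1$'' really coincide---which is where the bookkeeping of reflection functors from Section \ref{subsection:combinatorial reflection functors} and Proposition \ref{prop:tilting for the repetitive category, general case} enters.
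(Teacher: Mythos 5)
Your reduction of the theorem to the single identity
$\dim_K\Ext^1_{[\bm{i}]}(M,N)+\dim_K\Ext^1_{[\bm{i}]}(N,M)=\dim_K\Ext^1_{\pvd(\Pi_Q)}(M,N)$
for indecomposables $M,N\in\C([\bm{i}])$ is correct, and it is a genuinely different route from the paper's proof (which tilts $N$ to a simple $S_i$ at a source and computes both pairings directly via the reflection functor and a defect lemma). The gap is in the combinatorial step. Your key lemma --- that the slices $\Upsilon_{[\bm{j}]}$, for $[\bm{j}]$ in the $r$-cluster point of $[\bm{i}]$, are exactly the windows $\{\widehat{\beta}^{\bm{i}}_{m+1},\dots,\widehat{\beta}^{\bm{i}}_{m+N}\}$ --- is false. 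Reflecting at a source $j$ of $[\bm{i}]$ replaces the single vertex $(\alpha_j,0)$ of the slice by its $\Sigma^{-1}$-shift, i.e.\ it sends the set of $\bm{i}$-indices $\{1,\dots,N\}$ to $(\{1,\dots,N\}\setminus\{m\})\cup\{m+N\}$, where $\widehat{\beta}^{\bm{i}}_m=(\alpha_j,0)$; this is a one-step window shift only when $m=1$, i.e.\ when $j$ happens to begin the chosen representative $\bm{i}$. Concretely, in type $\mathsf{A}_3$ with $\bm{i}=(1,3,2,1,3,2)$, the vertex $3$ is a source of $[\bm{i}]$ and the slice of $r_3[\bm{i}]$ is $\{1,3,4,5,6,8\}$ in $\bm{i}$-indices: not a window, and containing two indices that differ by $7>N-1$. (Every window is a slice, so the ``if'' half of your criterion survives; it is the converse that fails --- there are strictly more slices than windows.)

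The same example breaks the dichotomy you derive from the lemma: for $M=S_1$ (index $1$) and $N=S_3$ (index $2$), \emph{both} $\I_{[\bm{i}]}(M,\Sigma N)$ and $\I_{[\bm{i}]}(N,\Sigma M)$ vanish, since the slices obtained from $r_1[\bm{i}]$ and $r_3[\bm{i}]$ contain $\{2,7\}$ and $\{1,8\}$ respectively (and their $\Sigma$-translates contain $\{1,-4\}$ and $\{2,-5\}$). So ``exactly one vanishes when $a\neq b$'' is not true; the theorem is unharmed here only because $\Ext^1_{\pvd(\Pi_Q)}(S_1,S_3)=0$. What you actually need is the conditional dichotomy: exactly one of the two ideals vanishes \emph{whenever} $\Ext^1_{\pvd(\Pi_Q)}(M,N)\neq 0$. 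Proving that requires controlling which pairs of vertices joined by a path in $\widehat{\Upsilon}_{[\bm{i}]}$ can lie in a common slice, which is a genuinely harder statement than index bookkeeping and is not supplied by your proposal. As it stands, the crucial step is unproved and the stated form of it is false.
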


\begin{proof}
By additivity, we may assume that $M$ and $N$ are indecomposable. Tilting functors preserve the formula above, so we can also suppose that $N = S_i$ for a source $i \in \Delta_0$ of $[\bm{i}]$. Finally, by replacing $M$ with a suitable shift, we may take $M \in \C([\bm{i}])$. 

Since $S_i$ is injective, we have
\[
\langle M,S_i\rangle_{[\bm{i}]} = \dim_K\Hom_{\C([\bm{i}])}(M,S_i)
\]
by Proposition \ref{prop:projective/injective if and only if Ext1 = 0}. If $M \cong S_i$, we thus obtain $\langle S_i,S_i\rangle_{[\bm{i}]} = 1$ and
\[
([S_i]_{\pvd(\Pi_Q)},[S_i]_{\pvd(\Pi_Q)}) = (\alpha_i,\alpha_i) = 2 = 2\langle S_i,S_i\rangle_{[\bm{i}]},
\]
as needed. Hence, we may suppose $M \not\cong S_i$. In this case, the tilting functor $T_i^{-1}: \D([\bm{i}]) \longrightarrow \D(r_i[\bm{i}])$ takes $M$ to an object of $\C(r_i[\bm{i}])$ by Proposition \ref{prop:equivalence under reflection, general case}. Since $S_i$ is projective with respect to $r_i[\bm{i}]$, we get
\[
\langle S_i,M\rangle_{[\bm{i}]} = \langle T_i^{-1}(S_i),T_i^{-1}(M)\rangle_{r_i[\bm{i}]} = -\langle S_i,T_i^{-1}(M)\rangle_{r_i[\bm{i}]} = -\dim_K\Hom_{\C(r_i[\bm{i}])}(S_i,T_i^{-1}(M)),
\]
where we used item (1) of Proposition \ref{prop:properties of spherical twists} and Proposition \ref{prop:projective/injective if and only if Ext1 = 0}. Therefore, by Lemma \ref{lemma:equality of defects after applying reflection} below,
\[
\langle M,S_i\rangle_{[\bm{i}]} + \langle S_i,M\rangle_{[\bm{i}]} = \dim_KM(i) - \dim_KT_i^{-1}(M)(i),
\]
which we can write as
\[
((1 - s_i)[M]_{\pvd(\Pi_Q)},\varpi_i) = ([M]_{\pvd(\Pi_Q)},(1-s_i)\varpi_i) = ([M]_{\pvd(\Pi_Q)},\alpha_i) = ([M]_{\pvd(\Pi_Q)},[S_i]_{\pvd(\Pi_Q)}),
\]
finishing the proof.
\end{proof}

\begin{lemma}\label{lemma:equality of defects after applying reflection}
Let $[\bm{i}]$ be a commutation class of reduced words for $w_0$ and let $i \in \Delta_0$ be a source of $[\bm{i}]$. If $M \in \ind(r_i[\bm{i}])$ and $M \not\cong S_i$, then we have an equality:
\[
\dim_KM(i) - \dim_K\Hom_{\C(r_i[\bm{i}])}(S_i,M) = \dim_KT_i(M)(i) - \dim_K\Hom_{\C([\bm{i}])}(T_i(M),S_i).
\]
\end{lemma}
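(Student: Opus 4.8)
The plan is to reduce the whole statement to an elementary linear-algebra computation with the representation of $M$ around the vertex $i$, once all the relevant objects are recognized as $\Lambda_Q$-modules. First I would observe that, since $i$ is a source of $[\bm{i}]$, it is a sink of $r_i[\bm{i}]$, and $S_i$ lies in both $\C([\bm{i}])$ and $\C(r_i[\bm{i}])$ by Corollary \ref{cor:simples are in C([i])}. By Proposition \ref{prop:equivalence under reflection, general case} the object $T_i(M)$ belongs to $\C([\bm{i}])$, hence is concentrated in degree zero by Theorem \ref{thm:indecomposables are concentrated in zero}; Proposition \ref{prop:computing the spherical twist} then applies and yields two facts I will use repeatedly: the map $M_{\mathrm{in}(i)}$ is surjective, and $T_i(M) \cong \Sigma_i(M)$. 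All four quantities in the statement can then be evaluated inside $\modcat \Lambda_Q$, using that the $\Hom$-spaces of $\C([\bm{i}])$ and $\C(r_i[\bm{i}])$ coincide with those of the canonical heart $\modcat \Lambda_Q$ for objects concentrated in degree zero.

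Next I would record the two standard $\Hom$-formulas for the simple module $S_i$. For any representation $N$ of $\overline{Q}$ satisfying the preprojective relations, a morphism $S_i \to N$ amounts to a vector of $N(i)$ annihilated by all arrows leaving $i$, so $\dim_K \Hom_{\Lambda_Q}(S_i,N) = \dim_K \ker N_{\mathrm{out}(i)}$; dually, a morphism $N \to S_i$ is a functional on $N(i)$ vanishing on the images of all arrows entering $i$, so $\dim_K \Hom_{\Lambda_Q}(N,S_i) = \dim_K \operatorname{coker} N_{\mathrm{in}(i)}$. Applying the first formula to $N = M$ gives $\dim_K \Hom_{\C(r_i[\bm{i}])}(S_i,M) = \dim_K \ker M_{\mathrm{out}(i)}$, so by rank--nullity for $M_{\mathrm{out}(i)} \colon M(i) \to \widetilde{M}(i)$ the left-hand side of the asserted equality is exactly $\operatorname{rank} M_{\mathrm{out}(i)}$. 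For the right-hand side I would read off from the definition of $\Sigma_i$ that $\Sigma_i(M)(i) = \ker M_{\mathrm{in}(i)}$ and that its incoming map is $M_{\mathrm{out}(i)} M_{\mathrm{in}(i)}$; the second formula then gives $\dim_K \Hom_{\C([\bm{i}])}(T_i(M),S_i) = \dim_K \ker M_{\mathrm{in}(i)} - \operatorname{rank}\!\left(M_{\mathrm{out}(i)} M_{\mathrm{in}(i)}\right)$, so the right-hand side equals $\operatorname{rank}\!\left(M_{\mathrm{out}(i)} M_{\mathrm{in}(i)}\right)$.

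Finally I would invoke the surjectivity of $M_{\mathrm{in}(i)}$, which forces $\operatorname{rank}\!\left(M_{\mathrm{out}(i)} M_{\mathrm{in}(i)}\right) = \operatorname{rank} M_{\mathrm{out}(i)}$, and conclude that both sides equal $\operatorname{rank} M_{\mathrm{out}(i)}$, proving the lemma. The computation itself is short, so I expect no serious obstacle; the only genuine care is needed in justifying the passage from $\Hom$-spaces in $\C([\bm{i}])$ and $\C(r_i[\bm{i}])$ to $\Hom$-spaces in $\modcat \Lambda_Q$, and in correctly extracting the maps of $\Sigma_i(M)$ at the vertex $i$ from the defining diagram of the reflection functor, so that the preprojective relation $M_{\mathrm{in}(i)} M_{\mathrm{out}(i)} = 0$ and the surjectivity of $M_{\mathrm{in}(i)}$ are used precisely where the two rank--nullity arguments must line up.
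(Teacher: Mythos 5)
Your proposal is correct and follows essentially the same route as the paper's proof: both identify the left-hand side with $\operatorname{rank} M_{\mathrm{out}(i)}$ via the kernel description of $\Hom(S_i,M)$, identify the right-hand side with $\operatorname{rank}\bigl(M_{\mathrm{out}(i)}M_{\mathrm{in}(i)}\bigr)$ by reading off the data of $\Sigma_i(M)$ at the vertex $i$, and conclude using the surjectivity of $M_{\mathrm{in}(i)}$ guaranteed by Propositions \ref{prop:equivalence under reflection, general case} and \ref{prop:computing the spherical twist}. The only cosmetic difference is that you phrase the second computation via the cokernel formula for $\Hom(N,S_i)$ while the paper states it directly, and you make explicit the (harmless) identification of $\Hom$-spaces in $\C([\bm{i}])$ with those in $\modcat\Lambda_Q$.
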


Note that we indeed have $T_i(M) \in \ind([\bm{i}])$ by Proposition \ref{prop:equivalence under reflection, general case}.

\begin{proof}
Viewing the objects above as representations of $\overline{Q}$, observe that the dimension of the $\Hom$-space from $S_i$ to $M$ is precisely $\dim_K\ker M_{\mathrm{out}(i)}$. Therefore, the left-hand side in the statement equals $\dim_K\operatorname{im}M_{\mathrm{out}(i)}$. Since $T_i(M)$ is concentrated in degree $0$ by Proposition \ref{prop:equivalence under reflection, general case}, we know that $M_{\mathrm{in}(i)}$ is surjective and that $T_i(M) \cong \Sigma_i(M)$ by Proposition \ref{prop:computing the spherical twist}. Consequently, in the diagram
\[\begin{tikzcd}[column sep=5em]
	{\widetilde{T_i(M)}(i)} & {T_i(M)(i)} & {\widetilde{T_i(M)}(i)}
	\arrow["{(T_i(M))_{\mathrm{in}(i)}}", from=1-1, to=1-2]
	\arrow["{(T_i(M))_{\mathrm{out}(i)}}", from=1-2, to=1-3]
\end{tikzcd}\]
defining the representation associated with $T_i(M)$ around the vertex $i$, we have
\[
T_i(M)(i) = \ker M_{\mathrm{in}(i)}, \quad \widetilde{T_i(M)}(i) = \widetilde{M}(i) \quad \textrm{and} \quad (T_i(M))_{\mathrm{in}(i)} = M_{\mathrm{out}(i)}M_{\mathrm{in}(i)}.
\]
The dimension of the $\Hom$-space from $T_i(M)$ to $S_i$ equals the dimension of the cokernel of $(T_i(M))_{\mathrm{in}(i)}$; hence, the right-hand side in the statement is $\dim_K\operatorname{im}M_{\mathrm{out}(i)}M_{\mathrm{in}(i)}$. The lemma then follows because $M_{\mathrm{in}(i)}$ is a surjective map.
\end{proof}

\section{The case of Q-data}\label{section:the case of Q-data}

We now specialize to the case of a commutation class $[\Q]$ coming from a Q-datum $\Q$. In the first subsection, we show how to see the generalized twisted Coxeter element $\tau_{\Q}$ as an autoequivalence of $\R([\Q])$ and $\D([\Q])$. In the subsequent section, we prove that $\D([\Q])$ has a kind of partial Serre duality (Theorem \ref{thm:Serre duality}). Finally, we apply our results to reinterpret a formula for computing inverse quantum Cartan matrices due to \cite{FujitaOh}.

\begin{remark}
To simplify the notation, we will replace $[\Q]$ for $\Q$ in most of the symbols introduced before. For example, we shall write $\C([\Q])$, $M^{[\Q]}_{\alpha}$, and $P^{[\Q]}_{\im}$ as $\C(\Q)$, $M^{\Q}_{\alpha}$, and $P^{\Q}_{\im}$. We will also adopt the convention from Remark \ref{rmk:notation from FO} for notating vertices of $\Delta$.

Moreover, instead of seeing the coordinate function $c_{\Q}$ as a map from $\ind(\R(\Q))$ to $\widehat{\mathsf{R}}$, we will use the bijection $\phi_{\Q}$ from Section \ref{subsection:phiQ and g-additive} to view it as a function $c_{\Q}: \ind(\R(\Q)) \longrightarrow \widehat{\Delta}^{\sigma}_0$. In this case, we remark that the suspension functor sends the indecomposable object with coordinate $(\im,p) \in \widehat{\Delta}^{\sigma}_0$ to the indecomposable object with coordinate $(\im^*,p+rh^{\vee})$ (see \cite[Corollary 3.40]{FujitaOh}). We also extend the terminology from Section \ref{section:Q-data combinatorics} and define the \emph{height of} $M \in \ind(\R(\Q))$ as the height of $c_{\Q}(M)$.
\end{remark}

\subsection{A categorification of the generalized twisted Coxeter element}\label{subsection:tau categorified} Let $\Q = (\Delta,\sigma,\xi)$ be a Q-datum. Recall from Section \ref{subsection:twisted Coxeter elements} the definition of $X_{\Q}^{\circ}$ and $X_{\Q}'$. Choose compatible readings $(x_1,\dots,x_n)$ and $(y_1,\dots,y_m)$ for them and define
\[
\tau_{\Q}^{\circ} = T_{\im_1}T_{\im_2} \dotsb T_{\im_n} \sigma,
\]
and
\[
T[X_{\Q}'] = T_{\jm_1}T_{\jm_2} \dotsb T_{\jm_m},
\]
where $\im_k = \pi(x_k)$ and $\jm_l = \pi(y_l)$ (we remind that $\pi: \widehat{\Delta}^{\sigma}_0 \to \Delta_0$ is the projection onto the first coordinate). By the same reasoning as in \cite[Lemma 3.18]{FujitaOh}, $\tau_{\Q}^{\circ}$ and $T[X_{\Q}']$ are independent of the choice of compatible reading. We set $\tau_{\Q}: \pvd(\Pi_{Q}) \longrightarrow \pvd(\Pi_{Q})$ as the composition
\[
\tau_{\Q} = T[X_{\Q}']^{-1} \circ \tau_{\Q}^{\circ} \circ T[X_{\Q}'],
\]
which is an autoequivalence of $\pvd(\Pi_{Q})$.

\begin{remark}
Note that $\tau_{\Q}^{\circ}$ and $\tau_{\Q}$ can either denote an element of the coset $\mathsf{W}\sigma$ or an autoequivalence of $\pvd(\Pi_Q)$. The context will always allow us to resolve this ambiguity.
\end{remark}

\begin{lemma}\label{lemma:tau respects reflection}
If $\im \in \Delta_0$ is a source of $\Q$, then $T_{\im}^{-1}\tau_{\Q}T_{\im} \cong \tau_{s_{\im}\Q}$. Consequently, we have $T\tau_{\Q}T^{-1} \cong \tau_{\Q'}$ for any tilting functor $T: \R(\Q) \longrightarrow \R(\Q')$.
\end{lemma}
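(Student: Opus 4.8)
The plan is to deduce the second assertion from the first by a short induction, and to establish the first by categorifying, step by step, the proof of the corresponding Weyl-group identity in Proposition \ref{prop:properties of Coxeter element}(1). For the reduction: a tilting functor $T\colon \R(\Q)\to\R(\Q')$ is by definition a composite of the basic equivalences of Proposition \ref{prop:tilting for the repetitive category, general case} and their inverses, and for Q-data the basic equivalence attached to a source $\im$ of $\Q$ is $T_{\im}^{-1}\colon\R(\Q)\to\R(s_{\im}\Q)$. Granting the first statement, this basic functor $F=T_{\im}^{-1}$ satisfies $F\tau_{\Q}F^{-1}=T_{\im}^{-1}\tau_{\Q}T_{\im}\cong\tau_{s_{\im}\Q}$. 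For the inverse of a basic functor, i.e.\ a reflection at a sink $\im$ of $\Q$, one applies the first statement to the Q-datum $s_{\im}^{-1}\Q$ (for which $\im$ is a source) to obtain $T_{\im}\tau_{\Q}T_{\im}^{-1}\cong\tau_{s_{\im}^{-1}\Q}$. Thus every basic tilting functor conjugates $\tau_{\Q}$ into $\tau$ of the reflected Q-datum, and an induction on the number of basic factors of $T$ yields $T\tau_{\Q}T^{-1}\cong\tau_{\Q'}$ in general.

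It remains to prove $T_{\im}^{-1}\tau_{\Q}T_{\im}\cong\tau_{s_{\im}\Q}$ for a source $\im$. The identity $s_{\im}^{-1}\tau_{\Q}s_{\im}=\tau_{s_{\im}\Q}$ already holds in $\mathsf{W}\sigma$ by Proposition \ref{prop:properties of Coxeter element}(1), so the task is to promote that equality of Weyl-group elements to an isomorphism of auto-equivalences. The tools available are exactly the defining relations of the spherical twists: the commutation and braid isomorphisms $T_{\im}T_{\jm}\cong T_{\jm}T_{\im}$ (for $\im\not\sim\jm$) and $T_{\im}T_{\jm}T_{\im}\cong T_{\jm}T_{\im}T_{\jm}$ (for $\im\sim\jm$) from Proposition \ref{prop:properties of spherical twists}(2),(3); the $\sigma$-equivariance $\sigma T_{\im}\sigma^{-1}\cong T_{\sigma(\im)}$, which follows from Proposition \ref{prop:properties of spherical twists}(4) since the auto-equivalence $\sigma$ sends $S_{\im}$ to $S_{\sigma(\im)}$; and the trivial cancellations $T_{\im}T_{\im}^{-1}\cong\id\cong T_{\im}^{-1}T_{\im}$. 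The plan is to run the computation establishing Proposition \ref{prop:properties of Coxeter element}(1) in \cite{FujitaOh} --- tracking how the sets $X_{\Q}^{\circ}$ and $X_{\Q}'$, hence the functors $\tau_{\Q}^{\circ}=T[X_{\Q}^{\circ}]\sigma$ and $T[X_{\Q}']$, are modified when $\Q$ is replaced by $s_{\im}\Q$ --- and to replace each elementary Weyl-group move by the corresponding natural isomorphism above. The independence of $\tau_{\Q}^{\circ}$ and $T[X_{\Q}']$ from the chosen compatible reading (already noted, by the argument of \cite[Lemma 3.18]{FujitaOh}) guarantees that these functors are well defined, so the lifted computation is coherent.

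The main obstacle is that $\tau_{\Q}$ is genuinely more than its image in $\mathsf{W}\sigma$: the factor $T[X_{\Q}']^{-1}$ contributes inverse twists $T_{\jm}^{-1}$, so $\tau_{\Q}$ is a specific word in the $T_{\jm}^{\pm 1}$ and $\sigma$ and is \emph{not} determined by its class, and one cannot simply invoke Proposition \ref{prop:properties of Coxeter element}(1) at the level of $K_0$. Consequently the delicate point is to check that every reduction used to verify $s_{\im}^{-1}\tau_{\Q}s_{\im}=\tau_{s_{\im}\Q}$ is realized by one of the four isomorphism types listed above and, in particular, that all intermediate expressions remain reduced words, so that no braid-group relation outside Proposition \ref{prop:properties of spherical twists} is secretly required; this reducedness is what makes the rigidity of the construction (the same rigidity already invoked for $\tau_{\Q}^{\circ}$ and $T[X_{\Q}']$) applicable. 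I expect this bookkeeping --- splitting into cases according to the position of $\im$ within its $\sigma$-orbit, for instance whether $\im$ attains the maximal height $\im=\im^{\circ}$, as in \cite{FujitaOh} --- to be the real work; once the moves are matched, the conclusion $T_{\im}^{-1}\tau_{\Q}T_{\im}\cong\tau_{s_{\im}\Q}$ is immediate.
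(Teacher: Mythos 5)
Your proposal is correct and takes essentially the same approach as the paper: the paper likewise argues that the computation proving \cite[Proposition 3.34]{FujitaOh} lifts verbatim to the spherical twists because they satisfy the same commutation and braid relations, and it flags exactly the caveat you identify --- that $T_{\jm}^{-1}$ cannot be replaced by $T_{\jm}$, so one must check the argument never invokes involutivity of the simple reflections (the paper notes this replacement is in fact unnecessary in the original proof). The deduction of the second statement from the first by decomposing the tilting functor into basic reflections is also as you describe.
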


\begin{proof}
Since the spherical twists satisfy the same commutation relations as the reflections $s_{\jm}$ ($\jm \in \Delta_0$), the proof is essentially the same as the one in \cite[Proposition 3.34]{FujitaOh}.  The only care that we should take into consideration is that, in \cite{FujitaOh}, all appearances of $s_{\jm}^{-1}$ are replaced by $s_{\jm}$ (we cannot do this on the level of spherical twists), but this replacement is not necessary for the proof.
\end{proof}

\begin{prop}\label{prop:tau preserve the indecomposables associated with Q}
Let $\Q = (\Delta,\sigma,\xi)$ be a Q-datum. Then $\tau_{\Q}: \pvd(\Pi_Q) \longrightarrow \pvd(\Pi_Q)$ can be restricted to an autoequivalence of the repetition category $\R(\Q)$. It also preserves the ideal $\I_{\Q}$ and induces an autoequivalence of $\D(\Q)$. Moreover, it sends the indecomposable object with coordinate $(\im,p) \in \widehat{\Delta}^{\sigma}$ to the indecomposable object with coordinate $(\sigma(\im),p-2)$.
\end{prop}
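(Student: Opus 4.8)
The plan is to verify the three assertions---that $\tau_{\Q}$ restricts to $\R(\Q)$, preserves $\I_{\Q}$, and acts as $(\im,p) \mapsto (\sigma(\im),p-2)$ on coordinates---by first establishing the coordinate formula and then deducing the structural statements from it. The cleanest route is to reduce everything to the case $\Q = \Q^{\circ}$, where $\tau_{\Q} = \tau_{\Q}^{\circ}$ has a transparent description, and then transport the result along tilting functors using Lemma \ref{lemma:tau respects reflection}.

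First I would treat the coordinate formula. Recall that on $K_0(\pvd(\Pi_Q))$, the functor $\tau_{\Q}$ acts as the element $\tau_{\Q} \in \mathsf{W}\sigma$ (since each $T_{\im}$ acts as $s_{\im}$ and the induced automorphism of the root lattice is $\sigma$). Combining this with the fact that the coordinate $c_{\Q}(M)$ is defined via $\phi_{\Q}$ from the class $[M]$ and the cohomological degree of $M$, the claim $c_{\Q}(\tau_{\Q}M) = (\sigma(\im),p-2)$ when $c_{\Q}(M) = (\im,p)$ is exactly the categorical lift of the recursion defining $\phi_{\Q}$ in Section \ref{subsection:phiQ and g-additive}: indeed $\psi_{\Q}(\im,p-2d_{\overline{\im}}) = \tau_{\Q}^{d_{\overline{\im}}}\psi_{\Q}(\im,p)$, so that applying $\tau_{\Q}$ should shift the height by $-2$ and apply $\sigma$ to the residue. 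I would make this precise by checking the statement on a single ``seed'' object and propagating. Concretely, it suffices to verify that $\tau_{\Q}$ sends the indecomposable of coordinate $(\im,\xi_{\im})$ (whose class is $\gamma^{\Q}_{\im} = (1-\tau_{\Q}^{d_{\overline{\im}}})\varpi_{\im}$) to the indecomposable of coordinate $(\sigma(\im),\xi_{\im}-2)$, and then use that $\phi_{\Q}$ intertwines the recursion with the $\tau_{\Q}$-action on $\widehat{\mathsf{R}}$. Since $\tau_{\Q}$ categorifies $\tau_{\Q} \in \mathsf{W}\sigma$ on the Grothendieck group and the objects are determined up to shift by their classes together with the degree, the coordinate formula follows once we confirm $\tau_{\Q}$ preserves the repetition category (so that the image is again in $\ind(\R(\Q))$).

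For the restriction to $\R(\Q)$, I would exploit Lemma \ref{lemma:tau respects reflection} and reduce to the case $\Q = \Q^{\circ}$. When $\Q = \Q^{\circ}$ we have $\tau_{\Q} = \tau_{\Q}^{\circ} = T_{\im_1}\dotsb T_{\im_n}\sigma$, where $\{\im_1,\dots,\im_n\} = I_{\Q}^{\circ}$ gives a source sequence in which each orbit representative appears once; the relevant point is that $\sigma$ permutes the objects of $\R(\Q)$ (since $\sigma Q = Q$ and $\sigma$ acts compatibly on residues and coordinates) and that the composite of spherical twists coming from a source sequence is a tilting functor up to the twisted shift of Remark \ref{rmk:the longest twist acts as twisted shift}. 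I would argue that $\tau_{\Q}^{\circ}$ maps $\C(\Q)$ into $\R(\Q)$ by tracking coordinates through the source sequence and the shift induced by $\sigma$, and then that it maps all of $\R(\Q)$ into $\R(\Q)$ by $\Sigma$-equivariance. For general $\Q$, writing $\tau_{\Q} = T^{-1}\tau_{\Q^{\circ}}T$ for a tilting functor $T \colon \R(\Q) \to \R(\Q^{\circ})$ via Lemma \ref{lemma:tau respects reflection} (applied to the source sequence connecting $\Q$ to $\Q^{\circ}$), and using that tilting functors preserve the repetition categories (Proposition \ref{prop:tilting for the repetitive category, general case}), gives the restriction. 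The same conjugation argument, together with the fact that tilting functors send $\I_{\Q}$ to $\I_{\Q^{\circ}}$ (Proposition \ref{prop:I is an ideal of the repetition category}) and that $\tau_{\Q^{\circ}}$ preserves $\I_{\Q^{\circ}}$, shows that $\tau_{\Q}$ preserves $\I_{\Q}$ and hence descends to $\D(\Q)$.

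The main obstacle I anticipate is making the reduction $\Q \to \Q^{\circ}$ fully rigorous at the level of functors rather than just coset elements: Lemma \ref{lemma:tau respects reflection} is stated for a single reflection at a source and its tilting-functor consequence, so I must ensure that the sequence of reflections taking $\Q$ to $\Q^{\circ}$ is a genuine source sequence for $\Q$ (this is where I would invoke the characterization of $\Q^{\circ}$ via $i^{\circ} = \max\{\xi_{\im}\}$ and check that reflecting the non-maximal vertices in each orbit is admissible). A secondary technical point is verifying that $\tau_{\Q^{\circ}} = \tau_{\Q^{\circ}}^{\circ}$ genuinely preserves $\R(\Q^{\circ})$ and acts correctly on the seed objects $(\im,\xi_{\im})$; here I would unwind the definition of $\tau_{\Q}^{\circ}$ as spherical twists along a compatible reading of $X_{\Q}^{\circ}$ and match it against the coordinate description of $\phi_{\Q}$ near height $\xi_{\im}$, relying on the commutativity established in Theorem \ref{thm:coordinate of a positive root}. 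Once the seed case and the source-sequence reduction are in place, the remaining bookkeeping is routine given the earlier machinery.
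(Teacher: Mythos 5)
Your overall strategy coincides with the paper's: reduce to $\Q = \Q^{\circ}$ via Lemma \ref{lemma:tau respects reflection}, decompose $\tau_{\Q}^{\circ}$ as $\sigma$ followed by spherical twists along a source sequence, and handle the ideal and the coordinates by conjugation under tilting functors. However, there is a genuine gap at the crucial step when $\sigma \neq \id$: the claim that ``$\sigma$ permutes the objects of $\R(\Q)$'' is false. The subcategory $\R(\Q)$ depends on the commutation class $[\Q]$, not merely on the quiver $Q$, and $\sigma$ sends $\C(\Q)$ to $\C(\Q^{\sigma})$ where $\Q^{\sigma} = s_{\im_n}\dotsb s_{\im_1}\Q$ is a genuinely different Q-datum (one reflects only at the $n$ vertices of $I^{\circ}_{\Q}$, not at all of $\Delta_0$, so the commutation class really changes). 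Already in type $\mathsf{A}_2$ the repetition categories $\R([\bm{i}])$ and $\R(r_1[\bm{i}])$ are distinct subcategories of $\pvd(\Pi_Q)$: of the two non-isomorphic indecomposable $\Lambda_Q$-modules with dimension vector $(1,1)$, each lies in exactly one of them. So ``$\sigma Q = Q$ and $\sigma$ acts compatibly on residues'' is not enough. The paper closes exactly this gap by invoking the explicit reduced word $\bm{i} = (\im_1,\dots,\im_n,\sigma(\im_1),\dots,\sigma(\im_n),\dots)$ of \cite[Proposition 3.31]{FujitaOh} together with $\sigma^{rh^{\vee}/2}(\im) = \im^*$ from \cite[Remark 3.1]{FujitaOh}, which show that applying $\sigma$ termwise to $\bm{i}$ produces a representative of $[\Q^{\sigma}]$; hence $\sigma(\R(\Q)) = \R(\Q^{\sigma})$ by Proposition \ref{prop:properties of spherical twists}(4), and composing with the tilting functor $T_{\im_1}\dotsb T_{\im_n}\colon \R(\Q^{\sigma}) \to \R(\Q)$ gives the restriction. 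Without identifying the target $\R(\Q^{\sigma})$ and supplying this input, your decomposition does not land where you need it to.

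Two secondary points. First, your derivation of the coordinate formula from the recursion defining $\phi_{\Q}$ only controls $\tau_{\Q}^{d_{\overline{\im}}}$, not a single application of $\tau_{\Q}$; to pass from $(\im,p)$ to $(\sigma(\im),p-2)$ you need in addition the identity $\tau_{\Q}^{(\xi_{\sigma(\im)}-\xi_{\im})/2}(\gamma^{\Q}_{\sigma(\im)}) = \gamma^{\Q}_{\im}$ (\cite[Lemma 3.38]{FujitaOh}), or, as in the paper, the cleaner route of checking the seed $S_{\im} \mapsto S_{\sigma(\im)}$, recognizing the image as the injective object of $\C(\Q^{\sigma})$ at height $\xi_{\im}-2$, and using that tilting functors preserve coordinates (Proposition \ref{prop:tilting for the repetitive category, general case}). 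Second, the composite of spherical twists along a source sequence is a tilting functor on the nose; Remark \ref{rmk:the longest twist acts as twisted shift} concerns the full word for $w_0$ and plays no role here.
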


\begin{proof}
Suppose first that $\sigma = \id$. In this case, $\tau_{\Q} = T_{\im_1}\dotsb T_{\im_n}$ where $(\im_1,\dots,\im_n)$ is a source sequence for $\Q$ where each vertex appears exactly once. Thus, $\tau_{\Q}$ restricts to a tilting functor from $\R(\Q')$ to $\R(\Q)$, where $\Q' = s_{\im_n}\dotsb s_{\im_1}\Q$. The height function defining $\Q'$ is a translation by $-2$ of the height function defining $\Q$. In particular, $\Q$ and $\Q'$ represent the same Dynkin quiver and we have $\R(\Q) = \R(\Q')$ (but the coordinate functions differ by $-2$). Therefore, all three statements above are proved.

From now on, suppose $\sigma \neq \id$. After applying a suitable tilting functor and using Lemma \ref{lemma:tau respects reflection}, we can assume that $\Q = \Q^{\circ}$. Let $(x_1,\dots,x_n)$ be a compatible reading of $X^{\circ}_{\Q}$ and denote $\im_k = \pi(x_k)$ for $1 \leq k \leq n$. By \cite[Proposition 3.31]{FujitaOh}, we have $\bm{i} \in [\Q]$ where
\[
\bm{i} = (\im_1, \dots, \im_n, \sigma(\im_1), \dots, \sigma(\im_n), \dots\dots,
\sigma^{rh^{\vee}/2-1}(\im_1), \ldots, \sigma^{rh^{\vee}/2-1}(\im_n)).
\]
By the definition of $[\Q]$, we deduce that $(\im_1,\dots,\im_n)$ is a source sequence for $\Q$. Denote $\Q^{\sigma} = s_{\im_n}\dotsb s_{\im_1}\Q$. Hence, we have $\bm{i}^{\sigma} \in [\Q^{\sigma}]$ for
\[
\bm{i}^{\sigma} = (\sigma(\im_1), \dots, \sigma(\im_n), \dots\dots,
\sigma^{rh^{\vee}/2-1}(\im_1), \ldots, \sigma^{rh^{\vee}/2-1}(\im_n),\im_1^*,\dots,\im_n^*).
\]
By \cite[Remark 3.1]{FujitaOh}, we have $\sigma^{rh^{\vee}/2}(\im_k) = \im_k^*$ for $1 \leq k \leq n$, so $\bm{i}^{\sigma}$ is obtained from $\bm{i}$ by applying $\sigma$ to its terms. We deduce from item (4) in Proposition \ref{prop:properties of spherical twists} that $\sigma$, now viewed as an autoequivalence of $\pvd(\Pi_Q)$, restricts to a functor $\R(\Q) \longrightarrow \R(\Q^{\sigma})$. On the other hand, by Proposition \ref{prop:tilting for the repetitive category, general case}, the composition $T_{\im_1}\dotsb T_{\im_n}$ restricts to a tilting functor $\R(\Q^{\sigma}) \longrightarrow \R(\Q)$. Since $\tau_{\Q} = T_{\im_1}\dotsb T_{\im_n}\sigma$, this proves the first statement. For the second statement, by Proposition \ref{prop:I is an ideal of the repetition category}, it suffices to prove that $\sigma: \R(\Q) \longrightarrow \R(\Q^{\sigma})$ sends $\I_{\Q}$ to $\I_{\Q^{\sigma}}$, which follows from item (4) in Proposition \ref{prop:properties of spherical twists} and the fact that $\sigma$ sends objects concentrated in degree $0$ to objects concentrated in degree $0$.

For the last part, since tilting functors preserve coordinates, we may assume that $\Q = \Q^{\circ}$, $p = \xi_{\im}$, and $\im$ starts the reduced word $\bm{i}$ from the previous paragraph. In this case, the indecomposable object of $\R(\Q)$ with coordinate $(\im,p)$ is $S_{\im}$, and the functor $\sigma$ sends it to $S_{\sigma(\im)}$. Since $S_{\sigma(\im)} \cong M^{\bm{i}^{\sigma}}_1$, this object is injective in $\C(\Q^{\sigma})$ and its coordinate in $\R(\Q^{\sigma})$ is $(\sigma(\im),\xi^{\sigma}_{\sigma(\im)})$, where $\xi^{\sigma}$ denotes the height function defining $\Q^{\sigma}$. Since $\Q = \Q^{\circ}$, one can easily check that $\xi^{\sigma}_{\sigma(\im)} = \xi_{\im} - 2 = p-2$. Since $\tau_{\Q}$ is the composition of $\sigma$ with a tilting functor, we conclude that the coordinate of $\tau_{\Q}(S_{\im})$ is also $(\sigma(\im),p-2)$, as desired.
\end{proof}

\begin{cor}\label{cor:power of tau sends projective to shift of injective}
Let $\Q = (\Delta, \sigma, \xi)$ be a Q-datum and take $M \in \ind(\Q)$ of residue $\im \in \Delta_0$. Then $\Sigma^k\tau_{\Q}^{d_{\overline{\im}}}(M) \in \C(\Q)$ for some $k \in \{0,1\}$. Moreover, we have $k = 1$ if and only if $M$ is projective and, in this case,
\[
\Sigma\tau_{\Q}^{d_{\overline{\im}}}(M) \cong I^{\Q}_{\im^*}.
\]
\end{cor}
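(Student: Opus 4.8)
The plan is to translate the whole statement into the language of the coordinate function $c_{\Q}\colon \ind(\R(\Q)) \longrightarrow \widehat{\Delta}^{\sigma}_0$, which is a bijection, so that an isomorphism of indecomposable objects becomes an equality of coordinates. Write $d = d_{\overline{\im}}$ and $c_{\Q}(M) = (\im, p)$. Since $M \in \C(\Q)$, its coordinate lies in $(\Gamma_{\Q})_0$, that is $\xi_{\im^*} - rh^{\vee} < p \leq \xi_{\im}$. By Proposition \ref{prop:tau preserve the indecomposables associated with Q}, $\tau_{\Q}$ acts on coordinates by $(\jm,q) \mapsto (\sigma(\jm), q-2)$, hence $\tau_{\Q}^{d}$ acts by $(\jm,q) \mapsto (\sigma^{d}(\jm), q-2d)$. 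As $d$ is exactly the cardinality of the $\sigma$-orbit of $\im$, we have $\sigma^{d}(\im) = \im$, so $c_{\Q}(\tau_{\Q}^{d}(M)) = (\im, p-2d)$.

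First I would decide when $(\im, p-2d)$ belongs to $(\Gamma_{\Q})_0$. The upper bound $p-2d \leq \xi_{\im}$ is automatic, so the only condition is the lower bound $p-2d > \xi_{\im^*} - rh^{\vee}$. By the definition of the projective vertex as the vertex of residue $\im$ of smallest height, $M$ is projective precisely when $p$ attains this minimal height; and by Lemma \ref{lemma:height of projective vertex}, together with $d_{\overline{\im^*}} = d_{\overline{\im}} = d$, that minimal height equals $\xi_{\im^*} - rh^{\vee} + 2d$. Consequently, if $M$ is not projective then $p \geq \xi_{\im^*} - rh^{\vee} + 4d$, whence $p-2d > \xi_{\im^*} - rh^{\vee}$ and $(\im, p-2d) \in (\Gamma_{\Q})_0$; this gives $\tau_{\Q}^{d}(M) \in \C(\Q)$, so $k=0$. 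If instead $M$ is projective, then $p = \xi_{\im^*} - rh^{\vee} + 2d$ and $p-2d = \xi_{\im^*} - rh^{\vee}$ falls just outside the admissible range, so $\tau_{\Q}^{d}(M) \notin \C(\Q)$ and $k=0$ fails.

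It then remains to identify $\Sigma\tau_{\Q}^{d}(M)$ in the projective case. Here I would invoke the coordinate description of the suspension functor, which sends $(\jm,q)$ to $(\jm^*, q+rh^{\vee})$; applying it to $(\im, \xi_{\im^*} - rh^{\vee})$ yields the coordinate $(\im^*, \xi_{\im^*})$. This is exactly the injective vertex associated with $\im^*$, namely the vertex of residue $\im^*$ of largest height $\xi_{\im^*}$, so $\Sigma\tau_{\Q}^{d}(M) \cong I^{\Q}_{\im^*}$. In particular $\Sigma\tau_{\Q}^{d}(M) \in \C(\Q)$, which confirms that $k=1$ works in this case and that $k \in \{0,1\}$ in all cases, with $k=1$ occurring exactly when $M$ is projective.

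The comparisons of heights against the two bounds defining $(\Gamma_{\Q})_0$ are routine once this coordinate dictionary is assembled; the single point that needs care is the orbit-theoretic identity $d_{\overline{\im^*}} = d_{\overline{\im}}$, which is what makes the projective height of residue $\im$ sit exactly $2d$ above the excluded endpoint $\xi_{\im^*} - rh^{\vee}$. I expect this to be the only genuine obstacle, and I would settle it either directly from Table \ref{table:folding} (the involution $*$ commutes with $\sigma$ and therefore preserves $\sigma$-orbit sizes) or by observing that $*$ is an automorphism of the $\sigma$-weighted graph. Everything else follows by matching coordinates and using the bijectivity of $c_{\Q}$ to upgrade equalities of coordinates to isomorphisms.
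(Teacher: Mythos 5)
Your proposal is correct and follows essentially the same route as the paper: both arguments track coordinates via Proposition \ref{prop:tau preserve the indecomposables associated with Q} (so that $\tau_{\Q}^{d_{\overline{\im}}}$ shifts $(\im,p)$ to $(\im,p-2d_{\overline{\im}})$), compare against the bounds defining $(\Gamma_{\Q})_0$ using Lemma \ref{lemma:height of projective vertex}, and identify the resulting coordinate $(\im,\xi_{\im^*}-rh^{\vee})$ with that of $\Sigma^{-1}I^{\Q}_{\im^*}$. Your extra remark on $d_{\overline{\im^*}}=d_{\overline{\im}}$ is a point the paper uses implicitly as well, and your justification for it is fine.
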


\begin{proof}
Let $(\im,p) \in \widehat{\Delta}^{\sigma}$ be the coordinate of $M$. By Proposition \ref{prop:tau preserve the indecomposables associated with Q}, the coordinate of $\tau_{\Q}^{d_{\overline{\im}}}(M)$ is $(\im,p-2d_{\overline{\im}})$. We deduce that $\tau_{\Q}^{d_{\overline{\im}}}(M) \in \C(\Q)$ if $M$ is not projective. On the other hand, if $M$ is projective, then $M = P^{\Q}_{\im^*}$ and we have $p = \xi_{\im^*} - rh^{\vee} + 2d_{\overline{\im}}$ by Lemma \ref{lemma:height of projective vertex}. Thus, $\tau_{\Q}^{d_{\overline{\im}}}(M)$ has coordinate $(\im,\xi_{\im^*} - rh^{\vee})$, which is also the coordinate of $\Sigma^{-1}I^{\Q}_{\im^*}$.
\end{proof}

\begin{cor}\label{cor:using tau to find the repetition category}
Let $\Q = (\Delta, \sigma, \xi)$ be a Q-datum. If $M \in \ind(\R(\Q))$ has coordinates $(\im,p) \in \widehat{\Delta}^{\sigma}_0$, then
\[
M \cong \tau_{\Q}^{(\xi_{\im} - p)/2}(I^{\Q}_{\im}).
\]
\end{cor}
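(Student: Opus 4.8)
The plan is to prove the isomorphism by tracking coordinates under the auto-equivalence $\tau_{\Q}$ and then invoking the bijectivity of the coordinate function $c_{\Q}\colon \ind(\R(\Q)) \longrightarrow \widehat{\Delta}^{\sigma}_0$. The starting point is the observation that the injective indecomposable object $I^{\Q}_{\im}$ has residue $\im$ and coordinate equal to the injective vertex associated with $\im$, which is $(\im,\xi_{\im})$ (the vertex of residue $\im$ of largest height, as recalled in Section \ref{subsection:Q-data and twisted AR quivers}). So everything reduces to computing the coordinate of $\tau_{\Q}^{t}(I^{\Q}_{\im})$ for the appropriate integer $t$.

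First I would recall from Proposition \ref{prop:tau preserve the indecomposables associated with Q} that $\tau_{\Q}$ restricts to an auto-equivalence of $\R(\Q)$ sending the indecomposable object with coordinate $(\jm,s) \in \widehat{\Delta}^{\sigma}_0$ to the indecomposable object with coordinate $(\sigma(\jm),s-2)$. Since $\tau_{\Q}$ is an equivalence, its inverse sends $(\jm,s)$ to $(\sigma^{-1}(\jm),s+2)$, and by an immediate induction the functor $\tau_{\Q}^{t}$ sends the indecomposable object of coordinate $(\jm,s)$ to the indecomposable object of coordinate $(\sigma^{t}(\jm),s-2t)$ for every $t \in \Z$. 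Applying this with $(\jm,s) = (\im,\xi_{\im})$ shows that $\tau_{\Q}^{t}(I^{\Q}_{\im})$ has coordinate $(\sigma^{t}(\im),\xi_{\im}-2t)$.

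It then remains to choose $t = (\xi_{\im}-p)/2$ and check that this yields the coordinate $(\im,p)$. The height is correct by construction, since $\xi_{\im}-2t = p$. For the residue, the key point is that $t$ is not merely an integer but a multiple of $d_{\overline{\im}}$: by the very definition of $\widehat{\Delta}^{\sigma}_0$, the condition $(\im,p)\in\widehat{\Delta}^{\sigma}_0$ means $p-\xi_{\im}\in 2d_{\overline{\im}}\Z$, whence $t = (\xi_{\im}-p)/2 \in d_{\overline{\im}}\Z$. As the $\sigma$-orbit of $\im$ has exactly $d_{\overline{\im}}$ elements, we get $\sigma^{t}(\im)=\im$, so $\tau_{\Q}^{t}(I^{\Q}_{\im})$ has coordinate $(\im,p)$. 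Since $c_{\Q}$ is a bijection and $M$ also has coordinate $(\im,p)$, we conclude $M \cong \tau_{\Q}^{(\xi_{\im}-p)/2}(I^{\Q}_{\im})$.

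This argument is essentially pure bookkeeping with the coordinate function, so I do not expect a genuine obstacle; the one place requiring care—and the only step that uses more than formal manipulation—is the verification that $\sigma^{t}(\im)=\im$, which hinges precisely on the divisibility condition built into the definition of $\widehat{\Delta}^{\sigma}_0$ together with the fact that $d_{\overline{\im}}$ equals the size of the $\sigma$-orbit of $\im$.
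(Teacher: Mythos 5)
Your proposal is correct and follows exactly the route of the paper's proof, which simply notes that both sides have the same coordinate by Proposition \ref{prop:tau preserve the indecomposables associated with Q} and that $c_{\Q}$ is a bijection. Your additional verification that $\sigma^{t}(\im)=\im$, via the divisibility condition $p-\xi_{\im}\in 2d_{\overline{\im}}\Z$ built into $\widehat{\Delta}^{\sigma}_0$, is the one detail the paper leaves implicit, and you have handled it correctly.
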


\begin{proof}
It is immediate since both sides have the same coordinate (see Proposition \ref{prop:tau preserve the indecomposables associated with Q}).
\end{proof}

\subsection{Partial Serre duality} Let $[\bm{i}]$ be a commutation class of reduced words for $w_0$. If $M \in \C([\bm{i}])$, recall that $M$ can be seen as a representation of the double quiver $\overline{Q}$ by Theorem \ref{thm:indecomposables are concentrated in zero} and we describe it around the vertex $i \in \Delta_0$ by a diagram
\[\begin{tikzcd}
	{\widetilde{M}(i)} & {M(i)} & {\widetilde{M}(i)}
	\arrow["{M_{\mathrm{in}(i)}}", from=1-1, to=1-2]
	\arrow["{M_{\mathrm{out}(i)}}", from=1-2, to=1-3]
\end{tikzcd}\]
(see Sections \ref{subsection:alternative descriptions of spherical twists} and \ref{subsection:projective/injective objects}). We say that $i$ is a \emph{source in $M$} if $M_{\mathrm{in}(i)} = 0$. Dually, $i$ is a \emph{sink in $M$} if $M_{\mathrm{out}(i)} = 0$.

\begin{lemma}\label{lemma:source of Q gives source of the representation}
Let $\Q = (\Delta, \sigma, \xi)$ be a Q-datum. Take $\im \in \Delta_0$ and let $M \in \ind(\Q)$ be an indecomposable object of residue $\jm \in \Delta_0$. If $\im$ is a source (resp. sink) of $\Q$, then $\im$ is a source (resp. sink) in $M$ if one of the following conditions hold:
\begin{enumerate}[(1)]
	\item $d_{\overline{\im}} = r$;
	\item $\Q$ is of type $\mathsf{B}_n$ and $\im = \jm = n$;
	\item $\Q$ is of type $\mathsf{C}_n$ and $\im + \jm \leq n$;
	\item $\Q$ is of type $\mathsf{F}_4$ and $\im = \jm = 6$.
\end{enumerate}
\end{lemma}

\begin{proof}
If we know the result for the case when $\im$ is a source, then we can deduce it for when $\im$ is a sink, and conversely. By Proposition \ref{prop:computing the spherical twist}, one statement follows from the other one after applying a reflection as in Proposition \ref{prop:equivalence under reflection, general case}. With this in mind, let us consider each of the cases above separately.

\begin{enumerate}[(1)]
	\item Assume $d_{\overline{\im}} = r$. For this case, we suppose that $\im$ is a source of $\Q$. We argue for each type separately, the simply laced case being immediate by Proposition \ref{prop:concentrated in zero for adapted word}.

	Suppose that $\Q$ is of type $\mathsf{B}_n$. By Proposition \ref{prop:orientations that appear in Bn}, there are two quivers $Q^{\leq n}$ and $Q^{\geq n}$ orienting $\Delta = \mathsf{A}_{2n-1}$ such that each indecomposable object of $\C(\Q)$ is a representation of either $Q^{\leq n}$ or $Q^{\geq n}$. Since $d_{\overline{\im}} = r$, $\im$ is not the central vertex of $\Delta$. Hence, if $\im$ is a source of $\Q$, it is not hard to see that it is also a source of $Q^{\leq n}$ and $Q^{\geq n}$, implying our claim.

	Now, suppose that $\Q$ is of type $\mathsf{C}_n$. Assume first that $\im$ is the vertex $n$ in $\Delta = \mathsf{D}_{n+1}$. By applying reflections, we may assume that $\Q$ is the Q-datum of Example \ref{example:description of the objects in type Cn}. A quick inspection of the list of objects given by this example shows that $\im$ is a source in any indecomposable object of $\C(\Q)$, as desired. If $\im \neq n$, since $d_{\overline{\im}} = r$, we must have $\im = n+1$. But $\sigma(n+1) = n$, therefore, by applying the induced functor $\sigma: \pvd(\Pi_Q) \longrightarrow \pvd(\Pi_Q)$, we are reduced to the previous case.

	Finally, we can check the lemma for types $\mathsf{F}_4$ and $\mathsf{G}_2$ using Examples \ref{example:F4} and \ref{example:G2}, respectively. Although these examples do not cover all possibilities for the source $\im$, we can obtain the remaining cases by applying $\sigma$ as we did in the previous paragraph.

	\item If $\Q$ is of type $\mathsf{B}_n$ and $M$ is of residue $n$, then Proposition \ref{prop:orientations that appear in Bn} says that we can view $M$ as a representation of both quivers $Q^{\leq n}$ and $Q^{\geq n}$. There is a unique edge of $\Delta$ where these orientations differ. Hence, the map in the representation that corresponds to such an edge must be zero. This fact implies without difficulty that if $n$ is a source (resp. sink) of $\Q$, then $n$ is a source (resp. sink) in any indecomposable object of $\C(\Q)$ that has residue $n$.
	
	\item Suppose that $\Q$ is of type $\mathsf{C}_n$ and that $\im$ is a sink of $\Q$. Assume $\im + \jm \leq n$, so that, in particular, we have $\im,\jm \leq n-1$ and $d_{\overline{\im}} = d_{\overline{\jm}} = 1$. If $\Q'$ denotes the Q-datum of Example \ref{example:description of the objects in type Cn}, then we may assume that $\Q = s_{\im-1}^{-1}s_{\im-2}^{-2}\dotsb s_1^{-1}\Q'$ after possibly applying some reflections and the automorphism $\sigma$ to $\Q$. Thus, by Proposition \ref{prop:tilting for the repetitive category, general case}, we can compute the objects of residue $\jm$ in $\ind(\Q)$ by applying the functor $T_{\im-1}T_{\im-2}\dotsb T_1$ to those in $\ind(\Q')$ and then a shift if necessary. By the description of the action of $\tau_{\Q'}$ in Example \ref{example:description of the objects in type Cn}, we see that the objects of residue $\jm$ in $\ind(\Q')$ are precisely
	
	\noindent\begin{minipage}{\linewidth}
	\[
	A^{1,0}_{\jm}, A^{1,1}_{\jm - 1}, B_{\jm-2,n-1}, B_{\jm-3,n-2}, \dots, B_{1,n-\jm+2}, A^{0,1}_{n-\jm+1}, A^{0,0}_{n-\jm,n-1}, A^{0,0}_{n-\jm-1,n-2}, \dots, A^{0,0}_{1,\jm},
	\]
	\end{minipage}
		
	\vspace{0.5em}
	\noindent where the objects of the form $B_{\im',\jm'}$ do not appear if $\jm \leq 2$ and the object $A^{1,1}_{\jm-1}$ does not appear if $\jm = 1$. Computing the image of the objects above under $T_{\im-1}T_{\im-2}\dotsb T_1$ using Proposition \ref{prop:computing the spherical twist}, it is not hard to see that $\im$ is a sink in any object of residue $\jm$ in $\ind(\Q)$. The crucial observation here is that the inequality $\im + \jm \leq n$ guarantees that we have $\jm' \geq \im + 2$ in all objects of the form $B_{\im',\jm'}$ above. In particular, when computing the spherical twists and checking whether $\im$ is a sink, we may ignore the vertices where $B_{\im',\jm'}$ has dimension $2$. The inequality also forces $\jm = 1$ when $\im = n-1$, so that $A^{1,1}_{\jm-1}$ is not taken into account in this case.

	\item Suppose $\Q$ is of type $\mathsf{F}_4$. If $6$ is a sink of $\Q$, then we may assume that $\Q$ is the Q-datum of Example \ref{example:F4} after applying reflections. One can then directly check that $6$ is a sink in any indecomposable object of residue $6$.
\end{enumerate}
\vspace{-1.5em}
\end{proof}

\begin{remark}\label{rmk:not always source or sink when expected}
The lemma is not true if we remove the conditions (1)--(4). Counterexamples can be easily found using the explicit descriptions in Appendix \ref{appendix:examples}. For instance, in Example \ref{example:G2}, the vertex $\im = 2$ is a sink of the given Q-datum, but it is not a sink in the indecomposable object with coordinate $(2,0)$. Note that $d_{\overline{\im}} = 1$ in this case.
\end{remark}

\begin{prop}\label{prop:projectives and injectives gives M(i)}
Let $\Q = (\Delta, \sigma, \xi)$ be a Q-datum and take $\im \in \Delta_0$. If $d_{\overline{\im}} = r$, then there are isomorphisms
\[\begin{tikzcd}
	{\Hom_{\C(\Q)}(P^{\Q}_{\im}, M)} & {M(\im)} & {D\Hom_{\C(\Q)}(M,I^{\Q}_{\im})}
	\arrow["\sim", from=1-1, to=1-2]
	\arrow["\sim"', from=1-3, to=1-2]
\end{tikzcd}\]
natural in the variable $M \in \C(\Q)$, where $D$ denotes the duality functor for $K$-vector spaces.
\end{prop}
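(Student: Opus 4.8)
The plan is to prove each of the two isomorphisms by transporting the question, via a tilting functor, to a Q-datum in the same $r$-cluster point for which $\im$ is a \emph{sink} (for the first isomorphism) or a \emph{source} (for the second). At such a special $\Q'$ the relevant indecomposable degenerates to a simple: when $\im$ is a sink of $\Q'$ the projective $P^{\Q'}_{\im}$ is isomorphic to $S_{\im}$ (its root is $\alpha_{\im}$, since the last occurrence of $\im^*$ is $-w_0(\alpha_{\im^*})=\alpha_{\im}$), and dually $I^{\Q'}_{\im}\cong S_{\im}$ when $\im$ is a source. The point of the hypothesis $d_{\overline{\im}}=r$ is that Lemma~\ref{lemma:source of Q gives source of the representation} then applies: it forces $M_{\mathrm{out}(\im)}=0$ for every $M\in\C(\Q')$ in the sink case (resp.\ $M_{\mathrm{in}(\im)}=0$ in the source case). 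A one-line computation with representations of $\overline{Q}$ gives $\Hom_{\Lambda_{Q}}(S_{\im},M)\cong M(\im)$ when $M_{\mathrm{out}(\im)}=0$ (a map is determined by the image $w\in M(\im)$ of the generator, and the only constraint, $M_{\mathrm{out}(\im)}(w)=0$, is vacuous), and symmetrically $\Hom_{\Lambda_{Q}}(M,S_{\im})\cong D(M(\im))$ when $M_{\mathrm{in}(\im)}=0$.

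To make this a genuine natural isomorphism for the original $\Q$, I would first record that $\dim_K P^{\Q}_{\im}(\im)=1$ and $\dim_K I^{\Q}_{\im}(\im)=1$: a tilting functor reflecting only at vertices $\neq\im$ and making $\im$ a sink carries the projective $P^{\Q}_{\im}$ of residue $\im^*$ to $P^{\Q'}_{\im}\cong S_{\im}$ without ever shifting it out of degree zero, since a reflection at a sink $k$ shifts only $S_k=P_k$ and $\im\neq k$. Fixing a generator $v$ of the line $P^{\Q}_{\im}(\im)$, I define $\eta_M\colon\Hom_{\C(\Q)}(P^{\Q}_{\im},M)\to M(\im)$ by $\eta_M(f)=f_{\im}(v)$, which is natural; dually I use a generator of the dual line $D(I^{\Q}_{\im}(\im))$ to define $\Hom_{\C(\Q)}(M,I^{\Q}_{\im})\to D(M(\im))$. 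By additivity it suffices to check these are isomorphisms on indecomposables.

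For the reduction I would produce the tilting functor $T\colon\R(\Q)\to\R(\Q')$ by a peeling argument: as long as $\im$ is not a sink, the last letter of a representing word is $k^*$ with $k\neq\im$, and applying $r_k^{-1}$ moves the last occurrence of $\im^*$ one step toward the end, so after finitely many steps $\im$ becomes a sink and no reflection is ever performed at $\im$. Because reflections at vertices $\neq\im$ do not alter the space at $\im$, we get $M(\im)=T(M)(\im)$ and $T$ intertwines $\eta_M$ with $\eta_{T(M)}$ whenever $T(M)$ remains concentrated in degree zero. In that case $\eta_{T(M)}$ is an isomorphism by the computation above (invoking Lemma~\ref{lemma:source of Q gives source of the representation}), hence so is $\eta_M$; the source case for $I^{\Q}_{\im}$ is entirely parallel.

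The main obstacle is the remaining case, where $M$ leaves degree zero under $T$, say $T(M)\cong\Sigma^{a}N$ with $N\in\C(\Q')$ and $a\neq0$; here I must show both sides vanish. For the left-hand side, the tilting equivalence identifies it with $\Ext^{a}_{\Q'}(S_{\im},N)$, which is zero for $a\neq0$ because $S_{\im}=P^{\Q'}_{\im}$ is projective in $\C(\Q')$ (Proposition~\ref{prop:projective/injective if and only if Ext1 = 0}) and extension groups in $\C(\Q')$ vanish in negative degrees. For the right-hand side I would argue $\dim_K M(\im)=0$: the image of the frame $\Upsilon_{[\Q]}$ inside $\widehat{\Upsilon}_{[\Q']}$ is a window of $N$ consecutive roots and therefore spans only two consecutive levels, one of which is $0$ (it contains $P^{\Q'}_{\im}\cong S_{\im}$), so $a\in\{-1,0,1\}$ and the nonzero value is odd; since $\dim_K M(\im)$ equals the $\alpha_{\im}$-coefficient of $[M]$, which is preserved by each simple reflection $s_k$ ($k\neq\im$) composing $T$, one obtains $\dim_K M(\im)=(-1)^{a}\dim_K N(\im)$, forcing $\dim_K M(\im)=0$ when $a$ is odd. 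This parity-and-degree bookkeeping, together with the verification that the tilt can always be chosen to avoid $\im$, is the delicate part of the argument; everything else is the elementary representation-theoretic computation made available by Lemma~\ref{lemma:source of Q gives source of the representation}.
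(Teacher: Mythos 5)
Your proof is correct and follows essentially the same strategy as the paper's: tilt at vertices other than $\im$ until the relevant projective/injective object becomes the simple $S_{\im}$, invoke Lemma \ref{lemma:source of Q gives source of the representation} to identify the $\Hom$-space with $M(\im)$ or its dual, and treat separately the indecomposables that the tilting functor pushes out of degree zero. Your handling of that exceptional case (Ext-vanishing on one side, a parity argument on the $\alpha_{\im}$-coefficient of the class in $K_0(\pvd(\Pi_Q))$ on the other) and your explicit construction of the natural transformation by evaluation at a generator of $P^{\Q}_{\im}(\im)$ differ only in minor bookkeeping from the paper's argument, which instead uses the $\Hom$-order and the description of the exceptional objects as twists of simples at vertices distinct from $\im$.
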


\begin{proof}
We consider only the second natural isomorphism above. The proof for the first is similar.

Let $X \subset \ind(\Q)$ be the set of indecomposable objects $N$ satisfying $N \preceq_{\C(\Q)} I^{\Q}_{\im}$ and $I^{\Q}_{\im} \not\cong N$. Let $(\im_1,\dots,\im_l)$ be the residues of a compatible reading of the coordinates of the objects in $X$. Notice that $\im_k \neq \im$ for all $1 \leq k \leq l$ since $I_{\im}^{\Q}$ is injective. By Proposition \ref{prop:equivalence under reflection, general case}, the autoequivalence $T = T_{\im_l}^{-1}\dotsb T_{\im_2}^{-1}T_{\im_1}^{-1}$ of $\pvd(\Pi_{Q})$ restricts to a fully faithful functor from the full additive subcategory of $\C(\Q)$ generated by $\ind(\Q) \setminus X$ to $\C(\Q')$, where $\Q' = s_{\im_l}\dotsb s_{\im_2}s_{\im_1}\Q$. It is easy to see that $\im$ is a source for $\Q'$ and $T(I_{\im}^{\Q}) \cong S_{\im} \in \C(\Q')$.

We can assume that $M \in \ind(\Q)$ since both sides are additive on the variable $M$. If $M \not\in X$, then $T$ induces an isomorphism
\[\begin{tikzcd}
	{\Hom_{\C(\Q)}(M,I^{\Q}_{\im})} & {\Hom_{\C(\Q')}(T(M), S_{\im}).}
	\arrow["\sim", from=1-1, to=1-2]
\end{tikzcd}\]
By Lemma \ref{lemma:source of Q gives source of the representation}, $\im$ is a source in $T(M)$ so, if we regard $T(M)$ and $S_{\im}$ as representations of the double quiver $\overline{Q}$, there is an isomorphism
\[\begin{tikzcd}
	{\Hom_{\C(\Q')}(T(M), S_{\im})} & {\Hom_K(T(M)(\im), K) = D(T(M)(\im)).}
	\arrow["\sim", from=1-1, to=1-2]
\end{tikzcd}\]
Finally, since $\im_k \neq \im$ for all $k$, the spherical twists $T_{\im_k}^{-1}$ do not change $M$ at the vertex $\im$ (see Proposition \ref{prop:computing the spherical twist}), hence we have an isomorphism $D(T(M)(\im)) \xlongrightarrow[]{\sim} D(M(\im))$. Composing the inverse of these isomorphisms and applying the duality functor, we get an isomorphism from $D\Hom_{\C(\Q)}(M,I^{\Q}_{\im})$ to $M(\im)$.

On the other hand, if $M \in X$, then $\Hom_{\C(\Q)}(M,I^{\Q}_{\im}) = 0$ since $I^{\Q}_{\im} \not\preceq_{\C(\Q)} M$. There also exists $1 \leq k \leq l$ such that
\[
M \cong T_{\im_1}T_{\im_2}\dotsb T_{\im_{k-1}}(S_{\im_k}),
\]
which implies that $M(\im) = 0$ because $\im_{k'} \neq \im$ for all $1 \leq k' \leq l$. Consequently, $D\Hom_{\C(\Q)}(M,I^{\Q}_{\im})$ is trivially isomorphic to $M(\im)$.

We have thus defined the isomorphism in the statement. It remains to check that it is natural in the variable $M$. For this, it is enough to show that the square induced by a morphism $M \longrightarrow M'$ between indecomposable objects is commutative. One can do this on a case-by-case analysis by considering whether $M$ and $M'$ are in $X$ or not.
\end{proof}

To extend the previous result to the whole c-derived category $\D(\Q)$, we define $S_{\Q}: \D(\Q) \longrightarrow \D(\Q)$ to be the equivalence $S_{\Q} = \Sigma \circ \tau_{\Q}^r$. It behaves like a Serre functor:

\begin{thm}\label{thm:Serre duality}
Let $\Q = (\Delta, \sigma, \xi)$ be a Q-datum and take $M \in \ind(\D(\Q))$ of residue $\im \in \Delta_0$ satisfying $d_{\overline{\im}} = r$. There is an isomorphism
\[\begin{tikzcd}
	{\Hom_{\D(\Q)}(M, N)} & {D\Hom_{\D(\Q)}(N,S_{\Q}M)}
	\arrow["\sim", from=1-1, to=1-2]
\end{tikzcd}\]
natural in the variable $N \in \D(\Q)$, where $D$ denotes the duality functor for $K$-vector spaces.
\end{thm}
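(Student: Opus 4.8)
The plan is to exploit the auto-equivalence $\tau_{\Q}$ of $\D(\Q)$ to reduce the statement to a single distinguished object — the projective indecomposable $P^{\Q}_{\im^*}$, which has residue $\im$ — and then to invoke Proposition~\ref{prop:projectives and injectives gives M(i)}. First I would note that both sides of the desired isomorphism are additive in $N$, so I may assume $N$ indecomposable. By Proposition~\ref{prop:tau preserve the indecomposables associated with Q} the functor $\tau_{\Q}$ sends coordinate $(\im,p)$ to $(\sigma(\im),p-2)$, so $\tau_{\Q}^{r}$ fixes the residue $\im$; by Corollary~\ref{cor:using tau to find the repetition category} the indecomposables of $\D(\Q)$ of residue $\im$ form a single $\langle\tau_{\Q}^{r}\rangle$-orbit, and in particular $M \cong \tau_{\Q}^{t}(P^{\Q}_{\im^*})$ for some $t \in \Z$ (here $\im^*$ again satisfies $d_{\overline{\im^*}} = d_{\overline{\im}} = r$, since the involution $*$ preserves $\sigma$-orbits). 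As $\tau_{\Q}$ is built from the triangulated functors $T_{\im}$ and $\sigma$, it is an equivalence commuting with $\Sigma$, hence with $S_{\Q} = \Sigma\tau_{\Q}^{r}$; applying $\tau_{\Q}^{-t}$ therefore gives isomorphisms $\Hom_{\D(\Q)}(M,N) \cong \Hom_{\D(\Q)}(P^{\Q}_{\im^*}, \tau_{\Q}^{-t}N)$ and $\Hom_{\D(\Q)}(N, S_{\Q}M) \cong \Hom_{\D(\Q)}(\tau_{\Q}^{-t}N, S_{\Q}P^{\Q}_{\im^*})$, both natural in $N$. As $\tau_{\Q}^{-t}N$ ranges over all indecomposables, this reduces the theorem to the case $M = P^{\Q}_{\im^*}$.

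In that case I would first compute $S_{\Q}P^{\Q}_{\im^*}$. Since $P^{\Q}_{\im^*}$ is projective of residue $\im$ and $d_{\overline{\im}} = r$, Corollary~\ref{cor:power of tau sends projective to shift of injective} gives $S_{\Q}P^{\Q}_{\im^*} = \Sigma\tau_{\Q}^{r}(P^{\Q}_{\im^*}) \cong I^{\Q}_{\im^*}$. Thus it remains to produce a natural isomorphism $\Hom_{\D(\Q)}(P^{\Q}_{\im^*}, N) \cong D\Hom_{\D(\Q)}(N, I^{\Q}_{\im^*})$. Writing an indecomposable $N$ as $\Sigma^{k}N_0$ with $N_0 \in \C(\Q)$, the left-hand side is $\Ext^{k}_{\Q}(P^{\Q}_{\im^*}, N_0)$ and the right-hand side is $D\Ext^{-k}_{\Q}(N_0, I^{\Q}_{\im^*})$. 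Because $P^{\Q}_{\im^*}$ is projective and $I^{\Q}_{\im^*}$ is injective, Proposition~\ref{prop:projective/injective if and only if Ext1 = 0} — together with the vanishing of negative extension groups between objects of $\C(\Q)$ — forces both sides to vanish unless $k = 0$, so the only nontrivial case is $N = N_0 \in \C(\Q)$.

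For $N \in \C(\Q)$ the two isomorphisms of Proposition~\ref{prop:projectives and injectives gives M(i)}, applied at the vertex $\jm = \im^*$ with $d_{\overline{\im^*}} = r$, give
\[
\Hom_{\C(\Q)}(P^{\Q}_{\im^*}, N) \cong N(\im^*) \cong D\Hom_{\C(\Q)}(N, I^{\Q}_{\im^*}),
\]
and composing them yields the required isomorphism, which is natural in $N$ since each factor is; reassembling through the equivalence $\tau_{\Q}^{-t}$ then establishes the theorem. I expect the main difficulty to be bookkeeping rather than conceptual: one must check that the natural isomorphism of Proposition~\ref{prop:projectives and injectives gives M(i)}, which lives in $\C(\Q)$, transports coherently across the shifts and the $\tau_{\Q}$-twist to all of $\D(\Q)$, and that the vanishing argument genuinely annihilates both $\Hom$-spaces for $k \neq 0$. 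Verifying $d_{\overline{\im^*}} = d_{\overline{\im}}$ in each nonsimply laced type is a routine check from the explicit description of $*$ and $\sigma$.
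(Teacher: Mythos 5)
Your proof is correct and follows essentially the same route as the paper's: reduce to the case where $M$ is projective, identify $S_{\Q}M$ with the corresponding injective via Corollary \ref{cor:power of tau sends projective to shift of injective}, dispose of indecomposable $N$ not lying in $\C(\Q)$ using Proposition \ref{prop:projective/injective if and only if Ext1 = 0} together with the vanishing of negative extension groups, and conclude with Proposition \ref{prop:projectives and injectives gives M(i)}. The only (harmless) divergence is in the reduction step: the paper transports the statement along a tilting functor to another Q-datum using Lemma \ref{lemma:tau respects reflection}, whereas you stay inside $\D(\Q)$ and conjugate by a power of $\tau_{\Q}$, which works because such powers commute with $S_{\Q}$ and the residue-$\im$ indecomposables form a single $\tau_{\Q}^{r}$-orbit.
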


\begin{proof}
By applying a tilting functor and using Lemma \ref{lemma:tau respects reflection}, we may assume that $M$ is projective. In this case, by Corollary \ref{cor:power of tau sends projective to shift of injective}, $S_{\Q}M$ is the injective object corresponding to the same vertex. By Proposition \ref{prop:projective/injective if and only if Ext1 = 0}, the $\Hom$-spaces above are zero if $N$ is indecomposable and is not in $\C(\Q)$. Thus, we may assume that $N$ is in $\C(\Q)$, and then the theorem follows from Proposition \ref{prop:projectives and injectives gives M(i)}.
\end{proof}

\begin{remark}
If $N \in \ind(\D(\Q))$ has residue $\im \in \Delta_0$, then $S_{\Q}^{-1}N$ has residue $\im^*$. Thus, if $d_{\overline{\im}} = d_{\overline{\im^*}} = r$, we have isomorphisms
\[
\Hom_{\D(\Q)}(M,N) \cong \Hom_{\D(\Q)}(M,S_{\Q}(S_{\Q}^{-1}N)) \cong D\Hom_{\D(\Q)}(S_{\Q}^{-1}N,M) \cong D\Hom_{\D(\Q)}(N,S_{\Q}M),
\]
natural in the variable $M \in \D(\Q)$. This gives another form of Theorem \ref{thm:Serre duality}.
\end{remark}

\begin{remark}
In the nonsimply laced case, one may also consider the autoequivalence $S'_{\Q}: \D(\Q) \longrightarrow \D(\Q)$ given by $S'_{\Q} = \Sigma \circ \tau_{\Q}$. We can adapt the previous proofs to construct an isomorphism
\[\begin{tikzcd}
	{\Hom_{\D(\Q)}(M, N)} & {D\Hom_{\D(\Q)}(N,S'_{\Q}M)}
	\arrow["\sim", from=1-1, to=1-2]
\end{tikzcd}\]
for $M,N \in \ind(\Q)$ with residues $\im,\jm \in \Delta_0$ satisfying one of the conditions (2), (3), or (4) in Lemma \ref{lemma:source of Q gives source of the representation}. Notice that in these cases, we have $d_{\overline{\im}} = d_{\overline{\jm}} = 1$.
\end{remark}

For the next result, we identify $K_0(\pvd(\Pi_Q))$ with the root lattice $\mathsf{Q}$ of $\Delta$ and denote the class of $M \in \pvd(\Pi_Q)$ in $K_0(\pvd(\Pi_Q))$ as $[M]_{\pvd(\Pi_{Q})}$ (see Section \ref{subsection:categorification of the root system}).

\begin{prop}\label{prop:getting M(i) using Euler form}
Let $\Q = (\Delta, \sigma, \xi)$ be a Q-datum and take $M \in \ind(\D(\Q))$ of residue $\jm \in \Delta_0$. For $\im \in \Delta_0$, if we have $\max\{d_{\overline{\im}},d_{\overline{\jm}}\} = r$ or if one of the conditions (2), (3), or (4) in Lemma \ref{lemma:source of Q gives source of the representation} holds, then
\[
([M]_{\pvd(\Pi_{Q})},\varpi_{\im}) = \left\langle M,\bigoplus_{k = 0}^{\lceil d_{\overline{\jm}}/d_{\overline{\im}}\rceil - 1}\tau_{\Q}^k(I^{\Q}_{\im})\right\rangle_{\!\!\Q} = \left\langle\bigoplus_{k = 0}^{\lceil d_{\overline{\jm}}/d_{\overline{\im}}\rceil - 1}\tau_{\Q}^{-k}(P^{\Q}_{\im}), M\right\rangle_{\!\!\Q}.
\]
\end{prop}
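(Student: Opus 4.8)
The plan is to first reduce to the case where $M$ is indecomposable and lies in $\C(\Q)$. Both sides of the asserted identities are additive in $M$, and each is compatible with the suspension functor: replacing $M$ by $\Sigma M$ multiplies $[M]_{\pvd(\Pi_Q)}$ by $-1$, while $\langle \Sigma M, -\rangle_{\Q} = -\langle M, -\rangle_{\Q}$ and $\langle -, \Sigma M\rangle_{\Q} = -\langle -, M\rangle_{\Q}$. Since $\Sigma$ sends an object of residue $\jm$ to one of residue $\jm^*$, and since the involution $*$ is a diagram automorphism compatible with $\sigma$ (so that $d_{\overline{\jm}} = d_{\overline{\jm^*}}$ and $\lceil d_{\overline{\jm}}/d_{\overline{\im}}\rceil$ is unchanged), we may assume $M \in \C(\Q)$. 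Under the identification of $K_0(\pvd(\Pi_Q))$ with the root lattice, $[M]_{\pvd(\Pi_Q)}$ is the dimension vector of $M$ seen as a representation of $\overline{Q}$, so $([M]_{\pvd(\Pi_Q)}, \varpi_{\im}) = \dim_K M(\im)$ because $(\varpi_{\im}, \alpha_{\jm}) = \delta_{\im\jm}$. It therefore suffices to compute the two Euler forms and show each equals $\dim_K M(\im)$.

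Next I would dispose of the case $d_{\overline{\im}} = r$, where $\lceil d_{\overline{\jm}}/d_{\overline{\im}}\rceil = 1$ and both sums have a single term. As $I^{\Q}_{\im}$ is injective and $P^{\Q}_{\im}$ is projective, Proposition \ref{prop:projective/injective if and only if Ext1 = 0} gives $\langle M, I^{\Q}_{\im}\rangle_{\Q} = \dim_K\Hom_{\C(\Q)}(M, I^{\Q}_{\im})$ and $\langle P^{\Q}_{\im}, M\rangle_{\Q} = \dim_K\Hom_{\C(\Q)}(P^{\Q}_{\im}, M)$, and Proposition \ref{prop:projectives and injectives gives M(i)}, valid precisely because $d_{\overline{\im}} = r$, identifies both with $\dim_K M(\im)$. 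The $\mathsf{B}_n$ addendum, where $d_{\overline{\im}} = d_{\overline{\jm}} = 1$ forces $\im = \jm = n$ and again $L = 1$, is handled the same way after replacing Proposition \ref{prop:projectives and injectives gives M(i)} by its residue-$n$ analogue, which holds in type $\mathsf{B}_n$ by Remark \ref{rmk:defect in Bn is zero between residue n}.

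The remaining case is $d_{\overline{\im}} = 1$ and $d_{\overline{\jm}} = r$, with $L = r$; here I would first show that the two Euler-form expressions agree. Since $\tau_{\Q}$ is an auto-equivalence we have $\langle \tau_{\Q}X, \tau_{\Q}Y\rangle_{\Q} = \langle X, Y\rangle_{\Q}$, so the second expression rewrites as $\sum_{k=0}^{r-1}\langle P^{\Q}_{\im}, \tau_{\Q}^k M\rangle_{\Q}$. For the first, the partial Serre duality of Theorem \ref{thm:Serre duality} applies to $M$ since $d_{\overline{\jm}} = r$, giving $\langle M, N\rangle_{\Q} = \langle N, S_{\Q}M\rangle_{\Q} = -\langle N, \tau_{\Q}^r M\rangle_{\Q}$ for all $N$; combining this with $\tau_{\Q}$-invariance and the identity $I^{\Q}_{\im} \cong \Sigma\tau_{\Q}^{d_{\overline{\im}}}P^{\Q}_{\im} = \Sigma\tau_{\Q}P^{\Q}_{\im}$ from Corollary \ref{cor:power of tau sends projective to shift of injective} turns $\sum_{k=0}^{r-1}\langle M, \tau_{\Q}^k I^{\Q}_{\im}\rangle_{\Q}$, after reindexing, into exactly $\sum_{k=0}^{r-1}\langle P^{\Q}_{\im}, \tau_{\Q}^k M\rangle_{\Q}$. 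This proves the two expressions coincide and, using Lemma \ref{lemma:tau respects reflection} for $\tau$-compatibility, reduces the statement to the single identity
\[
\dim_K M(\im) = \sum_{k=0}^{r-1}\dim_K\Hom_{\C(\Q)}(\tau_{\Q}^{-k}P^{\Q}_{\im}, M).
\]

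Establishing this last identity is what I expect to be the main obstacle. When $d_{\overline{\im}} = 1$ the relative projective $P^{\Q}_{\im}$ of $\C(\Q)$ is not the genuine projective at the vertex $\im$, and Proposition \ref{prop:projectives and injectives gives M(i)} genuinely fails for a single factor: only the sum over the $r$ translates $\tau_{\Q}^{-k}P^{\Q}_{\im}$, which sit in the residue-$\im^*$ column at consecutive heights ($\im$ being $\sigma$-fixed), recovers the full multiplicity $\dim_K M(\im)$. I would prove it by a reflection-functor argument parallel to Proposition \ref{prop:projectives and injectives gives M(i)}: reflect at a sequence of sources avoiding $\im$ into a Q-datum for which $\im$ becomes a sink, so that each $\tau_{\Q}^{-k}P^{\Q}_{\im}$ is carried to a concrete object, and then read off from the local picture at the $\sigma$-fixed vertex $\im$ that the total of the $r$ Hom-spaces equals the dimension attached to $\im$. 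Two points demand care: one must check that each $\tau_{\Q}^k M$ (equivalently each $\tau_{\Q}^{-k}P^{\Q}_{\im}$) stays in $\C(\Q)$ for $0 \le k \le r-1$, treating the boundary case where $M$ is projective separately, and one must verify that the folded local computation at $\im$ assembles the $r$ contributions without overlap, for which the explicit descriptions of the indecomposables in the relevant nonsimply laced types (Appendix \ref{appendix:examples}) supply the needed input.
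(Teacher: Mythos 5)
Your overall architecture matches the paper's: reduce to $M \in \C(\Q)$, dispatch $d_{\overline{\im}} = r$ and the $\mathsf{B}_n$ addendum via Propositions \ref{prop:projective/injective if and only if Ext1 = 0} and \ref{prop:projectives and injectives gives M(i)}, and use Theorem \ref{thm:Serre duality} together with Corollary \ref{cor:power of tau sends projective to shift of injective} to identify the two Euler-form expressions (the paper derives the second from the first; your algebra going the other way is correct). But there are two gaps in the remaining case $d_{\overline{\im}} = 1$, $d_{\overline{\jm}} = r$.

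First, passing from $\sum_k \langle \tau_{\Q}^{-k}P^{\Q}_{\im}, M\rangle_{\Q}$ to $\sum_k \dim_K\Hom_{\C(\Q)}(\tau_{\Q}^{-k}P^{\Q}_{\im}, M)$ requires the vanishing of $\Ext^1_{\Q}$ between these objects for $1 \le k \le r-1$, where $\tau_{\Q}^{-k}P^{\Q}_{\im}$ is no longer projective; you assert this reduction without justification. The paper proves the dual statement $\Ext^l_{\Q}(M, \tau_{\Q}^k(I^{\Q}_{\im})) = 0$ by combining Theorem \ref{thm:Serre duality} with Lemma \ref{lemma:morphisms in the repetition category follow paths} and an explicit height estimate $(p - 2r) - (\xi_{\im} - 2k) < d_{\Delta}^{\sigma}(\im,\jm)$ ruling out any path in $\widehat{\Delta}^{\sigma}$; some such argument is genuinely needed. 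Second, and more seriously, the core identity $\dim_K M(\im) = \sum_{k}\dim_K\Hom$ is only gestured at, and the mechanism you propose (reflect until $\im$ is a sink and ``read off the local picture'') is not the one that works: since $d_{\overline{\im}} = 1$, Lemma \ref{lemma:source of Q gives source of the representation} fails at $\im$, so making $\im$ a sink does not make the individual Hom-spaces computable, and the translates $\tau_{\Q}^{-k}P^{\Q}_{\im}$ do not become simples after such a reflection. The paper instead splits on the position of $M$ in the Hom-order: when $M \not\preceq_{\C(\Q)} \tau_{\Q}^{r-1}(I^{\Q}_{\im})$ it reflects $M$ to the projective $P^{\Q'}_{\jm^*}$ (whose vertex has $d = r$, so Proposition \ref{prop:projectives and injectives gives M(i)} applies on the \emph{other} side), then evaluates the sum by the telescoping $\sum_{k=0}^{r-1}\tau_{\Q}^k\gamma^{\Q}_{\im} = (1-\tau_{\Q}^r)\varpi_{\im}$ combined with Lemma \ref{lemma:projective in terms of tau and the fundamental weight} and Proposition \ref{prop:properties of Coxeter element}; when $M \preceq_{\C(\Q)} \tau_{\Q}(I^{\Q}_{\im})$ only the $k=0$ term survives and one adapts the proof of Proposition \ref{prop:projectives and injectives gives M(i)} using the extra hypothesis to see that $\im$ is a source in the reflected object; and a single leftover $\mathsf{G}_2$ configuration is checked by hand. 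None of this weight-lattice computation, which is the real content of the proposition, appears in your sketch.
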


\begin{proof}
By shifting $M$, we can suppose that $M \in \C(\Q)$. If $d_{\overline{\im}} = r$, the result follows from Propositions \ref{prop:projective/injective if and only if Ext1 = 0} and \ref{prop:projectives and injectives gives M(i)}. We can similarly prove the result if one of the conditions (2), (3), or (4) in Lemma \ref{lemma:source of Q gives source of the representation} holds by modifying the proof of the latter proposition. From now on, we additionally suppose that $d_{\overline{\im}} = 1$ and $d_{\overline{\jm}} = r > 1$. Once we prove the first equality, the second will follow from Theorem \ref{thm:Serre duality} and the fact that $P^{\Q}_{\im} \cong \Sigma^{-1}\tau_{\Q}^{-1}(I^{\Q}_{\im})$ (see Corollary \ref{cor:power of tau sends projective to shift of injective}).

We claim that $\Ext_{\Q}^l(M, \tau_{\Q}^k(I^{\Q}_{\im})) = 0$ for $0 \leq k < r$ and $l > 0$. By Proposition \ref{prop:derived category is hereditary}, we only need to check the case $l = 1$. By Theorem \ref{thm:Serre duality}, we have
\[
\Ext_{\Q}^1(M, \tau_{\Q}^k(I^{\Q}_{\im})) \cong \Hom_{\D(\Q)}(\Sigma^{-1}M, \tau_{\Q}^k(I^{\Q}_{\im})) \cong D\Hom_{\D(\Q)}(\tau_{\Q}^k(I^{\Q}_{\im}), \tau_{\Q}^r(M)).
\]
Hence, by Lemma \ref{lemma:morphisms in the repetition category follow paths}, it is enough to show that there is no path from the coordinate of $\tau_{\Q}^k(I^{\Q}_{\im})$ to the coordinate of $\tau_{\Q}^r(M)$ in $\widehat{\Delta}^{\sigma}$. If $p$ is the height of $M$, it is not hard to check that $p \leq \xi_{\im} + d_{\Delta}^{\sigma}(\im,\jm)$ by the definition of a height function and the fact that $d_{\overline{\im}} = 1$. Keeping in mind that the height of $\tau_{\Q}^k(I^{\Q}_{\im})$ is $\xi_{\im} - 2k$ and that the height of $\tau_{\Q}^r(M)$ is $p - 2r$, the inequality
\[
(p - 2r) - (\xi_{\im} - 2k) = (p - \xi_{\im}) + 2(k - r) < d_{\Delta}^{\sigma}(\im,\jm)
\]
ensures what we need. Therefore, the first equality of the statement simplifies to
\[
\dim_K M(\im) = \dim_K\Hom_{\C(\Q)}\left(M,\bigoplus_{k = 0}^{r-1}\tau_{\Q}^k(I^{\Q}_{\im})\right).
\]

Assume first that $M \not\preceq_{\C(\Q)} \tau_{\Q}^{r-1}(I^{\Q}_{\im})$. In this case, there is a sink sequence $(\im_1,\dots,\im_l)$ of $\Q$ such that the induced tilting functor $T: \D(\Q) \longrightarrow \D(\Q')$ satisfies $T(M) \cong P^{\Q'}_{\jm^*}$ and $T(\tau^k_{\Q}(I^{\Q}_{\im})) \in \C(\Q')$ for $0 \leq k < r$. Hence, if we denote $w = s_{\im_l}\dotsb s_{\im_2}s_{\im_1} \in \mathsf{W}$, we have
\begin{align*}
\dim_K\Hom_{\C(\Q)}\left(M,\bigoplus_{k = 0}^{r-1}\tau_{\Q}^k(I^{\Q}_{\im})\right) &= \dim_K\Hom_{\C(\Q')}\left(P^{\Q'}_{\jm^*},\bigoplus_{k = 0}^{r-1}T(\tau_{\Q}^k(I^{\Q}_{\im}))\right)\\
&= \left(\varpi_{\jm^*}, \sum_{k = 0}^{r-1}w\tau_{\Q}^k(\gamma^{\Q}_{\im})\right),
\end{align*}
where we used Proposition \ref{prop:projectives and injectives gives M(i)} for the last equality. Since $d_{\overline{\im}} = 1$, we have $\gamma^{\Q}_{\im} = (1-\tau_{\Q})\varpi_{\im}$, which gives
\[
\left(\varpi_{\jm^*}, \sum_{k = 0}^{r-1}w\tau_{\Q}^k(\gamma^{\Q}_{\im})\right) = \left(\varpi_{\jm^*}, w(1 - \tau_{\Q}^r)\varpi_{\im}\right) = \left((1-\tau_{\Q}^{-r})w^{-1}\varpi_{\jm^*}, \varpi_{\im}\right) = (w^{-1}(1-\tau_{\Q'}^{-r})\varpi_{\jm^*},\varpi_{\im}),
\]
where the last equality follows from Proposition \ref{prop:properties of Coxeter element}. By Lemma \ref{lemma:projective in terms of tau and the fundamental weight}, $(1-\tau_{\Q'}^{-r})\varpi_{\jm^*}$ represents the class of $P^{\Q'}_{\jm^*}$ in $K_0(\pvd(\Pi_{Q}))$, so the isomorphism $M \cong T^{-1}(P^{\Q'}_{\jm^*})$ tells us that the left term inside the last bilinear form above is the class of $M$ in $K_0(\pvd(\Pi_{Q}))$, as desired.

Suppose now that $M \preceq_{\C(\Q)} \tau_{\Q}(I^{\Q}_{\im})$. By our hypotheses, we have
\[
\Hom_{\C(\Q)}(M,\tau_{\Q}^k(I^{\Q}_{\im})) = 0
\]
for $1 \leq k < r$, so we need to prove that
\[
\dim_KM(\im) = \dim_K\Hom_{\C(\Q)}(M,I^{\Q}_{\im}).
\]
To do so, we can proceed exactly as in the proof of Proposition \ref{prop:projectives and injectives gives M(i)} by defining an analogous set $X \subset \ind(\Q)$ and a fully faithful functor $T$ from the full additive subcategory of $\C(\Q)$ generated by $\ind(\Q) \setminus X$ to some $\C(\Q')$ where $\im$ is a source of $\Q'$. The only part which we cannot immediately adapt is that $\im$ is a source in $T(M)$ when $M \not\in X$ (we cannot apply Lemma \ref{lemma:source of Q gives source of the representation}), but this follows from the additional hypothesis that $M \preceq_{\C(\Q)} \tau_{\Q}(I^{\Q}_{\im})$. Indeed, it implies that
\[
T(M) \cong T_{\im_1}\dots T_{\im_{k-1}}(S_{\im_k})
\]
for some source sequence $(\im_1,\dots,\im_k)$ for $\Q'$ where $\im$ appears at most once. Since $\im$ is a source of $\Q'$, we can assume $\im_1 = \im$, and then it is not hard to verify that $\im$ is a source in $T(M)$ using Proposition \ref{prop:computing the spherical twist}.

If $r = 2$, this finishes the proof. However, if $\Q$ is of type $\mathsf{G}_2$, it remains to consider the case when $M \preceq_{\C(\Q)} \tau_{\Q}^2(I^{\Q}_{\im})$ but $M \not\preceq_{\C(\Q)} \tau_{\Q}(I^{\Q}_{\im})$. In this case, $M$ has to be the unique indecomposable object with residue $\jm$ such that $d_{\overline{\jm}} = r$ and that satisfies $\tau_{\Q}(I^{\Q}_{\im}) \preceq_{\C(\Q)} M \preceq_{\C(\Q)} \tau_{\Q}^2(I^{\Q}_{\im})$. The formula of the statement can then be easily checked directly (see Example \ref{example:G2}).
\end{proof}

\subsection{Application: inverse quantum Cartan matrices}\label{subsection:applications}

Let $C = (c_{ij})_{i,j \in I}$ denote the Cartan matrix of the complex finite-dimensional simple Lie algebra $\mathfrak{g}$. Using the associated pair $(\Delta, \sigma)$, we have
\[
c_{ij} = \begin{cases}
2 &\textrm{if } i = j,\\
-\lceil d_j/d_i\rceil &\textrm{if } i \sim j,\\
0 &\textrm{otherwise,}
\end{cases}
\]
for $i,j \in I$. The \emph{quantum Cartan matrix of $\mathfrak{g}$} is a $\Z[q^{\pm1}]$-valued matrix $C(q) = (C_{ij}(q))_{i,j \in I}$, where $q$ is an indeterminate. It is defined by
\[
C_{ij}(q) = \begin{cases}
q^{d_i} + q^{-d_i} &\textrm{if } i = j,\\
\frac{q^{c_{ij}} - q^{-c_{ij}}}{q-q^{-1}} &\textrm{if } i \neq j,
\end{cases}
\]
for $i,j \in I$. Such a matrix is invertible if regarded as a matrix with entries in the field $\mathbb{Q}(\!(q)\!)$ of formal Laurent series over $\mathbb{Q}$. Let $\widetilde{C}(q) = (\widetilde{C}_{ij}(q))_{i,j \in I}$ be its inverse and denote by
\[
\widetilde{C}_{ij}(q) = \sum_{u \in \Z}\widetilde{c}_{ij}(u)q^u
\]
the formal Laurent expansion of the $(i,j)$-entry.

One of the main results in \cite{FujitaOh} is a combinatorial formula for the coefficients $\widetilde{c}_{ij}(u)$ above, generalizing a formula of Hernandez--Leclerc (cf. \cite{HernandezLeclerc}) to the nonsimply laced case.

\begin{thm}[{\cite[Theorem 4.8]{FujitaOh}}]\label{thm:Fujita-Oh formula}
For each $i,j \in I$ and $u \in \Z$, we have
\[
\widetilde{c}_{ij}(u) = \begin{cases}
(\varpi_{\im}, \tau_{\Q}^{(u+\xi_{\jm}-\xi_{\im}-d_i)/2}(\gamma^{\Q}_{\jm})) &\textrm{if } u \geq 0 \textrm{ and } u + \xi_{\jm} - \xi_{\im} - d_i \in 2\Z,\\
0 &\textrm{otherwise,}
\end{cases}
\]
where $\Q = (\Delta, \sigma, \xi)$ is a Q-datum for $\mathfrak{g}$ and $\im \in i$, $\jm \in j$.
\end{thm}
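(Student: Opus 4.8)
The statement is exactly \cite[Theorem 4.8]{FujitaOh}, so the plan is to obtain it by matching that reference's conventions with the ones fixed in Section~\ref{section:Q-data combinatorics}. The first step is to rewrite the right-hand side using the function $\psi_{\Q}$ from Section~\ref{subsection:phiQ and g-additive}. Writing $\im \in i$, $\jm \in j$ and $d_i = d_{\overline{\im}}$, and setting $p = \xi_{\im} + d_i - u$, the defining identity $\psi_{\Q}(\jm,p) = \tau_{\Q}^{(\xi_{\jm}-p)/2}(\gamma^{\Q}_{\jm})$ gives
\[
\tau_{\Q}^{(u+\xi_{\jm}-\xi_{\im}-d_i)/2}(\gamma^{\Q}_{\jm}) = \psi_{\Q}(\jm,\xi_{\im}+d_i-u)
\]
whenever $(\jm,\xi_{\im}+d_i-u) \in \widehat{\Delta}^{\sigma}_0$, so the claim reads $\widetilde{c}_{ij}(u) = (\varpi_{\im},\psi_{\Q}(\jm,\xi_{\im}+d_i-u))$ on its support and $0$ elsewhere. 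This recasts the assertion as a purely combinatorial identity pairing fundamental weights against the labels $\psi_{\Q}$ of the folded repetition quiver $\widehat{\Delta}^{\sigma}$, which is the form established in \cite{FujitaOh} (generalizing \cite{HernandezLeclerc}).

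For the substance, I would compare two recursions and conclude by uniqueness. On one side, the coefficients $\widetilde{c}_{ij}(u)$ are pinned down by the relation $\widetilde{C}(q)C(q) = \mathrm{Id}$ together with their support and initial values: expanding this relation coefficientwise in $q$, the diagonal entry $q^{d_j}+q^{-d_j}$ and the off-diagonal $q$-integer entries $C_{\kappa j}(q)$ for $\kappa \sim j$ yield a recursion in the second index $j$ and the degree $u$. On the other side, fixing $\im$ and setting $f(\jm,p) = (\varpi_{\im},\psi_{\Q}(\jm,p))$, the $\mathfrak{g}$-additive property (Theorem~\ref{thm:g-additive property}, reproved here as Corollary~\ref{cor:generalized g-additive property}) gives
\[
f(\jm,p) + f(\jm,p-2d_{\overline{\jm}}) = \sum_{(\jm',s) \in V(\jm,p)} f(\jm',s),
\]
which is precisely the mesh relation underlying that recursion. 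I would then verify the boundary behaviour of $f$ — its vanishing above the relevant heights and its prescribed initial values, computed from $\gamma^{\Q}_{\jm} = (1-\tau_{\Q}^{d_{\overline{\jm}}})\varpi_{\jm}$ and Lemma~\ref{lemma:projective in terms of tau and the fundamental weight} — and match it with the support and seed values of $\widetilde{c}_{ij}$, so that the two solutions of the recursion agree.

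The main obstacle is bookkeeping rather than any conceptual difficulty. One must align the shift by $d_i$ and the parity condition $u+\xi_{\jm}-\xi_{\im}-d_i \in 2\Z$ with the vertex condition $p-\xi_{\jm} \in 2d_{\overline{\jm}}\Z$ of $\widehat{\Delta}^{\sigma}$; track the dictionary between the degree $u$ and the heights $p \pm d_j$ appearing in the diagonal term of $\widetilde{C}(q)C(q)$, which is the source of the $d_i$-shift in the final formula; and absorb the sign $\tau_{\Q}^{rh^{\vee}/2} = -1$ of Proposition~\ref{prop:properties of Coxeter element} when $p$ leaves the fundamental domain in the nonsimply laced case. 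Since all of this is carried out in detail in \cite{FujitaOh}, the cleanest route in the present paper is simply to invoke their Theorem~4.8 after the reduction of the first paragraph, and to record the $\psi_{\Q}$-reformulation as the bridge to the categorical reinterpretation that follows.
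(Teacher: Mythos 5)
Your proposal is correct and ends up where the paper does: Theorem \ref{thm:Fujita-Oh formula} is stated purely as a citation of \cite[Theorem 4.8]{FujitaOh}, with no proof given in the paper, so invoking that reference (after translating its conventions via $\psi_{\Q}$) is exactly the intended treatment. Your sketch of the underlying argument --- matching the recursion from $\widetilde{C}(q)C(q)=\mathrm{Id}$ against the mesh relation supplied by the $\mathfrak{g}$-additive property and checking boundary values --- is a faithful outline of how the cited result is actually established, but none of that is reproduced in this paper.
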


In \cite[Corollary 3.6]{Fujita}, Fujita reinterprets the formula above in the simply laced case using representations of a Dynkin quiver of the same type. Our next proposition generalizes his result and proof. Before we state it, we introduce some notation.

Following \cite[Section 3.8]{FujitaOh}, define
\[
\widehat{I} = \{(i,p) \in I \times \Z \mid \exists(\im,p) \in \widehat{\Delta}^{\sigma}_0,\overline{\im}=i\}.
\]
The map $f: \widehat{\Delta}^{\sigma}_0 \longrightarrow \widehat{I}$ given by projecting the first coordinate from $\Delta_0$ to $I$ is a bijection called the \emph{folding map}. For a Q-datum $\Q = (\Delta,\sigma,\xi)$, we have a bijection $H_{\Q}: \widehat{I} \longrightarrow \ind(\D(\Q))$ that sends $(i,p) \in \widehat{I}$ to the indecomposable object of $\D(\Q)$ with coordinate $f^{-1}(i,p)$.

\begin{prop}\label{prop:computing the inverse of the quantum cartan matrix}
Let $(i,p), (j,s) \in \widehat{I}$ be such that $p - s + d_i \geq 0$. Suppose one of the following conditions holds:
\begin{enumerate}[(1)]
	\item $\max\{d_i,d_j\} = r$;
	\item $\mathfrak{g}$ is of type $\mathsf{B}_n$ and $i = j = n$;
	\item $\mathfrak{g}$ is of type $\mathsf{C}_n$ and $i + j \leq n$;
	\item $\mathfrak{g}$ is of type $\mathsf{F}_4$ and $i = j = 4$.
\end{enumerate}
Then, for any Q-datum $\Q = (\Delta, \sigma, \xi)$ for $\mathfrak{g}$, we have
\[
\widetilde{c}_{ij}(p-s+d_i) = \left\langle H_{\Q}(j,s), \bigoplus_{k=0}^{\lceil d_j/d_i\rceil-1}\tau_{\Q}^k(H_{\Q}(i,p))\right\rangle_{\!\!\Q}.
\]
Moreover, by varying $p$ and $s$, the value $p-s+d_i$ covers all integers $u \in \Z$ such that $\widetilde{c}_{ij}(u) \neq 0$.
\end{prop}

\begin{proof}
Let $\im \in i$ and $\jm \in j$ be the residues of $H_{\Q}(i,p)$ and $H_{\Q}(j,s)$, respectively. Note that
\[
(p-s+d_i) + \xi_{\jm} - \xi_{\im} - d_i = (\xi_{\jm} - s) - (\xi_{\im} - p) \in 2\Z
\]
since $p$ is congruent to $\xi_{\im}$ modulo $2d_i$ and $s$ is congruent to $\xi_{\jm}$ modulo $2d_j$. By Theorem \ref{thm:Fujita-Oh formula}, we have
\[
\widetilde{c}_{ij}(p-s+d_i) = (\varpi_{\im}, \tau_{\Q}^{(\xi_{\jm} - s)/2 - (\xi_{\im} - p)/2}(\gamma^{\Q}_{\jm})).
\]
By Proposition \ref{prop:getting M(i) using Euler form}, this equals
\begin{align*}
\left\langle\tau_{\Q}^{(\xi_{\jm} - s)/2 - (\xi_{\im} - p)/2}(I^{\Q}_{\jm}), \bigoplus_{k = 0}^{\lceil d_j/d_i\rceil - 1}\tau_{\Q}^k(I^{\Q}_{\im})\right\rangle_{\!\!\Q} &= \left\langle\tau_{\Q}^{(\xi_{\jm} - s)/2}(I^{\Q}_{\jm}), \bigoplus_{k = 0}^{\lceil d_j/d_i\rceil - 1}\tau_{\Q}^k(\tau_{\Q}^{(\xi_{\im} - p)/2}(I^{\Q}_{\im}))\right\rangle_{\!\!\Q}\\
&= \left\langle H_{\Q}(j,s),\bigoplus_{k = 0}^{\lceil d_j/d_i\rceil - 1}\tau_{\Q}^k(H_{\Q}(i,p))\right\rangle_{\!\!\Q},
\end{align*}
where the last equality comes from Corollary \ref{cor:using tau to find the repetition category}. The last statement also follows from Theorem \ref{thm:Fujita-Oh formula} once we realize that $(i,p+2k) \in \widehat{I}$ for all $k \in \Z$ (see \cite[Section 3.8]{FujitaOh}).
\end{proof}

\begin{remark}
	By the formula above, the quasi-periodicity property of $\widetilde{c}_{ij}(u)$ described in \cite[Corollary 4.10(1)]{FujitaOh} arises from the behavior of the Euler form under shifts. Indeed, since $\Sigma H_{\Q}(j,s) = H_{\Q}(j^*,s+rh^{\vee})$ and $\langle \Sigma M,N \rangle_{\Q} = - \langle M,N \rangle_{\Q}$, we deduce that $\widetilde{c}_{ij}(u + rh^{\vee}) = -\widetilde{c}_{ij^*}(u)$ for $u \geq 0$. We also have the following interpretation for the positivity property described in \cite[Corollary 4.10(6)]{FujitaOh}. By applying a tilting functor that takes $H_{\Q}(i,p)$ to an injective object and writing the Euler form as an alternated sum, we see that there can be at most one term in this sum that does not vanish (the case where $d_i = 1$ and $d_j = r > 1$ was done in the proof of Proposition \ref{prop:getting M(i) using Euler form}). If we additionally suppose that $p-s+d_i \leq rh^{\vee}$, then such a term is nonnegative by \cite[Corollary 4.10(6)]{FujitaOh}. Thus, in case this term does not vanish, there must exist a tilting functor taking both objects inside the Euler form into some category $\C(\Q')$, otherwise we would need to apply an \emph{even} power of the shift functor to one of the objects for this to be true and we would get a contradiction to the inequality. We conclude that
	\[
	\widetilde{c}_{ij}(p-s+d_i) = \dim_K\Hom_{\D(\Q)}\left(H_{\Q}(j,s), \bigoplus_{k=0}^{\lceil d_j/d_i\rceil-1}\tau_{\Q}^k(H_{\Q}(i,p))\right).
	\]
	This equality is an analog of \cite[Proposition 3.8]{Fujita}.

	Finally, it is also interesting to remark that the direct sum appearing in the formula when $d_i = 1$ and $d_j = r > 1$ reflects the symmetry described in \cite[Lemma 4.4(2)]{FujitaOh}: in this case, if $u = p-s+1$ satisfies $0 \leq u \pm (r-1) \leq rh^{\vee}$, we have
	\begin{align*}
		\widetilde{c}_{ij}(u) &= \sum_{k=0}^{r-1}\langle H_{\Q}(j,s),\tau^k_{\Q}(H_{\Q}(i,p))\rangle_{\Q} = \sum_{k=0}^{r-1}\langle \tau^k_{\Q}(H_{\Q}(i,p)), S_{\Q}(H_{\Q}(j,s))
		\rangle_{\Q}\\
		&= \sum_{k=0}^{r-1}\langle H_{\Q}(i,p-2k), H_{\Q}(j,s-2r+rh^{\vee})\rangle_{\Q} = \sum_{k=0}^{r-1}\widetilde{c}_{ji}(rh^{\vee} - (u + (r-1) - 2k))\\
		&= \sum_{k=0}^{r-1}\widetilde{c}_{ji}(u + (r-1) - 2k),
	\end{align*}
	where we used Theorem \ref{thm:Serre duality} and \cite[Corollary 4.10(3)]{FujitaOh} in the second and fifth equalities, respectively.
\end{remark}

\begin{remark}
	The proposition above gives a categorical interpretation for all the entries of the inverse of the quantum Cartan matrix in simply laced types and in type $\mathsf{B}_n$. In the remaining types, there are values of $(i,p)$ and $(j,s)$ for which the formula does not give the desired result. This occurs due to the failure of Lemma \ref{lemma:source of Q gives source of the representation} when its hypotheses are removed (see Remark \ref{rmk:not always source or sink when expected}). However, it is interesting to point out a link with the conjectural formula \cite[Conjecture 6.7]{FujitaOh} for the denominator of the $R$-matrix between Kirillov--Reshetikhin modules over the quantum affine algebra (see also Conjecture 5.9 and Remark 5.10 in \cite{FujitaMurakami}). In \cite[Conjecture 6.15]{FujitaMurakami}, this conjectural formula is extended to all cases by adding an extra term $\Delta_{ij}(q)$, which is zero precisely when one of the conditions in Proposition \ref{prop:computing the inverse of the quantum cartan matrix} holds (see \cite[Proposition A.1]{FujitaMurakami}).
\end{remark}

In \cite{Hernandez}, the inverse of the quantum Cartan matrix appears in the definition of a skew-symmetric pairing $\mathscr{N}: \widehat{I} \times \widehat{I} \longrightarrow \Z$, an essential ingredient to construct a quantization of the Grothendieck ring of a quantum affine algebra. It is defined as
\[
\mathscr{N}(i,p;j,s) = \widetilde{c}_{ij}(p-s-d_i) - \widetilde{c}_{ij}(p-s+d_i) - \widetilde{c}_{ij}(s-p-d_i) + \widetilde{c}_{ij}(s-p+d_i).
\]
We can describe it using the c-derived category of a Q-datum.

\begin{prop}
For distinct $(i,p), (j,s) \in \widehat{I}$ with $s \geq p$, we have
\[
\mathscr{N}(i,p;j,s) = \langle H_{\Q}(i,p),H_{\Q}(j,s)\rangle_{\Q} + \langle H_{\Q}(j,s),H_{\Q}(i,p)\rangle_{\Q}
\]
for any Q-datum $\Q = (\Delta, \sigma, \xi)$ for $\mathfrak{g}$.
\end{prop}

\begin{proof}
The result follows from Theorem \ref{thm:symmetrization of Euler form is Cartan form} and \cite[Proposition 5.21]{FujitaOh}.
\end{proof}

\appendix

\section{Explicit descriptions in the case of Q-data}\label{appendix:examples}

In this appendix, we present some explicit descriptions of the indecomposable objects of $\C([\bm{i}])$ when $[\bm{i}]$ is a commutation class of reduced words $w_0$ coming from a Q-datum. They are important for the proof of Lemma \ref{lemma:source of Q gives source of the representation}. What follows depends mostly on Sections \ref{section:objects} and \ref{subsection:tau categorified}. In particular, we adopt the same conventions as at the end of Section \ref{section:indecomposable objects from commutation classes} when we deal with representations.

\begin{example}\label{example:description of the objects in type Bn}
Let us fix $(\Delta,\sigma) = (\mathsf{A}_{2n-1}, \vee)$ and let $\Q = (\Delta,\sigma,\xi)$ be a Q-datum of type $\mathsf{B}_n$. If we view $M \in \ind(\Q)$ as a representation of the double quiver $\overline{Q}$, then we have a vector space of dimension at most one on each vertex. Since $M$ satisfies the preprojective relations, one can easily deduce that $M$ is the restriction of a representation of some orientation $Q'$ of $\mathsf{A}_{2n-1}$ under the quotient map $K\overline{Q} \longrightarrow KQ'$. In this example, we will prove that there are two orientations $Q^{\leq n} = Q^{\leq n}(\Q)$ and $Q^{\geq n} = Q^{\geq n}(\Q)$ of $\mathsf{A}_{2n-1}$ such that any indecomposable object of $\C(\Q)$ comes from a representation of $Q^{\leq n}$ or $Q^{\geq n}$.

The definition of the orientations $Q^{\leq n}$ and $Q^{\geq n}$ depends on whether the height function $\xi$ satisfies $\xi_n = \xi_{n-1} + 1$ or $\xi_n = \xi_{n+1} + 1$, and is given as follows. Let $\im,\jm \in \Delta_0$ be such that $\im \sim \jm$.  If $\xi_n = \xi_{n-1} + 1$, then there is an arrow $\im \rightarrow \jm$ in $Q^{\leq n}$ if and only if $\xi_{\im} > \xi_{\jm}$, and $Q^{\geq n}$ can obtained from $Q^{\leq n}$ by reversing the direction of the arrow between the vertices $n-1$ and $n$. On the other hand, if $\xi_n = \xi_{n+1} + 1$, then there is an arrow $\im \rightarrow \jm$ in $Q^{\geq n}$ if and only if $\xi_{\im} > \xi_{\jm}$, and $Q^{\leq n}$ can obtained from $Q^{\geq n}$ by reversing the direction of the arrow between the vertices $n$ and $n+1$.

\begin{prop}\label{prop:orientations that appear in Bn}
Let $\Q = (\Delta,\sigma,\xi)$ be a Q-datum of type $\mathsf{B}_n$, and let $Q^{\leq n}$ and $Q^{\geq n}$ be the orientations of $\Delta = \mathsf{A}_{2n-1}$ defined above. Let $M \in \C(\Q)$ be an indecomposable object of residue $\im \in \Delta_0$ and view it as a representation of the double quiver $\overline{Q}$.
\begin{itemize}
	\item If $\im \leq n$, then $M$ is the restriction of an indecomposable representation of $Q^{\leq n}$.
	\item If $\im \geq n$, then $M$ is the restriction of an indecomposable representation of $Q^{\geq n}$.
\end{itemize}
\end{prop}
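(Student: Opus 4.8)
The plan is to argue by induction along the spherical-twist construction of the indecomposables, tracking the orientation at every step. As a preliminary I would record the reductions made available by the preamble: by Theorem~\ref{thm:indecomposables are concentrated in zero} every $M \in \ind(\Q)$ is concentrated in degree zero, hence a $\Lambda_Q$-module, and in type $\mathsf{A}_{2n-1}$ it is thin. A thin indecomposable representation of a type-$\mathsf{A}$ quiver has interval support $[a,b]$, and the identity maps between consecutive one-dimensional spaces determine a \emph{support-orientation} of the internal edges of $[a,b]$; $M$ is the restriction of a representation of an orientation $Q'$ exactly when $Q'$ agrees with this support-orientation on those internal edges. Since $Q^{\le n}$ and $Q^{\ge n}$ differ only in the single edge $e^{*}$ adjacent to the central vertex $n$, the statement becomes: the support-orientation of $M$ coincides with $Q^{\le n}$ (resp. $Q^{\ge n}$) on \emph{all} internal edges of its support when $\im \le n$ (resp. $\im \ge n$). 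The residue-$n$ case, where $M$ is to be compatible with both, will fall out because $e^{*}$ is then never internal to the support.

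Next I would set up the induction. Writing $M = T_{i_1}\dotsb T_{i_m}(S_{\im})$ with residue $\im = i_{m+1}$, I induct on $m$, the base $m=0$ being $M = S_{\im}$ whose singleton support is trivially compatible with every orientation. For the step, set $M'' = T_{i_2}\dotsb T_{i_m}(S_{\im})$, which by Proposition~\ref{prop:equivalence under reflection, general case} and the identity $[s_{\im}\Q] = r_{\im}[\Q]$ is an indecomposable of $\C(s_{i_1}\Q)$ of the same residue $\im$, with $i_1$ a source of $\Q$. Since both $M$ and $M''$ lie in degree zero, Proposition~\ref{prop:computing the spherical twist} identifies $T_{i_1}$ with the reflection functor $\Sigma_{i_1}$, which by Remark~\ref{rem:spherical twist gives reflection functor on the classical case} is the classical Bernstein--Gelfand--Ponomarev functor at the sink $i_1$ of $s_{i_1}\Q$. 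The engine of the induction is then the orientation-bookkeeping claim: if $\im_0$ is a source of $\Q$, then $s_{\im_0}Q^{\le n}(\Q) = Q^{\le n}(s_{\im_0}\Q)$ and $s_{\im_0}Q^{\ge n}(\Q) = Q^{\ge n}(s_{\im_0}\Q)$. Granting it, the BGP functor sends a representation of $Q^{\le n}(s_{i_1}\Q)$ (resp. $Q^{\ge n}(s_{i_1}\Q)$) to a representation of its reflection, so the inductive hypothesis on $M''$ transports to the desired statement for $M$.

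The bookkeeping claim I would prove by a short case analysis on the distance from $\im_0$ to the centre, using that exactly one of the two normal forms $\xi_n = \xi_{n-1}+1$ or $\xi_n = \xi_{n+1}+1$ holds (these are exhaustive by condition~(2) of a height function, since the two central neighbours differ in height by $2$). When $\im_0 \notin \{n-1,n,n+1\}$ the relation of $\xi_n$ to its neighbours is untouched, so $e^{*}$ is unchanged and both orientations reflect identically. When $\im_0 = n\pm 1$, being a source forces the \emph{opposite} branch to carry $e^{*}$, so $e^{*}$ is again untouched and the two edges at $\im_0$ flip consistently in both orientations.

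I expect the real obstacle to be the central reflection $\im_0 = n$, where $d_{\overline{n}} = 1$: here reflecting interchanges the two normal forms, and $n$ is a source of exactly \emph{one} of $Q^{\le n}, Q^{\ge n}$ — namely the one matching a residue on that side. The delicate point is to show this asymmetry is compatible with the residue constraint, i.e. that a module whose residue sits on the $Q^{\le n}$-side genuinely has $n$ as a source of the orientation it lives over, so that the central reflection is an honest BGP reflection there, while on the other side the passage through $n$ either leaves $e^{*}$ uninvolved or forces the offending map to vanish. (I must avoid circularity: Lemma~\ref{lemma:source of Q gives source of the representation} and Remark~\ref{rmk:defect in Bn is zero between residue n} both depend on this proposition, so they cannot be invoked.) I would instead pin down this compatibility directly, by checking it against the explicit injective objects and the coordinate action of $\tau_{\Q}$ from Section~\ref{subsection:tau categorified}, which is where the argument does its essential work.
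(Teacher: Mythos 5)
Your reduction for reflections at sources $\im_0 \neq n$ is sound and is essentially the paper's own first step: the paper likewise shows that if $\im \in \Delta_0\setminus\{n\}$ is a source of $\Q$ and $s_{\im}\Q$ satisfies the proposition, then so does $\Q$, using that such an $\im$ is a source of both $Q^{\leq n}(\Q)$ and $Q^{\geq n}(\Q)$ and that these orientations reflect compatibly (your observation that a source $\im_0 = n\pm 1$ must lie on the branch opposite to the distinguished edge is also correct). The problem is that the difficulty you flag at the central vertex is not a loose end but the entire content of the proposition, and your proposal does not supply an argument for it. In your induction, every time the peeled source is $n$ --- which happens in essentially every other layer, since residues alternate sides under $\tau_{\Q}$ --- the inductive hypothesis for a module $M''$ of residue $\geq n$ only places $M''$ over $Q^{\geq n}(s_n\Q)$, in which $n$ is neither a source nor a sink. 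Then $\Sigma_n$ is not the BGP reflection of a representation at a sink, and whether $T_n(M'')$ is again supported on a single orientation, and which one, is exactly what has to be proved. Saying you would ``check this against the explicit injective objects and the coordinate action of $\tau_{\Q}$'' is the right instinct but is a promissory note, not a proof.

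For comparison, after the same first reduction the paper normalizes to the Q-datum whose unique source is $n$, fixes the reduced word coming from $\tau_{\Q} = T_nT_{n-1}\dotsb T_1\sigma$, and observes that every indecomposable of $\C(\Q)$ is (up to shift) a $\tau_{\Q}$-power of one of $M^{\bm{i}}_1,\dots,M^{\bm{i}}_n$, with residues alternating between the two sides of $n$ under $\tau_{\Q}$. The proposition is then reduced to two explicitly verified assertions: the first $n$ objects are representations of $Q^{\leq n}$ (and $M^{\bm{i}}_1 = S_n$ also of $Q^{\geq n}$), and $\tau_{\Q}$ carries any interval module $M^{\star}(\im,\jm)$ over $Q^{\star n}$ that remains in $\C(\Q)$ to an interval module over $Q^{\mathrm{op}(\star)n}$. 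The second assertion is a six-case computation; the delicate case $\im < n$, $\jm = 2n-1$, $\star =\ \leq$ is precisely the one where $T_n$ hits a module for which $n$ is a flow-through vertex, and there the twist leaves degree zero, so the object simply drops out of $\C(\Q)$ and no orientation claim is needed. To close your gap you would have to carry out this table (or an equivalent computation); nothing in your sketch substitutes for it.
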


\begin{proof}
Without loss of generality, we assume that $\xi_n = \xi_{n-1} + 1$. Let us first show the following statement: if $\im \in \Delta_0 \setminus \{n\}$ is a source of $\Q$ and if $s_{\im}\Q$ satisfies the proposition, then so does $\Q$. By our hypothesis on $\xi$, we have $\im \neq n-1$. Note that $\im$ is also a source for the quivers $Q^{\leq n}(\Q)$ and $Q^{\geq n}(\Q)$, and we have $Q^{\leq n}(s_{\im}\Q) = s_{\im}Q^{\leq n}(\Q)$ and $Q^{\geq n}(s_{\im}\Q) = s_{\im}Q^{\geq n}(\Q)$. Thus, if we apply the spherical twist $T_{\im}$ to an indecomposable representation of $Q^{\leq n}(s_{\im}\Q)$ or $Q^{\geq n}(s_{\im}\Q)$ not isomorphic to $S_{\im}$ (viewed as an object of $\pvd(\Pi)$ in the canonical way), we get a representation of $Q^{\leq n}(\Q)$ or $Q^{\geq n}(\Q)$ by Remark \ref{rem:spherical twist gives reflection functor on the classical case}. Hence, by Proposition \ref{prop:equivalence under reflection, general case}, if $s_{\im}\Q$ satisfies the proposition above, so does $\Q$, as desired.

By the previous paragraph, after applying a source sequence, we may assume that $n$ is the unique source of $\Q$. Therefore, up to adding a constant, the height function $\xi$ is given as follows: 
\[
\xi_{\im} = \begin{cases}
	-d^{\sigma}_{\Delta}(\im,n) &\textrm{if } \im \leq n,\\
	-d^{\sigma}_{\Delta}(\im,n) - 2 &\textrm{if } \im > n.
\end{cases}
\]
The quivers $Q^{\leq n} = Q^{\leq n}(\Q)$ and $Q^{\geq n} = Q^{\geq n}(\Q)$ are
\begin{center}
\begin{tikzpicture}
	
	\node[circle, draw, inner sep=1.5pt] (A1) at (0,0) {};
	\node[above] at (0,0.1) {$1$};
	\node[circle, draw, inner sep=1.5pt] (A2) at (1.5,0) {};
	\node[above] at (1.5,0.1) {$2$};
	\node[circle, draw, inner sep=1.5pt] (Anm2) at (3,0) {};
	\node[above] at (3,0.1) {$n-2$};
	\node[circle, draw, inner sep=1.5pt] (Anm1) at (4.5,0) {};
	\node[above] at (4.5,0.1) {$n-1$};
	\node[circle, draw, inner sep=1.5pt] (An) at (6,0) {};
	\node[above] at (6,0.1) {$n$};
	\node[circle, draw, inner sep=1.5pt] (Anp1) at (7.5,0) {};
	\node[above] at (7.5,0.1) {$n+1$};
	\node[circle, draw, inner sep=1.5pt] (A2nm2) at (9,0) {};
	\node[above] at (9,0.1) {$2n-2$};
	\node[circle, draw, inner sep=1.5pt](A2nm1) at (10.5,0) {};
	\node[above] at (10.5,0.1) {$2n-1$};

	\draw[<-] (A1) -- (A2);
	\draw[dotted] (A2) -- (Anm2);
	\draw[<-] (Anm2) -- (Anm1);
	\draw[<-] (Anm1) -- (An);
	\draw[->] (An) -- (Anp1);
	\draw[dotted] (Anp1) -- (A2nm2);
	\draw[->] (A2nm2) -- (A2nm1);

\end{tikzpicture}
\end{center}
and
\begin{center}
\begin{tikzpicture}
	
	\node[circle, draw, inner sep=1.5pt] (A1) at (0,0) {};
	\node[above] at (0,0.1) {$1$};
	\node[circle, draw, inner sep=1.5pt] (A2) at (1.5,0) {};
	\node[above] at (1.5,0.1) {$2$};
	\node[circle, draw, inner sep=1.5pt] (Anm2) at (3,0) {};
	\node[above] at (3,0.1) {$n-2$};
	\node[circle, draw, inner sep=1.5pt] (Anm1) at (4.5,0) {};
	\node[above] at (4.5,0.1) {$n-1$};
	\node[circle, draw, inner sep=1.5pt] (An) at (6,0) {};
	\node[above] at (6,0.1) {$n$};
	\node[circle, draw, inner sep=1.5pt] (Anp1) at (7.5,0) {};
	\node[above] at (7.5,0.1) {$n+1$};
	\node[circle, draw, inner sep=1.5pt] (A2nm2) at (9,0) {};
	\node[above] at (9,0.1) {$2n-2$};
	\node[circle, draw, inner sep=1.5pt](A2nm1) at (10.5,0) {};
	\node[above] at (10.5,0.1) {$2n-1$};

	\draw[<-] (A1) -- (A2);
	\draw[dotted] (A2) -- (Anm2);
	\draw[<-] (Anm2) -- (Anm1);
	\draw[->] (Anm1) -- (An);
	\draw[->] (An) -- (Anp1);
	\draw[dotted] (Anp1) -- (A2nm2);
	\draw[->] (A2nm2) -- (A2nm1);

\end{tikzpicture}
\end{center}
respectively.

Note that $\Q^{\circ} = \Q$. Moreover, there is a unique compatible reading for $X_{\Q}^{\circ}$ and the corresponding sequence of residues is $(n,n-1,\dots,1)$. Hence, $\tau_{\Q} = T_nT_{n-1}\dotsb T_1\sigma$ and, by \cite[Proposition 3.31]{FujitaOh}, we have $\bm{i} \in [\Q]$ for
\[
\bm{i} = (n,n-1,\dots,1,\sigma(n),\sigma(n-1),\dots, \sigma(1),\dots\dots,\sigma^{h^{\vee}-1}(n),\sigma^{h^{\vee}-1}(n-1),\dots,\sigma^{h^{\vee}-1}(1)).
\]
By item (4) in Proposition \ref{prop:properties of spherical twists}, we deduce that $\tau_{\Q}(M^{\bm{i}}_k) \cong M^{\bm{i}}_{k+n}$ for $1 \leq k \leq N - n$. Observe that $M^{\bm{i}}_1,M^{\bm{i}}_2,\dots,M^{\bm{i}}_n$ have residue $n, n-1,\dots, 1$, respectively. In addition, by Proposition \ref{prop:tau preserve the indecomposables associated with Q}, if we repeatedly apply $\tau_{\Q}$ to an indecomposable object of $\C(\Q)$, the residue of the objects we get alternate between being less than $n$ and being greater than $n$. With these observations, we will have proved the proposition after we verify the following assertions.
\begin{enumerate}[(1)]
	\item The indecomposable objects $M^{\bm{i}}_1,\dots,M^{\bm{i}}_n$ are representations of $Q^{\leq n}$ and $M^{\bm{i}}_1$ is also a representation of $Q^{\geq n}$.
	\item Let $\star$ denote either $\leq$ or $\geq$. If $M \in \ind(\Q)$ is a representation of $Q^{\star n}$ and $\tau_{\Q}(M) \in \C(\Q)$, then $\tau_{\Q}(M)$ is a representation of $Q^{\mathrm{op}(\star)n}$, where $\mathrm{op}(\star)$ denotes the inequality sign opposite to $\star$.
\end{enumerate}

For $1 \leq \im \leq \jm \leq 2n-1$ and $\star \in \{\leq,\geq\}$, denote by $M^{\star}(\im,\jm)$ the indecomposable representation of $Q^{\star n}$ whose support are the vertices between $\im$ and $\jm$. We view it as an object of $\pvd(\Pi_Q)$. One can compute using Proposition \ref{prop:computing the spherical twist} that $M^{\bm{i}}_k \cong M^{\leq}(n-k+1,n)$ for $1 \leq k \leq n$ and $M^{\bm{i}}_1 \cong S_n \cong M^{\geq}(n,n)$, which proves (1). The same proposition allows us to calculate that
\[
\tau_{\Q}(M^{\star}(\im,\jm)) \cong \begin{cases}
M^{\mathrm{op}(\star)}(\sigma(\jm),\sigma(\im)) &\textrm{if } \jm < n-1,\\
M^{\mathrm{op}(\star)}(\sigma(\jm)-1,\sigma(\im)) &\textrm{if } \im < n \textrm{ and } n-1 \leq \jm < 2n-1,\\
M^{\mathrm{op}(\star)}(\sigma(\jm)-1,\sigma(\im)-1) &\textrm{if } \im \geq n \textrm{ and } n \leq \jm < 2n-1,\\
M^{\leq}(n+1,\sigma(\im)) &\textrm{if } \im < n, \jm=2n-1 \textrm{ and } \star = \ \geq,\\
T_n(M^{\leq}(n,\sigma(\im))) &\textrm{if } \im < n, \jm=2n-1 \textrm{ and } \star = \ \leq,\\
\Sigma^{-1}M^{\leq}(\sigma(\im),n) &\textrm{if } \im \geq n \textrm{ and } \jm = 2n-1.
\end{cases}
\]
In the fifth case above, note that $M^{\leq}(n,\sigma(\im))$ does not satisfy the condition of Proposition \ref{prop:computing the spherical twist} for the vertex $n$, hence $T_n(M^{\leq}(n,\sigma(\im)))$ is not concentrated in degree zero. We deduce that if $\tau_{\Q}(M^{\star}(\im,\jm))$ is concentrated in degree zero, then it is a representation of $Q^{\mathrm{op}(\star)n}$. This implies (2) and finishes the proof.
\end{proof}
\end{example}

\begin{example}\label{example:description of the objects in type Cn}
Suppose $(\Delta,\sigma) = (\mathsf{D}_{n+1},\vee)$ is of type $\mathsf{C}_n$.  Let $Q$ be the following orientation for $\Delta$:
\begin{center}
\begin{tikzpicture}
	
	\node[circle, draw, inner sep=1.5pt] (D1) at (11.5,-0.5) {};
	\node[above] at (11.5,-0.4) {$1$};
	\node[circle, draw, inner sep=1.5pt] (D2) at (13,-0.5) {};
	\node[above] at (13,-0.4) {$2$};
	\node[circle, draw, inner sep=1.5pt] (Dnm2) at (14.5,-0.5) {};
	\node[above] at (14.5,-0.4) {$n-2$};
	\node[circle, draw, inner sep=1.5pt] (Dnm1) at (16,-0.5) {};
	\node[above] at (16,-0.4) {$n-1$};
	\node[circle, draw, inner sep=1.5pt] (Dn) at (17.5,0) {};
	\node[right] at (17.6,0) {$n$};
	\node[circle, draw, inner sep=1.5pt] (Dnp1) at (17.5,-1) {};
	\node[right] at (17.6,-1) {$n+1$};

	\draw[->] (D1) -- (D2);
	\draw[dotted] (D2) -- (Dnm2);
	\draw[->] (Dnm2) -- (Dnm1);
	\draw[->] (Dnm1) -- (Dn);
	\draw[->] (Dnm1) -- (Dnp1);
	
\end{tikzpicture}
\end{center}
Let $\Q = (\Delta,\sigma,\xi)$ be the Q-datum defined by $\xi_k = k$ for $1 \leq k \leq n$ and $\xi_{n+1} = n-2$. The indecomposable objects of $\C(\Q)$ can be listed in the following five families.
\begin{enumerate}[(1)]
	\item For $1 \leq \im \leq \jm \leq n-1$, define
	\[
	A^{0,0}_{\im,\jm} = \begin{tikzcd}[row sep=0.5em, column sep=1.5em, baseline={([yshift=-0.3em]current bounding box.center)}]
		&&&&&&&&&& 0 \\
		0 & \dotsb & 0 & K & K & \dotsb & K & 0 & \dotsb & 0 \\
		&&&&&&&&&& 0
		\arrow[bend left=20, from=1-11, to=2-10]
		\arrow[bend left=20, from=2-1, to=2-2]
		\arrow[bend left=20, from=2-2, to=2-1]
		\arrow[bend left=20, from=2-2, to=2-3]
		\arrow[bend left=20, from=2-3, to=2-2]
		\arrow[bend left=20, from=2-3, to=2-4]
		\arrow[bend left=20, from=2-4, to=2-3]
		\arrow["0", bend left=20, from=2-4, to=2-5]
		\arrow["\id", bend left=20, from=2-5, to=2-4]
		\arrow["0", bend left=20, from=2-5, to=2-6]
		\arrow["\id", bend left=20, from=2-6, to=2-5]
		\arrow["0", bend left=20, from=2-6, to=2-7]
		\arrow["\id", bend left=20, from=2-7, to=2-6]
		\arrow[bend left=20, from=2-7, to=2-8]
		\arrow[bend left=20, from=2-8, to=2-7]
		\arrow[bend left=20, from=2-8, to=2-9]
		\arrow[bend left=20, from=2-9, to=2-8]
		\arrow[bend left=20, from=2-9, to=2-10]
		\arrow[bend left=20, from=2-10, to=1-11]
		\arrow[bend left=20, from=2-10, to=2-9]
		\arrow[bend left=20, from=2-10, to=3-11]
		\arrow[bend left=20, from=3-11, to=2-10]
	\end{tikzcd}
	\]
	If we read from left to right, the first nonzero vector space appears at vertex $\im$ and the last at vertex $\jm$.
	
	\item For $1 \leq \im \leq n$, define
	\[
	A^{1,0}_{\im} = \begin{tikzcd}[row sep=0.5em, column sep=1.5em, baseline={([yshift=-0.3em]current bounding box.center)}]
		&&&&&&& K \\
	0 & \dotsb & 0 & K & K & \dotsb & K \\
	&&&&&&& 0
	\arrow["\id", bend left=20, from=1-8, to=2-7]
	\arrow[bend left=20, from=2-1, to=2-2]
	\arrow[bend left=20, from=2-2, to=2-1]
	\arrow[bend left=20, from=2-2, to=2-3]
	\arrow[bend left=20, from=2-3, to=2-2]
	\arrow[bend left=20, from=2-3, to=2-4]
	\arrow[bend left=20, from=2-4, to=2-3]
	\arrow["0", bend left=20, from=2-4, to=2-5]
	\arrow["\id", bend left=20, from=2-5, to=2-4]
	\arrow["0", bend left=20, from=2-5, to=2-6]
	\arrow["\id", bend left=20, from=2-6, to=2-5]
	\arrow["0", bend left=20, from=2-6, to=2-7]
	\arrow["0", bend left=20, from=2-7, to=1-8]
	\arrow["\id", bend left=20, from=2-7, to=2-6]
	\arrow[bend left=20, from=2-7, to=3-8]
	\arrow[bend left=20, from=3-8, to=2-7]
	\end{tikzcd}
	\]
	If we read from left to right, the first nonzero vector space appears at vertex $\im$.

	\item For $1 \leq \im \leq n$, define
	\[
	A^{0,1}_{\im} = \begin{tikzcd}[row sep=0.5em, column sep=1.5em, baseline={([yshift=-0.3em]current bounding box.center)}]
		&&&&&&& 0 \\
	0 & \dotsb & 0 & K & K & \dotsb & K \\
	&&&&&&& K
	\arrow[bend left=20, from=1-8, to=2-7]
	\arrow[bend left=20, from=2-1, to=2-2]
	\arrow[bend left=20, from=2-2, to=2-1]
	\arrow[bend left=20, from=2-2, to=2-3]
	\arrow[bend left=20, from=2-3, to=2-2]
	\arrow[bend left=20, from=2-3, to=2-4]
	\arrow[bend left=20, from=2-4, to=2-3]
	\arrow["0", bend left=20, from=2-4, to=2-5]
	\arrow["\id", bend left=20, from=2-5, to=2-4]
	\arrow["0", bend left=20, from=2-5, to=2-6]
	\arrow["\id", bend left=20, from=2-6, to=2-5]
	\arrow["0", bend left=20, from=2-6, to=2-7]
	\arrow[bend left=20, from=2-7, to=1-8]
	\arrow["\id", bend left=20, from=2-7, to=2-6]
	\arrow["0", bend left=20, from=2-7, to=3-8]
	\arrow["\id", bend left=20, from=3-8, to=2-7]
	\end{tikzcd}
	\]
	If we read from left to right, the first nonzero vector space appears at vertex $\im$ (or at vertex $n+1$ if $\im = n$).

	\item For $1 \leq \im \leq n-1$, define
	\[
	A^{1,1}_{\im} = \begin{tikzcd}[row sep=0.5em, column sep=1.5em, baseline={([yshift=-0.3em]current bounding box.center)}]
		&&&&&&& K \\
	0 & \dotsb & 0 & K & K & \dotsb & K \\
	&&&&&&& K
	\arrow["\id" pos=.2, bend left=20, from=1-8, to=2-7]
	\arrow[bend left=20, from=2-1, to=2-2]
	\arrow[bend left=20, from=2-2, to=2-1]
	\arrow[bend left=20, from=2-2, to=2-3]
	\arrow[bend left=20, from=2-3, to=2-2]
	\arrow[bend left=20, from=2-3, to=2-4]
	\arrow[bend left=20, from=2-4, to=2-3]
	\arrow["0", bend left=20, from=2-4, to=2-5]
	\arrow["\id", bend left=20, from=2-5, to=2-4]
	\arrow["0", bend left=20, from=2-5, to=2-6]
	\arrow["\id", bend left=20, from=2-6, to=2-5]
	\arrow["0", bend left=20, from=2-6, to=2-7]
	\arrow["0", bend left=20, from=2-7, to=1-8]
	\arrow["\id", bend left=20, from=2-7, to=2-6]
	\arrow["\id" pos=.9, bend left=20, from=2-7, to=3-8]
	\arrow["0", bend left=20, from=3-8, to=2-7]
	\end{tikzcd}
	\]
	If we read from left to right, the first nonzero vector space appears at vertex $\im$.

	\item For $1 \leq \im < \jm \leq n-1$, define
	\[
	B_{\im,\jm} = \begin{tikzcd}[row sep=0.5em, column sep=1.5em, baseline={([yshift=-0.7em]current bounding box.center)}, ampersand replacement=\&]
	\&\&\&\&\&\&\&\&\&\& K \\
	0 \& \dotsb \& 0 \& K \& \dotsb \& K \& {K^2} \& {K^2} \& \dotsb \& {K^2} \\
	\&\&\&\&\&\&\&\&\&\& K
	\arrow["{i_2}" pos=.2, bend left=20, from=1-11, to=2-10]
	\arrow[bend left=20, from=2-1, to=2-2]
	\arrow[bend left=20, from=2-2, to=2-1]
	\arrow[bend left=20, from=2-2, to=2-3]
	\arrow[bend left=20, from=2-3, to=2-2]
	\arrow[bend left=20, from=2-3, to=2-4]
	\arrow[bend left=20, from=2-4, to=2-3]
	\arrow["0", bend left=20, from=2-4, to=2-5]
	\arrow["\id", bend left=20, from=2-5, to=2-4]
	\arrow["0", bend left=20, from=2-5, to=2-6]
	\arrow["\id", bend left=20, from=2-6, to=2-5]
	\arrow["{i_1}" pos=0.6, bend left=20, from=2-6, to=2-7]
	\arrow["{\pi_2}" pos=0.4, bend left=20, from=2-7, to=2-6]
	\arrow["{\begin{bmatrix}0 & 1\\0 & 0\end{bmatrix}}", bend left=20, from=2-7, to=2-8]
	\arrow["\id", bend left=20, from=2-8, to=2-7]
	\arrow["{\begin{bmatrix}0 & 1\\0 & 0\end{bmatrix}}", bend left=20, from=2-8, to=2-9]
	\arrow["\id", bend left=20, from=2-9, to=2-8]
	\arrow["{\begin{bmatrix}0 & 1\\0 & 0\end{bmatrix}}", bend left=20, from=2-9, to=2-10]
	\arrow["0", bend left=20, from=2-10, to=1-11]
	\arrow["\id", bend left=20, from=2-10, to=2-9]
	\arrow["{\pi_2}" pos=0.9, bend left=20, from=2-10, to=3-11]
	\arrow["{i_1}", bend left=20, from=3-11, to=2-10]
	\end{tikzcd}
	\]
	If we read from left to right, the first nonzero vector space appears at vertex $\im$ and the first two-dimensional vector space at vertex $\jm$.
\end{enumerate}

An easy computation shows that $I^{\Q}_{\im} \cong A^{1,0}_{\im}$ for $1 \leq \im \leq n$ and $I^{\Q}_{n+1} \cong A^{1,1}_{n-1}$. By Corollary \ref{cor:using tau to find the repetition category}, we can compute any object in $\ind(\Q)$ by applying some power of $\tau_{\Q}$ to $I^{\Q}_{\im}$ for some $1 \leq \im \leq n+1$. Since the set of objects displayed above represents all positive roots of $\mathsf{R}^+$, we only need to show that it is closed (up to isomorphism and up to an application of the suspension functor) under the action of $\tau_{\Q}$. Indeed, knowing that $\tau_{\Q} = T_n\dotsb T_2T_1\sigma$, one can compute:
\[
\tau_{\Q}(A^{0,0}_{\im,\jm}) \cong \begin{cases}
A^{0,0}_{\im-1,\jm-1} &\textrm{if }\im \neq 1,\\
\Sigma^{-1}A^{1,0}_{\jm} &\textrm{if }\im=1,
\end{cases} \quad\quad
\tau_{\Q}(A^{1,0}_{\im}) \cong \begin{cases}
A^{1,1}_{\im-1} &\textrm{if }\im \neq 1,\\
A^{0,1}_n &\textrm{if }\im=1,
\end{cases}
\]
\[
\tau_{\Q}(A^{0,1}_\im) \cong \begin{cases}
A^{0,0}_{\im-1,n-1} &\textrm{if }\im \neq 1,\\
\Sigma^{-1}A^{1,0}_n &\textrm{if }\im=1,
\end{cases} \quad\quad
\tau_{\Q}(A^{1,1}_{\im}) \cong \begin{cases}
B_{\im-1,n-1} &\textrm{if }\im \neq 1,\\
A^{0,1}_{n-1} &\textrm{if }\im=1,
\end{cases}
\]
\[
\tau_{\Q}(B_{\im,\jm}) \cong \begin{cases}
B_{\im-1,\jm-1} &\textrm{if }\im \neq 1,\\
A^{0,1}_{\jm-1} &\textrm{if }\im=1.
\end{cases}
\]
By Corollary \ref{cor:power of tau sends projective to shift of injective}, we also deduce that $P_{\im}^{\Q} \cong A^{0,0}_{1,\im}$ for $1 \leq \im \leq n-1$, $P_n^{\Q} \cong B_{12}$, and $P_{n+1}^{\Q} \cong A^{0,1}_1$.
\end{example}

\begin{example}\label{example:F4} Suppose $(\Delta,\sigma) = (\mathsf{E}_6,\vee)$ is of type $\mathsf{F}_4$. Let $Q$ be the following orientation for $\Delta$:
\begin{center}
\begin{tikzpicture}
		
	\node[circle, draw, inner sep=1.5pt] (E1) at (2,-4) {};
		\node[above] at (2,-3.9) {$1$};
		\node[circle, draw, inner sep=1.5pt] (E2) at (3,-4) {};
		\node[above] at (3,-3.9) {$2$};
		\node[circle, draw, inner sep=1.5pt] (E3) at (3.75,-4.5) {};
		\node[above] at (3.75,-4.4) {$3$};
		\node[circle, draw, inner sep=1.5pt] (E4) at (3,-5) {};
		\node[below] at (3,-5.1) {$4$};
		\node[circle, draw, inner sep=1.5pt] (E5) at (2,-5) {};
		\node[below] at (2,-5.1) {$5$};
		\node[circle, draw, inner sep=1.5pt] (E6) at (4.75,-4.5) {};
		\node[above] at (4.75,-4.4) {$6$};
	
		\draw[->] (E1) -- (E2);
		\draw[->] (E2) -- (E3);
		\draw[<-] (E3) -- (E4);
		\draw[<-] (E4) -- (E5);
		\draw[->] (E3) -- (E6);
\end{tikzpicture}
\end{center}
Let $\Q = (\Delta,\sigma,\xi)$ be the Q-datum defined by $\xi_1 = 7$, $\xi_2 = 9$, $\xi_3 = 8$, $\xi_4 = 7$, $\xi_5 = 9$, and $\xi_6 = 7$. The indecomposable objects of $\C(\Q)$ (with their coordinates in $\Gamma_{\mathcal{Q}}$) are depicted in Figure \ref{figure:F4}. The unspecified objects are as follows:
\[
M_1 = \begin{tikzcd}[row sep=1.5em, column sep=1.5em, baseline={([yshift=-0.3em]current bounding box.center)}, ampersand replacement=\&]
	\&\& K \\
	K \& {K^2} \& {K^2} \& K \& K
	\arrow["{i_1}" pos=0.53, bend left=20, from=1-3, to=2-3]
	\arrow["0" pos=0.6, bend left=20, from=2-1, to=2-2]
	\arrow["{\pi_1}" pos = 0.4, bend left=20, from=2-2, to=2-1]
	\arrow["\id", bend left=20, from=2-2, to=2-3]
	\arrow["{\pi_2}" pos=0.47, bend left=20, from=2-3, to=1-3]
	\arrow["0", bend left=20, from=2-3, to=2-2]
	\arrow["{\pi_2}"' pos=0.4, bend right=20, from=2-3, to=2-4]
	\arrow["{i_1}"' pos=0.51, bend right=20, from=2-4, to=2-3]
	\arrow["0"', bend right=20, from=2-4, to=2-5]
	\arrow["\id"', bend right=20, from=2-5, to=2-4]
\end{tikzcd} \quad\quad M_2 = \begin{tikzcd}[row sep=1.5em, column sep=1.5em, baseline={([yshift=-0.3em]current bounding box.center)}, ampersand replacement=\&]
	\&\& K \\
	0 \& {K} \& {K^2} \& K \& K
	\arrow["{i_1}" pos=0.53, bend left=20, from=1-3, to=2-3]
	\arrow[bend left=20, from=2-1, to=2-2]
	\arrow[bend left=20, from=2-2, to=2-1]
	\arrow["i_2" pos=0.6, bend left=20, from=2-2, to=2-3]
	\arrow["{\pi_2}" pos=0.47, bend left=20, from=2-3, to=1-3]
	\arrow["0" pos=0.4, bend left=20, from=2-3, to=2-2]
	\arrow["{\pi_2}"' pos=0.4, bend right=20, from=2-3, to=2-4]
	\arrow["{i_1}"' pos=0.51, bend right=20, from=2-4, to=2-3]
	\arrow["0"', bend right=20, from=2-4, to=2-5]
	\arrow["\id"', bend right=20, from=2-5, to=2-4]
\end{tikzcd}
\]
\[
M_3 = \begin{tikzcd}[row sep=1.5em, column sep=1.5em, baseline={([yshift=-0.3em]current bounding box.center)}, ampersand replacement=\&]
	\&\& K \\
	K \& {K^2} \& {K^2} \& {K^2} \& K
	\arrow["0" pos=0.53, bend left=20, from=1-3, to=2-3]
	\arrow["0" pos=0.6, bend left=20, from=2-1, to=2-2]
	\arrow["{\pi_1}" pos = 0.4, bend left=20, from=2-2, to=2-1]
	\arrow["\id", bend left=20, from=2-2, to=2-3]
	\arrow["{\pi_1 + \pi_2}" pos=0.47, bend left=20, from=2-3, to=1-3]
	\arrow["0", bend left=20, from=2-3, to=2-2]
	\arrow["{i_1\pi_2}"', bend right=20, from=2-3, to=2-4]
	\arrow["{i_1\pi_2}"', bend right=20, from=2-4, to=2-3]
	\arrow["0"' pos=0.4, bend right=20, from=2-4, to=2-5]
	\arrow["{i_2}"' pos=0.53, bend right=20, from=2-5, to=2-4]
\end{tikzcd} \quad\quad M_4 = \begin{tikzcd}[row sep=1.5em, column sep=1.5em, baseline={([yshift=-0.3em]current bounding box.center)}, ampersand replacement=\&]
	\&\& K \\
	K \& {K^2} \& {K^2} \& K \& 0
	\arrow["0" pos=0.53, bend left=20, from=1-3, to=2-3]
	\arrow["0" pos=0.6, bend left=20, from=2-1, to=2-2]
	\arrow["{\pi_1}" pos=0.4, bend left=20, from=2-2, to=2-1]
	\arrow["\id", bend left=20, from=2-2, to=2-3]
	\arrow["{\pi_1 + \pi_2}" pos=0.47, bend left=20, from=2-3, to=1-3]
	\arrow["0", bend left=20, from=2-3, to=2-2]
	\arrow["{\pi_2}"' pos=0.4, bend right=20, from=2-3, to=2-4]
	\arrow["0"' pos=0.51, bend right=20, from=2-4, to=2-3]
	\arrow[bend right=20, from=2-4, to=2-5]
	\arrow[bend right=20, from=2-5, to=2-4]
\end{tikzcd}
\]
\[
M_5 = \begin{tikzcd}[row sep=1.5em, column sep=1.5em, baseline={([yshift=-0.3em]current bounding box.center)}, ampersand replacement=\&]
	\&\&\& K \\
	K \& {K^2} \&\& {K^3} \&\& {K^2} \& K
	\arrow["{-i_1}" pos=0.53, bend left=20, from=1-4, to=2-4]
	\arrow["0" pos=0.6, bend left=20, from=2-1, to=2-2]
	\arrow["{\pi_1}" pos = 0.4, bend left=20, from=2-2, to=2-1]
	\arrow["{i_2\pi_1 + i_3\pi_2}", bend left=20, from=2-2, to=2-4]
	\arrow["{\pi_3}" pos=0.47, bend left=20, from=2-4, to=1-4]
	\arrow["0", bend left=20, from=2-4, to=2-2]
	\arrow["{-i_1\pi_2 + (i_1-i_2)\pi_3}"', bend right=20, from=2-4, to=2-6]
	\arrow["{i_1\pi_2}"', bend right=20, from=2-6, to=2-4]
	\arrow["0"' pos=0.4, bend right=20, from=2-6, to=2-7]
	\arrow["{i_2}"' pos=0.53, bend right=20, from=2-7, to=2-6]
\end{tikzcd} \quad M_6 = \begin{tikzcd}[row sep=1.5em, column sep=1.5em, baseline={([yshift=-0.3em]current bounding box.center)}, ampersand replacement=\&]
	\&\& K \\
	0 \& {K} \& {K^2} \& {K^2} \& K
	\arrow["{-i_1}" pos=0.53, bend left=20, from=1-3, to=2-3]
	\arrow[bend left=20, from=2-1, to=2-2]
	\arrow[bend left=20, from=2-2, to=2-1]
	\arrow["{i_2}" pos=0.6, bend left=20, from=2-2, to=2-3]
	\arrow["{\pi_2}" pos=0.47, bend left=20, from=2-3, to=1-3]
	\arrow["0" pos=0.4, bend left=20, from=2-3, to=2-2]
	\arrow["{(i_1-i_2)\pi_2}"', bend right=20, from=2-3, to=2-4]
	\arrow["{i_1\pi_2}"', bend right=20, from=2-4, to=2-3]
	\arrow["0"' pos=0.4, bend right=20, from=2-4, to=2-5]
	\arrow["{i_2}"' pos=0.6, bend right=20, from=2-5, to=2-4]
\end{tikzcd}
\]
\[
M_7 = \begin{tikzcd}[row sep=1.75em, column sep=2em, baseline={([yshift=-0.3em]current bounding box.center)}, ampersand replacement=\&]
	\&\&\& {K^2} \\
	K \& {K^2} \&\& {K^3} \&\& {K^2} \& K
	\arrow["{-i_1\pi_2}" pos=0.4, bend left=20, from=1-4, to=2-4]
	\arrow["0" pos=0.6, bend left=20, from=2-1, to=2-2]
	\arrow["{\pi_1}" pos = 0.4, bend left=20, from=2-2, to=2-1]
	\arrow["{i_2\pi_1 + i_3\pi_2}", bend left=20, from=2-2, to=2-4]
	\arrow["{i_1\pi_2 + i_2\pi_3}" pos=0.6, bend left=20, from=2-4, to=1-4]
	\arrow["0", bend left=20, from=2-4, to=2-2]
	\arrow["{i_1\pi_3}"', bend right=20, from=2-4, to=2-6]
	\arrow["{ \ \ \ -i_1\pi_1 + (i_1-i_2)\pi_2}"', bend right=20, from=2-6, to=2-4]
	\arrow["0"' pos=0.4, bend right=20, from=2-6, to=2-7]
	\arrow["{i_2}"' pos=0.53, bend right=20, from=2-7, to=2-6]
\end{tikzcd}
\]
\[
M_8 = \begin{tikzcd}[row sep=1.5em, column sep=1.5em, baseline={([yshift=-0.3em]current bounding box.center)}, ampersand replacement=\&]
	\&\& K \\
	K \& {K} \& {K^2} \& {K} \& K
	\arrow["0" pos=0.53, bend left=20, from=1-3, to=2-3]
	\arrow["0", bend left=20, from=2-1, to=2-2]
	\arrow["{\id}", bend left=20, from=2-2, to=2-1]
	\arrow["{i_2}" pos=0.6, bend left=20, from=2-2, to=2-3]
	\arrow["{\pi_2}" pos=0.47, bend left=20, from=2-3, to=1-3]
	\arrow["0" pos=0.4, bend left=20, from=2-3, to=2-2]
	\arrow["0"' pos=0.4, bend right=20, from=2-3, to=2-4]
	\arrow["{i_1-i_2}"' pos=0.53, bend right=20, from=2-4, to=2-3]
	\arrow["0"', bend right=20, from=2-4, to=2-5]
	\arrow["\id"', bend right=20, from=2-5, to=2-4]
\end{tikzcd} \quad\quad M_9 = \begin{tikzcd}[row sep=1.5em, column sep=1.5em, baseline={([yshift=-0.3em]current bounding box.center)}, ampersand replacement=\&]
	\&\& K \\
	K \& {K} \& {K^2} \& {K^2} \& K
	\arrow["0" pos=0.53, bend left=20, from=1-3, to=2-3]
	\arrow["0", bend left=20, from=2-1, to=2-2]
	\arrow["{\id}", bend left=20, from=2-2, to=2-1]
	\arrow["i_2" pos=0.6, bend left=20, from=2-2, to=2-3]
	\arrow["{\pi_1 - \pi_2}" pos=0.47, bend left=20, from=2-3, to=1-3]
	\arrow["0" pos=0.4, bend left=20, from=2-3, to=2-2]
	\arrow["{i_1\pi_2}"', bend right=20, from=2-3, to=2-4]
	\arrow["{i_1\pi_2}"', bend right=20, from=2-4, to=2-3]
	\arrow["0"' pos=0.4, bend right=20, from=2-4, to=2-5]
	\arrow["{i_2}"' pos=0.53, bend right=20, from=2-5, to=2-4]
\end{tikzcd}
\]
\[
M_{10} = \begin{tikzcd}[row sep=1.5em, column sep=1.5em, baseline={([yshift=-0.3em]current bounding box.center)}, ampersand replacement=\&]
	\&\& K \\
	0 \& {K} \& {K^2} \& {K} \& 0
	\arrow["{i_1}" pos=0.53, bend left=20, from=1-3, to=2-3]
	\arrow[bend left=20, from=2-1, to=2-2]
	\arrow[bend left=20, from=2-2, to=2-1]
	\arrow["{i_2}" pos=0.6, bend left=20, from=2-2, to=2-3]
	\arrow["{\pi_2}" pos=0.47, bend left=20, from=2-3, to=1-3]
	\arrow["0" pos=0.4, bend left=20, from=2-3, to=2-2]
	\arrow["{\pi_2}"' pos=0.4, bend right=20, from=2-3, to=2-4]
	\arrow["{i_1}"' pos=0.53, bend right=20, from=2-4, to=2-3]
	\arrow[bend right=20, from=2-4, to=2-5]
	\arrow[bend right=20, from=2-5, to=2-4]
\end{tikzcd} \quad\quad M_{11} = \begin{tikzcd}[row sep=1.5em, column sep=1.5em, baseline={([yshift=-0.3em]current bounding box.center)}, ampersand replacement=\&]
	\&\& K \\
	K \& {K} \& {K^2} \& K \& 0
	\arrow["{i_1}" pos=0.53, bend left=20, from=1-3, to=2-3]
	\arrow["0", bend left=20, from=2-1, to=2-2]
	\arrow["{\id}", bend left=20, from=2-2, to=2-1]
	\arrow["{i_2}" pos=0.6, bend left=20, from=2-2, to=2-3]
	\arrow["{\pi_2}" pos=0.47, bend left=20, from=2-3, to=1-3]
	\arrow["0" pos=0.4, bend left=20, from=2-3, to=2-2]
	\arrow["{\pi_2}"' pos=0.4, bend right=20, from=2-3, to=2-4]
	\arrow["{i_1}"' pos=0.51, bend right=20, from=2-4, to=2-3]
	\arrow[bend right=20, from=2-4, to=2-5]
	\arrow[bend right=20, from=2-5, to=2-4]
\end{tikzcd}
\]

\begin{sidewaysfigure}
\[\begin{tikzcd}[column sep={3em,between origins},row sep={1em},ampersand replacement=\&]
	{(\im \ \backslash \ p)} \& {-9} \& {-8} \& {-7} \& {-6} \& {-5} \& {-4} \& {-3} \& {-2} \& {-1} \& 0 \& 1 \& 2 \& 3 \& 4 \& 5 \& 6 \& 7 \& 8 \& 9 \\
	1 \&\&\&\&\& {
		\begin{tikzpicture}

			\node at (-0.3,0) {$K$};
			\node at (-0.6,0) {$K$};
			\node at (-1,0.2) {$0$};
			\node at (-1,-0.2) {$K$};
			\node at (-1.3, 0.2) {$0$};
			\node at (-1.3, -0.2) {$K$};
	
			\draw[->] (-0.58,-0.15) -- (-0.32,-0.15);
			\draw[->] (-0.825,-0.3) -- (-0.675,-0.1);
			\draw[->] (-1.28,-0.35) -- (-1.02, -0.35);
			
		\end{tikzpicture}
	} \&\&\&\& {
		\begin{tikzpicture}

			\node at (-0.3,0) {$0$};
			\node at (-0.6,0) {$K$};
			\node at (-1,0.2) {$K$};
			\node at (-1,-0.2) {$K$};
			\node at (-1.3, 0.2) {$K$};
			\node at (-1.3, -0.2) {$0$};
	
			\draw[->] (-0.80,0.55) -- (-0.65,0.35);
			\draw[<-] (-1.28,0.6) -- (-1.02, 0.6);
			\draw[<-] (-0.825,-0.3) -- (-0.675,-0.1);
			
		\end{tikzpicture}
	} \&\&\&\& {M_2} \&\&\&\& {
		\begin{tikzpicture}

			\node at (-0.3,0) {$0$};
			\node at (-0.6,0) {$0$};
			\node at (-1,0.2) {$K$};
			\node at (-1,-0.2) {$0$};
			\node at (-1.3, 0.2) {$K$};
			\node at (-1.3, -0.2) {$0$};
	
			\draw[<-] (-1.28,0.6) -- (-1.02, 0.6);
			
		\end{tikzpicture}
	} \\
	2 \&\&\& {
		\begin{tikzpicture}

			\node at (-0.3,0) {$K$};
			\node at (-0.6,0) {$K$};
			\node at (-1,0.2) {$0$};
			\node at (-1,-0.2) {$K$};
			\node at (-1.3, 0.2) {$0$};
			\node at (-1.3, -0.2) {$0$};
	
			\draw[->] (-0.58,-0.15) -- (-0.32,-0.15);
			\draw[->] (-0.825,-0.3) -- (-0.675,-0.1);
			
		\end{tikzpicture}
	} \&\&\&\& {M_9} \&\&\&\& {M_5} \&\&\&\& {M_1} \&\&\&\& {
		\begin{tikzpicture}

			\node at (-0.3,0) {$0$};
			\node at (-0.6,0) {$0$};
			\node at (-1,0.2) {$K$};
			\node at (-1,-0.2) {$0$};
			\node at (-1.3, 0.2) {$0$};
			\node at (-1.3, -0.2) {$0$};
	
			
		\end{tikzpicture}
	} \\
	3 \&\& {
		\begin{tikzpicture}

			\node at (-0.3,0) {$K$};
			\node at (-0.6,0) {$K$};
			\node at (-1,0.2) {$0$};
			\node at (-1,-0.2) {$0$};
			\node at (-1.3, 0.2) {$0$};
			\node at (-1.3, -0.2) {$0$};
	
			\draw[->] (-0.58,-0.15) -- (-0.32,-0.15);
			
		\end{tikzpicture}
	} \&\& {
		\begin{tikzpicture}

			\node at (-0.3,0) {$0$};
			\node at (-0.6,0) {$K$};
			\node at (-1,0.2) {$0$};
			\node at (-1,-0.2) {$K$};
			\node at (-1.3, 0.2) {$0$};
			\node at (-1.3, -0.2) {$0$};
	
			\draw[->] (-0.825,-0.3) -- (-0.675,-0.1);
			
		\end{tikzpicture}
	} \&\& {
		\begin{tikzpicture}

			\node at (-0.3,0) {$K$};
			\node at (-0.6,0) {$K$};
			\node at (-1,0.2) {$K$};
			\node at (-1,-0.2) {$K$};
			\node at (-1.3, 0.2) {$K$};
			\node at (-1.3, -0.2) {$0$};
	
			\draw[->] (-0.58,-0.15) -- (-0.32,-0.15);
			\draw[->] (-0.80,0.55) -- (-0.65,0.35);
			\draw[<-] (-1.28,0.6) -- (-1.02, 0.6);
			\draw[<-] (-0.825,-0.3) -- (-0.675,-0.1);
			
		\end{tikzpicture}
	} \&\& {M_8} \&\& {M_6} \&\& {M_4} \&\& {
		\begin{tikzpicture}

			\node at (-0.3,0) {$0$};
			\node at (-0.6,0) {$K$};
			\node at (-1,0.2) {$K$};
			\node at (-1,-0.2) {$K$};
			\node at (-1.3, 0.2) {$K$};
			\node at (-1.3, -0.2) {$K$};
	
			\draw[->] (-0.80,0.55) -- (-0.65,0.35);
			\draw[<-] (-1.28,0.6) -- (-1.02, 0.6);
			\draw[->] (-0.825,-0.3) -- (-0.675,-0.1);
			\draw[->] (-1.28,-0.35) -- (-1.02, -0.35);
			
		\end{tikzpicture}
	} \&\& {
		\begin{tikzpicture}

			\node at (-0.3,0) {$K$};
			\node at (-0.6,0) {$K$};
			\node at (-1,0.2) {$K$};
			\node at (-1,-0.2) {$K$};
			\node at (-1.3, 0.2) {$0$};
			\node at (-1.3, -0.2) {$K$};
	
			\draw[->] (-0.58,-0.15) -- (-0.32,-0.15);
			\draw[->] (-0.80,0.55) -- (-0.65,0.35);
			\draw[<-] (-0.825,-0.3) -- (-0.675,-0.1);
			\draw[->] (-1.28,-0.35) -- (-1.02, -0.35);
			
		\end{tikzpicture}
	} \&\& {
		\begin{tikzpicture}

			\node at (-0.3,0) {$0$};
			\node at (-0.6,0) {$K$};
			\node at (-1,0.2) {$K$};
			\node at (-1,-0.2) {$0$};
			\node at (-1.3, 0.2) {$0$};
			\node at (-1.3, -0.2) {$0$};
	
			\draw[->] (-0.80,0.55) -- (-0.65,0.35);
			
		\end{tikzpicture}
	} \\
	4 \&\&\&\&\& {M_{11}} \&\&\&\& {M_7} \&\&\&\& {M_3} \&\&\&\& {
		\begin{tikzpicture}

			\node at (-0.3,0) {$0$};
			\node at (-0.6,0) {$K$};
			\node at (-1,0.2) {$K$};
			\node at (-1,-0.2) {$K$};
			\node at (-1.3, 0.2) {$0$};
			\node at (-1.3, -0.2) {$K$};
	
			\draw[->] (-0.80,0.55) -- (-0.65,0.35);
			\draw[<-] (-0.825,-0.3) -- (-0.675,-0.1);
			\draw[->] (-1.28,-0.35) -- (-1.02, -0.35);
			
		\end{tikzpicture}
	} \\
	5 \&\&\& {
		\begin{tikzpicture}

			\node at (-0.3,0) {$0$};
			\node at (-0.6,0) {$0$};
			\node at (-1,0.2) {$0$};
			\node at (-1,-0.2) {$0$};
			\node at (-1.3, 0.2) {$K$};
			\node at (-1.3, -0.2) {$0$};
	
			
		\end{tikzpicture}
	} \&\&\&\& {M_{10}} \&\&\&\& {
		\begin{tikzpicture}

			\node at (-0.3,0) {$K$};
			\node at (-0.6,0) {$K$};
			\node at (-1,0.2) {$K$};
			\node at (-1,-0.2) {$K$};
			\node at (-1.3, 0.2) {$K$};
			\node at (-1.3, -0.2) {$K$};
	
			\draw[->] (-0.58,-0.15) -- (-0.32,-0.15);
			\draw[->] (-0.80,0.55) -- (-0.65,0.35);
			\draw[<-] (-1.28,0.6) -- (-1.02, 0.6);
			\draw[->] (-0.825,-0.3) -- (-0.675,-0.1);
			\draw[->] (-1.28,-0.35) -- (-1.02, -0.35);
			
		\end{tikzpicture}
	} \&\&\&\& {
		\begin{tikzpicture}

			\node at (-0.3,0) {$0$};
			\node at (-0.6,0) {$K$};
			\node at (-1,0.2) {$K$};
			\node at (-1,-0.2) {$K$};
			\node at (-1.3, 0.2) {$0$};
			\node at (-1.3, -0.2) {$0$};
	
			\draw[->] (-0.80,0.55) -- (-0.65,0.35);
			\draw[<-] (-0.825,-0.3) -- (-0.675,-0.1);
			
		\end{tikzpicture}
	} \&\&\&\& {
		\begin{tikzpicture}

			\node at (-0.3,0) {$0$};
			\node at (-0.6,0) {$0$};
			\node at (-1,0.2) {$0$};
			\node at (-1,-0.2) {$0$};
			\node at (-1.3, 0.2) {$0$};
			\node at (-1.3, -0.2) {$K$};
	
			
		\end{tikzpicture}
	} \\
	6 \& {
		\begin{tikzpicture}

			\node at (-0.3,0) {$K$};
			\node at (-0.6,0) {$0$};
			\node at (-1,0.2) {$0$};
			\node at (-1,-0.2) {$0$};
			\node at (-1.3, 0.2) {$0$};
			\node at (-1.3, -0.2) {$0$};
	
			
		\end{tikzpicture}
	} \&\& {
		\begin{tikzpicture}

			\node at (-0.3,0) {$0$};
			\node at (-0.6,0) {$K$};
			\node at (-1,0.2) {$0$};
			\node at (-1,-0.2) {$0$};
			\node at (-1.3, 0.2) {$0$};
			\node at (-1.3, -0.2) {$0$};
	
			
		\end{tikzpicture}
	} \&\& {
		\begin{tikzpicture}

			\node at (-0.3,0) {$0$};
			\node at (-0.6,0) {$0$};
			\node at (-1,0.2) {$0$};
			\node at (-1,-0.2) {$K$};
			\node at (-1.3, 0.2) {$0$};
			\node at (-1.3, -0.2) {$0$};
	
			
		\end{tikzpicture}
	} \&\& {
		\begin{tikzpicture}

			\node at (-0.3,0) {$K$};
			\node at (-0.6,0) {$K$};
			\node at (-1,0.2) {$K$};
			\node at (-1,-0.2) {$0$};
			\node at (-1.3, 0.2) {$K$};
			\node at (-1.3, -0.2) {$0$};
	
			\draw[->] (-0.58,-0.15) -- (-0.32,-0.15);
			\draw[->] (-0.80,0.55) -- (-0.65,0.35);
			\draw[<-] (-1.28,0.6) -- (-1.02, 0.6);
			
		\end{tikzpicture}
	} \&\& {
		\begin{tikzpicture}

			\node at (-0.3,0) {$0$};
			\node at (-0.6,0) {$K$};
			\node at (-1,0.2) {$0$};
			\node at (-1,-0.2) {$K$};
			\node at (-1.3, 0.2) {$0$};
			\node at (-1.3, -0.2) {$K$};
	
			\draw[->] (-0.825,-0.3) -- (-0.675,-0.1);
			\draw[->] (-1.28,-0.35) -- (-1.02, -0.35);
			
		\end{tikzpicture}
	} \&\& {
		\begin{tikzpicture}

			\node at (-0.3,0) {$K$};
			\node at (-0.6,0) {$K$};
			\node at (-1,0.2) {$K$};
			\node at (-1,-0.2) {$K$};
			\node at (-1.3, 0.2) {$0$};
			\node at (-1.3, -0.2) {$0$};
	
			\draw[->] (-0.58,-0.15) -- (-0.32,-0.15);
			\draw[->] (-0.80,0.55) -- (-0.65,0.35);
			\draw[<-] (-0.825,-0.3) -- (-0.675,-0.1);
			
		\end{tikzpicture}
	} \&\& {
		\begin{tikzpicture}

			\node at (-0.3,0) {$0$};
			\node at (-0.6,0) {$K$};
			\node at (-1,0.2) {$K$};
			\node at (-1,-0.2) {$0$};
			\node at (-1.3, 0.2) {$K$};
			\node at (-1.3, -0.2) {$0$};
	
			\draw[->] (-0.80,0.55) -- (-0.65,0.35);
			\draw[<-] (-1.28,0.6) -- (-1.02, 0.6);
			
		\end{tikzpicture}
	} \&\& {
		\begin{tikzpicture}

			\node at (-0.3,0) {$0$};
			\node at (-0.6,0) {$0$};
			\node at (-1,0.2) {$0$};
			\node at (-1,-0.2) {$K$};
			\node at (-1.3, 0.2) {$0$};
			\node at (-1.3, -0.2) {$K$};
	
			\draw[->] (-1.28,-0.35) -- (-1.02, -0.35);
			
		\end{tikzpicture}
	} \&\& {
		\begin{tikzpicture}

			\node at (-0.3,0) {$K$};
			\node at (-0.6,0) {$K$};
			\node at (-1,0.2) {$K$};
			\node at (-1,-0.2) {$0$};
			\node at (-1.3, 0.2) {$0$};
			\node at (-1.3, -0.2) {$0$};
	
			\draw[->] (-0.58,-0.15) -- (-0.32,-0.15);
			\draw[->] (-0.80,0.55) -- (-0.65,0.35);
			
		\end{tikzpicture}
	}
	\arrow[from=2-6, to=3-8]
	\arrow[from=2-10, to=3-12]
	\arrow[from=2-14, to=3-16]
	\arrow[from=2-18, to=3-20]
	\arrow[from=3-4, to=2-6]
	\arrow[from=3-4, to=4-5]
	\arrow[from=3-8, to=2-10]
	\arrow[from=3-8, to=4-9]
	\arrow[from=3-12, to=2-14]
	\arrow[from=3-12, to=4-13]
	\arrow[from=3-16, to=2-18]
	\arrow[from=3-16, to=4-17]
	\arrow[from=4-3, to=3-4]
	\arrow[bend right = 6, from=4-3, to=7-4, end anchor={[shift={(-9pt,0pt)}]}]
	\arrow[from=4-5, to=5-6]
	\arrow[bend right = 6, from=4-5, to=7-6, end anchor={[shift={(-9pt,0pt)}]}]
	\arrow[from=4-7, to=3-8]
	\arrow[bend right = 6, from=4-7, to=7-8, end anchor={[shift={(-9pt,0pt)}]}]
	\arrow[from=4-9, to=5-10]
	\arrow[bend right = 6, from=4-9, to=7-10, end anchor={[shift={(-9pt,0pt)}]}]
	\arrow[from=4-11, to=3-12]
	\arrow[bend right = 6, from=4-11, to=7-12, end anchor={[shift={(-9pt,0pt)}]}]
	\arrow[from=4-13, to=5-14]
	\arrow[bend right = 6, from=4-13, to=7-14, end anchor={[shift={(-9pt,0pt)}]}]
	\arrow[from=4-15, to=3-16]
	\arrow[bend right = 6, from=4-15, to=7-16, end anchor={[shift={(-9pt,0pt)}]}]
	\arrow[from=4-17, to=5-18]
	\arrow[bend right = 6, from=4-17, to=7-18, end anchor={[shift={(-9pt,0pt)}]}]
	\arrow[from=4-19, to=3-20]
	\arrow[from=5-6, to=4-7]
	\arrow[from=5-6, to=6-8]
	\arrow[from=5-10, to=4-11]
	\arrow[from=5-10, to=6-12]
	\arrow[from=5-14, to=4-15]
	\arrow[from=5-14, to=6-16]
	\arrow[from=5-18, to=4-19]
	\arrow[from=5-18, to=6-20]
	\arrow[from=6-4, to=5-6]
	\arrow[from=6-8, to=5-10]
	\arrow[from=6-12, to=5-14]
	\arrow[from=6-16, to=5-18]
	\arrow[bend right = 6, from=7-2, to=4-3, start anchor={[shift={(9pt,0pt)}]}]
	\arrow[bend right = 6, from=7-4, to=4-5, start anchor={[shift={(9pt,0pt)}]}]
	\arrow[bend right = 6, from=7-6, to=4-7, start anchor={[shift={(9pt,0pt)}]}]
	\arrow[bend right = 6, from=7-8, to=4-9, start anchor={[shift={(9pt,0pt)}]}]
	\arrow[bend right = 6, from=7-10, to=4-11, start anchor={[shift={(9pt,0pt)}]}]
	\arrow[bend right = 6, from=7-12, to=4-13, start anchor={[shift={(9pt,0pt)}]}]
	\arrow[bend right = 6, from=7-14, to=4-15, start anchor={[shift={(9pt,0pt)}]}]
	\arrow[bend right = 6, from=7-16, to=4-17, start anchor={[shift={(9pt,0pt)}]}]
	\arrow[bend right = 6, from=7-18, to=4-19, start anchor={[shift={(9pt,0pt)}]}]
\end{tikzcd}\]

\caption{Indecomposable objects of $\C(\Q)$ for a Q-datum $\Q$ of type $\mathsf{F}_4$ (see Example \ref{example:F4} for more details).}
\label{figure:F4}
\end{sidewaysfigure}
\end{example}

\begin{example}\label{example:G2}
Suppose $(\Delta,\sigma) = (\mathsf{D}_4,\widetilde{\vee})$ is of type $\mathsf{G}_2$. Let $Q$ be the following orientation for $\Delta$:
\begin{center}
	\begin{tikzpicture}
			
		\node[circle, draw, inner sep=1.5pt] (D1) at (3.5,-0.5) {};
		\node[above] at (3.5,-0.4) {$1$};
		\node[circle, draw, inner sep=1.5pt] (D2) at (4.5,-0.5) {};
		\node[above] at (4.5,-0.4) {$2$};
		\node[circle, draw, inner sep=1.5pt] (D3) at (5.25,0) {};
		\node[right] at (5.35,0) {$3$};
		\node[circle, draw, inner sep=1.5pt] (D4) at (5.25,-1) {};
		\node[right] at (5.35,-1) {$4$};
	
		\draw[->] (D1) -- (D2);
		\draw[->] (D3) -- (D2);
		\draw[->] (D4) -- (D2);
	
	\end{tikzpicture}
	\end{center}
Let $\Q = (\Delta,\sigma,\xi)$ be the Q-datum defined by $\xi_1 = 3$, $\xi_2 = 4$, $\xi_3 = 1$, and $\xi_4 = 5$. The indecomposable objects of $\C(\Q)$ are shown below.
\[\begin{tikzcd}[column sep={3em,between origins},row sep={1em}]
	{(\im \ \backslash \ p)} & {-6} & {-5} & {-4} & {-3} & {-2} & {-1} & 0 & 1 & 2 & 3 & 4 & 5 \\
	1 &&&& {\begin{tikzpicture}

		\node at (0,0) {$K$};
		\node at (0.3,0) {$K$};
		\node at (0.7,0.2) {$K$};
		\node at (0.7,-0.2) {$0$};
		
		\draw[->] (0.02,0.4) -- (0.28,0.4);
		\draw[->] (0.4,0.35) -- (0.55,0.55);
		
		\end{tikzpicture}} &&&&&& {\begin{tikzpicture}

		\node at (0,0) {$K$};
		\node at (0.3,0) {$K$};
		\node at (0.7,0.2) {$0$};
		\node at (0.7,-0.2) {$K$};
			
		\draw[->] (0.28,-0.15) -- (0.02,-0.15);
		\draw[->] (0.475,-0.3) -- (0.325,-0.1);
			
		\end{tikzpicture}} \\
	2 & {\begin{tikzpicture}

		\node at (0,0) {$0$};
		\node at (0.3,0) {$K$};
		\node at (0.7,0.2) {$0$};
		\node at (0.7,-0.2) {$0$};
		
		\end{tikzpicture}} && {\begin{tikzpicture}

		\node at (0,0) {$0$};
		\node at (0.3,0) {$0$};
		\node at (0.7,0.2) {$K$};
		\node at (0.7,-0.2) {$0$};
			
		\end{tikzpicture}} && {\begin{tikzpicture}

		\node at (0,0) {$K$};
		\node at (0.3,0) {$K$};
		\node at (0.7,0.2) {$0$};
		\node at (0.7,-0.2) {$0$};
		
		\draw[->] (0.02,0.4) -- (0.28,0.4);
		
		\end{tikzpicture}} && {\begin{tikzpicture}

		\node at (0,0) {$0$};
		\node at (0.3,0) {$K$};
		\node at (0.7,0.2) {$K$};
		\node at (0.7,-0.2) {$K$};
		
		\draw[->] (0.4,0.35) -- (0.55,0.55);
		\draw[->] (0.475,-0.3) -- (0.325,-0.1);
		
		\end{tikzpicture}} && {\begin{tikzpicture}

		\node at (0,0) {$K$};
		\node at (0.3,0) {$0$};
		\node at (0.7,0.2) {$0$};
		\node at (0.7,-0.2) {$0$};
		
		\end{tikzpicture}} && {\begin{tikzpicture}

		\node at (0,0) {$0$};
		\node at (0.3,0) {$K$};
		\node at (0.7,0.2) {$0$};
		\node at (0.7,-0.2) {$K$};
		
		\draw[->] (0.475,-0.3) -- (0.325,-0.1);
		
		\end{tikzpicture}} \\
	3 && {\begin{tikzpicture}

		\node at (0,0) {$0$};
		\node at (0.3,0) {$K$};
		\node at (0.7,0.2) {$K$};
		\node at (0.7,-0.2) {$0$};
		
		\draw[->] (0.55,0.55) -- (0.4,0.35);
		
		\end{tikzpicture}} &&&&&& {\begin{tikzpicture}

		\node at (0,0) {$K$};
		\node at (0.3,0) {$K$};
		\node at (0.7,0.2) {$K$};
		\node at (0.7,-0.2) {$K$};
		
		\draw[->] (0.02,0.4) -- (0.28,0.4);
		\draw[->] (0.4,0.35) -- (0.55,0.55);
		\draw[->] (0.475,-0.3) -- (0.325,-0.1);
		
		\end{tikzpicture}} \\
	4 &&&&&& M &&&&&& {\begin{tikzpicture}

		\node at (0,0) {$0$};
		\node at (0.3,0) {$0$};
		\node at (0.7,0.2) {$0$};
		\node at (0.7,-0.2) {$K$};
		
		\end{tikzpicture}}
	\arrow[from=2-5, to=3-6]
	\arrow[from=2-11, to=3-12]
	\arrow[from=3-2, to=4-3]
	\arrow[from=3-4, to=2-5]
	\arrow[from=3-6, to=5-7]
	\arrow[from=3-8, to=4-9]
	\arrow[from=3-10, to=2-11]
	\arrow[from=3-12, to=5-13]
	\arrow[from=4-3, to=3-4]
	\arrow[from=4-9, to=3-10]
	\arrow[from=5-7, to=3-8]
\end{tikzcd}\]
Here, the object $M$ is the following representation:
\[\begin{tikzcd}
	&& K \\
	K & {K^2} \\
	&& K
	\arrow["{i_1}", bend left=20, from=1-3, to=2-2]
	\arrow["{-i_1}", bend left=20, from=2-1, to=2-2]
	\arrow["{\pi_2}", bend left=20, from=2-2, to=1-3]
	\arrow["{\pi_2}", bend left=20, from=2-2, to=2-1]
	\arrow["0", bend left=20, from=2-2, to=3-3]
	\arrow["{i_2}", bend left=20, from=3-3, to=2-2]
\end{tikzcd}\]
\end{example}

\sloppy
\printbibliography

\end{document}